\documentclass[11pt, reqno]{amsart}
\usepackage{amsmath, amssymb, amsthm, graphicx,marvosym}
\usepackage{cancel}
\usepackage{cite}
\usepackage[nobysame]{amsrefs}[11pts]
\usepackage{bm}
\usepackage{color}
\setlength{\evensidemargin}{.5in} \setlength{\oddsidemargin}{.5in}
\setlength{\textwidth}{5.5in} \setlength{\topmargin}{0in}
\setlength{\textheight}{8.5in}
\usepackage{amsfonts}
\usepackage{mathrsfs}

\newtheorem{theorem}{Theorem}[section]
\newtheorem{definition}{Definition}[section]
\newtheorem{lemma}{Lemma}[section]
\newtheorem{remark}{Remark}[section]

\newtheorem{corollary}{Corollary}[section]
\numberwithin{equation}{section}
\numberwithin{figure}{section}

\makeatletter \@addtoreset{equation}{section} \makeatother


\def\tilde{\widetilde}

\newcommand{\fr}{\frac}
\newcommand{\ef}{\eqref}

\newcommand{\beq}{\begin{equation}}
	\newcommand{\eeq}{\end{equation}}

\newcommand{\dv}{{\mathrm {div}}}

\usepackage{amsmath}
\allowdisplaybreaks

\begin{document}

	\title[Degenerate compressible Navier-Stokes equations]{Local-in-time well-posedness for the regular solution to the 2D full compressible Navier-Stokes equations with degenerate viscosities and heat conductivity}

\author{Yue Cao}
\address[Y. Cao]{School of Mathematics, East China University of Science and Technology, Shanghai, 200237, China.}
\email{\tt cao\_yue12@ecust.edu.cn}

\author{Xun Jiang}
\address[X. Jiang]{School of Mathematical Sciences, Shanghai Jiao Tong University, Shanghai, 200240, China.}
\email{\tt xunjiang@sjtu.edu.cn}



\keywords{Compressible Navier-Stokes equations, two-dimensions, degenerate viscosity, far-field vacuum, local-in-time existence, classical solutions}

\subjclass[2010]{35Q30, 35A09, 35A01, 
35B40,  76N10.}
\date{\today}

\begin{abstract}
	
	This paper considers  the  two-dimensional  Cauchy problem of the full compressible Navier-Stokes equations with far-field vacuum in $\mathbb{R}^2$, where the viscosity and heat-conductivity coefficients  depend on the absolute temperature $\theta$ in the form of $\theta^\nu$ with $\nu>0$. 
    Due to the appearance of the vacuum, the momentum equation are both degenerate in the time evolution and spatial dissipation, which makes the study on the well-posedness challenged. By establishing some new singular-weighted (negative powers of the density $\rho$) estimates of the solution, we establish the local-in-time well-posedness of the regular solution with far-field vacuum in terms of $\rho$, the velocity $u$ and the entropy $S$. 
	    

\end{abstract}
\maketitle

\section{Introduction}

The full  compressible Navier-Stokes equation (\textbf{FCNS}) describes the movement of viscous heat-conductive fluid, which is given by:
\begin{equation}\label{1}
	\left\{\begin{aligned}
		&\rho_t+\text{div}(\rho u)=0,\\[5pt]
		&(\rho u)_t+\text{div}(\rho u\otimes u)+\nabla P=\text{div}\mathbb{T},\\[5pt]
		&(\rho \mathcal{E})_t+\text{div}(\rho \mathcal{E} u+Pu)=\text{div}(u\mathbb{T})+\text{div}(\kappa\nabla\theta).
	\end{aligned}\right.
\end{equation}
In \eqref{1}, $t\geq 0$ and $x=(x_1,x_2)^\top\in \mathbb{R}^2$ are the time and the spatial coordinate, respectively, $\rho\geq 0$ the density, $u=(u^{(1)},u^{(2)})^{\top}$ the fluid velocity, $\theta$ the absolute temperature, $\mathcal{E}=e+\frac{1}{2}|u|^2$  the specific total energy,  $e$ the specific internal energy. $P$ is the pressure, for polytropic fluids,  the  equation of state satisfies 
\begin{equation}\label{2}
	P=R\rho\theta=(\gamma-1)\rho e=Ae^{S/c_v}\rho^\gamma,\quad e=c_v\theta=\frac{R}{\gamma-1} \theta,
\end{equation}
$R>0$ is the gas constant, $A$ is a   positive constant, $c_v>0$ is  the specific heat at unit volume,  $\gamma>1$ is the adiabatic exponent and $S$ is the entropy. $\mathbb{T}$ is the viscosity stress tensor given by
\begin{equation}\label{3}
	\mathbb{T}=2\mu D(u)+\lambda \text{div}u \mathbb{I}_2,
\end{equation}
where $D(u)=\frac{\nabla u+(\nabla u)^\top}{2}$ is the deformation tensor, $\mathbb{I}_2$ is the $2\times 2$ identity matrix, $\mu$ is the shear viscosity coefficient, and $\lambda+\mu$ is the bulk viscosity coefficient. $\kappa$ denotes the coefficient of heat conductivity. 

For constant viscous heat-conductive flow, there is a lot of literature on the well-posedness and behavior of solutions to the compressible Navier-Stokes equation. 
When the initial data are away from vacuum, as demonstrated in \cites{itaya1, KA, nash, serrin, tani,ksone} and so on, the local well-posedness of classical solutions to \ef{1} can be established through the standard symmetric hyperbolic-parabolic structure that satisfies Kawashima's condition. 

When the initial vacuum is allowed, some new difficulties arise.  For example, the degeneracy of the time evolution in both the momentum and energy equations made it difficult to characterize the behavior of $u$, $\theta$ and $S$ near the vacuum domain. 
  For the constant viscous flow with vacuum, 
  Salvi and Stra$\rm{\check{s}}$kraba \cites{sal} first introduce the compatibility conditions and obtain the local-in-time existence of strong solutions to the isentropic flow, which has been extended to the non-isentropic flow by the work of Cho and Kim \cites{cho4}. For the global-in-time existence of classical solutions, we refer to Huang, Li and Xin \cites{HLX} for the isentropic flow in $\mathbb{R}^3$, 
and Huang-Li \cite{arma2018}, Wen-Zhu \cites{WH},  for the non-isentropic flow in $\mathbb{R}^3$. 
Some other results can be refer to \cites{FM2,jensen, ls15, song,mat} and the references cited therein.


It was pointed that Xin-Zhu \cite{XinZ1} and Duan-Xin-Zhu \cite{DXZ1} reveal a phenomenon: the classical solutions with vacuum to the constant viscous heat-conductive flow cannot preserve the conservation of momentum. In fact, the assumption of constant viscosity leads to a nonphysical scenario, that is, the vacuum exerts an external force on the fluid at the vacuum boundary.  Thus it is more appropriate to adopt degenerate viscosity coefficients to the viscous heat-conductive flows when modeling the vacuum problems.
  Besides, according to the theory of gas dynamics,   \ef{1} can be derived from the Boltzmann equations through the Chapman-Enskog expansion \cites{chap} to the first order,  and under some proper physical assumptions, $\mu$, $\lambda$ and $\kappa$ display as 
\begin{equation}
	\label{eq:1.5g}
	\begin{split}
		\mu(\theta)=&\alpha \theta^{\mathrm{b}},\quad  \lambda(\theta)=\beta \theta^{\mathrm{b}}, \quad \kappa(\theta)=\hbar \theta^{\mathrm{b}},
	\end{split}
\end{equation}
for
$\mathrm{b}=\frac12+\frac{2}{\Upsilon} \in \big[\frac12,\infty\big)$, where for Maxwellian molecules, the rigid elastic spherical molecules and the ionized gas,
$\mathrm{b} = 1, \frac12, \frac52$,  respectively.
%
%
%
For the isentropic flow, according to Boyle and Gay-Lussac law,  the dependence on $\theta$ is reduced to the dependence on $\rho$, i.e.
\begin{equation}\label{15r}
\mu(\rho)=\alpha \rho^\mathrm{c},\quad \lambda(\rho)=\beta \rho^\mathrm{c}.
\end{equation}

For isentropic flow with density-dependent viscosities as in \eqref{15r}, the appearance of vacuum will cause the degenerate of both time evolution and spatial dissipation, and make it hard to establish the well-posedness theory. 
 Bresch and Desjardins \cites{DB1} introduced the BD entropy identity,
which helps to establish the well-posedness of weak solutions to the compressible Navier-Stokes equations, especially the study of weak solutions of the case with density-dependent viscosities, see \cites{VaYu2,GZH,LiX,VaYu1}.  When $\text{c}=1$ in \eqref{15r}, Li-Pan-Zhu \cites{LPZ1}  established the local-in-time existence of classical solutions without using the BD entropy, see also Li-Pan-Zhu \cites{LPZ2} for the case of $1<\text{c}\le\frac{\gamma+1}{2}$; when  $0<\text{c}<1$ in \eqref{15r}, 
 Xin-Zhu \cite{XinZ1} obtain the local-in-time existence of classical solutions with conserved total mass and momentum in $\mathbb{R}^3$. For the global existence of classical solutions, we refer to Cao-Li-Zhu \cite{CLZ} for the case in $\mathbb{R}$, Huang-Li \cite{lh22} for the case in $\mathbb{R}^2$ and Xin-Zhu \cites{XZ2} for the case in $\mathbb{R}^3$. 

For the degenerate non-isentropic flow with viscosity and heat-conductivity in \eqref{eq:1.5g}, due to the appearance of the temperature in the viscous stress tensor and the heat flux vector, the difficulty of the problem will reach a new level. More precisely, for the isentropic flow, we can compare the density power to obtain the degeneracy of the time evolution and spatial dissipation, and then obtain the principal mathematical structure of the momentum equations. However, for the non-isentropic flow, the degeneracy of the time evolution is characterized by the power of the density, and the degeneracy of spatial dissipation is characterized by the power of the temperature, thus one needs to find a uniform quantity to evaluate the degeneracy. According to Boyle and Gay-Lussac law, 
\[
\theta=R^{-1}Ae^{S/c_v}\rho^{\gamma-1},
\]
thus, one can rewrite the momentum equation with density and entropy, and the viscosity stress tensor term can be divided into two terms: one term with derivative on the viscosity coefficients, and one term with the derivative on the entropy. Then if we divide the momentum equation by $\rho$, the second term will become a more singular term than the isentropic case, and this makes the study on the non-isentropic case harder than the isentropic case. 
Duan-Xin-Zhu \cites{DXZ1} 
 investigate the case with zero heat conductivity ($\kappa=0$), and establish the local-in-time existence of classical solutions in $\mathbb{R}^3$, where the solution has conserved total mass, momentum and energy, moreover, $S$ maintains uniform upper and lower bounds throughout the spatial domain.  Recently,
 Duan-Xin-Zhu \cites{DXZ2} extend the result to the case with positive heat conductivity ($\kappa>0$). 
 However, due to the difference of interpolating estimates and  singular-weighted estimates in $\mathbb{R}^3$ and $\mathbb{R}^2$, the theory in Duan-Xin-Zhu \cites{DXZ2} for $\mathbb{R}^3$ can not be directly apply to the $\mathbb{R}^2$ cases.
This paper aims at address the local-in-time existence of classical solutions to the Cauchy problem of \ef{1} with degenerate viscosity and far-field vacuum in $\mathbb{R}^2$,
where 
\begin{equation}\label{4}
	\mu(\theta)=\alpha\theta^\nu,\quad \lambda(\theta)=\beta\theta^\nu,\quad \kappa=\hbar \theta^{\nu}
\end{equation}
with constants $\alpha,\beta,\hbar, \nu$ satisfying
\begin{equation}\label{5}
	\alpha>0,\quad  \alpha+\beta\geq 0,\quad \hbar>0, \quad \text{and} \quad 0<\delta=(\gamma-1)\nu <1.
\end{equation}
Then  according to \eqref{2} and \eqref{4}, one can rewrite    \eqref{1} to the system of $(\rho,u,S)$ as
\begin{equation}\label{8}\left\{\begin{aligned}
		\displaystyle
		&\rho_t+\text{div}(\rho u)=0,\\[1pt]
		\displaystyle
		&(\rho u)_t+\text{div}(\rho u\otimes u)+\nabla P
		=A^\nu R^{-\nu}\text{div}\big(\rho^{\delta}e^{\frac{S}{c_v}\nu}Q(u)\big),\\[1pt]
		\displaystyle
		&P \big(S_t+u\cdot \nabla S\big)- \digamma \rho^{\delta+\gamma-1} e^{\frac{S}{c_v}\nu}\Delta e^{\frac{S}{c_v}}\\
		&\quad=A^{\nu}R^{1-\nu}\rho^{\delta}e^{\frac{S}{c_v}\nu}H(u)+\digamma \rho^{\delta}e^{\frac{S}{c_v}(\nu+1)} \Delta \rho^{\gamma-1}+\Lambda(\rho, S),
	\end{aligned}\right.\end{equation}
where $\digamma=R\hbar \big(\frac{A}{R}\big)^{\nu+1}$, and 
\begin{equation}\label{QH}
	\begin{split}
		Q(u)=&\alpha\big(\nabla u+(\nabla u)^\top\big)+\beta\text{div}u\mathbb{I}_2,\quad H(u)=2\alpha |D(u)|^2+\beta (\text{div}u)^2,\\
		\Lambda(\rho, S)=&2 \digamma \rho^{\delta} e^{\frac{S}{c_v}\nu}\nabla \rho^{\gamma-1}\cdot \nabla e^{\frac{S}{c_v}} +\digamma \nabla( \rho^{\delta} e^{\frac{S}{c_v}\nu})\cdot \nabla( e^{\frac{S}{c_v}} \rho^{\gamma-1} ).
	\end{split}
\end{equation}
We  will establish the local-in-time existence of $(\rho,u,S)$  to the Cauchy problem    $\eqref{8}$  with \eqref{2}, 
\eqref{QH},  and  
\begin{align}
	(\rho,u,S)|_{t=0}=\big(\rho_0(x)>0,u_0(x),S_0(x)\big)\quad  &\text{for} \quad x\in\mathbb{R}^2, \label{6}\\[2pt]
	(\rho,u,S)(t,x)\rightarrow(0,0,\bar{S})\quad  \text{as} \quad |x|\rightarrow\infty\quad &\text{for}\quad t\geq 0,  \label{7}
\end{align}
where $\bar{S}$ is some constant. 

In this paper, we adopt the following simplified notations for the standard homogeneous and inhomogeneous Sobolev spaces(see Galdi \cite{gandi}):
\begin{equation*}\begin{split}
		& |f|_p=\|f\|_{L^p(\mathbb{R}^2)},\quad  \|f\|_s=\|f\|_{H^s(\mathbb{R}^2)}, \quad \|f\|_{m,p}=\|f\|_{W^{m,p}(\mathbb{R}^2)},\\[1pt]   
		& D^{k,r}=\{f\in L^1_{loc}(\mathbb{R}^2): |f|_{D^{k,r}}=|\nabla^kf|_{r}<\infty\},\quad   |f|_{D^{k,r}}=\|f\|_{D^{k,r}(\mathbb{R}^2)},   \\[1pt]
		& D^k=D^{k,2},\quad \int  f   =\int_{\mathbb{R}^2}  f \text{d}x,\quad X([0,T]; Y)=X\big([0,T]; Y(\mathbb{R}^2)\big), \\[1pt]
		&  \|f\|_{X_1 \cap X_2}=\|f\|_{X_1}+\|f\|_{X_2},\quad \|(f,g)\|_X=\|f\|_{X}+\|g\|_{X}.
	\end{split}
\end{equation*}
The \text{Lam$\acute{\text{e}}$} operator $L$ acting on $u$ is defined as:
$$Lu\triangleq-\alpha \Delta u -(\alpha+\beta) \nabla \text{div} u.$$
  
Before stating the main results,  we first introduce a proper class of solutions called regular solutions to the Cauchy problem \eqref{8} with \eqref{2} and \eqref{QH}-\eqref{7}.
\begin{definition}\label{def21}  Let $T>0$ be a finite constant. $(\rho,u,S)$  is called a regular solution to the Cauchy problem \eqref{8} with \eqref{2} and \eqref{QH}-\eqref{7}   in $[0,T]\times\mathbb{R}^2$ if
	\begin{enumerate}
    \item   $(\rho,u,S)$ satisfies this problem in the sense of distributions; 
		\item $\rho>0,\ \  \rho^{\gamma-1}\in C([0,T]; L^p\cap D^1\cap D^3), \ \ \nabla\rho^{\delta-1}\in C([0,T]; L^q\cap D^1 ),\\
		\quad    \rho^{\frac{\delta-1}{2}} \nabla^3\rho^{\delta-1}\in C([0,T];L^2)$;\\
		\item $ u\in C([0,T];H^3)\cap L^2([0,T];H^4),\quad  \rho^{\delta-1}\nabla u \in C([0,T];L^2),$\\
		\quad $    \rho^{\frac{3(\delta-1)}{2}}\nabla^2 u\in  L^\infty([0,T];H^1),\quad \rho^{\delta-1}\nabla^4 u \in L^2([0,T];L^2)  $;\\
		\item $S-\bar{S}\in C([0,T];H^3),\quad  e^{\frac{S}{c_v}}-e^{\frac{\bar{S}}{c_v}}\in C([0,T];H^3)$,
	\end{enumerate}
	for some constants $p,q\in (2,\infty)$.
\end{definition}

Now, the main result in this paper can be stated as follows.
\begin{theorem} \label{th21} Assume that  
	\begin{equation}\label{can1}
		\gamma>1,\quad 0<\delta=(\gamma-1)\nu <1, \quad \gamma+\delta \leq 2, \quad\alpha>0,\quad \alpha+\beta\geq 0.
	\end{equation}
If $(\rho_0,u_0,S_0)$ satisfy
	\begin{equation}\label{2.7}\begin{aligned}
			&\rho_0>0,\quad  \rho_0^{\gamma-1}\in  L^p\cap D^1\cap D^3,\quad \nabla\rho_0^{\delta-1}\in L^q\cap D^{1},\quad \rho_0^\frac{\delta-1}{2}\nabla^3\rho_0^{\delta-1}\in L^2,\\
			& \nabla \rho_0^{\frac{(\delta-1)}{2}}\in L^4,  
   \quad  \nabla \rho_0^{\frac{2\gamma+\delta-3}{2}}\in L^2,\quad u_0\in H^3,\quad S_0-\bar{S}\in  H^3,
		\end{aligned}
	\end{equation}
	for some  $p, q\in (2,\infty)$	and the   compatibility conditions:
	\begin{equation}\label{2.8}\begin{aligned}
			u_0=\rho_0^{\frac{1-\delta}{2}}g_1, \ \nabla u_0=\rho_0^{1-\delta}g_2,\  Lu_0=\rho_0^{\frac{3(1-\delta)}{2}}g_3, \ \nabla (\rho^{\frac{3(1-\delta)}{2}}Lu_0)=\rho_0^{\frac{1-\delta}{2}} g_4,\\
			\nabla e^{\frac{S_0}{c_v}}_0=\rho_0^{\frac{1-\delta}{2}}g_5,\ \  \Delta e^{\frac{S_0}{c_v}}_0=\rho_0^{1-\delta} g_6, \ \  \nabla (\rho^{\delta-1}_0 \Delta e_0^{\frac{S_0}{c_v}})=\rho_0^{\frac{1-\delta}{2}} g_7,\\
	\end{aligned}\end{equation}
	for  $g_i\in L^2(i=1,\cdots,7)$, then the Cauchy problem \eqref{8} with \eqref{2} and \eqref{QH}-\eqref{7} admits  a unique strong solution $(\rho,u,S)$ in $[0,T_*]\times \mathbb{R}^2$  for some  time $T_*>0$ with 
\begin{equation}\label{2.9}
\begin{aligned}
& \nabla \rho^{\delta-1} \in C\left(\left[0, T_*\right] ; L^q\cap D^1 \cap D^2\right), \quad \rho_t^{\gamma-1} \in C\left(\left[0, T_*\right] ; H^2\right),\\
&\rho_{t t}^{\gamma-1} \in C\left(\left[0, T_*\right] ; L^2\right)\cap L^2(\left[0, T_*\right] ; D^1),\quad \nabla \rho_t^{\delta-1} \in C\left(\left[0, T_*\right] ; H^1\right),\\
&\nabla \rho_{t t}^{\delta-1} \in L^2\left(\left[0, T_*\right] ; L^2\right), \quad
  u_t \in C\left(\left[0, T_*\right] ; H^1\right)\cap L^2\left(\left[0, T_*\right] ; D^2\right), \\ &\rho^{\delta-1} \nabla u_t\in L^\infty([0,T_*];L^2), \quad (\rho^{\delta-1} \nabla^2 u_t,u_{tt})\in L^2([0,T_*];L^2),\\
  &t^{\frac{1}{2}} u_t \in L^{\infty}\left(\left[0, T_*\right] ; D^2\right) \cap L^2\left(\left[0, T_*\right] ; D^3\right), \quad t^{\frac{1}{2}} \rho^{\delta-1} \nabla^2 u_t\in L^{\infty}\left(\left[0, T_*\right] ; L^2\right)  \\&( t^{\frac{1}{2}} \rho^{\delta-1} \nabla^3 u_t,  t^{\frac{1}{2}} \rho^{\frac{\delta-1}{2}} \nabla u_{t t}) \in L^2\left(\left[0, T_*\right] ; L^2\right), \\
& t^{\frac{1}{2}} u_{t t} \in L^{\infty}\left(\left[0, T_*\right] ; L^2\right) \cap L^2\left(\left[0, T_*\right] ; D^1\right), \\
& ( S-\bar{S},\rho^{\frac{\delta-1}{2}} \nabla S,\rho^{\delta-1} \nabla^2 S, \rho^{\delta-1} \nabla^3 S) \in L^{\infty}\left(\left[0, T_*\right] ; L^2\right), \\
& S_t \in C\left(\left[0, T_*\right] ; H^1\right) \cap L^2\left(\left[0, T_*\right] ; D^2\right), \quad \rho^{\frac{\delta-1}{2}} S_t \in L^{\infty}\left(\left[0, T_*\right] ; D^1\right), \\
& \rho^{\frac{\delta-1}{2}} S_t \in L^2\left(\left[0, T_*\right] ; D^2\right), \quad S_{t t} \in L^2\left(\left[0, T_*\right] ; L^2\right)\\&
(t^{\frac{1}{2}} \rho^{\frac{\delta-1}{2}} \nabla^2 S_t, t^{\frac{1}{2}} \rho^{\frac{1-\delta}{4}} S_{t t}) \in L^{\infty}\left(\left[0, T_*\right] ; L^2\right),
\; t^{\frac{1}{2}} \rho^{\frac{\delta-1}{4}} \nabla S_{t t}\in L^2\left(\left[0, T_*\right] ; L^2\right).
\end{aligned}
\end{equation}
 %
%
%
%
%
	%
	Moreover,    $(\rho,u,S)$ is a classical solution to  the  problem \eqref{8} with \eqref{2} and \eqref{QH}-\eqref{7}  and  $\left(\rho, u, \theta=A R^{-1} \rho^{\gamma-1} e^{S / c_v}\right)$ is a classical solution to the problem  \eqref{1}-\eqref{3} with \eqref{4} and \eqref{6}-\eqref{7} in $\left(0, T_*\right] \times \mathbb{R}^2$.

\end{theorem}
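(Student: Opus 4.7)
The plan is to construct the solution via a regularization-linearization-compactness scheme and then pass to the vacuum limit. The starting point is to rewrite $\eqref{8}$ so that the degeneracy becomes structurally transparent: dividing the momentum equation by $\rho$ produces
$$
u_t+u\cdot\nabla u+\rho^{-1}\nabla P
= -A^\nu R^{-\nu}\rho^{\delta-1}e^{S\nu/c_v}Lu
+ A^\nu R^{-\nu}\rho^{-1}\nabla\!\bigl(\rho^{\delta}e^{S\nu/c_v}\bigr)\cdot Q(u),
$$
a uniformly parabolic equation whose effective coefficient $\rho^{\delta-1}$ blows up at the far-field vacuum; an analogous manipulation of the entropy equation exposes the singular quantities $\rho^{\delta-1}\Delta e^{S/c_v}$ and $\rho^{\delta-1}\nabla\rho^{\gamma-1}\cdot\nabla e^{S/c_v}$. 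Accordingly, the genuine unknowns are $(\rho^{\gamma-1},\rho^{\delta-1},u,S-\bar S)$, and the compatibility conditions $\eqref{2.8}$ are tailored precisely so that $u_t,u_{tt},S_t,S_{tt}$ acquire initial values in the relevant singular-weighted $L^2$ spaces.

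To produce solutions, I would first regularize by replacing $\rho_0$ by $\rho_0+\eta$ with $\eta>0$, so that the viscosity and heat conductivity become uniformly positive. For the regularized problem, an approximate solution is built by iteration: freeze $(w,\phi)$ in the coefficients and source, solve the linear transport equation for $\rho$ along the characteristics of $w$, the linear parabolic system for $u$ by standard elliptic-parabolic theory, and the linear parabolic equation for $e^{S/c_v}$ by energy methods; closing the loop by a Banach fixed point in a suitable energy-type space on a short time interval produces a classical solution of the non-degenerate problem.

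The core of the argument is to derive a priori estimates that are \textbf{uniform in} $\eta$. These are produced by differentiating the equations in $t$ and $x$, testing against density-weighted multipliers such as $\rho^{2(\delta-1)}Lu$ and $\rho^{\delta-1}\Delta e^{S/c_v}$, and handling commutator terms via the transport equation $(\rho^{\delta-1})_t+u\cdot\nabla\rho^{\delta-1}+(1-\delta)\rho^{\delta-1}\,\text{div}\,u=0$. The hierarchy to close is: a basic energy estimate, $H^3$ bounds for $u$ and $S-\bar S$ via elliptic regularity applied to $Lu$ and $-\Delta e^{S/c_v}$ with density-weighted right-hand sides, then $u_t\in H^1$ and $S_t\in H^1$, and finally the time-weighted bounds $t^{1/2}u_t\in L^\infty D^2\cap L^2 D^3$, $t^{1/2}u_{tt}\in L^\infty L^2\cap L^2 D^1$ and their entropic analogues, matching exactly the list $\eqref{2.9}$. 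Propagation of $\nabla\rho^{\delta-1}\in L^q\cap D^1\cap D^2$ is then obtained by differentiating its transport equation and applying Gronwall's inequality.

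The main obstacle is the two-dimensional embedding defect: $H^1\not\hookrightarrow L^\infty$, so the 3D closure of Duan-Xin-Zhu does not transfer directly. Every term of the form $\nabla\rho^{\delta-1}\cdot Q(u)$ or $\nabla\rho^{\delta-1}\cdot\nabla e^{S/c_v}$ demands an $L^q\times L^{q'}$ pairing in place of the 3D pairing $L^3\times L^6$, which is precisely why the hypothesis $\nabla\rho_0^{\delta-1}\in L^q$ with $q>2$ is imposed and why this $L^q$-integrability must be propagated throughout the scheme; the auxiliary initial bounds on $\nabla\rho_0^{(\delta-1)/2}$ and $\nabla\rho_0^{(2\gamma+\delta-3)/2}$ are precisely what allow the critical interpolations to close. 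Residual logarithmic losses are absorbed by a Brezis-Gallouet or refined Gagliardo-Nirenberg interpolation, combined with the time weights $t^{1/2}$ to kill singularities at $t=0$. Once these uniform estimates are in hand, standard compactness lets $\eta\to 0$, weak lower semicontinuity of the weighted norms yields $\eqref{2.9}$, and uniqueness follows from an energy estimate on the difference in a weighted $L^2$ framework that exploits the positivity of the principal operator $\rho^{\delta-1}L$ and the transport structure of the continuity equation.
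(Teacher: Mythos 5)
Your overall roadmap (reformulate in the singular variables, regularize by adding $\eta>0$ to the density, linearize and solve by iteration, derive $\eta$-uniform weighted energy estimates, pass to the limit, prove uniqueness by a weighted difference estimate) matches the paper's architecture. But the proposal leaves a genuine gap precisely at the hard step you flag: you say the 2D embedding defect is absorbed by ``$L^q\times L^{q'}$ pairings, Brezis--Gallouet or refined Gagliardo--Nirenberg, plus time weights,'' without exhibiting a scheme that actually closes. That is not a detail: it is the content of the theorem, and the obvious transplant of the 3D argument \emph{fails} here. The paper's own strategy section computes the failure explicitly: in $\mathbb{R}^3$ one has $|\phi^\iota f|_6\le C(|\nabla\phi^\iota|_\infty|f|_2+|\phi^\iota\nabla f|_2)$, but in $\mathbb{R}^2$ only $|\phi^{\iota/2}f|_4\le C(|\nabla\phi^\iota|_\infty^{1/2}|f|_2+|\phi^\iota\nabla f|_2^{1/2}|f|_2^{1/2})$, so $f$ absorbs only half the singular weight per derivative, and the trouble terms $\int n g^{3/2}H(v)_t l_{tt}$ and $\int n g^{3/2}H(v)_{tt} l_{tt}$ in the entropy hierarchy do not close under the 3D weighting. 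Your proposed multipliers $\rho^{2(\delta-1)}Lu$ and $\rho^{\delta-1}\Delta e^{S/c_v}$ are not the ones that make the hierarchy close; the paper needs the asymmetric ladder $h^{1/2}u$, $hu_t$, $l^{-\nu}hu_{tt}$, together with $w^{-\nu/2}l_t$, $h^{1/2}\nabla l_t$, $w^{-\nu/2}h^{-1/4}l_{tt}$, where each power of $h=\phi^{2\iota}$ and $l$ is tuned to the 2D Gagliardo--Nirenberg exponents.

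More concretely, there is one specific idea the proposal does not contain and cannot substitute for by interpolation: in the paper's linearization the elliptic coefficient on $Lu^\epsilon$ is the \emph{unknown} $h^\epsilon$ while lower-order coupling terms carry the \emph{frozen} coefficient $g$, and closing the scheme requires the equivalence $\tilde C^{-1}\le gh^{-1}\le \tilde C$ on a uniform time interval. In 3D that equivalence can be extracted after the singular-weighted $\phi$ estimates are in hand; in 2D those $\phi$ estimates are themselves downstream of the equivalence, so a different mechanism is required. The paper supplies it via the transport identity
\begin{equation*}
(h^{-1}\psi)_t + v\cdot\nabla(h^{-1}\psi) + (\nabla v)^\top h^{-1}\psi + a\delta\,\nabla\mathrm{div}\,v = 0,
\end{equation*}
which gives an $L^\infty$ bound on $h^{-1}\nabla h$ along characteristics independent of any $\phi$ estimate, and from this the $g$--$h$ equivalence follows by an ODE argument on $y=gh^{-1}$ (Lemma 5.3). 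It also introduces $\phi^{2\iota}\nabla u\in L^2_tL^2_x$ via the singular parabolic structure of the $u$ equation specifically to tame $a_5 l^\nu n\phi^{4\iota}H(u)$ in the entropy equation. Without identifying these two devices, the claim that the estimates ``close'' in 2D is unsubstantiated, and a reader following your proposal as written would get stuck at exactly the step the paper had to invent new machinery to pass.
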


\begin{remark} $\eqref{2.7}$-$\eqref{2.8}$ identifies a class of admissible initial data that provide unique solvability to  the problem \eqref{8} with \eqref{2} and \eqref{QH}-\eqref{7}. Such initial  data include
	\begin{equation*}
		\rho_0(x)=\frac{1}{(1+|x|^2)^{\varkappa}},\quad u_0(x)\in C^3_0(\mathbb{R}^2),\quad S_0=\bar{S}+f(x),
	\end{equation*}
	for some $f(x)\in H^3$, where $\fr{3+\delta}{2}<\gamma+\delta\leq 2$, 
	\begin{equation*}
		\hspace{2mm}\frac{1}{p(\gamma-1)}<\varkappa<\frac{1-2/q}{2(1-\delta)}.
	\end{equation*}
\end{remark}

\begin{remark}\label{canshu}
	We make some comments on \ef{can1} for the physical parameters: 
	
	First,  $\gamma+\delta\le 2$ is necessary to obtain the estimates of the density-related variables like $n=\rho^{2-\gamma-\delta}$. Moreover, the condition $\gamma+\delta\le 2$ allows more physical models compared to the condition $4\gamma+3\delta\le 7$ in $\mathbb{R}^3$.
	
	Second, $0<\delta=\nu(\gamma-1)<1$ includes many physical fluids, for example,  taking $p=6$, then the example of the initial data includes the air ($\gamma=1.4,\ \nu=0.72$ or $\nu=0.5$ for high temperature), the carbon dioxide ($\gamma=\fr43,\ \nu=0.95$), the hydrogen ($\gamma=1.4,\ \nu=0.69$) etc. (See Chapter 2, Section 2.1, pages 128-129 of \cite{tlt}  and Chapter 19, pages 638-639 of \cite{wuli}).  
\end{remark}

\begin{remark}\label{rxiangrong}
	We remark that \eqref{2.8} are required in proving Theorem \ref{th21}, especially in deducing the singular-weighted estimates of the velocity and the entropy.
	In particular,
	in order to show that
	\[
	\rho^{\frac{\delta-1}{2}}u,\  \rho^{\delta-1} \nabla u, \ \rho^{\frac{\delta-1}{2}}u_t, \ \rho^{\delta-1} \nabla u_t ,\ \rho^{\frac{\delta-1}{2}}\nabla e^{\frac{S}{c_v}} , \ (e^{\frac{S}{c_v}})_t,\ \rho^{\frac{\delta-1}{2}}\nabla ( e^{\frac{S}{c_v}} )_t
	\]
	and then
	\[
	\rho^{\frac{3(1-\delta)}{2}}\nabla^2u, \ \ \rho^{\frac{3(1-\delta)}{2}}\nabla^3u,\ \  \rho^{\delta-1}\nabla^2 e^{\frac{S}{c_v}}, \ \ \rho^{\delta-1}\nabla^3 e^{\frac{S}{c_v}}
	\]
	are all belong to $ L^\infty([0,T];L^2)$ for some constant $T>0$, we need the compatibility conditions \eqref{2.8}.
\end{remark}

In rest of this paper, we will introduce some basic facts in \S 2; and then the reformulation and main strategy for the Cauchy problem \eqref{8} with \eqref{2} and \eqref{QH}-\eqref{7} will be given in \S 3; the linearization of the reformulated problem \eqref{2.3} is stated in \S 4; the uniform a priori estimates of the solutions to the linearized Cauchy problem \eqref{ln} is established in \S 5; at last, we will prove the local-in-time existence of the Cauchy problem \eqref{8} with \eqref{2} and \eqref{QH}-\eqref{7} in \S 6.

\section{Preliminaries }


Now, we list some basic facts that will be frequently used in proving the local-in-time existence.
			The first one is the Gagliardo-Nirenberg inequality in $\mathbb{R}^2$.
			\begin{lemma}\cite{oar}\label{lem2as}\
				Let $f\in L^{q_1}\cap D^{1,r}(\mathbb{R}^2)$ for $1 \leq q_1,  r \leq \infty$.  Suppose   that  real numbers $\xi$ and $q_2$,  and  natural numbers $m$, $i$  and $j$ satisfy
				$$\frac{1}{{q_2}} = \frac{j}{d} + \left( \frac{1}{r} - \frac{i}{2} \right) \xi + \frac{1 - \xi}{q_1} \quad \text{and} \quad 
				\frac{j}{i} \leq \xi \leq 1.
				$$
				Then $u\in D^{j,{q_2}}(\mathbb{R}^2)$, and  there exists a constant $C$ depending only on $i$, $j$, $q_1$, $r$ and $\xi$ such that
				\begin{equation}\label{33}
					\begin{split}
						\| \nabla^{j} f \|_{L^{{q_2}}} \leq C \| \nabla^{i} f \|_{L^{r}}^{\xi} \| f \|_{L^{q_1}}^{1 - \xi}.
					\end{split}
				\end{equation}
				Moreover, if $j = 0$, $ir < 2$ and $q_1 = \infty$, then it is necessary to make the additional assumption that either $f$ tends to zero at infinity or that $f$ lies in $L^s(\mathbb{R}^2)$ for some finite $s > 0$;
				if $1 < r < \infty$ and $i -j -2/r$ is a non-negative integer, then it is necessary to assume also that $\xi \neq 1$.
				
				More specifically, in $\mathbb{R}^2$, it holds that
				\begin{equation*}
					\begin{split}
						|f|_3\le &C|\nabla f|_2^{\frac13} |f|_2^{\frac23},\quad |f|_4\le C |\nabla f|_2^{\frac12} |f|_2^{\frac12},\quad |f|_6\le C |\nabla f|_2^{\frac23} |f|_2^{\frac13},\\
						|f|_{\infty}\le& C|\nabla f|_3^{\frac34} |f|_2^{\frac14}\le C |\nabla^2 f|_2^{\frac14}  |\nabla f|_2^{\frac12} |f|_2^{\frac14},\quad   |f|_{\infty}\le C|\nabla f|_3^{\frac23} |f|_3^{\frac13},\\
						|f|_\infty\le& C|\nabla^2 f|_2^{\frac13} |f|_4^{\frac23}, \quad  |f|_2 \le  C|\nabla f|_1.
					\end{split}
				\end{equation*}
			\end{lemma}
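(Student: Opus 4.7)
The plan is to establish the Gagliardo-Nirenberg inequality via scaling, a two-dimensional base case, and an interpolation-plus-iteration argument; the specific $\mathbb{R}^2$ inequalities listed at the end of the lemma are then particular instances. The first step is a sanity check: dilating $f(x)$ to $f(\lambda x)$ rescales the left and right sides of \eqref{33} by explicit powers of $\lambda$, and equating those powers produces exactly the arithmetic identity $\tfrac{1}{q_2}=\tfrac{j}{2}+\big(\tfrac{1}{r}-\tfrac{i}{2}\big)\xi+\tfrac{1-\xi}{q_1}$ in $d=2$. This both confirms the hypothesis is forced and fixes the exponents that must appear in the proof.

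Next I would prove the sharp two-dimensional base case $|f|_2\le C|\nabla f|_1$, which is the last inequality listed in the lemma. For Schwartz $f$ one writes
$$|f(x_1,x_2)|^2\le\Big(\int_{\mathbb{R}}|\partial_1 f(t,x_2)|\,dt\Big)\Big(\int_{\mathbb{R}}|\partial_2 f(x_1,s)|\,ds\Big),$$
then integrates in $(x_1,x_2)$ and applies Fubini to each factor; density extends the bound to $D^{1,1}(\mathbb{R}^2)$. This is the endpoint Sobolev embedding in two dimensions and serves as the single hard analytic input.

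Then I would upgrade to the $j=0$, $i=1$ case $\|f\|_{L^{q_2}}\le C\|\nabla f\|_{L^r}^{\xi}\|f\|_{L^{q_1}}^{1-\xi}$ by substituting $|f|^\sigma$ into the base case and applying H\"older to $\nabla|f|^\sigma=\sigma|f|^{\sigma-1}\nabla f$, choosing $\sigma>1$ so the resulting factors match $\|\nabla f\|_{L^r}$ and $\|f\|_{L^{q_1}}$; the scaling identity pins down $\sigma$ uniquely. For $j\ge 1$ or $i\ge 2$ I would iterate: apply the $j=0$, $i=1$ result to $\nabla^{j}f$ controlled by the pair $(\nabla^{j+1}f,\nabla^{j-1}f)$, and then combine successive applications using the log-convexity of $k\mapsto\log\|\nabla^k f\|_{L^{p_k}}$ (a Landau-Kolmogorov-type interpolation) to eliminate the intermediate derivatives in favor of $\|\nabla^i f\|_{L^r}^{\xi}\|f\|_{L^{q_1}}^{1-\xi}$ with the correct powers. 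The explicit inequalities at the end (for example $|f|_4\le C|\nabla f|_2^{1/2}|f|_2^{1/2}$) come by substituting specific tuples $(q_1,q_2,r,i,j,\xi)$ into this master estimate.

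The main obstacle is the two degenerate configurations flagged in the statement. When $j=0$, $ir<2$, and $q_1=\infty$, the inequality is false for nonzero constants, so the additional decay hypothesis is not cosmetic but is genuinely needed to eliminate that counterexample in the base-case step. When $1<r<\infty$ and $i-j-\tfrac{2}{r}$ is a nonnegative integer, the case $\xi=1$ is a borderline Sobolev embedding requiring a logarithmic correction (a well-known obstruction at $W^{k,r}\hookrightarrow L^\infty$ type endpoints), and I would need to verify that the iteration above produces only $\xi<1$, justifying the extra assumption $\xi\ne 1$. Apart from these two degenerate boundary regimes, everything reduces to scaling plus one endpoint Sobolev estimate plus H\"older.
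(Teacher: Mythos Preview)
The paper does not prove this lemma at all: it is stated with a citation to \cite{oar} (Lady\v{z}enskaja--Solonnikov--Ural'ceva) and used as a black box, so there is no ``paper's own proof'' to compare against. Your outline is the standard Nirenberg argument (endpoint Sobolev $W^{1,1}\hookrightarrow L^2$ in $\mathbb{R}^2$ via one-dimensional fundamental-theorem-of-calculus bounds and Fubini, then substitution of $|f|^\sigma$ with H\"older, then iteration on the derivative order), and it is essentially correct as a sketch; in particular your identification of the two degenerate regimes matches the caveats in the statement. Since the paper only quotes the result, your proposal simply supplies what the paper omits rather than taking a different route.
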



			The second one gives some compactness results. 
			\begin{lemma}\cite{jm}\label{aubin} Let $X_0\subset X\subset X_1$ be three Banach spaces.  Suppose that $X_0$ is compactly embedded in $X$ and $X$ is continuously embedded in $X_1$. Then it holds that				
				\begin{enumerate}
					\item[(1)] If $f$ is bounded in $L^r([0,T];X_0)$ for $1\leq r < +\infty$, and $\frac{\partial f}{\partial t}$ is bounded in $L^1([0,T];X_1)$, then $f$ is relatively compact in $L^r([0,T];X)$;
					\item[(2)] If $f$ is bounded in $L^\infty([0,T];X_0)$  and $\frac{\partial f}{\partial t}$ is bounded in $L^r([0,T];X_1)$ for $r>1$, then $f$ is relatively compact in $C([0,T];X)$.
				\end{enumerate}
			\end{lemma}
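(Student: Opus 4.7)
The proof is the classical Aubin-Lions-Simon compactness argument, and the plan proceeds in three steps: an interpolation lemma followed by the compactness criteria appropriate to each of parts (1) and (2).

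\textbf{Step 1: Ehrling's inequality.} I first show that for every $\eta > 0$ there exists $C_\eta > 0$ such that
\[
\|v\|_X \leq \eta \|v\|_{X_0} + C_\eta \|v\|_{X_1}, \qquad \forall\, v \in X_0.
\]
This is proved by contradiction: negating the inequality produces a normalized sequence $v_n \in X_0$ with $\|v_n\|_X = 1$, $\|v_n\|_{X_0}$ bounded, and $\|v_n\|_{X_1}\to 0$. By the compactness of $X_0 \hookrightarrow X$, a subsequence converges in $X$ to some $v$ with $\|v\|_X=1$; but $\|v_n\|_{X_1}\to 0$ combined with the continuous embedding $X \hookrightarrow X_1$ forces $v = 0$, a contradiction. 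This is the only place the compact and continuous embeddings are both used in a structural way.

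\textbf{Step 2 (Part 1): Frechet-Kolmogorov in $L^r([0,T];X)$.} Let $\{f_n\}$ be bounded by $M_0$ in $L^r([0,T];X_0)$ with $\{\partial_t f_n\}$ bounded by $M_1$ in $L^1([0,T];X_1)$. The vector-valued Kolmogorov-Riesz-Frechet criterion reduces the claim to two facts: (a) uniform boundedness in $L^r([0,T];X)$, which is immediate from $X_0 \hookrightarrow X$; and (b) uniform smallness of time translates,
\[
\sup_n \|f_n(\cdot+h)-f_n(\cdot)\|_{L^r([0,T-h];X)} \to 0 \ \text{as}\ h\to 0^+.
\]
For (b) I apply Ehrling pointwise in $t$ and integrate:
\[
\|f_n(\cdot+h)-f_n(\cdot)\|_{L^r(X)} \leq 2\eta M_0 + C_\eta \|f_n(\cdot+h)-f_n(\cdot)\|_{L^r([0,T-h];X_1)}.
\]
The second term I control by the fundamental theorem of calculus, $f_n(t+h)-f_n(t)=\int_t^{t+h}\partial_s f_n(s)\,ds$: Fubini gives the clean $L^1$ bound $\|f_n(\cdot+h)-f_n(\cdot)\|_{L^1(X_1)}\le hM_1$. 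I then combine this with the uniform $L^r([0,T];X_1)$ bound (coming from $X_0\hookrightarrow X_1$ applied to $f_n$) via the interpolation $\|g\|_{L^r}^r \le \|g\|_{L^\infty}^{r-1}\|g\|_{L^1}$ after a standard mollification-in-time regularization, to upgrade $L^1$-in-time smallness to $L^r$-in-time smallness with a rate $o(1)$ as $h\to 0$. Sending $h\to 0$ and then $\eta\to 0$ establishes (b).

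\textbf{Step 3 (Part 2): Arzelà-Ascoli in $C([0,T];X)$.} Here $\partial_t f_n$ is bounded in $L^r([0,T];X_1)$ with $r>1$, so Hölder yields
\[
\|f_n(t+h)-f_n(t)\|_{X_1} \le h^{1-1/r}\|\partial_t f_n\|_{L^r(X_1)},
\]
i.e., $\{f_n\}$ is uniformly Hölder continuous into $X_1$. Ehrling combined with the $L^\infty([0,T];X_0)$ bound then gives
\[
\|f_n(t+h)-f_n(t)\|_X \le 2\eta \|f_n\|_{L^\infty(X_0)} + C_\eta\, h^{1-1/r}\|\partial_t f_n\|_{L^r(X_1)},
\]
whence uniform equicontinuity in $C([0,T];X)$. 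At each $t$, $\{f_n(t)\}$ is bounded in $X_0$ and thus precompact in $X$ by the compact embedding. Arzelà-Ascoli (vector-valued form) then yields relative compactness in $C([0,T];X)$.

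\textbf{Main obstacle.} The delicate point is the upgrade from $L^1$-in-time smallness of $\|f_n(\cdot+h)-f_n(\cdot)\|_{X_1}$ to $L^r$-in-time smallness in part (1), without losing uniformity in $n$, since only $\partial_t f_n \in L^1([0,T];X_1)$ is assumed. The interpolation-plus-mollification argument (equivalently Simon's averaging estimate) handles this, and it is what makes the Kolmogorov-Riesz criterion applicable. The remaining steps are routine once Ehrling is in hand.
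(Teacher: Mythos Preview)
The paper does not give its own proof of this lemma; it is quoted as a classical result from Simon \cite{jm} and used as a black box. Your sketch follows the standard Aubin--Lions--Simon route (Ehrling, then Kolmogorov--Riesz for part (1) and Arzel\`a--Ascoli for part (2)), which is indeed how the cited reference proceeds.

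One step in your Part (1) is underexplained and, as written, does not close. You correctly get $\|f_n(\cdot+h)-f_n(\cdot)\|_{L^1([0,T-h];X_1)}\le hM_1$ and then want to upgrade to $L^r$-smallness via the interpolation $\|g\|_{L^r}^r\le\|g\|_{L^\infty}^{r-1}\|g\|_{L^1}$, but you only cite the uniform $L^r([0,T];X_1)$ bound on $f_n$, and the phrase ``mollification-in-time regularization'' does not supply the needed $L^\infty$ control. The missing observation is that the family is in fact uniformly bounded in $L^\infty([0,T];X_1)$: since $f_n\in L^r([0,T];X_1)$ and $\partial_t f_n\in L^1([0,T];X_1)$, one has $f_n\in W^{1,1}([0,T];X_1)\hookrightarrow C([0,T];X_1)$, and choosing $t_0$ with $\|f_n(t_0)\|_{X_1}\le T^{-1/r}\|f_n\|_{L^r(X_1)}$ and integrating the derivative gives $\|f_n\|_{L^\infty(X_1)}\le C(M_0+M_1)$. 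With this in hand your interpolation yields $\|f_n(\cdot+h)-f_n(\cdot)\|_{L^r(X_1)}\le Ch^{1/r}$ uniformly, and the argument goes through. (Simon's original proof sidesteps Kolmogorov--Riesz by using time-averages $M_af$ and Arzel\`a--Ascoli for those, but the same $L^\infty(X_1)$ bound is what drives the uniform approximation $M_af\to f$ in $L^r(X)$ there as well.) A minor additional imprecision: condition (a) in the vector-valued Kolmogorov--Riesz criterion is not mere boundedness in $L^r([0,T];X)$ but relative compactness in $X$ of the sets $\{\int_{t_1}^{t_2}f\,:\,f\in F\}$; this follows from the $L^r(X_0)$ bound together with the compact embedding $X_0\hookrightarrow X$, which you should state explicitly.
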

			
			
			The third is used to obtain the time-weighted estimates of $u$.
			\begin{lemma}\cite{bjr}\label{bjr}
				If $f(t,x)\in L^2([0,T]; L^2)$, then there exists a sequence $s_k$ such that
				$$
				s_k\rightarrow 0 \quad \text{and}\quad s_k |f(s_k,x)|^2_2\rightarrow 0 \quad \text{as} \quad k\rightarrow\infty.
				$$
			\end{lemma}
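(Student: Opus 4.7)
The statement is an elementary real-analysis fact: if $g(t):=|f(t,\cdot)|_2^2\in L^1([0,T])$, then there exists a sequence $s_k\to 0^+$ along which $s_k g(s_k)\to 0$. My plan is a short contradiction argument based on integrability.

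Set $g(t)=|f(t,\cdot)|_2^2$. By Fubini's theorem applied to $|f(t,x)|^2$ on $[0,T]\times\mathbb{R}^2$, $g$ is a nonnegative measurable function on $[0,T]$ with
\begin{equation*}
\int_0^T g(t)\,\dd t \;=\; \|f\|_{L^2([0,T];L^2)}^2 \;<\;\infty,
\end{equation*}
so $g\in L^1([0,T])$ and in particular is finite almost everywhere.

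Suppose, for contradiction, that no sequence $s_k\to 0^+$ exists with $s_kg(s_k)\to 0$. Then there must exist constants $\varepsilon_0>0$ and $\delta_0\in(0,T)$ such that $s\,g(s)\ge \varepsilon_0$ for almost every $s\in(0,\delta_0)$; otherwise, for every $n\in\mathbb{N}$ the set $\{s\in(0,1/n): s\,g(s)<1/n\}$ would have positive measure, and picking a point $s_n$ in each such set would yield a sequence with the required property. Consequently $g(s)\ge \varepsilon_0/s$ for a.e.\ $s\in(0,\delta_0)$, hence
\begin{equation*}
\int_0^{\delta_0} g(s)\,\dd s \;\ge\; \varepsilon_0\int_0^{\delta_0}\frac{\dd s}{s}\;=\;+\infty,
\end{equation*}
which contradicts $g\in L^1([0,T])$. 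This proves the existence of the desired sequence.

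The argument is short and I do not anticipate any serious obstacle; the only point requiring a touch of care is the use of the almost-everywhere defined function $g$, handled by choosing $s_n$ from sets of positive measure as above, so that the values $|f(s_n,\cdot)|_2^2$ are genuinely finite and well-defined along the chosen sequence.
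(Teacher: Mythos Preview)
The paper does not prove this lemma; it is simply quoted from the cited reference \cite{bjr}. Your argument is correct and self-contained: reducing to $g(t)=|f(t,\cdot)|_2^2\in L^1([0,T])$ via Fubini and then showing $\liminf_{s\to 0^+} s\,g(s)=0$ by the integrability contradiction is exactly the right elementary route, and your care in selecting $s_n$ from sets of positive measure handles the a.e.\ issue cleanly.
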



			At last, one has the following regularity theory for  \text{Lam$\acute{\text{e}}$}  problem:
			\begin{equation}\label{elleq}
				-\alpha \Delta u -(\alpha+\beta) \nabla \text{div} u =f, \quad u\to 0\quad \text{as}\quad |x|\to \infty.
			\end{equation}
			
			\begin{lemma}\label{ell}
				\cite{lame} If $u\in D^{1,r} (\mathbb{R}^2)$ with $1<r<\infty$ is a weak solution to \eqref{elleq}, then 
				\[
				|u|_{D^{k+2,r}}\le C |f|_{D^{k,r}},
				\]
				where $C$ depends only on $\alpha, \beta$ and $r$.
			\end{lemma}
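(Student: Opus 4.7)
The plan is to reduce the Lam\'e system \eqref{elleq} to two scalar Poisson equations by applying the divergence and the scalar curl. Using $\operatorname{div}\nabla\operatorname{div} u = \Delta\operatorname{div} u$, I obtain
\[
-(2\alpha+\beta)\,\Delta\operatorname{div} u = \operatorname{div} f,
\]
and using that the 2D scalar curl $\operatorname{curl} u := \partial_1 u^{(2)} - \partial_2 u^{(1)}$ annihilates gradients,
\[
-\alpha\,\Delta(\operatorname{curl} u) = \operatorname{curl} f.
\]
Under the hypotheses $\alpha>0$ and $\alpha+\beta\ge 0$ both leading constants are strictly positive (since $2\alpha+\beta\ge\alpha>0$), so these are genuine Poisson problems.

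For the base case $k=0$, I would apply the standard Calder\'on--Zygmund $L^r$ theory for $-\Delta$ on $\mathbb{R}^2$ (valid for $1<r<\infty$) to each scalar equation, obtaining
\[
|\nabla\operatorname{div} u|_r + |\nabla\operatorname{curl} u|_r \le C|f|_r.
\]
Every second-order partial derivative $\partial_i\partial_j u^{(k)}$ can be written as a linear combination of components of $\nabla\operatorname{div} u$ and $\nabla\operatorname{curl} u$ after applying bounded Riesz transforms on $L^r$ (valid for $1<r<\infty$). This yields $|u|_{D^{2,r}}\le C|f|_r$.

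For general $k\ge 1$, I would proceed by induction. Differentiating \eqref{elleq} by $\partial^\gamma$ with $|\gamma|=1$, the function $\partial^\gamma u$ again solves a Lam\'e system with $\partial^\gamma f$ as source (the coefficients are constants, so they pass through the derivative), and the base case applied to this new equation upgrades the regularity by one order. Iterating this $k$ times delivers $|u|_{D^{k+2,r}}\le C|f|_{D^{k,r}}$.

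The main obstacle is justifying uniqueness and well-definedness in the unbounded domain $\mathbb{R}^2$, where the logarithmic behavior of the Newtonian potential prevents direct inversion of $-\Delta$ on $L^r$ scalars. One cannot recover $\operatorname{div} u$ and $\operatorname{curl} u$ from $\operatorname{div} f$ and $\operatorname{curl} f$ by directly inverting $\Delta$ in $L^r$; instead, the estimates must be read at the level of first derivatives, where Calder\'on--Zygmund applies. The decay built into the assumption $u\in D^{1,r}$ together with the far-field condition $u\to 0$ rules out nontrivial harmonic contributions via a Liouville-type argument, which is the reason the statement is formulated in the homogeneous Sobolev space $D^{1,r}$ rather than $W^{1,r}$.
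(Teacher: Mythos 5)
Your proof is correct, and it supplies an explicit argument where the paper only cites Ciarlet's elasticity monograph \cite{lame} and gives no proof of its own. The div/curl reduction is the natural two-dimensional route: taking $\operatorname{div}$ and the scalar $\operatorname{curl}$ of the constant-coefficient Lam\'e system decouples it into two Poisson equations, with both leading constants strictly positive since $2\alpha+\beta=\alpha+(\alpha+\beta)\geq\alpha>0$ under the paper's hypotheses. The identity $\Delta u^{(1)}=\partial_1\operatorname{div}u-\partial_2\operatorname{curl}u$ (and its analogue for $u^{(2)}$) recovers the full Hessian of $u$ from $\nabla\operatorname{div}u$ and $\nabla\operatorname{curl}u$ via Riesz transforms, which are bounded on $L^r$ for $1<r<\infty$; the induction on $k$ is immediate because the coefficients are constant, so $\partial^\gamma u$ solves the same system with source $\partial^\gamma f$. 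You correctly flag the one delicate point: identifying $\nabla^2 u$ with the Calder\'on--Zygmund representation of $f$ is, a priori, only an equality modulo harmonic corrections, and in $\mathbb{R}^2$ the Newtonian potential grows logarithmically so $(-\Delta)^{-1}$ cannot be applied directly on $L^r$. The remedy is exactly as you indicate -- work at the level of $\nabla\operatorname{div}u$ and $\nabla\operatorname{curl}u$, compare them in the sense of distributions to the Riesz-transform expressions in $f$, and use the standing hypothesis $\nabla u\in L^r$ to eliminate the harmonic discrepancy (an $L^r$ harmonic tempered distribution on $\mathbb{R}^2$ vanishes); alternatively, one may mollify and cut off in frequency, prove the bound for smooth approximants, and pass to the limit. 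What your route buys over the citation is self-containedness and a transparent accounting of why the constant depends only on $\alpha$, $\beta$ and $r$; Ciarlet's treatment targets three-dimensional elasticity and obtains such estimates from a more abstract elliptic-regularity framework, rather than the 2D-specific scalar-curl decoupling you use.
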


			\section{Reformulation and main strategy}
            
            \subsection{Reformulation}
            In order to prove Theorems \ref{th21}, we apply the density and entropy related functions 
			\begin{equation}\label{2.1}\begin{aligned}
					&\phi=\frac{A\gamma}{\gamma-1}\rho^{\gamma-1}, \quad l=e^{\frac{S}{c_v}}
			\end{aligned}\end{equation}
		to reformulate the Cauchy problem \eqref{8} with \eqref{2} and \eqref{QH}-\eqref{7} as 
			\begin{equation}\label{2.3}\left\{\begin{aligned}
					\displaystyle
					&  \phi_t+u\cdot\nabla\phi+(\gamma-1)\phi \text{div}u=0,\\[5pt]
					\displaystyle
					&  u_t+u\cdot \nabla u+a_1\phi\nabla l+l\nabla\phi+a_2l^\nu\phi^{2\iota} Lu
					=a_2\phi^{2\iota}\nabla l^\nu\cdot Q(u)+a_3l^\nu\psi  \cdot  Q(u),\\[5pt]
					\displaystyle
					& l_t+u\cdot\nabla l-a_4\phi^{2\iota}l^\nu\Delta l =a_5l^\nu n\phi^{4\iota}H(u)+a_6l^{\nu+1}\text{div}\psi+\Theta(\phi,l,\psi),\\
                    	&(\phi,u,l)|_{t=0}=(\phi_0,u_0,l_0)
					=\Big(\frac{A\gamma}{\gamma-1}\rho_0^{\gamma-1}(x),u_0(x),e^{\frac{S_0(x)}{c_v}}\Big)\quad  \text{for} \quad x\in\mathbb{R}^2,\\
                    	&(\phi,u,l)\rightarrow (0,0,\bar{l}) \hspace{2mm} \text{as} \hspace{2mm}|x|\rightarrow \infty \quad \text{for} \quad t\geq 0,
				\end{aligned}\right.\end{equation}
			where  $\psi=\frac{\delta}{\delta-1}\nabla\rho^{\delta-1}$,  $n=\rho^{2-\delta-\gamma}$, $$ \Theta(\phi,l,\psi)=a_7l^{\nu+1}\phi^{-2\iota}\psi\cdot\psi+a_8l^\nu\nabla l\cdot\psi+a_9l^{\nu-1}\phi^{2\iota}\nabla l\cdot \nabla l,$$  constants $\bar{l}=e^{\frac{\bar{S}}{c_v}}>0$, $\iota=\frac{\delta-1}{2(\gamma-1)}$, and
				\begin{align*}
					a_1=&\frac{\gamma-1}{\gamma},\ \   a_2=a\Big(\frac{A}{R}\Big)^\nu,\ \ a_3=\Big(\fr{A}{R}\Big)^\nu,\ \    a_4=\digamma\frac{a}{Ac_v},\ \ a_5=\frac{A^{\nu-1}a^2(\gamma-1)}{R^\nu},\\
					 a_6=&\digamma\frac{(\gamma-1)}{Ac_v\delta},\ \  a_7=\digamma\frac{\gamma(\gamma-1)}{aAc_v\delta^2},\ \ 
					a_8=2\digamma\frac{1+\nu}{Ac_v\nu},\ \  a_9=\digamma\frac{a\nu}{Ac_v},\ \ a=\Big(\frac{A\gamma}{\gamma-1}\Big)^{\fr{1-\delta}{\gamma-1}}. 
				\end{align*}

		 
We will first give the definition of regular solution and establish the local-in-time existence to the Cauchy problem \eqref{2.3}.
					\begin{definition}\label{strongsolution}
   $(\phi,u,l)$  is a strong solution to the Cauchy problem \ef{2.3} in $[0,T_*]\times \mathbb{R}^2$, if it satisfies \ef{2.3} in the sense of distributions and meets the equations in \eqref{2.3} a.e. $(t,x)\in (0,T_*]\times \mathbb{R}^2$.
\end{definition}
			
			\begin{theorem}\label{3.1} Under $\ef{can1}$, for some $p,q\in (2,\infty)$ and $\iota=\frac{\delta-1}{2(\gamma-1)}$, if $(\phi_0,u_0,l_0)$ satisfies the regularities:
				\begin{equation}\label{a}\begin{aligned}
						&\phi_0>0,\quad \phi_0\in L^p\cap D^1\cap D^3,\quad \nabla \phi^{2\iota}_0\in L^q\cap D^1,\quad  \phi_0^{2\iota}\nabla^3\phi^{2\iota}_0\in L^2, \\
						& \nabla\phi_0^{\frac{1}{2}\iota}\in L^4, \quad  \nabla \phi_0^{\iota+1}\in L^2,\quad  u_0\in H^3,\quad l_0-\bar{l}\in  H^3 ,\quad \inf_{x\in \mathbb{R}^2} l_0>0,
				\end{aligned}\end{equation}
				 and the compatibility conditions:
				\begin{equation}\label{2.8*}\begin{aligned}
						 u_0=\phi_0^{-\iota}g_1,   \quad \nabla u_0=\phi_0^{-2\iota}g_2,  \quad   L u_0=\phi_0^{-3\iota}g_3, \quad 
						\nabla(\phi_0^{3\iota} Lu_0)=\phi_0^{-\iota}g_4,\\
						\nabla  l_0 =\phi_0^{-\iota}g_5, \quad 
						\Delta l_0=\phi_0^{- 2\iota}g_6, \quad  \nabla(\phi_0^{2\iota}\Delta l_0)=\phi_0^{-\iota}g_7,
				\end{aligned}\end{equation}
				for  some $g_i (i=1, \cdots, 7)\in L^2$, then there exist a time $T_*>0$ and a unique strong solution $(\phi,u,l)$  in $[0,T_*]\times \mathbb{R}^2$   to the Cauchy problem \ef{2.3} with 
				\begin{equation}\label{2.1zzx}
					\begin{aligned}
                   & \phi(t,x)>0\quad  \text{in}\; [0,T_*]\times \mathbb{R}^2,\quad \displaystyle\inf_{(t,x)\in [0,T_*]\times \mathbb{R}^2} l>0,\\
						&\phi \in C\left(\left[0, T_*\right] ; L^p\cap D^1\cap D^3\right), \quad \nabla \phi^{\frac{1}{2} \iota} \in C\left(\left[0, T_*\right] ; L^4\right),\\
						&\psi \in C\left(\left[0, T_*\right] ; L^q \cap D^{1} \cap D^2 \right), \; \phi_t \in C\left(\left[0, T_*\right] ; H^2\right), \; \psi_t \in C\left(\left[0, T_*\right];H^1\right), \\
						&  \phi_{t t} \in C\left(\left[0, T_*\right] ; L^2\right) \cap L^2\left(\left[0, T_*\right] ; D^1\right),\quad \psi_{t t}\in L^2\left(\left[0, T_*\right] ; L^2\right), \\
						& u \in C\left(\left[0, T_*\right] ; H^3\right) \cap L^2\left(\left[0, T_*\right] ; H^4\right),\quad l-\bar{l} \in C\left(\left[0, T_*\right] ; H^3\right),\\
						&(u_t, l_t) \in C\left(\left[0, T_*\right] ; H^1\right)\cap L^2\left(\left[0, T_*\right] ; D^2\right), \quad \phi^{3 \iota} \nabla^2 u \in L^{\infty}\left(\left[0, T_*\right] ; H^1\right) ,\\
						&\left(\phi^{2\iota} \nabla u, t^{\frac{1}{2}} \phi^{2 \iota} \nabla^4 u, \phi^{2\iota} \nabla u_t, t^{\frac{1}{2}} \phi^{2 \iota} \nabla^2 u_t\right) \in L^{\infty}\left(\left[0, T_*\right] ; L^2\right),\\
						&\left(\phi^{2 \iota} \nabla^4 u,\phi^{2 \iota} \nabla^2 u_t, t^{\frac{1}{2}} \phi^{2 \iota} \nabla^3 u_t, u_{t t}, t^{\frac{1}{2}} \phi^\iota \nabla u_{t t},l_{t t}, t^{\frac{1}{2}} \phi^{\frac{\iota}{2}} \nabla l_{t t}\right) \in L^2\left(\left[0, T_*\right] ; L^2\right),\\
						&t^{\frac{1}{2}} u_{t t} \in L^{\infty}\left(\left[0, T_*\right] ; L^2\right) \cap L^2\left(\left[0, T_*\right] ; D^1\right), \\
						&\left(l-\bar{l},\phi^{\iota} \nabla l, \phi^{2\iota} \nabla^2 l, \phi^{2\iota} \nabla^3 l,  l_t, t^{\frac{1}{2}} \phi^\iota \nabla^2 l_t, t^{\frac{1}{2}} \phi^{-\frac{\iota}{2}} l_{t t}\right) \in L^{\infty}\left(\left[0, T_*\right] ; L^2\right),\\
						& \phi^{\iota} l_t \in L^{\infty}\left(\left[0, T_*\right] ; D^1\right)\cap L^2\left(\left[0, T_*\right] ; D^2\right).
					\end{aligned}
				\end{equation}
			\end{theorem}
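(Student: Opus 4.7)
The plan is to establish Theorem \ref{3.1} via the linearization-estimate-limit scheme that will be executed in Sections 4--6 of the paper. First, I would freeze the transport velocity and coefficients by replacing $(u,l,\psi)$ inside the nonlinear terms of \eqref{2.3} with functions $(v,w,\varphi)$ drawn from a closed ball $\mathfrak{X}(T)$ in the solution space prescribed by \eqref{2.1zzx}. This produces a linear problem consisting of a transport equation for $\phi$, a linear degenerate parabolic system for $u$ with viscous coefficient $a_2 w^\nu \phi^{2\iota}$ acting via the Lamé operator $L$, and a linear degenerate parabolic equation for $l$ with diffusivity $a_4 \phi^{2\iota} w^\nu$. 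Since $\iota=(\delta-1)/(2(\gamma-1))<0$, the weight $\phi^{2\iota}$ blows up as $\phi\to 0$ at far fields, so the problem is singular rather than degenerate at infinity. I would solve this linearization in two steps: propagate $\phi$ along characteristics, then solve the $u$- and $l$-equations after a regularization $\phi\mapsto \phi+\eta$ that makes them uniformly parabolic, using standard parabolic theory together with the Lamé regularity in Lemma \ref{ell}, and finally passing $\eta\to 0$ with $\eta$-independent estimates.

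The technical heart of the proof is the derivation of uniform a priori estimates that remain in $\mathfrak{X}(T)$ for a short but iteration-independent time $T_*>0$. Starting from the hypotheses \eqref{a}--\eqref{2.8*}, I would close bounds on the $H^3$ norms of $u$, $l-\bar l$, $\phi$; on the singular-weighted quantities $\phi^{\iota}\nabla l$, $\phi^{2\iota}\nabla^2 l$, $\phi^{2\iota}\nabla u$, $\phi^{3\iota}\nabla^2 u$ in $L^\infty_tL^2_x$; on the first and second time derivatives, including weighted versions such as $\phi^{-\iota/2}l_{tt}$ and $\phi^{\iota}\nabla u_{tt}$; and on the time-weighted bounds $t^{1/2}u_{tt}$ and $t^{1/2}\phi^{2\iota}\nabla^3 u_t$ via Lemma \ref{bjr}. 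The main obstacle will be controlling the strongly singular source $a_7 l^{\nu+1}\phi^{-2\iota}|\psi|^2$ inside $\Theta(\phi,l,\psi)$ and the term $a_3 l^\nu \psi\cdot Q(u)$ in the momentum equation, both of which contain explicit negative powers of $\phi$. The key observation is that by construction $\phi^{-\iota}\psi\sim \nabla \phi^{\iota}$, so $\phi^{-2\iota}|\psi|^2\sim |\nabla\phi^{\iota}|^2$, which can be controlled through the $L^q\cap D^1$ regularity of $\nabla\phi^{2\iota}$, the $L^4$ regularity of $\nabla\phi^{\iota/2}$ from \eqref{a}, and the sharp two-dimensional interpolations of Lemma \ref{lem2as}. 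The structural assumption $\gamma+\delta\le 2$ in \eqref{can1} is used precisely to close estimates on the auxiliary variable $n=\rho^{2-\gamma-\delta}$ that appears in the entropy source $a_5 l^\nu n \phi^{4\iota}H(u)$. Compared to the three-dimensional analysis, the failure of $H^1\hookrightarrow L^6$ in $\mathbb{R}^2$ forces replacing pointwise Sobolev bounds by Gagliardo-Nirenberg interpolations and absorbing the resulting mixed-norm factors carefully, especially for the higher-order singular-weighted terms.

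Given the uniform a priori bounds, standard arguments finish the proof. Passage to the limit along the iterates uses Aubin-Lions compactness (Lemma \ref{aubin}) to obtain strong convergence in low-regularity norms, together with weak-$*$ convergence in the norms of \eqref{2.1zzx}, yielding a strong solution on $[0,T_*]\times\mathbb{R}^2$ in the sense of Definition \ref{strongsolution}; positivity of $\phi$ and the lower bound on $l$ persist on a short time interval because the transport structure preserves them. Uniqueness follows by subtracting two solutions and running a weighted $L^2$-type energy estimate for the difference, using the same singular-weight techniques in a dissipative combination and closing by Grönwall. Finally, time continuity of the top-order norms at $t=0$ is obtained by the standard weak-continuity-plus-norm-limit argument together with the compatibility conditions \eqref{2.8*}, which guarantee that the initial traces of $(u_t,l_t)$, $(\phi^{2\iota}\nabla^2 u,\phi^{2\iota}\nabla^2 l)$ and $\nabla(\phi^{3\iota}Lu)$, $\nabla(\phi^{2\iota}\Delta l)$ lie in $L^2$.
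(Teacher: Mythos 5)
Your three-phase scheme---linearize around frozen coefficients, derive $\epsilon$- (or $\eta$-) independent singular-weighted a priori estimates, then pass to the limit with Aubin--Lions and close uniqueness by a weighted $L^2$ Gr\"onwall argument---matches the paper's overall architecture in \S 4--\S 6, and you correctly identify the dimensional obstruction (loss of $H^1\hookrightarrow L^6$) and the structural roles of $\phi^{-2\iota}|\psi|^2\sim|\nabla\phi^\iota|^2$ and of the condition $\gamma+\delta\le 2$ for the auxiliary quantity $n$. One benign difference is the regularization: you propose to regularize the coefficient $\phi^{2\iota}\mapsto(\phi+\eta)^{2\iota}$ inside the equations, whereas the paper shifts the \emph{initial datum} to $\phi_0^\epsilon=\phi_0+\epsilon$; the latter is cleaner because it keeps the transport structure of $\eqref{2.3}_1$ intact and gives a pointwise lower bound on $\phi^\epsilon$ by the characteristics method, while your variant introduces a coefficient/unknown mismatch that one would have to reconcile when passing $\eta\to0$.

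There is, however, a genuine gap at the heart of the a priori estimates, and it is precisely the point the paper advertises as its new ingredient relative to the $\mathbb{R}^3$ result of \cite{DXZ2}. In the linearized problem \eqref{ln} the weight in the Lam\'e and heat operators is the \emph{given} function $g$, while the constitutive quantity $h=(\phi^\epsilon)^{2\iota}$ is reconstructed from the solved transport equation. To transfer estimates between $g$-weighted and $h$-weighted norms one must prove the two-sided comparison $\tilde C^{-1}\le gh^{-1}\le\tilde C$ (Lemma~\ref{gh}), but the naive proof of that comparison uses singular-weighted bounds on $\phi$, which in turn would use the comparison --- a circularity. The paper breaks this cycle by introducing the quantity $h^{-1}\psi$, which obeys the closed transport equation
\begin{equation*}
(h^{-1}\psi)_t+v\cdot\nabla(h^{-1}\psi)+(\nabla v)^{\top}(h^{-1}\psi)+a\delta\,\nabla\mathrm{div}\,v=0,
\end{equation*}
so that $|h^{-1}\psi|_\infty$ is controlled from characteristics without knowing $g\sim h$ a priori; only then do the singular-weighted estimates on $\phi$ (e.g.\ $|h^{1/2}\nabla\phi|_2$, $|h\nabla^2\phi|_2$, $|h^{1/2}\phi_t|_2$, $|h\nabla\phi_t|_2$ in Lemma~\ref{phiphii}) close, and only then can one let $u$ and $l$ carry heavier singular weights than in $\mathbb{R}^3$. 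Likewise, your proposal does not mention the device of estimating $\phi^{2\iota}\nabla u$ in $L^2([0,T];L^2)$ directly from the singular parabolic structure of $\eqref{2.3}_2$, which the paper uses in \S\ref{ells} to close the $L^2$ bound on $l-\bar l$ against the strongly singular source $a_5 l^\nu n\phi^{4\iota}H(u)$ despite the weaker 2D embeddings. Without these two devices the chain of estimates you sketch does not actually close; identifying them is the essential content of the proof you are outlining.
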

		Then based on Theorem 3.1, we can prove the local well-posedness to the Cauchy problem \eqref{8} with \eqref{2} and \eqref{QH}-\eqref{7} immediately.

	\par\bigskip
    \subsection{Main strategy}
	%
			First, via establishing carefully designed singular-weighted estimates,  \cite{DXZ2}  proved the local well-posedness of regular solutions to the corresponding Cauchy problem in $\mathbb{R}^3$. However, 
 the difference in the Sobolev embedding inequalities between $\mathbb{R}^2$ and $\mathbb{R}^3$ makes it hard to apply the same renormalization and singular-weighted estimates in $\mathbb{R}^3$ to establish the well-posedness theory in $\mathbb{R}^2$.
   For example, one has 
   $$|\phi^{\iota} f|_6\le |\nabla (\phi^{\iota}f)|_2\le C(|\nabla \phi^{\iota}|_\infty  |f|_2+|\phi^{\iota}\nabla f|_2) \quad \text{in}\quad \mathbb{R}^3
   $$
   for some function $f$, where $\phi^{\iota}$ is a quantity of negative power of $\rho$, 
   i.e. $|f|_2, |\nabla \phi^{\iota}|_\infty$ and $|\phi^{\iota}\nabla f|_2$ are suffice to provide the singular-weighted estimate of $|\phi^{\iota} f|_6$, which plays key role in establishing uniform the a prior estimates, for example, in establishing the singular-weighted estimates of $l$, one has
   \begin{equation*}
       \begin{split}
           \frac{a_4}{2}\frac{\text{d}}{\text{d}t}|h^{\frac14}\nabla l_t|_2^2+|w^{-\frac{\nu}{2}} h^{-\frac14}l_{tt}|_2^2=&\underbrace{a_5\int   n g^{\frac32} H(v)_t  l_{tt}}_{\text{trouble term}}+\cdots\\
           \le &C |w^{\frac{\nu}{2}} n|_\infty |g\nabla v|_6 |g^{\frac34}\nabla v_t|_3 |w^{-\frac{\nu}{2}} h^{-\frac14}l_{tt}|_2+\cdots
       \end{split}
   \end{equation*}
   and
      \begin{equation*}
       \begin{split}
           \frac{1}{2}\frac{\text{d}}{\text{d}t}|w^{-\frac{\nu}{2}} h^{-\frac14}l_{tt}|_2^2+a_4|h^{\frac14}\nabla l_{tt}|_2^2=&\underbrace{a_5\int   n g^{\frac32} H(v)_{tt}   l_{tt}}_{\text{trouble term}}+\cdots\\
          \le &C | n|_\infty |g\nabla v|_6 |g^{\frac14} v_{tt}|_3 | h^{\frac14}\nabla l_{tt}|_2+\cdots
       \end{split}
   \end{equation*}
However, one has
  $$|\phi^{\frac{\iota}{2}} f|_4\le C\big(|\nabla \phi^{\iota}|^\frac12_\infty  |f|_2+|\phi^{\iota}\nabla f|_2^\frac12|f|_2^\frac12\big)\quad \text{in}\quad \mathbb{R}^2,$$
which implies that, with $|f|_2$, $\nabla \phi^{\iota}|_\infty$ and $|\phi^{\iota}\nabla f|_2$ at hand, 
 $f$ can bear more singular weight $\phi^{\iota}$ in $\mathbb{R}^3$, but less singular weight $\phi^{\frac{\iota}{2}}$ in $\mathbb{R}^2$, 
and this caused essential difficulties in deducing closed singular-weighted estimates, especially the estimates of $l$ and $u$, and this is the reason why we need to find a different singular-weighted estimates in two dimensional space. See \S \ref{apr} for more details. 

 To compensate for the distance in absorbing the singular weight caused by the dimension difference, we find that under a different linearized system, if we let $\phi$ carry some singular weight, i.e. establish the singular-weighted estimates of $\phi$, then $u$ and $l$ can carry more singular weight of $\rho$ than the $\mathbb{R}^3$ case, which helps to establish the well-posedness in $\mathbb{R}^2$. We remark that the estimates in \cite{DXZ2} do not provide the singular-weighted estimate of $\phi$, since one first needs to establish the equivalence between $h(=\phi^{2\iota})$ and its linearized counterpart $g$, while this equivalence needs in turn the singular-weighted estimates of $\phi$. Here, we introduce the new quantity  $h^{-1}\psi$, which satisfies 
$$(h^{-1}\psi )_t +   v\cdot \nabla ( h^{-1}\psi) +  (\nabla v)^{\top}  h^{-1}\psi   +a\delta \nabla \text{div}v=0,$$
and first deduce the $L^{\infty}$-estimates of $h^{-1}\psi$, then obtain the singular-weighted  estimate of $ \phi$ without using the equivalence of $h$ and $g$, see Lemma \ref{gh} for details.

		At last, 
in order to establish the $L^2$ estimate of $l-\bar{l}$, i.e. the $L^2$ estimate of $S$, which is essential for demonstrating that the regular solution is indeed a classical one, the main difficulty is to deal with the singular term $a_5l^\nu n\phi^{4\iota}H(u)$ in $\eqref{2.3}_3$.  
 Here, we introduce a new quantity $\phi^{2\iota}\nabla u$ and establish its estimates in $L^2([0,T]; L^2)$ by using the singular parabolic equations of $u$, which help to deal with difficulties due to the difference of embedding inequalities in $\mathbb{R}^2$ and $\mathbb{R}^3$. See \S \ref{ells} for more details.
             
			\section{Linearization}
			 This section states the linearized problem of   \eqref{2.3} in $[0,T]\times \mathbb{R}^2$ for some  constants $T>0$ and  $\epsilon>0$:
			\begin{equation}\label{ln}\left\{\begin{aligned}
					&\phi^{\epsilon}_t+v\cdot\nabla\phi^{\epsilon}+(\gamma-1)\phi^{\epsilon} \text{div}v=0,\\[2pt]
					&u^{\epsilon}_t+v\cdot \nabla v+a_1\phi^{\epsilon}\nabla l^{\epsilon}+l^{\epsilon}\nabla\phi^{\epsilon}+a_2(l^{\epsilon})^\nu  h^{\epsilon}  Lu^{\epsilon}\\
					&\qquad \qquad\quad
                     =a_2 g\nabla (l^{\epsilon})^\nu\cdot Q(v)+a_3(l^{\epsilon})^\nu \psi^{\epsilon} \cdot Q(v),\\[2pt]
					&l^{\epsilon}_t+v\cdot\nabla l^{\epsilon}-a_4w^\nu h^{\epsilon}\Delta l^{\epsilon}=a_5w^\nu n^{\epsilon} g^2H(v)+a_6w^{\nu+1}\text{div}\psi^{\epsilon}+\Pi(l^{\epsilon},h^{\epsilon},w,g),\\[2pt]
					&(\phi^{\epsilon},u^{\epsilon},l^{\epsilon})|_{t=0}=(\phi^\epsilon_0,u^\epsilon_0,l^\epsilon_0)
					=(\phi_0+\epsilon,u_0,l_0 )\quad  \text{for} \quad x\in\mathbb{R}^2,\\[2pt]
					&(\phi^{\epsilon},u^{\epsilon},l^{\epsilon} )\rightarrow (\epsilon,0,\bar{l}) \quad \text{as} \hspace{2mm}|x|\rightarrow \infty \quad {\rm for}\quad t\geq 0,
				\end{aligned}\right.\end{equation}
			where $h^{\epsilon}=(\phi^{\epsilon})^{2\iota}$, $\psi^{\epsilon}=\fr{a\delta}{\delta-1}\nabla h^{\epsilon}$,$n^{\epsilon}=(ah^{\epsilon})^b$, $b=\fr{2-\delta-\gamma}{\delta-1}\le 0$, and $$
					\Pi(l^{\epsilon},h^{\epsilon},w,g)=a_7w^{\nu+1}(h^{\epsilon})^{-1}\psi^{\epsilon}\cdot\psi^{\epsilon}+a_8w^\nu\nabla l^{\epsilon}\cdot\psi^{\epsilon}+a_9w^{\nu-1}g\nabla w\cdot\nabla w.$$
			Here, $v =(v^{(1)}, v^{(2)})^\top\in \mathbb{R}^2$, $g$ and $w$ are known  functions satisfying $w> 0$, $(v,g,w)(0,x)=(u_0,h_0=(\phi^\epsilon_0)^{2\iota},l_0)(x)$, and
			\begin{equation}\label{4.1*}
				\begin{split}
					&g\in L^\infty\cap C([0,T]\times \mathbb{R}^2),\ \ \nabla g\in C([0,T];L^q\cap D^{1}),  \\[3pt]
					& g^{\frac12}\nabla^3 g\in C([0,T];L^2),\ \  \nabla g^\frac12\in C([0,T];L^4),\quad g_t\in C([0,T];H^2),\\[3pt]
					&  (\nabla g_{tt},v_{tt},w_{tt})\in L^2([0,T];L^2),\quad v\in C([0,T];H^3)\cap L^2([0,T];H^4),\\[3pt]
					& t^{\fr{1}{2}}v\in L^\infty([0,T];D^4),\quad
					v_t\in C([0,T];H^1)\cap L^2([0,T];D^2),\\[3pt]
					&  t^{\fr{1}{2}}v_t\in L^\infty([0,T];D^2)\cap L^2([0,T];D^3),\quad w-\bar{l}\in C([0,T];H^3),\\[3pt]
					& t^{\fr{1}{2}}(v_{tt},w_{tt})\in L^\infty([0,T];L^2)\cap L^2([0,T];D^1),\quad \inf_{(t,x)\in[0,T]\times\mathbb{R}^2} w>0,\\[3pt]
					& w_t\in C([0,T];H^1)\cap L^2([0,T];D^2),\quad t^\frac12 w_{t}\in  L^\infty([0,T];D^2).
				\end{split}
			\end{equation}
			Moreover, we claim that $(\phi_0,u_0,l_0)$ and  $\bar{l}=e^{\frac{\bar{S}}{c_v}}>0$ in \eqref{ln} is same as the ones in  $\eqref{2.3}$.
			%
			Then according to the conventional theory \cites{KA,oar}, one has the global well-posedness to \ef{ln} in $[0,T]\times \mathbb{R}^2$ for all $\epsilon>0$.
			\begin{lemma}\label{ls}
				Under $\ef{can1}$, if $\epsilon>0$ and $(\phi_0,u_0,l_0)$ satisfy 
				\eqref{a}-\eqref{2.8*}. Then $\ef{ln}$ has a unique classical solution $(\phi^{\epsilon},u^{\epsilon},l^{\epsilon})$ in $[0,T]\times \mathbb{R}^2$ for all $T>0$ with
				\begin{equation}\label{2.13}\begin{aligned}
						&\phi^{\epsilon}-\epsilon\in C([0,T]; L^p\cap D^1\cap D^3), \quad  \phi^{\epsilon}_t\in C([0,T];H^2), \\[3pt]
						& h^{\epsilon}\in L^\infty\cap C([0,T]\times \mathbb{R}^2),\quad  \nabla h^{\epsilon}\in C([0,T];H^2),\\[3pt]
						& h^{\epsilon}_t\in C([0,T];H^2),\quad 	u^{\epsilon}\in C([0,T];H^3)\cap L^2([0,T];H^4),\\[3pt]
						& u^{\epsilon}_t\in C([0,T];H^1)\cap L^2([0,T];D^2),\quad  u^{\epsilon}_{tt}\in L^2([0,T];L^2),\\[3pt]
						& t^{\fr{1}{2}}u^{\epsilon}\in L^\infty([0,T];D^4), \quad t^{\fr{1}{2}}u^{\epsilon}_t\in L^\infty([0,T];D^2)\cap L^2([0,T];D^3),\\[3pt]
						& t^{\fr{1}{2}}u^{\epsilon}_{tt}\in L^\infty([0,T];L^2)\cap L^2([0,T];D^1),\quad  l^{\epsilon}-\bar{l}\in C([0,T]; H^3),\\[3pt]
						& l^{\epsilon}_t\in  C([0,T];H^{1})\cap L^2([0,T];D^2),\quad l^{\epsilon}_{tt}\in L^2([0,T];L^2),\\[3pt]&t^\frac12l_t^{\epsilon}\in L^\infty([0,T];D^2),\quad t^\frac12l_{tt}^{\epsilon}\in L^\infty([0,T];L^2)\cap L^2([0,T];D^1).
				\end{aligned}\end{equation}
			\end{lemma}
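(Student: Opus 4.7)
The plan is to exploit the fact that, although \eqref{ln} couples three unknowns, the auxiliary functions $v,g,w$ are prescribed; consequently the system decouples sequentially. Once $\phi^{\epsilon}$ is known, the $l^{\epsilon}$-equation becomes a scalar linear parabolic equation, and once both $\phi^{\epsilon}$ and $l^{\epsilon}$ are known, the $u^{\epsilon}$-equation becomes a linear Lam\'e-type parabolic system. Each of the three subproblems is standard in its own right, so the proof reduces to recording the right regularities.

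First I would solve the transport equation for $\phi^{\epsilon}$ by the method of characteristics generated by $v$. Since $v\in C([0,T];H^3)$ is Lipschitz in $x$ uniformly in $t$, the flow $X(t,y)$ is well defined and smooth, and the explicit formula
\[
\phi^{\epsilon}(t,X(t,y)) = (\phi_0(y)+\epsilon)\exp\!\Big(-(\gamma-1)\!\int_0^t \mathrm{div}\,v(s,X(s,y))\,\mathrm ds\Big)
\]
yields the uniform positive lower bound $\phi^{\epsilon}\ge \epsilon\, e^{-(\gamma-1)\int_0^T|\mathrm{div}\,v|_\infty\,\mathrm ds}>0$. Commuting spatial and temporal derivatives with the transport operator and applying Gronwall's inequality with the norms of $v$ in \eqref{4.1*} deliver the regularities of $\phi^{\epsilon}-\epsilon$ and $\phi^{\epsilon}_t$ listed in \eqref{2.13}. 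Since $2\iota<0$ and $\phi^{\epsilon}$ is two-sidedly bounded, the chain rule transfers these regularities to $h^{\epsilon}=(\phi^{\epsilon})^{2\iota}$, $\psi^{\epsilon}$ and $h^{\epsilon}_t$.

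With $\phi^{\epsilon}$ in hand, the equation for $l^{\epsilon}$ becomes the linear parabolic equation
\[
l^{\epsilon}_t+v\cdot\nabla l^{\epsilon} - a_4 w^{\nu} h^{\epsilon}\Delta l^{\epsilon} = F(x,t),
\]
whose leading coefficient $a_4 w^{\nu} h^{\epsilon}$ is pinched between two positive constants (depending on $\epsilon$): $w\geq w_*>0$ by \eqref{4.1*} and $h^{\epsilon}$ is two-sidedly bounded by Step~1. The right-hand side $F$ and the lower-order coefficients have enough regularity by \eqref{4.1*} and Step~1 (note that $F$ contains $H(v)$ and gradients of $\psi^{\epsilon}$ and $w$, all in suitable mixed $L^t_x$ spaces). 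Standard linear parabolic theory \cite{KA,oar} then gives $l^{\epsilon}-\bar l\in C([0,T];H^3)$. Differentiating in $t$ once and twice, and using the compatibility conditions \eqref{2.8*} to identify $l^{\epsilon}_t(0)\in H^1$ and $l^{\epsilon}_{tt}(0)\in L^2$, yields the stated regularity of $l^{\epsilon}_t$ and $l^{\epsilon}_{tt}$; Lemma \ref{bjr} then supplies the $t^{1/2}$-weighted bounds along suitable sequences $s_k\downarrow 0$. An entirely analogous argument handles $u^{\epsilon}$: the equation is a linear Lam\'e-type parabolic system with scalar coefficient $a_2(l^{\epsilon})^{\nu} h^{\epsilon}$ two-sidedly bounded and with $L$ strongly elliptic (by $\alpha>0$ and $\alpha+\beta\ge0$); the same $L^2$-parabolic machinery together with the compatibility conditions on $u_0$ in \eqref{2.8*} produce the regularity of $u^{\epsilon},u^{\epsilon}_t,u^{\epsilon}_{tt}$ and the $t^{1/2}$-weighted estimates. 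Uniqueness at each step is a routine linear energy estimate.

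The real work is not in the existence theorem itself, which is standard, but in bookkeeping. The hard part will be checking that every forcing term and low-order coefficient lies in a space compatible with the parabolic theory being invoked, and verifying that the compatibility conditions \eqref{2.8*} supply the correct $L^2$/$H^1$ traces at $t=0$ for the higher time derivatives, so that the $t^{1/2}$-weighted norms in \eqref{2.13} close. Every quantitative bound obtained in this way depends on $\epsilon>0$ through the two-sided bounds on $h^{\epsilon}$; producing $\epsilon$-independent estimates is the task of the a priori estimates in \S\ref{apr}.
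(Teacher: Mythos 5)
The paper offers no proof of Lemma \ref{ls} beyond citing the conventional theory of \cite{KA,oar}; your proposal supplies exactly the argument those references make routine, and your structural observation is the right one: since $v,g,w$ are prescribed, \eqref{ln} decouples sequentially into a transport equation for $\phi^{\epsilon}$ (solved along characteristics), a scalar linear parabolic equation for $l^{\epsilon}$ whose diffusion coefficient $a_4 w^{\nu}h^{\epsilon}$ is two-sidedly bounded for fixed $\epsilon>0$, and a linear Lam\'e-parabolic system for $u^{\epsilon}$ once $\phi^{\epsilon},l^{\epsilon}$ are known. One caveat worth recording, which you and the paper both pass over: the ellipticity coefficient $a_2(l^{\epsilon})^{\nu}h^{\epsilon}$ and the source terms $\nabla(l^{\epsilon})^{\nu},(l^{\epsilon})^{\nu}$ in the $u^{\epsilon}$-equation require $l^{\epsilon}>0$ for non-integer $\nu$, while the sources in the $l^{\epsilon}$-equation (e.g.\ $a_6 w^{\nu+1}\text{div}\,\psi^{\epsilon}$) are not sign-definite, so the parabolic maximum principle does not by itself keep $l^{\epsilon}$ positive on $[0,T]$ for arbitrary $T$; the standard remedy is to replace $s\mapsto s^{\nu}$ by a smooth positive extension, which is harmless here because only the short interval $[0,T_*]$ from \S\ref{apr}, on which the a priori bound $l^{\epsilon}\geq \tfrac12 c_0^{-1}$ holds, is ultimately used.
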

			

			\section{Uniform a priori estimates}\label{apr}  
			This section aims at establishing the uniform  a priori estimates of the solution $(\phi^{\epsilon},u^{\epsilon},l^{\epsilon})$, 
			which is independent of $ \epsilon$.
			\par\bigskip
            
			First, for any fixed constant $\epsilon\in(0,1]$, as 
$(\phi^\epsilon_0,u^\epsilon_0,l^\epsilon_0,h^\epsilon_0)=\big(\phi_0+\epsilon,u_0,l_0,(\phi_0+\epsilon)^{2\iota}\big)$, and $(\phi_0,u_0,l_0)$  satisfy 
			\eqref{a}-\eqref{2.8*}, one has 
			\begin{equation}\label{2.14}
				\begin{aligned}
					&2+\epsilon+\bar{l}+\|\phi^\epsilon_0-\epsilon\|_{L^p\cap D^1\cap D^2}+\|\nabla h^\epsilon_0\|_{L^q\cap D^{1}}+|\nabla (\phi_0^\epsilon
					)^{\iota+1}|_2\\&+| (h^\epsilon_0)^{\frac{1}{2}}\nabla^3h^\epsilon_0|_2+|\nabla (h^\epsilon_0)^{\frac{1}{2}}|_4
					+|(h^\epsilon_0)^{-1}|_\infty+\|u^\epsilon_0\|_3 +|(l^\epsilon_0)^{-1}|_\infty\\&+\|l^\epsilon_0-\bar{l}\|_{3}+\sum_{i=1}^7 |g^\epsilon_i|_2 \le c_0,
			\end{aligned}\end{equation}
			for some constant $c_0>1$ independent of $\epsilon$,  where 
			\begin{equation*}
				\begin{split}
					 g^\epsilon_1=(\phi^\epsilon_0)^{ \iota} u^\epsilon_0,\quad 
     g^\epsilon_2=(\phi^{\epsilon}_0)^{ 2\iota}\nabla u^\epsilon_0,\quad g_3^\epsilon=(\phi^\epsilon_0)^{3\iota}Lu^\epsilon_0,\quad 
					g_4^\epsilon=(\phi^\epsilon_0)^{\iota}\nabla\big((\phi^\epsilon_0)^{3\iota}  Lu^\epsilon_0\big),\\
					g_5^\epsilon=(\phi^\epsilon_0)^{\iota} \nabla l_0^{\epsilon}, \quad 
					g_6^\epsilon=(\phi^\epsilon_0)^{ 2\iota }\Delta l^\epsilon_0, \quad g_7^\epsilon=(\phi^\epsilon_0)^{\iota}\nabla\big((\phi^\epsilon_0)^{2\iota}  \Delta l^\epsilon_0\big).
				\end{split}
			\end{equation*}
			
Hereinafter, denote $ M(c)\geq 1 (c\ge0)$ a generic  increasing and continuous function,  $C\geq 1$  a generic  constant, which are  differ from  one line to another and only depend on constants $A, R, c_v, \alpha, \beta, \gamma, \delta, T$. 
		Moreover, 
			for simplicity in statement, we still denote
$(\phi^\epsilon_0,u^\epsilon_0,l^\epsilon_0,h^\epsilon_0,\psi^\epsilon_0)$, 
$(\phi^{\epsilon},u^{\epsilon},l^{\epsilon},h^{\epsilon},\psi^{\epsilon})$, and $g_i^{\epsilon}(i=1,\cdots,7)$ 
as the ones without the superscript $\epsilon$, and   $(\phi,u,l)$ is the unique  solution to $\ef{ln}$.

            \par\bigskip

            Now, for $g$, $w$ and $v$, let constants $c_i (i=1,\cdots,5)$ satisfy
			\begin{equation}\label{801}\begin{aligned}
					\sup_{0\leq t\leq T^*}(\|\nabla g\|^2_{L^\infty\cap L^q\cap D^{1}\cap D^2}+|\nabla g^\frac{1}{2}|_4^2)(t)\leq c_1^2,&\\
					\inf_{[0,T^*]\times \mathbb{R}^2} w(t,x)\geq c^{-1}_1,\quad   \inf_{[0, T^{*}]\times \mathbb{R}^2} g(t,x)\ge c_1^{-1},&\\
					\sup_{0\leq t\leq T^*}  (|w|^2_{\infty}+|v|_\infty^2)(t)+\int^{T^*}_0(|v|^2_{D^2}+|v_t|^2_2)\text{d}t\leq c_1^2,&\\
					\sup_{0\leq t\leq T^*}  (\|v\|^2_{1}+|g\nabla v|_2^2+|g^{\frac12}\nabla w|_2^2)(t)+\int_{0}^{T^*}(|w_t|_2^2+|g\nabla^2w|_2^2)\text{d}t\le c_2^2,&\\
					\sup_{0\leq t\leq T^*}(|w_t|_2^2+|g\nabla^2w|_2^2)(t)+\int_{0}^{T^*}(|g^\frac12\nabla w_t|_2^2+|g\nabla^3 w|_2^2)\text{d}t\le c_2^2,&\\
					\sup_{0\leq t\leq T^*}(|g^{\frac12}\nabla w_t|_2^2+|g\nabla^3w|_2^2)(t)+\int_{0}^{T^*}(| w_{tt}|_2^2+|g^\frac12\nabla^2 w_t|_2^2)\text{d}t\le c_2^2,&\\
					\text{ess}\sup_{0\leq t\leq T^*}t(|g^{-\frac14} w_{tt}|_2^2+|g^\frac12\nabla^2 w_t|_2^2)(t)+\int_{0}^{T^*}t|g^\frac14w_{tt}|_{D^1}^2\text{d}t\le c_2^2,&\\
					\sup_{0\leq t\leq T^*}(|v|^2_{D^2}+|v_t|^2_2+|g^\frac32\nabla^2v|^2_2)(t)+\int^{T^*}_0(|v|_{D^3}^2+|v_t|^2_{D^1})\text{d}t\leq c_3^2,&\\
					\sup_{0\leq t\leq T^*}(|v|^2_{D^3}+|v_t|_{D^1}^2+|g\nabla v_t|^2_2+|g_t|^2_{D^1})(t)+\int^{T^*}_0(|v|^2_{D^4}+|v_t|^2_{D^2}+|v_{tt}|^2_2)\text{d}t\leq c_4^2,&\\
					\sup_{0\leq t\leq T^*}(|g^\frac32\nabla^3v|^2_2+|g_t|^2_\infty)(t)+\int^{T^*}_0(|(g\nabla^2v)_t|^2_2+|g\nabla^4v|^2_2)\text{d}t\leq c_4^2,&\\
					\text{ess}\sup_{0\leq t\leq T^*}t(|v|^2_{D^4}+|g\nabla^2v_t|^2_2)(t)+\int^{T^*}_0 (|g_{tt}|_4^2+ |g_{tt}|^2_{D^1})\text{d}t\leq c^2_5,&\\
					\text{ess}\sup_{0\leq t\leq T^*}t|v_{tt}(t)|^2_2+\int^{T^*}_0t(|v_{tt}|^2_{D^1}+|g^\frac12v_{tt}|_{D^1}^2+|v_t|^2_{D^3})\text{d}t\leq c_5^2,&
				\end{aligned}
			\end{equation}
			for  some time $T^*\in(0,T]$, where  
			\begin{equation}
				1<c_0\leq c_1\leq c_2\leq c_3\leq c_4\leq c_5,
			\end{equation}
 and  $T^*$, $c_i (i=1,\cdots,5)$ will be determined later,  which depend only on $c_0$ and  fixed constants  $A, R, c_v, \alpha,\beta,\gamma,\delta, T$.

            \subsection{Initial information}
			
		First, we state some initial regularity based on \ef{2.14} and compatibility conditions, which will be used in the a priori estimates of $\phi$ and $u$. 
						\begin{lemma}\label{r1}
It follows from \ef{2.14} and compatibility conditions that
\begin{equation*}
    |\phi_0^{3\iota}\nabla^2u_0|_2+|\phi_0^{2\iota}\nabla^2\phi_0|_2+|\phi_0^{3\iota}\nabla^3u_0|_2+|\phi_0^{2 \iota} \nabla^2 l_0|_2+|\phi_0^{3 \iota} \nabla^3 l_0|_2\leq \hat{C},
\end{equation*}
where $\hat{C}>0$ is a generic  constant that depends on $c_0$ but independent of $\epsilon$.
                        \end{lemma}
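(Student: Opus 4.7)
The plan is to combine the compatibility conditions in \eqref{2.8*} with weighted integration-by-parts identities attached to the Lam\'e and Laplace operators. The crucial algebraic observation, used repeatedly, is the chain-rule identity $\nabla\phi_0 = \tfrac{1}{2\iota}\phi_0^{1-2\iota}\nabla h_0$ (with $h_0 = \phi_0^{2\iota}$), which yields $\nabla\phi_0^{k\iota} = \tfrac{k}{2}\phi_0^{(k-2)\iota}\nabla h_0$ for any exponent $k$. This lets me trade every singular power of $\phi_0$ appearing inside a gradient for a factor of $\nabla h_0\in L^\infty(\mathbb{R}^2)$: indeed, \eqref{2.14} combined with $\phi_0\in L^\infty$ (via $H^2(\mathbb{R}^2)\hookrightarrow L^\infty$) gives $\nabla^3 h_0 = h_0^{-1/2}(h_0^{1/2}\nabla^3 h_0)\in L^2$, whence $\nabla h_0\in H^2\hookrightarrow L^\infty$ in two dimensions.

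I first handle $|\phi_0^{2\iota}\nabla^2\phi_0|_2$, which requires no compatibility: differentiating the chain-rule identity once more produces
\[
\phi_0^{2\iota}\nabla^2\phi_0 = \frac{1}{2\iota}\Big[(1-2\iota)\nabla\phi_0\otimes\nabla h_0 + \phi_0\,\nabla^2 h_0\Big],
\]
whose $L^2$ norm is bounded by $|\nabla\phi_0|_4|\nabla h_0|_4+|\phi_0|_\infty|\nabla^2 h_0|_2$, all finite by \eqref{2.14}. For $|\phi_0^{3\iota}\nabla^2 u_0|_2$ the starting point is $\phi_0^{3\iota}Lu_0=g_3\in L^2$: squaring and expanding, the cross term $\int\phi_0^{6\iota}\Delta u_0\cdot\nabla\text{div}u_0$ is integrated by parts into $|\phi_0^{3\iota}\nabla\text{div}u_0|_2^2$ plus commutators of schematic form $\int\nabla\phi_0^{6\iota}\otimes\nabla u_0\otimes\nabla^2 u_0$; using $\nabla\phi_0^{6\iota}=3\phi_0^{4\iota}\nabla h_0$ together with $\phi_0^\iota\nabla u_0=\phi_0^{-\iota}g_2\in L^2$, Cauchy--Schwarz controls each commutator by $C|\phi_0^{3\iota}\nabla^2 u_0|_2$, which Young's inequality absorbs. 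The companion identity
\[
\int \phi_0^{6\iota}|\nabla^2 u_0|^2 = \int \phi_0^{6\iota}|\Delta u_0|^2 + (\text{absorbable commutator}),
\]
obtained by the same integration-by-parts trick, then closes the estimate. The scalar-Laplacian bound $|\phi_0^{2\iota}\nabla^2 l_0|_2$ is handled identically starting from $\phi_0^{2\iota}\Delta l_0=g_6\in L^2$, with the benefit that $\nabla l_0\in L^\infty$ is inherited directly from $l_0-\bar l\in H^3$.

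For the third-order bounds I differentiate the defining relations: from $\nabla(\phi_0^{3\iota}Lu_0)=\phi_0^{-\iota}g_4$ and $Lu_0=\phi_0^{-3\iota}g_3$ the commutator $\nabla\phi_0^{3\iota}\otimes Lu_0=\tfrac{3}{2}\phi_0^{-2\iota}\nabla h_0\otimes g_3$ is explicit, giving
\[
\phi_0^{3\iota}\nabla Lu_0 = \phi_0^{-\iota}g_4 - \tfrac{3}{2}\phi_0^{-\iota}\nabla h_0\otimes g_3\in L^2.
\]
The same weighted-elliptic machinery applied at one higher derivative level then upgrades this to $|\phi_0^{3\iota}\nabla\Delta u_0|_2\le C$ and hence $|\phi_0^{3\iota}\nabla^3 u_0|_2\le C$, with the new commutators pairing $\nabla\phi_0^{6\iota}$ with $\nabla^2 u_0$ and $\nabla^3 u_0$ and controlled via the already-proven second-order bound; $|\phi_0^{3\iota}\nabla^3 l_0|_2$ follows along the same lines starting from $\nabla(\phi_0^{2\iota}\Delta l_0)=\phi_0^{-\iota}g_7$. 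The main obstacle throughout is the bookkeeping of these commutators: because $\iota<0$, a naive $L^\infty$ bound on $\nabla\phi_0^{k\iota}$ is unavailable, so one must arrange each commutator so that the singular weight pairs exactly with a compatibility factor such as $\phi_0^{k\iota}\nabla u_0$ or $\phi_0^{k\iota}\nabla l_0$, leaving only $\nabla h_0\in L^\infty$ as the floating piece. Once this pairing is identified, Cauchy--Schwarz and Young's inequality close every estimate and the resulting constant $\hat C$ manifestly depends only on $c_0$, not on $\epsilon$.
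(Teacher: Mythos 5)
Your argument is correct, but it follows a genuinely different route from the paper. The paper's proof pushes the singular weight \emph{inside} the elliptic operator: from $\phi_0^{3\iota}Lu_0=g_3$ it writes $L(\phi_0^{3\iota}u_0)=g_3-\frac{\delta-1}{a\delta}G(\phi_0,\psi_0,u_0)$ with an explicit commutator $G$ built from $\psi_0=\frac{a\delta}{\delta-1}\nabla h_0$, then invokes the Lam\'e regularity Lemma \ref{ell} to get $|\phi_0^{3\iota}u_0|_{D^2}\le C(|g_3|_2+|G|_2)$ and finally unfolds this to $|\phi_0^{3\iota}\nabla^2 u_0|_2$; the third-order and $l_0$-bounds are handled the same way ($\Delta^{-1}\mathrm{div}$ for $g_4,g_7$ and standard elliptic theory for $\Delta(\phi_0^{2\iota}(l_0-\bar l))$). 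You instead keep the weight outside, square $\phi_0^{3\iota}Lu_0=g_3$, and perform a direct weighted energy estimate with integration by parts, absorbing the commutators by Young. Both routes ultimately rest on the same two facts that you correctly isolate: the chain-rule identity $\nabla\phi_0^{k\iota}=\tfrac{k}{2}\phi_0^{(k-2)\iota}\nabla h_0$, which converts every singular derivative of a power of $\phi_0$ into a factor of $\nabla h_0\in L^\infty$, and the pairing of the residual singular weight with a compatibility factor like $\phi_0^{\iota}\nabla u_0=\phi_0^{-\iota}g_2\in L^2$ (finite since $-\iota>0$ and $\phi_0$ is bounded). The paper's version is more compact because the IBP bookkeeping is outsourced to the elliptic lemma; yours is more self-contained but requires verifying that the cross-term coefficient $(\alpha+\beta)(3\alpha+\beta)\ge 0$ so the $|\nabla\mathrm{div}\,u_0|^2$ term can be dropped — which does hold under \eqref{can1}, but should be stated. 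One small slip: $\nabla\phi_0^{3\iota}\otimes Lu_0=\tfrac{3}{2}\phi_0^{-2\iota}\nabla h_0\otimes g_3$, so the identity should read $\phi_0^{3\iota}\nabla Lu_0=\phi_0^{-\iota}g_4-\tfrac{3}{2}\phi_0^{-2\iota}\nabla h_0\otimes g_3$; the conclusion is unchanged since both $\phi_0^{-\iota}$ and $\phi_0^{-2\iota}$ are uniformly bounded.
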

                        
                        \begin{proof}
				First, according to the definition of $g_3$, $\phi_0>\epsilon$ and $\ef{ln}_5$  that 
				\begin{equation*} \begin{aligned}
				&L(\phi_0^{3\iota}u_0)=  g_3-\frac{\delta-1}{a\delta}G(\phi_0,\psi_0,u_0),\quad 	\phi_0^{3\iota}u_0\longrightarrow 0 \ \ \text{as}\ \ |x|\longrightarrow \infty,\\
					\end{aligned}\end{equation*}
				where 
				\begin{equation}\label{Gdingyi}
                \begin{aligned}
                    \frac23 G(\phi_0,\psi_0,u_0)=&(\alpha+\beta)\big( \phi_0^\iota\psi_0{\rm{div}} u_0+\phi_0^\iota\psi_0\cdot\nabla u_0+u_0\cdot\nabla( \phi_0^\iota\psi_0)\big)\\&+\alpha\phi_0^\iota \psi_0\cdot\nabla u_0+\alpha {\rm{div}}\big(u_0\otimes \phi_0^\iota\psi_0\big).
                \end{aligned}
				\end{equation}
				Then according to Lemma \ref{ell}  and $\ef{2.14}$ that
				\begin{equation}\label{incc}
					\begin{split}
						|\phi_0^{3\iota}u_0|_{D^2}\leq & C(|  g_3|_2+|G(\phi_0,\psi_0,u_0)|_2)\leq C_1<\infty,\\
						|\phi_0^{3\iota}\nabla^2u_0|_2\leq & C\big(|\phi_0^{3\iota}u_0|_{D^2}+|\nabla\psi_0|_4|\phi_0^\iota u_0|_4\\&+|\psi_0|_\infty(|\phi_0^\iota\nabla u_0|_2+|\nabla\phi_0^\iota|_4|u_0|_4)\big)\leq C_1<\infty,
					\end{split}
				\end{equation}
				for some generic constant $C_1>0$ that independent of $\epsilon$. 
 Moreover, one has
				\begin{equation*}
					\begin{split}
						|\phi_0^{2\iota}\nabla^2\phi_0|_2
						\leq C_1.
					\end{split}
				\end{equation*}

				Second, since 
	$\nabla(\phi_0^{3\iota}Lu_0)=\phi_0^{-\iota}g_4\in L^2$,
				thus formally,
				\begin{equation*} \begin{aligned}
&L(\phi_0^{3\iota}u_0)=\Delta^{-1} {\rm{div}}\big(\phi_0^{-\iota}g_4\big)-\frac{\delta-1}{a\delta}G(\phi_0,\psi_0,u_0),\\
						&\phi_0^{3\iota}u_0\longrightarrow 0 \ \ \text{as}\ \ |x|\longrightarrow \infty,\\
					\end{aligned} \end{equation*}   
				this implies
				\begin{equation}\label{incc*}
					\begin{split}
						|\phi_0^{3\iota}u_0|_{D^3}\leq & C(|\phi_0^{-\iota}|_\infty |g_4|_2+|G(\phi_0,\psi_0,u_0)|_{D^1})\leq C_1<\infty,\\
						|\phi_0^{3\iota}\nabla^3u_0|_2\leq & C\big(|\phi_0^{3\iota} u_0|_{D^3}+|\nabla\psi_0|_4|\phi_0^\iota\nabla u_0|_4\\&+|\nabla\phi_0^\iota|_4(|\psi_0|_\infty|\nabla u_0|_4+|\nabla \psi_0|_4|u_0|_\infty)\\&+|\psi_0|^2_\infty|\phi_0^{-3\iota}|_\infty| u_0|_2+|\psi_0|_\infty|\phi_0^\iota\nabla^2 u_0|_2\\
						&+|\nabla^2\psi_0|_2|\phi_0^\iota u_0|_\infty\big)\leq C_1.
					\end{split}
				\end{equation}
				
				Similarly, the definition of $g_6$ and $\phi_0>\epsilon$ imply that
				\begin{equation*}
					 \begin{array}{l}
						\Delta\big(\phi_0^{2 \iota}(l_0-\bar{l})\big)=g_6+2 \nabla\phi_0^{2 \iota} \cdot \nabla l_0+(l_0-\bar{l}) \Delta\phi_0^{2 \iota}, \\
						\phi_0^{2 \iota}(l_0-\bar{l}) \longrightarrow 0 \text { as }|x| \longrightarrow \infty,
					\end{array}
				\end{equation*}
				which, together with the standard  elliptic theory and $\ef{2.14}$, yields that
				\begin{equation*}
					\begin{aligned}
						|\phi_0^{2 \iota}(l_0-\bar{l})|_{D^2} \leq & C\big(|g_6|_2+|\psi_0|_{\infty}|\nabla l_0|_2 +|l_0-\bar{l}|_{\infty}|\nabla^2 h_0|_2\big) \leq C_1<\infty, \\
						|\phi_0^{2 \iota} \nabla^2 l_0|_2 \leq & C\big(|\phi_0^{2 \iota}(l_0-\bar{l})|_{D^2}+|\psi_0|_{\infty}|\nabla l_0|_2 \\
						& +\left|l_0-\bar{l}\right|_{\infty}|\nabla^2 h_0|_2\big) \leq C_1<\infty .
					\end{aligned}
				\end{equation*}
				
				At last, since
				$
				\nabla(\phi_0^{2\iota} \Delta l_0)=\phi_0^{- \iota} g_7 \in L^2
				$, 
				thus formally
				\begin{equation*}\label{incom2}
					 \begin{array}{c}
						\Delta\big(\phi_0^{3 \iota}(l_0-\bar{l})\big)=\Delta^{-1} {\rm{div}}(g_7+\nabla\phi_0^{\iota} \cdot\phi_0^{2\iota} \Delta l_0)
					 +2 \nabla\phi_0^{3 \iota} \cdot \nabla l_0\\
                     +(l_0-\bar{l}) \Delta\phi_0^{3 \iota},\qquad \qquad\qquad\\
						\phi_0^{3 \iota}(l_0-\bar{l}) \longrightarrow  0 \text { as }\quad |x| \longrightarrow \infty,\qquad\qquad\qquad\qquad
					\end{array} 
				\end{equation*}
				which implies
				\begin{equation*}
					\begin{aligned}
						|\phi_0^{3 \iota}(l_0-\bar{l})|_{D^3}& \leq C(|g_7|_2+|\psi_0|_{\infty}|(h_0)^{-1}|_{\infty}^{\frac{1}{2}}|g_6|_2+\aleph) \leq C_1, \\
						|\phi_0^{3 \iota} \nabla^3 l_0|_2 &\leq C(|\phi_0^{3 \iota}(l_0-\bar{l})|_{D^3}+\aleph) \leq C_1,
					\end{aligned}
				\end{equation*}
				where
				$$
				\begin{aligned}
					\aleph= & |\psi_0|_{\infty}|\phi_0^{ \iota} \nabla^2 l_0|_2+|\nabla l_0|_4|\psi_0|_{\infty}|\nabla\phi_0^{\iota}|_4+|\phi_0^{ \iota}\nabla l_0|_4|\nabla^2\phi_0^{2 \iota}|_4 \\
					& +|l_0-\bar{l}|_{\infty}(|(h_0)^{\frac{1}{4}} \nabla^3 h_0|_2+|\nabla\phi_0^{\iota}|_4|\nabla^2\phi_0^{2 \iota}|_4+|\phi_0^{-\iota}|_\infty|\psi_0|_{\infty}|\nabla\phi_0^{ \iota}|_4^2) .
				\end{aligned}
				$$
             
                The proof of Lemma \ref{r1} is complete.
                 \end{proof}

                 \begin{lemma}\label{inin} It holds that 
\begin{equation*}
    \limsup _{\tau \rightarrow 0} \big(|w^{-\frac{\nu}{2}}  l_t(\tau)|_2+|h^{\frac{1}{2}} \nabla l_t(\tau)|_2+|h^\frac12u_t(\tau)|_2 +|h \nabla u_t(\tau)|_2\big) \le M(c_0).
\end{equation*}

\begin{proof}
                 First,   $(3.1)_3$ gives
				\begin{equation*}
					\begin{aligned}
						|w^{-\frac\nu2}l_t(\tau)|_2\le&\big(|w^{-\frac\nu2}\big(-v\cdot\nabla l+a_4w^\nu h\Delta l\\&+a_5w^\nu ng^2H(v)+a_6w^{\nu+1}\text{div}\psi+\Pi(l,h,w,g)\big)|_2\big)(\tau),
					\end{aligned}
				\end{equation*}
				then Lemmas \ref{ls}-\ref{r1} and \ef{2.14}  imply that
				\begin{equation}\label{in1}
					\begin{aligned}
						& \limsup _{\tau \rightarrow 0}|w^{-\frac{\nu}{2}}  l_t(\tau)|_2 \\
						\leq & C\big(|l_0^{-\frac{\nu}{2}}  u_0 \cdot \nabla l_0|_2+|l_0^{\frac{\nu}{2}} h_0 \Delta l_0|_2 +|l_0^{\frac{\nu}{2}}  h_0^b h_0^{2} H(u_0)|_2 \\
						&+|l_0^{1+\frac{\nu}{2}}  \text{div} \psi_0|_2+|l_0^{-\frac{\nu}{2}}  \Pi(l_0, h_0, w_0, g_0)|_2\big) \\
						\leq & C|l_0^{\frac{\nu}{2}}|_{\infty}(|l_0^{-\nu}|_{\infty}|u_0|_{\infty}|\nabla l_0|_2+|g_6|_2\\
						& +|\phi_0^{2 b \iota}|_{\infty}|\phi_0^{2\iota} \nabla u_0|_4^2)+C|l_0^{1+\frac{\nu}{2}}|_{\infty}(|\nabla^2 \phi_0^{2 \iota}|_2+|\nabla \phi_0^{ \iota}|_4^2) \\
						& +C|l_0^{\frac{\nu}{2}}|_{\infty}|\nabla \phi_0^{ \iota}|_4|\phi_0^{\iota}\nabla l_0|_4+C|l_0^{\frac{\nu}{2}-1}|_{\infty}|\phi_0^{\iota} \nabla l_0|_4^2 \leq M(c_0).
					\end{aligned}
				\end{equation}
                
Moreover, $(3.1)_3$ gives
				\begin{equation*}
					\begin{aligned}
						|h^{\frac{1}{2}} \nabla l_t(\tau)|_2 \leq & \mid h^{\frac{1}{2}} \nabla\big(-v \cdot \nabla l+a_4 w^\nu h \Delta l \\
						& +a_5 w^\nu n g^{2} H(v)+a_6 w^{\nu+1} \text{div} \psi+\Pi\big)|_2(\tau),
					\end{aligned}
				\end{equation*}
				thus \ef{4.1*}, \ef{2.14} and Lemmas \ref{ls}-\ref{r1} imply that
					\begin{align*}
						& \limsup _{\tau \rightarrow 0}|h^{\frac{1}{2}} \nabla l_t(\tau)|_2 \\
						\leq & C\big(|h_0^{\frac{1}{2}} \nabla\left(u_0 \cdot \nabla l_0\right)|_2+|h_0^{\frac{1}{2}} \nabla(l_0^\nu h_0 \Delta l_0)|_2 \\
						& +\mid h_0^{\frac{1}{2}} \nabla\big(l_0^\nu h_0^b h_0^2 H(u_0)\big)|_2+|h_0^{\frac{1}{2}} \nabla(l_0^{\nu+1} \text{div} \psi_0)|_2+|h_0^{\frac{1}{2}} \nabla\Pi_0|_2\big) \\
						\leq & C\big(|u_0|_{\infty}|\phi_0^{\iota} \nabla^2 l_0|_2+|\nabla u_0|_{\infty}|\phi_0^{\iota} \nabla l_0|_2+|l_0^\nu|_{\infty}|\phi_0^{3 \iota} \nabla^3 l_0|_2 \\
						& +|\phi_0^{\iota}\nabla l_0^\nu|_4|\phi_0^{2 \iota} \nabla^2 l_0|_4+|l_0^\nu|_{\infty}|\psi_0|_{\infty}|\phi_0^{-\iota}|_{\infty}|\phi_0^{2 \iota} \nabla^2 l_0|_2 \\
						& +|\phi_0^{2 b \iota}|_{\infty}(|l_0^{\nu-1}|_{\infty}|\phi_0^{\iota} \nabla l_0|_\infty|\phi_0^{2 \iota} \nabla u_0|_4^2+|l_0^\nu|_{\infty}|\phi_0^{2 \iota} \nabla u_0|_4|\phi_0^{3 \iota} \nabla^2 u_0|_4) \\
						& +|l_0^\nu|_{\infty}(|\phi_0^{(2 b-1) \iota}|_{\infty}|\nabla \phi_0^{2 \iota}|_\infty|\phi_0^{2 \iota} \nabla u_0|_4^2+|\nabla \psi_0|_4|\phi_0^{\iota} \nabla l_0|_4) \\
						& +|l_0^{\nu+1}|_{\infty}|\phi_0^{\iota} \nabla^2 \psi_0|_2+|l_0^\nu|_{\infty}|\phi_0^{-2 \iota}|_{\infty}|\psi_0|_{\infty}^2|\phi_0^{\iota} \nabla l_0|_2 \\
						& +|l_0^{\nu+1}|_{\infty}(|\phi_0^{- \iota}|_{\infty}|\nabla \phi_0^{ \iota}|_4^2|\nabla \phi_0^{2 \iota}|_\infty+|\nabla \phi_0^{ \iota}|_4|\nabla \psi_0|_4) \\
						& +|l_0^{\nu-1}|_{\infty}|\phi_0^{\iota} \nabla l_0|_2|\nabla l_0|_{\infty}|\psi_0|_{\infty}+|l_0^{\nu-2}|_{\infty}|\phi_0^{\iota}\nabla l_0|_{\infty}|\phi_0^{ \iota} \nabla l_0|_4^2 \\
						& +|l_0^\nu|_{\infty}(|\psi_0|_{\infty}|\phi_0^{\iota} \nabla^2 l_0|_2+|\nabla \psi_0|_4|\phi_0^{\iota} \nabla l_0|_4) \\
						& +|l_0^{\nu-1}|_{\infty}(|\psi_0|_{\infty}|\phi_0^{\iota} \nabla l_0|_2|\nabla l_0|_{\infty}+|\phi_0^{ \iota}\nabla l_0|_4|\phi_0^{2 \iota} \nabla^2 l_0|_4)\big) \leq M(c_0). 
					\end{align*}
                    
            Second,  $(3.1)_2$ gives
				\begin{equation*}
					\begin{aligned}
						|h^\frac12u_t(\tau)|_2  \leq & |h^\frac12\mathcal{K}(\tau)|_2 \\\leq & C(|v|_{\infty}|h^\frac12\nabla v|_2+|\phi|_{\infty}|h^\frac12\nabla l|_2+|h^\frac12\nabla \phi|_2|l|_{\infty} 
						 +|l^\nu|_{\infty}|h^\frac32 L u|_2\\&+|l^{\nu-1}|_{\infty}|g \nabla v|_{\infty}|h^\frac12\nabla l|_2+|\psi|_{\infty}|l^\nu|_{\infty}|h^\frac12\nabla v|_2)(\tau),
					\end{aligned}
				\end{equation*}
                where $\mathcal{K}=  v \cdot \nabla v+a_1 \phi \nabla l+l \nabla \phi+a_2 h l^\nu L u-a_2 \nabla l^\nu \cdot g Q(v) -a_3 l^\nu \psi \cdot Q(v)$, thus \ef{4.1*}, \ef{2.14}, \ef{incc}, \ef{incc*} and Lemma \ref{ls} imply that
				\begin{equation*}
				    \begin{aligned}
					\limsup _{\tau \rightarrow 0}|h^\frac12u_t(\tau)|_2 \leq & C(|u_0|_{\infty}\left|\phi_0^{ \iota}\nabla u_0\right|_2+\left|\phi_0\right|_{\infty}\left|\phi_0^{ \iota}\nabla l_0\right|_2+\left|\phi_0^{ \iota}\nabla \phi_0\right|_2\left|l_0\right|_{\infty}+\left|l_0^\nu\right|_{\infty}\left|g_3\right|_2 \\
					& +\left|\psi_0\right|_{\infty}|l_0^\nu|_{\infty}\left|\phi_0^{ \iota}\nabla u_0\right|_2+|l_0^{\nu-1}|_{\infty}|\phi_0^{2 \iota} \nabla u_0|_{\infty}\left|\phi_0^{ \iota}\nabla l_0\right|_2) \leq M(c_0).
				\end{aligned}
				\end{equation*}

                 At last, according to $h_0 l_0^\nu(h \nabla L u_0+ L u_0 \otimes \nabla h_0) 
					=  l_0^\nu g_4$, 
				$\ef{ln}_2$ \ef{4.1*}, \ef{2.14}, and Lemmas \ref{ls}-\ref{r1} that
				\begin{equation}\label{in4}
					\begin{aligned}
						& \limsup _{\tau \rightarrow 0}|h \nabla u_t(\tau)|_2 \leq \limsup _{\tau \rightarrow 0}|h \nabla \mathcal{K}(\tau)|_2 \\
						\leq & C\big(| u_0|_4|\phi_0^{2\iota}\nabla^2 u_0|_4+|\nabla u_0|_{\infty}|\phi_0^{2\iota} \nabla u_0|_2+|l_0|_{\infty}|\phi_0^{2\iota} \nabla^2 \phi_0|_2 \\
						& +| \phi_0^{2\iota+1}\nabla^2 l_0|_2+|\phi_0^\iota \nabla l_0|_4|\phi_0^\iota \nabla \phi_0|_4+|l_0^\nu|_{\infty}(|\nabla \psi_0|_4|\phi_0^{2\iota} \nabla u_0|_4 \\
						& +|\psi_0|_{\infty}|\phi_0^{2\iota} \nabla^2 u_0|_2)+|l_0^{\nu-1}|_{\infty}|\psi_0|_{\infty}|\phi_0^{2\iota} \nabla u_0|_4|\nabla l_0|_4 \\
						& +|l_0^\nu|_{\infty}|g_4|_2+|l_0^{\nu-1}|_{\infty}|h_0^{\frac{3}{2}} L u_0|_4|h_0^{\frac{1}{2}}\nabla l_0|_4 +|h_0\nabla^2 l_0^\nu|_2|h_0 \nabla u_0|_{\infty} \\
						& +|h_0^{\frac{1}{2}} \nabla l_0^\nu|_4(|h_0^{\frac{3}{2}} \nabla^2 u_0|_4+|\psi_0|_{\infty}|h_0^{\frac{1}{2}}\nabla u_0|_4)\big) \leq M(c_0). 
					\end{aligned}
				\end{equation}
				
            The proof of Lemma \ref{inin} is complete.
                \end{proof}
                 \end{lemma}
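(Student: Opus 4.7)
The plan is to read off $l_t$ and $u_t$ directly from equations $(3.1)_3$ and $(3.1)_2$ and evaluate at $t=0$, where $(\phi,u,l,h,\psi)$ and $(v,g,w)$ all reduce to the initial data $(\phi_0,u_0,l_0,h_0,\psi_0)$. This converts the four quantities into explicit algebraic combinations of the initial data, which can then be bounded using the bounds \eqref{2.14} on the initial norms, the compatibility conditions \eqref{2.8*} and the higher-order initial regularities proved in Lemma \ref{r1}.

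First I would treat $|w^{-\nu/2} l_t(\tau)|_2$ and $|h^{1/2} u_t(\tau)|_2$. For these no spatial derivative is applied to $l_t$ or $u_t$, so each source term is already $L^2$: the transport terms $v\cdot\nabla l$ and $v\cdot\nabla v$ are bounded by $L^\infty \times L^2$; the pressure-type terms $\phi\nabla l$, $l\nabla\phi$ and the heat-conduction term $w^\nu h\Delta l$ involve at most one derivative of the initial data together with the weight that makes them $L^2$; the dissipation term $h l^\nu L u_0$ in the momentum equation is the one that requires the compatibility condition $L u_0 = \phi_0^{-3\iota} g_3$, so $|h_0^{1/2} h_0 L u_0|_2 \le C|h_0^{1/2}\phi_0^{-\iota}|_\infty|g_3|_2$, which is finite; the quadratic viscous source $w^\nu n g^2 H(v)$ is controlled by $|\phi_0^{2\iota}\nabla u_0|_4^2$ together with the Gagliardo–Nirenberg inequalities of Lemma \ref{lem2as}; the pieces of $\Pi$ are bounded using $|\psi_0|_\infty$, $|\phi_0^\iota\nabla l_0|_4$ and the weight arithmetic in \eqref{2.14}.

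Next, for $|h^{1/2}\nabla l_t(\tau)|_2$ and $|h\nabla u_t(\tau)|_2$, I would differentiate the right-hand sides of $(3.1)_2$ and $(3.1)_3$ and use the Leibniz rule. The terms with an extra derivative hitting the coefficients (e.g.\ $\nabla(w^\nu n g^2)H(v)$, $\nabla(l^\nu h)Lu$) are handled by $|\nabla\psi_0|_4$, $|\nabla\phi_0^\iota|_4$, $|\phi_0^{2\iota}\nabla u_0|_4$ from \eqref{2.14}. The terms where the derivative lands on the highest-order quantity require Lemma \ref{r1}: for instance $h_0^{1/2}\nabla(w^\nu h\Delta l)$ produces $|h_0^{3/2}\nabla^3 l_0|_2 = |\phi_0^{3\iota}\nabla^3 l_0|_2 \le \hat{C}$, and the viscous term gives $|\phi_0^{3\iota}\nabla^2 u_0|_2$, both controlled in Lemma \ref{r1}.

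The main obstacle is the $h\nabla(hl^\nu Lu)$ piece in the estimate of $|h\nabla u_t|_2$: directly estimating $|h^2 \nabla Lu_0|_2$ would require $|\phi_0^{4\iota}\nabla^3 u_0|_2$, which is not among the available bounds. The right move is to rewrite $h_0\nabla(h_0 l_0^\nu Lu_0) = l_0^\nu \bigl(h_0^2 \nabla Lu_0 + h_0 Lu_0\otimes\nabla h_0\bigr) + h_0^2 Lu_0 \nabla l_0^\nu$ and observe that $h_0^2\nabla Lu_0 + h_0 Lu_0\otimes\nabla h_0 = h_0\nabla(h_0 Lu_0)$, so this whole block reduces (up to factors of $l_0^\nu$ and lower-order terms) to $|\phi_0^\iota\nabla(\phi_0^{3\iota}Lu_0)|_2 = |g_4|_2$ via compatibility \eqref{2.8*}. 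The analogous identity for $h^{1/2}\nabla(w^\nu h\Delta l)$ reduces to $|g_7|_2$ through the compatibility on $\nabla(\phi_0^{2\iota}\Delta l_0)$. Once these ``dangerous'' terms are absorbed by the $g_i$'s, every remaining term is a product of quantities already controlled by $c_0$, Lemma \ref{r1}, and Lemma \ref{lem2as}, and the four $\limsup$ bounds follow.
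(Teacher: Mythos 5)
Your proposal is correct and follows essentially the same route as the paper: evaluate $(3.1)_2$--$(3.1)_3$ at $\tau=0$, estimate term by term, and absorb the highest-order pieces via the compatibility conditions (the paper invokes $g_3$, $g_4$ directly and routes the $\nabla^3 l_0$ term through Lemma~\ref{r1}, which itself relies on $g_7$, whereas you point out the equally valid direct identity $h_0^{1/2}\nabla(h_0\Delta l_0)=g_7$). One small slip: in your bound for the dissipation term in $|h^{1/2}u_t|_2$ you insert the factor $|h_0^{1/2}\phi_0^{-\iota}|_\infty$, which is identically $1$ since $h_0^{1/2}=\phi_0^\iota$; the correct reduction is simply $|h_0^{3/2}Lu_0|_2=|g_3|_2$.
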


			\subsection{Uniform a priori estimates}

			\subsubsection{The estimates on the density-related variables}

			Let
			$$ 
			\varphi=h^{-1},\quad \zeta=\nabla h^\frac{1}{2},\quad 
			n=(ah)^b \quad \text{and}\quad \psi=\frac{a\delta}{\delta-1}\nabla h.$$ 
			\begin{lemma}\label{gh}
				Set $T_1=\min\{T^*,(1+Cc_0^2c_4)^{-1}\}$. For $0\leq t\leq T_1$, one has  
				\begin{equation}\label{g/h}
					\begin{aligned}
						h>\fr{1}{2c_0}, \quad \fr{2}{3}\epsilon^{-2\iota}<\varphi <  2c_0,\quad  \tilde{C}^{-1}\leq gh^{-1}\leq  \tilde{C},&
					\end{aligned}
				\end{equation}
				where $\tilde{C}$ is a suitable constant independent of $ \epsilon$ and $ c_i (i=0,\cdots, 5)$.
			\end{lemma}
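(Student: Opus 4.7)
\medskip

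\noindent\textbf{Proof strategy for Lemma 5.3.}

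The plan is to extract from $\eqref{ln}_1$ a transport equation for $h$ (and for its reciprocal $\varphi=h^{-1}$), solve it along characteristics to get a pointwise exponential formula, then pick $T_1$ small enough that the exponent is bounded. A direct calculation using $h=\phi^{2\iota}$ and $\eqref{ln}_1$ gives
\[
h_t+v\cdot\nabla h+(\delta-1)h\,\mathrm{div}\,v=0,\qquad \varphi_t+v\cdot\nabla\varphi+(1-\delta)\varphi\,\mathrm{div}\,v=0,
\]
since $2\iota(\gamma-1)=\delta-1$. Along the flow $\dot X(t,x)=v(t,X(t,x))$, $X(0,x)=x$, integration yields
\[
h(t,X(t,x))=h_0(x)\exp\!\Big((1-\delta)\!\int_0^t\mathrm{div}\,v(s,X(s,x))\,ds\Big),
\]
and the analogous formula for $\varphi$ with the opposite sign.

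Next, I would bound the exponent by $(1-\delta)\int_0^t|\mathrm{div}\,v|_\infty\,ds$. The 2D Gagliardo--Nirenberg inequalities in Lemma 2.1 (for example $|f|_\infty\le C|\nabla^2 f|_2^{1/3}|f|_4^{2/3}$ applied to $f=\nabla v$) combined with \eqref{801} give $|\mathrm{div}\,v|_\infty\le C\|v\|_{H^3}\le Cc_4$ uniformly in $t\in[0,T^*]$, hence $(1-\delta)\int_0^t|\mathrm{div}\,v|_\infty\,ds\le Ctc_4$. The initial-data information in \eqref{2.14} gives $h_0\ge c_0^{-1}$ (from $|h_0^{-1}|_\infty\le c_0$), and the relation $\phi_0+\epsilon\ge\epsilon$ together with $-2\iota>0$ yields $h_0(x)\le\epsilon^{2\iota}$, i.e.\ $\varphi_0(x)\ge\epsilon^{-2\iota}$ pointwise, while $\varphi_0\le c_0$. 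Choosing $T_1=\min\{T^*,(1+Cc_0^2c_4)^{-1}\}$ with the constant $C$ sharp enough makes the exponent at most $\log(3/2)$, so the pointwise formulas give at once $h\ge\tfrac1{2c_0}$ and $\tfrac23\epsilon^{-2\iota}\le\varphi\le 2c_0$.

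For the equivalence $\tilde C^{-1}\le gh^{-1}\le\tilde C$ with a constant independent of $\epsilon$ and $c_i$, I would exploit the fact that $g$ and $h$ share the initial datum $h_0$, so $(gh^{-1})|_{t=0}\equiv 1$. Writing $\partial_t\log(gh^{-1})=g_t/g-h_t/h$ and using the transport equation for $h$ together with the regularity $g_t\in C([0,T];H^2)$, $\inf g\ge c_1^{-1}$ and $\nabla g\in L^\infty$ from \eqref{4.1*}--\eqref{801}, one can estimate $|\partial_t\log(gh^{-1})|_\infty$ by an $M(c_0,\dots,c_5)$-quantity; integrating and shrinking $T_1$ further if necessary controls $|\log(gh^{-1})|$ by a universal constant. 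The main obstacle is precisely this last step: because $\tilde C$ must be independent of the $c_i$, the naive bound $gh^{-1}\le|g|_\infty\varphi$ is useless, and one has to track the drift $g-h$ (which starts at zero) rather than $g$ and $h$ separately, leveraging the shared initial condition and the short time window $T_1$ already fixed by the transport analysis for $h,\varphi$.
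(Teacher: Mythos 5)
Your transport-along-characteristics argument for the bounds on $h$ and $\varphi=h^{-1}$ is correct and follows the paper: the exponent $(1-\delta)\int_0^t\text{div}\,v\,\mathrm{d}s$ is bounded by $Cc_4 t$ via \eqref{801}, and the initial bounds $|\varphi_0|_\infty\le c_0$ and $\varphi_0\ge\epsilon^{-2\iota}$ (from $\phi_0+\epsilon\ge\epsilon$ and $2\iota<0$) then give the first two assertions for $t\le T_1$.

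The equivalence $\tilde C^{-1}\le gh^{-1}\le\tilde C$, however, has a genuine gap. You write $\partial_t\log(gh^{-1})=g_t/g-h_t/h$ and assert an $M(c_0,\dots,c_5)$-bound on it "using the transport equation for $h$," but substituting $h_t/h=-v\cdot\nabla h\,h^{-1}-(\delta-1)\text{div}\,v$ leaves you needing $|h^{-1}\nabla h|_\infty$, which is \emph{not} controlled at this stage: the $L^\infty$ bound on $\psi=\frac{a\delta}{\delta-1}\nabla h$ is Lemma \ref{psi}, proved \emph{after} Lemma \ref{gh} and using it. "Tracking the drift $g-h$" does not repair this — the shared initial datum controls $\log(gh^{-1})$ at $t=0$, not its growth rate.

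The paper closes this exactly by introducing the auxiliary quantity $h^{-1}\psi$ and observing that it satisfies the closed transport equation
\[
(h^{-1}\psi)_t + v\cdot\nabla(h^{-1}\psi) + (\nabla v)^{\top}h^{-1}\psi + a\delta\,\nabla\text{div}\,v = 0,
\]
whose source $\nabla\text{div}\,v$ is controlled by \eqref{801}, so solving along characteristics gives $|h^{-1}\psi|_\infty\le Cc_0^2$ on $[0,T_1]$ from initial data alone. This yields $|h_t h^{-1}|_\infty\le C(|v|_\infty|h^{-1}\psi|_\infty+|\nabla v|_\infty)\le Cc_0^2c_4$ — which, incidentally, is where the $c_0^2$ in the statement's $T_1$ actually comes from, so $T_1$ is \emph{not} "already fixed by the transport analysis for $h,\varphi$" as you claim — and the linear ODE $y_t+y\,h_th^{-1}=g_th^{-1}$, $y(0)=1$, can then be solved and bounded explicitly. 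This device is highlighted in \S 3.2 as the paper's main technical novelty over \cite{DXZ2}.

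As an aside, a cleaner route exists very close to your sketch: take the \emph{material} rather than the Eulerian time derivative. Since $(h_t+v\cdot\nabla h)/h=-(\delta-1)\text{div}\,v$, the problematic $v\cdot\nabla h/h$ term cancels and
\[
\tfrac{D}{Dt}\log(gh^{-1}) = \frac{g_t+v\cdot\nabla g}{g} + (\delta-1)\,\text{div}\,v,
\]
which is bounded in $L^\infty$ by $M(c_1,c_4)$ using only \eqref{801}, with $\log(gh^{-1})|_{t=0}\equiv 0$. Either the $h^{-1}\psi$ transport estimate or this cancellation must be made explicit for your proposal to constitute a proof.
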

			\begin{proof}
				First,  {\emph{estimates on $\varphi=h^{-1}$.}} 
				Note that 
				\[
				\varphi_t+v\cdot \nabla \varphi +(1-\delta)  \varphi \text{div} v=0,
				\]
				then along the particle path $\mathbb{X}(t;x)$ 
				\begin{equation}
						\label{2.34b}
						\displaystyle \frac{\text{d}}{\text{d}s}\mathbb{X}(t;x)=v\big(s,\mathbb{X}(t; x)\big),\quad  0\leq t\leq T;\quad
						\mathbb{X}(0;x)=x,  \quad   x\in \mathbb{R}^2,
				\end{equation}
				one has
				\begin{equation*}
					\begin{split}
						\displaystyle
						\varphi\big(t,\mathbb{X}(t;x)\big)=\varphi_0(x)\exp\Big((\delta-1)\int_0^t  \text{div}v \big(s,\mathbb{X}(s;x)\big)\text{d}s\Big).
					\end{split}
				\end{equation*}
				This, along with  \eqref{801}, implies that 
				\begin{equation}\label{2.34d}
					\begin{split}
						\displaystyle
						\frac{2}{3}\epsilon^{-2\iota}<\varphi(t,x)<2|\varphi_0|_\infty\leq 2c_0\quad \text{for} \quad  (t,x)\in [0,T_1]\times \mathbb{R}^2,
					\end{split}
				\end{equation}
				and 
				\[
				h>\frac{1}{2c_0}\quad \text{for} \quad  0\le t\le T_1.
				\]

				Before moving to prove the equivalence of $g$ and $h$ in short time, we first give the estimate of $h^{-1}\nabla h $, i.e. $h^{-1}\psi$.
				Notice that
				\[
				(h^{-1}\psi )_t +   v\cdot \nabla ( h^{-1}\psi) +  (\nabla v)^{\top}  h^{-1}\psi   +a\delta \nabla \text{div}v=0,
				\]
				along with the particle path \eqref{2.34b}, one has
				\begin{equation}
					\begin{split}
						(h^{-1}\psi) \big(t, &\mathbb{X}(t;x)\big)=\exp \left(-\int_0^t (\nabla v)^{\top} \big(s, \mathbb{X}(s; x)\big) \text{d}s \right) \\
						&  \cdot\left(\frac{a\delta}{\delta-1}h_0^{-1}\nabla h_0-a\delta \int_0^t \nabla \text{div} v \exp \int_0^{s} (\nabla v)^{\top} \text{d}\tau \big(s, \mathbb{X}(s;x)\big) \text{d}s  \right),
					\end{split}
				\end{equation}
				this together with \eqref{801} implies that
				\begin{equation}\label{h1p}
					|h^{-1}\psi|_\infty\le C c_0^2\quad \text{for} \quad 0\le t\le T_1.
				\end{equation}
				
				Second, 
				denoting $gh^{-1}=y(t,x)$, then 
				\begin{equation*}
					y_t+y h_t h^{-1} =g_t h^{-1},\quad y(0,x)=1,
				\end{equation*}
			thus
				\begin{equation}\label{eqhg}
					y(t, x)=\exp\left(-\int^t_0 h_s h^{-1} \text{d}s\right)\left(1+\int^t_0 g_s h^{-1} \exp\big(\int^s_0 h_\tau h^{-1}  \text{d}\tau\big)\text{d}s\right),
				\end{equation}
				where  
				\[
				h_t h^{-1} =- v\cdot \nabla h h^{-1}- (\delta-1)  \text{div}v,
				\]
				and by  \eqref{801}, \eqref{h1p}, it holds that
				\begin{equation}\label{mhh1}
					\begin{split}
						|h_t h^{-1}|_\infty 
						\le & C\big( |v|_\infty |h^{-1} \nabla h|_\infty  + |\nabla v|_{\infty}\big)
						\le Cc_0^2c_4,
					\end{split}
				\end{equation}
				this, together with \eqref{801}, \eqref{2.34d} and \eqref{eqhg},
				implies that
				\begin{equation}\label{djx}
					  \tilde{C}^{-1}\leq hg^{-1}(t)\leq \tilde{C}
				\end{equation}
				for $ 0\le t\le T_1=\min\{T^*,(1+Cc_0^2c_4)^{-1}\}$.
						Moreover,
\begin{equation*}\label{g-1}
    |g^{-1}|_\infty=|hg^{-1}\varphi|_\infty\le \tilde{C}|\varphi|_\infty\le Cc_0.
\end{equation*}

Based on \eqref{djx}, we will not distinguish $h$ and $g$ in the rest of the a priori estimates.
				The proof of Lemma \ref{gh} is complete.
                
			\end{proof}

			\subsubsection{Estimates on $\phi$.} 
			
			\begin{lemma}\label{phiphii}
				Set $T_2=\min\{T_1,(1+Cc_4)^{-2}\}$. Then for $ t\in [0, T_2]$, one has
				\begin{equation*} \begin{aligned}
						&|\phi|_\infty+|h^\frac12\nabla \phi|_2+\|\phi-\epsilon\|_{L^p\cap D^1 \cap D^3}\leq Cc_0,\ \ |h\nabla^2 \phi|_2\leq Cc_0^2,\ \ |h^\frac12\phi_t|_2\leq Cc_0^\frac32c_2,\\
						&| h\nabla\phi_t|_{2}\leq Cc_0^2c_3,\quad| \phi_t|_{D^2}\leq Cc_0c_4,\quad|\phi_{tt}|_2\leq Cc_0c_4^2,\quad\int^t_0\|\phi_{ss}\|^2_1 {\rm{d}}s\leq Cc_0^2c_4^2.
				\end{aligned}\end{equation*}
			\end{lemma}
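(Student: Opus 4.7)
The strategy is to treat $\phi$ via its transport equation $\ef{ln}_1$ and its spatial/temporal derivatives, exploiting the uniform control of $v$ encoded in the constants $c_1,\dots,c_4$ from \ef{801}, together with the bounds on $h=\phi^{2\iota}$ already established in Lemma \ref{gh}. The pointwise bound $|\phi|_\infty\le Cc_0$ follows immediately from the particle-path representation
\begin{equation*}
\phi\big(t,\mathbb{X}(t;x)\big)=\phi_0(x)\exp\Big(-(\gamma-1)\int_0^t\mathrm{div}\,v\big(s,\mathbb{X}(s;x)\big)\,\mathrm{d}s\Big)
\end{equation*}
together with $|\nabla v|_\infty\le Cc_4$ and a short-time choice of $T_2$. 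The Sobolev estimates $\|\phi-\epsilon\|_{L^p\cap D^1\cap D^3}\le Cc_0$ will be obtained by differentiating $\ef{ln}_1$ up to three times in space, multiplying by suitable test functions, and applying Gronwall, with source terms controlled by the Sobolev norms of $v$ in \ef{801}.

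The crucial new ingredient is the singular-weighted estimate $|h^{1/2}\nabla\phi|_2=|\phi^\iota\nabla\phi|_2$. Using the identity $\phi^\iota\nabla\phi=(1+\iota)^{-1}\nabla\phi^{1+\iota}$ and the transport equation
\begin{equation*}
\partial_t\phi^{1+\iota}+v\cdot\nabla\phi^{1+\iota}+(1+\iota)(\gamma-1)\phi^{1+\iota}\,\mathrm{div}\,v=0,
\end{equation*}
obtained by raising $\ef{ln}_1$ to the appropriate power, a direct energy estimate on $\nabla\phi^{1+\iota}$ closes; the initial datum is supplied by the hypothesis $\nabla\phi_0^{\iota+1}\in L^2$ from \eqref{a}. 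For $|h\nabla^2\phi|_2$, the cleanest route is to apply $\nabla^2$ to $\ef{ln}_1$, test against $h^2\nabla^2\phi$, and use the $h$-evolution $h_t+v\cdot\nabla h+(\delta-1)h\,\mathrm{div}\,v=0$ together with integration by parts to handle the $(h^2)_t\,|\nabla^2\phi|^2$ term, rewriting it as transport of the weight and absorbing a factor of $\mathrm{div}\,v$.

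For the time-derivative bounds I will read $\phi_t=-v\cdot\nabla\phi-(\gamma-1)\phi\,\mathrm{div}\,v$ directly from $\ef{ln}_1$, so $|h^{1/2}\phi_t|_2$, $|h\nabla\phi_t|_2$ and $|\phi_t|_{D^2}$ follow from the corresponding spatial bounds on $\phi,\nabla\phi,\nabla^2\phi$ combined with the Sobolev bounds on $v$ in \ef{801}. Differentiating once more in time gives $\phi_{tt}=-v_t\cdot\nabla\phi-v\cdot\nabla\phi_t-(\gamma-1)(\phi_t\,\mathrm{div}\,v+\phi\,\mathrm{div}\,v_t)$, and the a priori control of $v_t$ from \ef{4.1*} and \ef{801} yields $|\phi_{tt}|_2\le Cc_0c_4^2$; the integrated bound $\int_0^t\|\phi_{ss}\|_1^2\,\mathrm{d}s\le Cc_0^2c_4^2$ follows by one further spatial differentiation and an $L^2_t$ energy estimate.

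The main obstacle is the singular-weighted second-derivative estimate $|h\nabla^2\phi|_2$: since $h=\phi^{2\iota}$ blows up as $\phi\to 0$, it is not a standard transport bound, and one must carefully handle the time derivative of the weight in the energy identity while ensuring the resulting Gronwall exponential involving $|\nabla v|_\infty$ stays $O(1)$ on the short time $T_2=\min\{T_1,(1+Cc_4)^{-2}\}$. Once this weighted estimate is in place, all subsequent singular-weighted estimates for $\nabla\phi_t$ and the time-integrated bounds close naturally with the stated powers of $c_0,\dots,c_4$.
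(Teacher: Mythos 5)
Your proposal is correct and takes essentially the same approach as the paper: Gronwall on the transport equation for $\phi$ and its spatial derivatives, using the transport evolution of the weight $h=\phi^{2\iota}$ (and of $h^{1/2}=\phi^\iota$) to close the singular-weighted estimates, and direct substitution from $\eqref{ln}_1$ for the time-derivative bounds. Your reformulation of $|h^{1/2}\nabla\phi|_2$ via the transported quantity $\phi^{1+\iota}$ (so $h^{1/2}\nabla\phi=(1+\iota)^{-1}\nabla\phi^{1+\iota}$) and the particle-path argument for $|\phi|_\infty$ are only cosmetic variants of the paper's energy estimate on $h^{1/2}\nabla\phi$ and its Sobolev-embedding bound from $\|\phi-\epsilon\|_{L^p\cap D^1\cap D^3}$.
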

			\begin{proof} First, it follows from $\eqref{ln}_1$
            and $\eqref{801}$, one has
				%
				  
				\begin{equation}\label{2093}\begin{aligned}
						\|\phi-\epsilon\|_{L^p\cap D^1\cap D^3}\leq &C\Big(\|\phi_0-\epsilon\|_{L^p\cap D^1 \cap D^3}+\int^t_0\|\nabla v\|_3\text{d}s\Big)\\
						&\cdot \exp\Big(\int^t_0C\|v\|_3 \text{d}s\Big)
						\leq Cc_0,
				\end{aligned}\end{equation}
				for $0\leq t\leq T_2$. 
                Moreover, 
				\begin{equation}\label{mbphii}
					|\phi|_\infty= |\phi-\epsilon +\epsilon|_\infty\leq |\phi-\epsilon|_\infty +\epsilon\le C\|\phi-\epsilon\|_{L^p\cap D^1\cap D^3}+\epsilon\le Cc_0.
				\end{equation}
                
                Similarly, through the standard energy estimates, and
                \begin{equation*}
					\begin{split}
						h^{\frac12}_t+v\cdot \nabla h^\frac12+ \frac{\delta-1}{2}h^\frac12 \text{div}v=0,\qquad
                        h_t+v\cdot \nabla h+ (\delta-1)h \text{div}v=0,
					\end{split}
				\end{equation*}
   it is easy to deduce from $\eqref{ln}_1$, $\eqref{801}$ and  Lemma \ref{r1}-\ref{gh} that
                \begin{equation}\label{2094}
                    \begin{aligned}
                       |h^\frac12\nabla \phi|_2\le& C\Big(|\phi
						^{\iota}_0\nabla \phi_0|_2+\int^t_0|h^\frac12\nabla^2 v|_2\text{d}s\Big)\exp{\left(\int_0^tC\|v\|_3\text{d}s\right)}\le Cc_0,\\
						|h\nabla^2 \phi|_2\le& C\Big(|\phi
						^{2\iota}_0\nabla^2 \phi_0|_2+\int^t_0(|h\nabla^2 v|_2+|h\nabla^3 v|_2)\text{d}s\Big)\\
						&\cdot\exp{\left(\int_0^tC\|v\|_3\text{d}s\right)}\le Cc_0^2,
                    \end{aligned}
                \end{equation}
				for $0\leq t\leq T_2$.
				
				Then, according to  $\ef{ln}_1$, \ef{801} and \eqref{2093}-\eqref{2094},  for $0\leq t\leq T_2$,  one has
				\begin{equation}\label{phitt} \begin{aligned}
						&|\phi_t|_2\leq C(|v|_4|\nabla\phi|_4+|\phi|_\infty|\nabla v|_2)\leq Cc_0c_2,\\
						&|\phi_t|_{D^1}\leq C(|v|_\infty|\nabla^2\phi|_2+|\nabla\phi|_4|\nabla v|_4+|\phi|_\infty|\nabla^2 v|_2)\leq Cc_0c_3,\\
                        &|h^\frac12\phi_t|_2\leq C(|v|_\infty|h^\frac12\nabla\phi|_2+|\phi|_\infty|h^\frac12\nabla v|_2)\leq Cc_0^\frac32c_2,\\
						&|h\nabla\phi_t|_{2}\leq C(|v|_\infty|h\nabla^2\phi|_2+|\nabla\phi|_4|h\nabla v|_4+|\phi|_\infty|h\nabla^2 v|_2)\leq Cc_0^2c_3,\\
						&|\phi_t|_{D^2}\leq C\|v\|_3(\|\nabla\phi\|_2+|\phi|_\infty) \leq Cc_0c_4.
				\end{aligned} \end{equation}

				Second,  applying $\partial_t$ to $\ef{ln}_1$,
				using \ef{801}, \eqref{2093}-\eqref{phitt}, for $0\leq t\leq T_2$, one has
				\begin{equation*}\begin{aligned}
						&|\phi_{tt}|_2\leq  C(|v_t|_4|\nabla\phi|_4+|v|_\infty|\nabla\phi_t|_2+|\phi_t|_4|\nabla v|_4+|\phi|_\infty|\nabla v_t|_2)\leq Cc_0c_4^2,\\
						\int^t_0& \|\phi_{ss}\|^2_1\text{d}s\leq C \int_0^t\big( \| (v\cdot\nabla\phi )_s\|_1^2+\| (\phi\dv v)_s\|_1^2\big)\text{d}s \leq  Cc_0^2c_4^2.
				\end{aligned}\end{equation*}

				The proof of Lemma \ref{phiphii} is complete.
                
			\end{proof}

			\subsubsection{Estimates on $\psi$.}
			\begin{lemma}\label{psi} Set $T_3=\min\{T_2,(1+Cc_4)^{-5}\}$. Then  for $t\in [0, T_3]$ and  $2<q<\infty$, one has
				\begin{equation*}
					\begin{split}
						|\psi|_\infty^2+\|\psi\|^2_{L^q\cap D^{1}\cap D^2}\leq Cc_0^2,\quad |\psi_t|_2\leq Cc_0c_3,\quad |\psi_t|^2_{D^1}+  \int^t_0 |\psi_{ss}|^2_2{\rm{d}}s\le Cc_4^4,\\
						|h_t |_\infty\le  M(c_0)c_3^{\frac34}c_4^{\frac14},\quad \int_0^t |h_s|_\infty^2 {\rm{d}}s\le M(c_0),\quad  \int_0^t |  h_{ss}|_4^2 {\rm{d}}s \le  M(c_0)c_4^2.
					\end{split}
				\end{equation*}
				
			\end{lemma}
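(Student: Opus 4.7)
The natural starting point is to differentiate the density-like transport equation $h_t+v\cdot\nabla h+(\delta-1)h\,\text{div}\,v=0$ (which follows from $\eqref{ln}_1$ since $h=\phi^{2\iota}$ and $2\iota(\gamma-1)=\delta-1$) to obtain the vector transport equation
\begin{equation*}
\psi_t+v\cdot\nabla\psi+(\nabla v)^{\top}\psi+(\delta-1)(\text{div}\,v)\,\psi+a\delta\, h\,\nabla\text{div}\,v=0.
\end{equation*}
All of the bounds claimed for $\psi$ follow from energy estimates on this equation, while the bounds on $h_t$ and $h_{tt}$ follow from the scalar equation for $h$ above.

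\emph{Estimates of $\psi$.} For $2<q<\infty$, test the $\psi$-equation against $|\psi|^{q-2}\psi$, integrate by parts in the transport term to throw a $\text{div}\,v$ onto $|\psi|^q$, and bound the source by $|h|_\infty|\nabla^2 v|_q$; Gronwall in time, together with the bounds on $v$ in \eqref{801} and on $|h|_\infty$ from Lemma \ref{phiphii}, yields the $L^q$ estimate on a short interval $T_3$. Applying $\nabla$ (resp.\ $\nabla^2$) to the equation and testing with $\nabla\psi$ (resp.\ $\nabla^2\psi$) gives the $D^1$ and $D^2$ estimates, where the terms involving higher derivatives of $v$ are controlled by $c_3$ and $c_4$ but only over a short time $T_3=\min\{T_2,(1+Cc_4)^{-5}\}$ so that the growth from Gronwall stays $\le Cc_0^2$. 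The $L^\infty$ bound on $\psi$ follows either by integrating along the particle path $\mathbb{X}(t;x)$ exactly as in Lemma \ref{gh} (so $|\psi|_\infty\le Cc_0$), or from Sobolev embedding once the $D^1\cap D^2$ bound is in hand. The estimates on $\psi_t$ and $\psi_t|_{D^1}$ are then immediate from the equation: $\psi_t=-v\cdot\nabla\psi-(\nabla v)^{\top}\psi-(\delta-1)(\text{div}\,v)\psi-a\delta h\nabla\text{div}\,v$, combined with H\"older and the $L^\infty$/$L^4$ interpolations of $v$, $\nabla v$, $\nabla^2 v$ in Lemma \ref{lem2as}.

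\emph{Estimates of $\psi_{tt}$ and $h_t$, $h_{tt}$.} Apply $\partial_t$ to the $\psi$-equation and read off $\psi_{tt}$ in terms of $v_t,\nabla v_t,\nabla^2 v_t$ and previously controlled quantities; the bound $\int_0^t|\psi_{ss}|_2^2\,\text{d}s\le Cc_4^4$ follows by inserting $\int_0^{T^*}(|v_t|_{D^2}^2+|v_{tt}|_2^2)\le c_4^2$. For $|h_t|_\infty$, use $h_t=-v\cdot\nabla h-(\delta-1)h\,\text{div}\,v$, bound $|v|_\infty|\psi|_\infty+|h|_\infty|\nabla v|_\infty$ via Lemma \ref{phiphii} and Lemma \ref{lem2as} with the interpolation $|\nabla v|_\infty\lesssim|\nabla^2 v|_3^{3/4}|\nabla v|_2^{1/4}$ and $|\nabla^2 v|_3\lesssim|\nabla^3 v|_2^{1/3}|\nabla^2 v|_2^{2/3}$, which gives the $c_3^{3/4}c_4^{1/4}$ factor. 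The time-integrated bound $\int_0^t|h_s|_\infty^2\,\text{d}s\le M(c_0)$ follows by choosing $T_3$ small and using $\int_0^{T^*}|v|_{D^3}^2\le c_4^2$ with one factor of $c_4^{1/2}$ absorbed by $T_3$. For $\int_0^t|h_{ss}|_4^2\,\text{d}s$, differentiate once more to get $h_{tt}=-v_t\cdot\nabla h-v\cdot\nabla h_t-(\delta-1)(h_t\,\text{div}\,v+h\,\text{div}\,v_t)$ and bound in $L^4$ by $|v_t|_4|\psi|_\infty+|v|_\infty|\nabla h_t|_4+|h_t|_4|\nabla v|_\infty+|h|_\infty|\nabla v_t|_4$; time-integration and $v_t\in L^2([0,T];D^2)$, together with a $D^1$-bound on $h_t$ (obtained analogously via $\nabla\psi_t$-control), gives the $M(c_0)c_4^2$ bound.

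\emph{Main obstacle.} The delicate point is keeping the growth factors $c_i$ balanced: the source $h\nabla\text{div}\,v$ and the commutators with $v$ produce terms that naturally scale like $c_4$ or worse, and it is only by choosing $T_3\le(1+Cc_4)^{-5}$ that the Gronwall factors for the $D^1, D^2$ estimates remain $O(1)$, so $\|\psi\|$ stays $\le Cc_0$ rather than $Cc_4^{\text{something}}$. The second delicate point is the $L^\infty$ bound on $h_t$, which must be proved with exactly the interpolated exponents $c_3^{3/4}c_4^{1/4}$ so that later, when this bound is plugged into the singular-weighted estimates for $l$ and $u$ in \S\ref{apr}, one can integrate in $t$ and absorb the time factor into Gronwall.
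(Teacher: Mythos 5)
Your proposal follows essentially the same route as the paper: differentiate the $h$-transport equation to obtain the vector equation for $\psi$, run $L^q$, $D^1$, $D^2$ energy estimates with Gronwall over the short interval $T_3\le(1+Cc_4)^{-5}$ so the Gronwall factor stays $O(1)$, obtain $|\psi|_\infty$ by Gagliardo--Nirenberg, read $\psi_t$ and its $D^1$ norm off the equation, differentiate once more for $\psi_{tt}$ and $h_{tt}$, and use the same interpolation $|\nabla v|_\infty\lesssim|\nabla v|_2^{1/4}|\nabla^2 v|_2^{1/2}|\nabla^3 v|_2^{1/4}\le c_3^{3/4}c_4^{1/4}$ for $|h_t|_\infty$. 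The only cosmetic deviations are your offer of the particle-path alternative for $|\psi|_\infty$ (the paper uses the GN route; the particle-path version also works since $\int_0^t|h\nabla\text{div}v|_\infty\,\mathrm{d}s$ is controllable from \eqref{801} exactly as in Lemma~\ref{gh}) and a slightly different H\"older split of $|h_t\,\mathrm{div}\,v|_4$, neither of which changes the argument.
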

			\noindent
			\begin{proof} First, it follows from    $\ef{ln}_1$ that 
				\begin{equation}\label{psieq}
					\psi_t+\sum_{k=1}^2 A_k(v) \partial_k\psi+ B(v) \psi+a\delta h  \nabla  \text{div} v  =0,
				\end{equation}
				where $A_k(v)=(a^k_{ij})_{2\times 2}$ for  $i$, $j$, $k=1, 2$,
				are symmetric  with
				$a^k_{ij}=v^{(k)}\quad \text{for}\ i=j;\ \text{otherwise}\  a^k_{ij}=0, $ and $B(v)=(\nabla v)^\top+(\delta-1)\text{div}v\mathbb{I}_2$.   
Then, the classical energy estimate  on \eqref{psieq} and Lemma \ref{gh} imply
				\begin{equation}\label{psiq}
					\begin{split}
						\frac{\text{d}}{\text{d}t}|\psi|^q_q \leq & C(|\nabla v|_\infty|\psi|^q_q+|hg^{-1}|_\infty |g\nabla^2v|_{q}|\psi|_q^{q-1})\\
						\leq & C(|\nabla v|_\infty|\psi|^q_q +\|g\nabla^2v\|_{1}|\psi|_q^{q-1}),
					\end{split}
				\end{equation} 
				this, together  with \eqref{2.14},  \eqref{801}, \eqref{psiq} and  Gronwall's inequality, gives 
				\begin{equation}\label{psideq} 
					|\psi|_q \leq Cc_0 \quad \text{for} \quad  0\leq t\leq T_3.
				\end{equation}

				Similarly, one can deduce that 
				\begin{equation*}
					\frac{\text{d}}{\text{d}t}\|\psi\|_{D^{1}\cap D^2}\leq C\big(c_4\|\psi\|_{D^{1}\cap D^2}+|g\nabla^4 v|_{2}+ c_4\big),
				\end{equation*}
				this, together with  \eqref{2.14} and Gronwall's inequality give that for $0\leq t \leq T_3$,
				\begin{equation}\label{2.26a}\begin{split}
						\|\psi\|_{D^{1}\cap D^2}\leq&  \Big(c_0+Cc_4t+C  \int_0^t |g\nabla^4 v|_{2} \text{d}s\Big) \exp(Cc_4t)\leq Cc_0.
					\end{split}
				\end{equation}
				Moreover,  \eqref{psideq}-\eqref{2.26a} and Lemma \ref{lem2as} imply
				\begin{equation}\label{3.24}
					|\psi|_\infty\le Cc_0,\quad \text{for} \quad 0\leq t \leq T_3.
				\end{equation}

				Third, according to \eqref{801}, \eqref{mhh1}, $\ef{psieq}$, \eqref{psideq}-\eqref{3.24},  for  $0\leq t \leq T_3$, one has
				\begin{equation*}\label{psit2} 
					\begin{aligned}
						|\psi_t|_2\le & C\big(| v|_\infty|\nabla \psi|_{2}+|\nabla v|_2|\psi|_{\infty} +|hg^{-1}|_\infty|g\nabla^2 v|_2\big)\leq Cc_0c_3,\\ 
						 | \psi_t|_{D^1} \le & C \big(\|v\|_{2} \|\psi\|_{L^q\cap D^{1}\cap D^2} +|hg^{-1}|_\infty |g\nabla^3 v|_{2}\big) \leq Cc^2_4,\\
						\int_0^t |\psi_{ss}|^2_{2} \text{d}s
						\leq& C\int_0^t \big(|v_s|^2_4|\nabla \psi|^2_4+|\nabla v|^2_\infty|\psi_s|^2_{2}+|v|^2_\infty|\nabla \psi_s|^2_2+|\psi|^2_\infty|\nabla v_s|^2_{2}\\
						&+|hg^{-1}|_\infty^2 ( |h^{-1} h_t|_\infty^2 |g\nabla^2 v|_2^2+|g \nabla^2 v_s|^2_{2} )\big) \text{d}s
						\leq Cc^4_4.
					\end{aligned}
				\end{equation*}

				At last, it follows from    \ef{801}, the estimates on $\psi$, Lemma \ref{gh} and
				\begin{equation*}
					\begin{split}
						h_t=-   v\cdot \nabla h- (\delta-1)h \text{div}v,\quad
						h_{tt}=& - (v\cdot \nabla h)_t-    (\delta-1) (h  \text{div}v)_t,
					\end{split}
				\end{equation*}
			that for $0\le t\le T_3$
				\begin{equation*} 
					\begin{split}
						|h_t |_\infty\le  & C(  |v|_\infty |\psi|_\infty  +  |h  \nabla v|_\infty) \le M(c_0)c_3^{\frac34}c_4^{\frac14},\\
						\int_0^t |  h_{ss}|_4^2 \text{d}s \le & C\int_0^t \big(  |\psi|_\infty^2 |v_s|_4^2+|\psi_s|_4^2|v|_{\infty}^2+|h_s|^2_\infty |\nabla v|_2^2
						+  | h\nabla v_s|_4^2 \big)\text{d}s \\ \le &  M(c_0)c_4^2.
					\end{split}
				\end{equation*}

				The proof of Lemma \ref{psi} is complete.
                
			\end{proof}

			\begin{lemma}\label{h3438n}
				For $t\in[0,T_3]$ and $q>2$, one has
				\begin{equation*}
					\begin{aligned}
      |\zeta|_4+|n|_\infty+|\nabla n|_\infty+\|n\|_{D^{1,q}\cap D^{1,4}\cap D^2\cap D^3}\le M(c_0),\quad
      |n_t|_2\le M(c_0)c_2,\\
						|n_t|_\infty+|\nabla n_t|_2+|\nabla n_t|_4\le  M(c_0)c_4,\quad|n_{tt}|_2\le M(c_0)c_4^3.
					\end{aligned}
				\end{equation*}
			\end{lemma}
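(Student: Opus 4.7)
The plan is to reduce everything to chain-rule computations in $h$ and then invoke Lemmas \ref{gh}, \ref{phiphii} and \ref{psi}. The key observation is that the hypothesis $\gamma+\delta\le 2$ together with $0<\delta<1$ forces $b=\frac{2-\delta-\gamma}{\delta-1}\le 0$, so every exponent $b$, $b-1$, $b-2$ appearing after differentiating $n=(ah)^b$ is nonpositive; combined with the $\epsilon$-independent lower bound $h\ge\frac{1}{2c_0}$ from Lemma \ref{gh}, each power $h^{\alpha}$ with $\alpha\le 0$ satisfies $|h^{\alpha}|_\infty\le M(c_0)$. This is precisely where the structural condition $\gamma+\delta\le 2$ enters.

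For the spatial estimates I would use $\nabla h = \frac{\delta-1}{a\delta}\psi$ to write
\[
\zeta=\frac{\delta-1}{2a\delta}\,h^{-\frac12}\psi,\qquad \nabla n = b a^{b} h^{b-1}\cdot\frac{\delta-1}{a\delta}\psi,
\]
so that $|\zeta|_4$, $|\nabla n|_\infty$, $|\nabla n|_q$, $|\nabla n|_4$ all follow by combining $|h^\alpha|_\infty\le M(c_0)$ with the $L^\infty$-, $L^q$- and $L^4$-bounds on $\psi$ already provided by Lemma \ref{psi}. The bounds on $\nabla^2 n$ and $\nabla^3 n$ are then obtained by expanding the chain rule into sums of products of the form $h^{b-k}\,(\nabla h)^{\otimes j}\,\nabla^i h$, each of which is dominated via $\|\psi\|_{L^q\cap D^1\cap D^2}$ together with $|h^{b-k}|_\infty\le M(c_0)$.

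For the first-order time derivatives, I would use $h_t = -v\cdot\nabla h-(\delta-1)h\,\mathrm{div}\,v$, which follows from $\eqref{ln}_1$ and the definition $h=(\phi^{\epsilon})^{2\iota}$, to get $n_t = b a^{b} h^{b-1}h_t$. The bounds $|n_t|_2$, $|n_t|_\infty$, $|\nabla n_t|_2$, $|\nabla n_t|_4$ then reduce to algebraic products of the already available bounds on $h^{b-1}$, $h^{b-2}$, $h_t$ (with $|h_t|_\infty\le M(c_0)c_3^{3/4}c_4^{1/4}$ coming from Lemma \ref{psi}), and $\nabla h_t$. The quantity $\nabla h_t$ is controlled directly by spatially differentiating the $h$-equation into $\nabla h_t = -\nabla v\cdot\nabla h - v\cdot\nabla^2 h-(\delta-1)(\nabla h\,\mathrm{div}\,v+h\,\nabla\mathrm{div}\,v)$ and invoking \eqref{801} together with the estimates on $\psi$ and $\nabla\psi$ from Lemma \ref{psi}.

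The main obstacle will be the book-keeping for $|n_{tt}|_2$. Differentiating $n_t$ once more,
\[
n_{tt}=b(b-1)a^{b} h^{b-2}h_t^2 + b a^{b} h^{b-1}h_{tt},
\]
with $h_{tt}=-v_t\cdot\nabla h - v\cdot\nabla h_t - (\delta-1)(h_t\,\mathrm{div}\,v+h\,\mathrm{div}\,v_t)$. To reach the claimed $M(c_0)c_4^3$ bound one must carefully match each of the four terms of $h_{tt}$ against the sharp a priori bound from \eqref{801}, and control the quadratic term $h_t^2$ by pairing $|h_t|_\infty\le M(c_0)c_3^{3/4}c_4^{1/4}$ with $|h_t|_2\le M(c_0)c_2$; the factor $|\nabla v_t|_2$ available in \eqref{801} produces the worst growth and accounts for the $c_4^3$ scaling. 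No further energy estimate is required beyond the bounds already accumulated in Lemmas \ref{gh}--\ref{psi} and \eqref{801}.
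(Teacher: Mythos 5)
There is a genuine gap in the step that bounds $|\zeta|_4$. You write that $|\zeta|_4=\big|\tfrac{\delta-1}{2a\delta}h^{-1/2}\psi\big|_4$ follows from $|h^{-1/2}|_\infty\le M(c_0)$ together with ``the $L^\infty$-, $L^q$- and $L^4$-bounds on $\psi$ already provided by Lemma~\ref{psi}.'' But Lemma~\ref{psi} gives $\|\psi\|_{L^q\cap D^1\cap D^2}+|\psi|_\infty\le Cc_0$ only; it gives no $L^4$ bound. For $q\in(2,4]$ you can interpolate $|\psi|_4\le|\psi|_q^{q/4}|\psi|_\infty^{1-q/4}$, but the hypotheses allow any $q\in(2,\infty)$, and when $q>4$ neither interpolation between $L^q$ and $L^\infty$ nor Gagliardo--Nirenberg with $\nabla\psi\in L^2$ (which only yields $|\psi|_r$ for $r\ge q$) produces $|\psi|_4$. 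This is precisely why the paper proves $|\zeta|_4\le Cc_0$ by a separate route: $\zeta=\nabla h^{1/2}$ satisfies its own transport equation
$$\zeta_t+\sum_{k=1}^2 A_k(v)\partial_k\zeta+B^*(v)\zeta+\tfrac12(\delta-1)h^{1/2}\nabla\operatorname{div}v=0,$$
and an $L^4$ energy estimate (driven by the initial bound $|\nabla(h_0^\epsilon)^{1/2}|_4\le c_0$, which is imposed separately in \eqref{2.14} and is \emph{not} a consequence of the $L^q$ bound on $\psi_0$ when $q>4$) propagates the $L^4$ control. Your purely algebraic route cannot see this initial condition, and so it fails for $q>4$. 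The gap propagates: $|\nabla n|_4$, $|\nabla^2n|_2$, $|\nabla^3n|_2$ and the term $|v_t\cdot\nabla h|_2$ in $h_{tt}$ all rely (in the paper's proof) on $|\zeta|_4$ through the factorization $\nabla h=2h^{1/2}\zeta$.

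Aside from this, your decomposition is the correct one and is algebraically equivalent to the paper's: the transport equation $n_t+v\cdot\nabla n+(2-\delta-\gamma)a^bh^b\operatorname{div}v=0$ that the paper works with is exactly $n_t=ba^bh^{b-1}h_t$ with $h_t=-v\cdot\nabla h-(\delta-1)h\operatorname{div}v$, and likewise for $n_{tt}$. The observation that $\gamma+\delta\le 2$ forces $b\le0$, so that $|h^{b-k}|_\infty\le M(c_0)$ follows from the lower bound $h>\tfrac{1}{2c_0}$, is exactly the point, and the remaining $L^\infty$, $L^q$, $L^2$ book-keeping for $|n|_\infty$, $|\nabla n|_\infty$, $|\nabla n|_q$, $|n_t|_2$, $|n_t|_\infty$, $|\nabla n_t|_2$, $|\nabla n_t|_4$, $|n_{tt}|_2$ goes through as you describe once the $\zeta$ estimate is in hand (the $c_4^3$ scaling actually arises from $|\nabla n_t|_2\cdot|v|_\infty$ and $|h^b|_\infty|\nabla v_t|_2$ rather than from $h_t^2$, and the paper actually overshoots a little---$M(c_0)c_4^2$ would suffice---but this is a matter of constants). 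To repair the proposal, either add the transport-equation $L^4$ energy estimate for $\zeta$ (as the paper does), or restrict to $q\le 4$ with an explicit justification that this is without loss of generality; the former is what the paper's initial-data structure is designed for.
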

			\begin{proof}

				First, note that \ef{psieq}   implies that
				\begin{equation*}
					\zeta_t+\sum_{k=1}^{2}A_k(v)\partial_k\zeta+B^*(v)\zeta+\frac12(\delta-1)h^\frac{1}{2}\nabla\text{div}v=0,
				\end{equation*}	
				where $B^*(v)=(\nabla v)^\top+\frac{(\delta-1)}{2}\text{div}v\mathbb{I}_2$. Thus one can easily obtain 
				\begin{equation*}
					|\zeta(t)|_4\le Cc_0\quad\text{for}\quad 0\le t\le T_3.
				\end{equation*}
				

				Second, $n=(ah)^b$ implies that
				\begin{equation*}
					n_t+v\cdot\nabla n+(2-\delta-\gamma)a^bh^{b}\text{div}v=0,
				\end{equation*}
this, together with Lemmas \ref{gh}-\ref{psi} and \ef{801}, implies  that for $0\le t\le T_3$,
					\begin{align*}
						&|n|_\infty\le a^b|\varphi|_\infty^{-b}\le M(c_0),\quad|\nabla n|_q=a^b|bh^{b-1}\nabla h|_q\le M(c_0),\quad\\
						&|\nabla n|_\infty=a^b|bh^{b-1}\nabla h|_\infty\le M(c_0),\quad |\nabla n|_4=2a^b|bh^{b-\frac{1}{2}}\nabla h^\frac{1}{2}|_4\le M(c_0),\\&|\nabla^2n|_2\le C(|h^{b-1}\nabla^2h|_2+|h^{b-1}\nabla h^\frac{1}{2}\cdot\nabla h^\frac{1}{2}|_2)\le M(c_0),\\&
						|\nabla^3n|_2\le C(|h^{b-1}\nabla^3h|_2+|h^{b-2}\nabla^2h\cdot\nabla h|_2+|h^{b-2}\nabla h\cdot\nabla h^{\frac{1}{2}}\cdot\nabla h^{\frac{1}{2}}|_2)\le M(c_0),\\&
						|n_t|_\infty\le C(|v|_\infty|\nabla n|_\infty+|h^{b}|_\infty|\text{div}v|_\infty)\le M(c_0)c_4,\\&
						|n_t|_2\le C(|v|_4|\nabla n|_4+|h^b|_\infty|\text{div}v|_2)\le M(c_0)c_2,\\&
						|\nabla n_t|_2\le C(|\nabla(v\cdot\nabla n)|_2+|\nabla(h^{b}\text{div}v)|_2)\le M(c_0)c_4,\\&
						|\nabla n_t|_4\le C(|\nabla(v\cdot\nabla n)|_4+|\nabla(h^{b}\text{div}v)|_4)\le M(c_0)c_4,\\&
						|n_{tt}|_2\le C(|(v\cdot\nabla n)_t|_2+|(h^{b}\text{div}v)_t|_2)\le M(c_0)c_4^3.
					\end{align*}

				The proof of Lemma \ref{h3438n} is complete.

			\end{proof}
			
			\subsubsection{Estimates on $l$.}  \label{ells}
			
			\begin{lemma}\label{l1}
				For $T_4=\min\{T_3, (1+Cc_4)^{-10-4\nu}\}$ and $t\in[0, T_4]$, one has
				\begin{equation}\label{802}
					\begin{aligned}
						|h^\frac{1}{2}\nabla l(t)|_2^2+\int_{0}^{t}|w^{-\frac{\nu}{2}}l_s|_2^2{\rm{d}}s\le M(c_0),\quad |(l-\bar{l})|_2^2\le M(c_0)c_1^\nu,\\
								\int_{0}^{t}|l_s|_2^2{\rm{d}}s\le M(c_0)c_1^{\nu},\quad
       \int_{0}^{t}(|h\nabla^2l|_2^2+|\nabla^2l|^2_2){\rm{d}}s\le M(c_0)c_1^{3\nu}.
					\end{aligned}
				\end{equation}
			\end{lemma}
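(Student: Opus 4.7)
My plan is to obtain the $|h^{1/2}\nabla l|_2^2$ and $|w^{-\nu/2}l_t|_2^2$ bounds simultaneously from a single weighted energy identity, and then deduce the remaining estimates as consequences. Concretely, I would multiply the linearized entropy equation $(4.1)_3$ by $w^{-\nu} l_t$ and integrate over $\mathbb{R}^2$. The two principal left-hand side terms are $|w^{-\nu/2}l_t|_2^2$ and, after an integration by parts in $-a_4 \int h\Delta l\cdot l_t$, the quantity $\tfrac{a_4}{2}\tfrac{d}{dt}|h^{1/2}\nabla l|_2^2$; the remainders $\tfrac{a_4}{2}\int h_t|\nabla l|^2$ and $a_4\int(\nabla h\cdot\nabla l)\,l_t$ can be absorbed using the $L^\infty$-bounds of $h_t$ and $\psi$ from Lemmas \ref{psi} and \ref{gh}, together with Cauchy--Schwarz and a small piece of $|w^{-\nu/2}l_t|_2^2$.

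On the right-hand side one has four source groups: the transport term $-v\cdot\nabla l$, the singular dissipative term $a_5 w^\nu n g^2 H(v)$, the divergence term $a_6 w^{\nu+1}\mathrm{div}\,\psi$, and the nonlinear collection $\Pi(l,h,w,g)$. The transport, divergence, and $\Pi$ terms are straightforward given Lemmas \ref{gh}--\ref{h3438n} and the bounds in \eqref{801}: each can be estimated by Cauchy--Schwarz against $w^{-\nu/2}l_t$, with the integrated-in-time loss absorbed by the smallness of $T_4$. The main obstacle, as emphasized in \S 3.2, is the $H(v)$ term. I would bound it as
\[
\Big|a_5\int n g^2 H(v)\,l_t\Big|\le C|n|_\infty |w^{\nu/2}|_\infty |g\nabla v|_4^2 |w^{-\nu/2}l_t|_2,
\]
using Lemma \ref{h3438n} for $|n|_\infty$, the equivalence $g\sim h$ from Lemma \ref{gh} to interpret $|g\nabla v|_4$ via interpolation between $|g\nabla v|_2\le c_2$ and $|g^{3/2}\nabla^2 v|_2\le c_3$ (here the 2D Gagliardo--Nirenberg inequality is crucial and is the reason why the singular weight allocation must differ from $\mathbb{R}^3$). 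A factor $c_1^{\nu/2}$ enters from $|w^{\nu/2}|_\infty$, but this is harmless for the first estimate because the corresponding contribution appears inside a time integral that gets multiplied by $T_4$; choosing $T_4\le (1+Cc_4)^{-10-4\nu}$ lets us close Gronwall's inequality with right-hand side $M(c_0)$.

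Integrating in $t\in[0,T_4]$ and invoking the initial bound $\limsup_{\tau\to 0}|w^{-\nu/2}l_t(\tau)|_2\le M(c_0)$ from Lemma \ref{inin} (needed only to justify that no initial singular contribution appears; actually for this first step only $|h^{1/2}\nabla l_0|_2\le M(c_0)$ is used) then yields
\[
|h^{1/2}\nabla l(t)|_2^2+\int_0^t |w^{-\nu/2}l_s|_2^2\,\mathrm{d}s\le M(c_0).
\]
For $|l-\bar l|_2^2$ I would multiply $(4.1)_3$ by $l-\bar l$ directly; the dissipative term gives a nonnegative contribution, and the singular source $a_5 w^\nu n g^2 H(v)$ integrated against $l-\bar l$ produces a factor $|w^\nu|_\infty\le c_1^\nu$ along with bounded weighted pieces, yielding $|l-\bar l|_2^2\le M(c_0)c_1^\nu$ after Gronwall and the smallness of $T_4$. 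For $\int_0^t |l_s|_2^2\,\mathrm{d}s$, simply use $|l_s|_2^2\le |w^\nu|_\infty |w^{-\nu/2}l_s|_2^2\le c_1^\nu |w^{-\nu/2}l_s|_2^2$ and the first estimate. Finally, for $\int_0^t |h\nabla^2 l|_2^2\,\mathrm{d}s$, I would solve the equation for $a_4 w^\nu h\Delta l$ and use standard elliptic regularity: $|h\Delta l|_2\le C|w^{-\nu}|_\infty(|l_s|_2+|v|_\infty|\nabla l|_2+|w^\nu n g^2 H(v)|_2+|w^{\nu+1}\mathrm{div}\,\psi|_2+|\Pi|_2)$, controlling $|w^\nu n g^2 H(v)|_2$ via $|g\nabla v|_4^2$ and picking up an extra $c_1^\nu$ from $w^{-\nu}$; squaring and integrating produces the asserted $c_1^{3\nu}$ loss. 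The $|\nabla^2 l|_2^2$ bound then follows from $|\nabla^2 l|_2\le C|\varphi|_\infty|h\nabla^2 l|_2\le Cc_0 |h\nabla^2 l|_2$ after appropriate commutator terms are handled using Lemma \ref{psi}.
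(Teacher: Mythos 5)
Your proposal follows essentially the same route as the paper: two weighted energy identities (one tested against $l_t$ with weight $w^{-\nu}$, one tested against $l-\bar l$), plus an elliptic-regularity/commutator step to recover $\int_0^t|h\nabla^2 l|_2^2$. The paper orders these slightly differently — it runs the $(l-\bar l)$ estimate first, using the $w^{-\nu}$-weighted form \eqref{620} so that the dissipation comes out as $a_4|h^{1/2}\nabla(l-\bar l)|_2^2$ and the $c_1^\nu$ loss enters only when unweighting — but your order works too, and you correctly identify that only $|h_0^{1/2}\nabla l_0|_2\le M(c_0)$ (not Lemma \ref{inin}) is needed here. One small imprecision: $|g\nabla v|_4$ is not obtained by interpolating between $|g\nabla v|_2$ and $|g^{3/2}\nabla^2 v|_2$; it is $|g\nabla v|_4\le C|g\nabla v|_2^{1/2}|\nabla(g\nabla v)|_2^{1/2}$, and $\nabla(g\nabla v)=\nabla g\otimes\nabla v+g\nabla^2 v$ is controlled via $|\nabla g|_\infty$, $|\nabla v|_2$, and $|g\nabla^2 v|_2$ (the latter bounded using $|g^{-1/2}|_\infty$ and $|g^{3/2}\nabla^2 v|_2$), exactly as in the paper's \eqref{701}. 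Likewise, you should note that $|w_t|_\infty$ is not uniformly bounded but rather $\le Cc_2|g^{1/2}\nabla^2 w_t|_2^{1/4}$, which is $L^8$ in time; this is what makes Gronwall close with the stated power of $c_4$ in $T_4$. With those points filled in, the argument is sound.
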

			\begin{proof}
				First,  $(3.1)_3$ implies
				\begin{equation}\label{803}
					\begin{aligned}
						w^{-\nu}(l_t+v\cdot\nabla l)-a_4h\Delta l=a_5ng^2H(v)+a_6w\text{div}\psi+w^{-\nu}\Pi(l,h,w,g),
					\end{aligned}
				\end{equation}
				and 
				\begin{equation}\label{620}
					\begin{aligned}
						w^{-\nu}((l-\bar{l})_t+v\cdot\nabla (l-\bar{l}))-a_4h\Delta (l-\bar{l})=a_5ng^2H(v)+a_6w\text{div}\psi\\+w^{-\nu}\Pi(l-\bar{l},h,w,g),
					\end{aligned}
				\end{equation}
                where
                \begin{equation*}
					H(v)=2\alpha\sum_{i=1}^{2}(\partial_iv^{(i)})^2+\beta(\text{div}v)^2+\alpha\sum_{i\neq j}^{2}(\partial_iv^{(j)})^2+2\alpha v_2^{(1)}v_1^{(2)}.
			\end{equation*}
				Multiplying \ef{620} by $l-\bar{l}$ and integrating over $\mathbb{R}^2$, it follows from \ef{801} and Lemmas \ref{gh}-\ref{h3438n} that
				
					\begin{align*}
						&\frac{1}{2}\frac{\text{d}}{\text{d}t}|w^{-\frac{\nu}{2}}(l-\bar{l})|_2^2+a_4|h^\frac{1}{2}\nabla (l-\bar{l})|_2^2\\=&-a_4\int\nabla h\cdot\nabla (l-\bar{l})(l-\bar{l})-\int w^{-\nu}v\cdot\nabla (l-\bar{l})(l-\bar{l})+a_5\int ng^2H(v)(l-\bar{l})\\&+a_6\int w\text{div}\psi (l-\bar{l})+\int w^{-\nu}\Pi(l-\bar{l},h,w,g)(l-\bar{l})+\frac{1}{2}\int (w^{-\nu})_t| (l-\bar{l})|^2\\
						\le&C(|w^\frac{\nu}{2}|_\infty|\psi|_\infty+|w^{-\frac{\nu}{2}}|_\infty|v|_\infty)|\varphi|^\frac{1}{2}_\infty|w^{-\frac{\nu}{2}}(l-\bar{l})|_2|h^\frac{1}{2}\nabla (l-\bar{l})|_2\\&+C\big(|w^\frac{\nu}{2}|_\infty|n|_\infty|g\nabla v|_4^2+|w^{1+\frac{\nu}{2}}|_\infty(|\nabla\psi|_2+|\nabla h^\frac{1}{2}|^2_4)\\&+
						|w^{-1+\frac{\nu}{2}}|_\infty|g^\frac12\nabla w|_4^2\big)|w^{-\frac{\nu}{2}}(l-\bar{l})|_2\\
						&+C|w^{-1}|_\infty|w_t|_\infty|w^{-\frac{\nu}{2}}(l-\bar{l})|_2^2\\
						\le&M(c_0) c_3^{\nu+10}|g^\frac12\nabla^2w_t|_2^\frac14|w^{-\frac{\nu}{2}}(l-\bar{l})|_2^2+M(c_0)+\frac{a_4}{2}|h^\frac{1}{2}\nabla (l-\bar{l})|_2^2,
					\end{align*}
				where we have used the following estimates
				\begin{equation}\label{701}
					\begin{aligned}
						|g\nabla v|_4&\le C|g\nabla v|_2^\frac12|\nabla(g\nabla v)|^\frac12_2\le Cc_2^\frac12(|g\nabla^2 v|_2+|\nabla g|_\infty|\nabla v|_2)^\frac12\le Cc_3^\frac32,\\
					|g^\frac12\nabla w|_4&
					\le C|g^\frac12\nabla w|_2^\frac12|\nabla(g^\frac12\nabla w)|_2^\frac12\\&\le Cc_2^\frac12(|g^\frac12\nabla^2 w|_2+|\nabla g|_\infty|g^{-1}|_\infty|g^\frac12\nabla w|_2)^\frac12\le Cc_2^2,\\
|w_t|_\infty&\le|w_t|^\frac14_2|\nabla w_t|^\frac12_2|\nabla^2w_t|_2^\frac14\\&\le|w_t|^\frac14_2|g^\frac14\nabla w_t|^\frac12_2|g^\frac12\nabla^2w_t|_2^\frac14|g^{-1}|^\frac14_\infty\le Cc_2|g^\frac12\nabla^2w_t|_2^\frac14.
					\end{aligned}
				\end{equation}
                
				Thus,  \ef{2.14}, \ef{801} and Gronwall's inequality give 
				\begin{equation*}\label{621}
					|w^{-\frac{\nu}{2}}(l-\bar{l})|_2^2+\int_{0}^{t}|h^\frac{1}{2}\nabla (l-\bar{l})|_2^2\text{d}s\le M(c_0)\quad \text{for}\quad 0\le t\le T_4.
				\end{equation*}
				 Moreover, 
			  \ef{801} and Lemma \ref{gh} imply
				\begin{equation*}\label{622}
					|l-\bar{l}|_2^2\le M(c_0)c_1^\nu,\quad\int_{0}^{t}|h^\frac{1}{2}\nabla l|_2^2\text{d}s\le M(c_0).
				\end{equation*}
				
				Multiplying \ef{803} by $l_t$ and integrating over $\mathbb{R}^2$, 
				it follows from \ef{801}, \ef{701}, Lemmas \ref{gh}-\ref{h3438n}  that 
				\begin{equation*}
					\begin{aligned}
						&\frac{a_4}{2}\frac{\text{d}}{\text{d}t}|h^\frac{1}{2}\nabla l|_2^2+|w^{-\frac{\nu}{2}}l_t|_2^2\\=&-a_4\int\nabla h\cdot\nabla ll_t-\int w^{-\nu}v\cdot\nabla ll_t+a_5\int ng^2H(v)l_t\\&+a_6\int w\text{div}\psi l_t+\int w^{-\nu}\Pi(l,h,w,g)l_t+\frac{1}{2}\int a_4h_t|\nabla l|^2\\
						\le&C(|w^\frac{\nu}{2}|_\infty|\psi|_\infty+|w^{-\frac{\nu}{2}}|_\infty|v|_\infty)|\varphi|^\frac{1}{2}_\infty|h^\frac{1}{2}\nabla l|_2|w^{-\frac{\nu}{2}}l_t|_2\\&+
						C\big(|w^\frac{\nu}{2}|_\infty|n|_\infty|g\nabla v|_4^2+|w^{1+\frac{\nu}{2}}|_\infty(|\nabla\psi|_2+|\nabla h^\frac{1}{2}|^2_4)\\&+
						|w^{-1+\frac{\nu}{2}}|_\infty|g^\frac{1}{2}\nabla w|_4^2\big)|w^{-\frac{\nu}{2}}l_t|_2+C|\varphi|_\infty|h_t|_\infty|h^\frac{1}{2}\nabla l|_2^2\\
		\le&M(c_0)c_4^{\nu+10}|h^\frac12\nabla l|_2^2+M(c_0)+\frac12|w^{-\frac{\nu}{2}}l_t|_2^2,
					\end{aligned}
				\end{equation*}
which along with \ef{2.14}, \ef{801} and Gronwall's inequality imply 
				\begin{equation*}\label{623}
					|h^\frac12\nabla l|_2^2+\int_{0}^{t}|w^{-\frac{\nu}{2}}l_s|_2^2\text{d}s\le M(c_0) \quad \text{for}\quad 0\le t\le T_4.
				\end{equation*}
  Moreover, \ef{801} and Lemma \ref{gh} gives
				\begin{equation}\label{804}
					|\nabla l|_2^2\le M(c_0),\quad\int_{0}^{t}|l_s|_2^2\text{d}s\le M(c_0)c_1^\nu.
				\end{equation}

				On the other hand, \ef{803} implies that 
				\begin{equation}\label{805}
					\begin{aligned}
						-a_4\Delta\big(h(l-\bar{l})\big)&=-a_4h\Delta l-a_4F(\nabla h,l-\bar{l})=w^{-\nu}\mathcal{A}-a_4F(\nabla h,l-\bar{l}),
					\end{aligned}
				\end{equation}
				where
				\begin{equation}
					\begin{aligned}
						& \mathcal{A}=-\left(l_t+v \cdot \nabla l\right)+a_5 w^\nu n g^{2} H(v)+a_6 w^{\nu+1} \text{div} \psi+\Pi(l-\bar{l}, h, w, g), \\
						& F=F\big(\nabla h, l-\bar{l}\big)=(l-\bar{l}) \Delta h+2 \nabla h \cdot \nabla l .
					\end{aligned}
				\end{equation}
				%
				%
				Then  \ef{2.14},  \ef{801}, \ef{804} and Lemma 3.2-\ref{h3438n} imply 
				\begin{equation}\label{806}
					\begin{aligned}
						|\mathcal{A}|_2\le& C(|l_t|_2+|v|_\infty|\nabla l|_2+|w^\nu|_\infty|n|_\infty|g\nabla v|_4^2+|w^{\nu+1}|_\infty|\nabla\psi|_2\\
					&+|w^{\nu+1}|_\infty|\nabla h^\frac12|_4^2+|w^\nu|_\infty|\psi|_\infty|\nabla l|_2+|w^{\nu-1}|_\infty|g^\frac12\nabla w|_4^2)\\
						\le& M(c_0)(|l_t|_2+c_1^{1+\nu}),\\
						|F|_2\le& C(|\nabla^2 h|_4|l-\bar{l}|_4+|\psi|_\infty|\nabla l|_2)
						\le M(c_0)c_1^\frac\nu4,
					\end{aligned}
				\end{equation}
				where  we have used  the fact that for $0\le t\le T_4$
				\begin{equation}\label{g32}
					\begin{aligned}
						\|v\|_2\le \|u_0\|_2&+t^\frac12\left(\int _0^t\|v_s\|_2^2\text{d}s\right)^\frac12\le M(c_0)(1+c_4t^\frac12)\le M(c_0),\\
						|g\nabla v|_4\le& |g(0,x)\nabla u_0|_4+t^\frac12\left(\int_0^t|(g\nabla v)_s|_4^2\text{d}s\right)^\frac12\\
						\le& C|h_0\nabla u_0|_2^\frac12|\nabla(h_0 u_0\big)|_2^\frac12+Ct^\frac12\Big(\int_0^t(|g_s|_\infty^2|\nabla v|_4^2+|g\nabla v_s|_4^2)\text{d}s\Big)^
						\frac12\\
						\le& M(c_0)+t^\frac12\Big(\int_0^t|g\nabla v_s|_2|\nabla(g\nabla v_s)|_2\text{d}s\Big)^\frac12\le M(c_0),\\
						|g^\frac12\nabla w|_4\le& |g^\frac12(0,x)\nabla l_0|_4+t^\frac12\Big(\int_0^t|(g^\frac12\nabla w)_s|_4^2\text{d}s\Big)^\frac12\\
						\le& C|h_0^\frac12\nabla l_0 |_2^\frac12|\nabla(h_0^\frac12\nabla l_0 )|_2^\frac12\\&+Ct^\frac12\Big(\int_0^t\big(|g^{-\frac12}g_s|^2_\infty|\nabla w|_4^2+|g^\frac12\nabla w_s|_4^2\big)\text{d}s\Big)^\frac12\\
						\le& M(c_0)+Ct^\frac12\Big(\int_0^t|g^\frac12\nabla w_s|_2|\nabla(g^\frac12\nabla w_s)|_2\big)\text{d}s\Big)^\frac12\le M(c_0),\\
						|l-\bar{l}|_4\le&|l-\bar{l}|_2^\frac12|\nabla l|_2^\frac12\le M(c_0)c_1^\frac\nu4.
					\end{aligned}
				\end{equation}
                
				Then \ef{806}, Lemma \ref{ell} and Lemma \ref{psi}  imply
				\begin{equation}\label{807}
					\begin{aligned}
						|h(l-\bar{l})|_{D^2} \leq & C(|w^{-\nu} \mathcal{A}|_2+|F(\nabla h, l-\bar{l})|_2) \\
						\leq & M(c_0)(c_1^{2\nu+1} +c_1^\nu| l_t|_2), \\
						|h \nabla^2 l|_2 \leq & C(|h(l-\bar{l})|_{D^2}+|\nabla^2 h(l-\bar{l})|_2 
						+|\nabla l \cdot \nabla h|_2) \\
						\leq & C(|h(l-\bar{l})|_{D^2}+|\nabla^2h|_4|l-\bar{l}|_4 
						+|\psi|_{\infty}|\nabla l|_2) \\\leq& M(c_0)(c_1^{2\nu+1} +c_1^\nu| l_t|_2) .
					\end{aligned}
				\end{equation}
				Consequently, \eqref{807} together with \ef{804}, yield that $\ef{802}_2$ holds for $0\le t\le T_4$. 

			\end{proof}

			\begin{lemma}\label{l2}
				For $T_5=\min\{T_4,(1+Cc_4)^{-16-8\nu}\}$ and $t\in[0,T_5]$, one has
				\begin{equation}
					\begin{aligned}
						|w^{-\frac{\nu }{2}}l_t|_2^2+\int_0^t(|h^\frac12\nabla l_s|_2^2+|\nabla l_s|^2_2){\rm{d}}s&\le M(c_0),\\
						|l_t|_2+|h\nabla^2l|_2+|l|_{D^2}&\le M(c_0)c_1^{2\nu+1},\\
						\int_0^t(|h\nabla^3l|_2^2+|h\nabla^2l|_{D^1}^2+|l|^2_{D^3}){\rm{d}}s&\le M(c_0)c_1^{2\nu+2}.	
					\end{aligned}
				\end{equation}
			\end{lemma}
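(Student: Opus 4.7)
The plan is to differentiate the rescaled equation $w^{-\nu}(l_t+v\cdot\nabla l)-a_4 h\Delta l=a_5ng^2H(v)+a_6w\,\text{div}\psi+w^{-\nu}\Pi$ (the form used in the proof of Lemma~\ref{l1}) once in $t$, test the result against $l_t$ and integrate over $\mathbb{R}^2$. After integration by parts in the dissipative term this produces
\begin{equation*}
\tfrac12\tfrac{d}{dt}|w^{-\nu/2}l_t|_2^2+a_4|h^{1/2}\nabla l_t|_2^2=\mathcal{R},
\end{equation*}
where $\mathcal{R}$ collects the contributions of $(w^{-\nu})_t$, $v_t\cdot\nabla l$, $h_t\Delta l$, $\nabla h\cdot\nabla l_t\,l_t$, $\partial_t(ng^2H(v))$, $\partial_t(w\,\text{div}\psi)$ and $\partial_t(w^{-\nu}\Pi)$. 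Every summand of $\mathcal{R}$ is bounded by combining Lemmas~\ref{gh}--\ref{l1}, the hypotheses in \eqref{801}, and the interpolation identities for $|g\nabla v|_4$, $|g^{1/2}\nabla w|_4$ and $|w_t|_\infty$ appearing in the proof of Lemma~\ref{l1}. The most delicate term is $a_5\int\partial_t(ng^2H(v))\,l_t$, since $H(v)_t$ introduces $\nabla v\cdot\nabla v_t$: I would write the integrand as $n(g\nabla v)(g\nabla v_t)l_t$ and bound it by $|n|_\infty|g\nabla v|_\infty|g\nabla v_t|_2|l_t|_2$, using $|g\nabla v_t|_2\le c_4$ from \eqref{801} and $|l_t|_2\le c_1^{\nu/2}|w^{-\nu/2}l_t|_2$, while Young's inequality pulls any $\nabla l_t$-factor onto $\tfrac{a_4}{2}|h^{1/2}\nabla l_t|_2^2$ whenever one is present. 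The $\text{div}\psi$- and $\Pi$-derivative contributions are controlled in the same spirit using $|\psi_t|_2,|\psi_t|_{D^1}$ from Lemma~\ref{psi} and the $n_t,h_t,w_t$ bounds from Lemmas~\ref{psi}, \ref{h3438n}.

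The resulting differential inequality takes the form $\tfrac{d}{dt}|w^{-\nu/2}l_t|_2^2+a_4|h^{1/2}\nabla l_t|_2^2\le M(c_0)c_4^{N}(1+|w^{-\nu/2}l_t|_2^2)$ for some $N=N(\nu)$; Gronwall on $[0,T_5]$, with $T_5=\min\{T_4,(1+Cc_4)^{-16-8\nu}\}$ chosen precisely to absorb $c_4^N T_5$, then yields the first line of the lemma. Lemma~\ref{gh} converts $|w^{-\nu/2}l_t|_2\le M(c_0)$ into $|l_t|_2\le M(c_0)c_1^{\nu/2}\le M(c_0)c_1^{2\nu+1}$, and $\int_0^t|h^{1/2}\nabla l_s|_2^2\,ds\le M(c_0)$ into $\int_0^t|\nabla l_s|_2^2\,ds\le M(c_0)\le M(c_0)c_1^{2\nu+2}$. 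Feeding the new bound on $|l_t|_2$ into the elliptic estimate $|h\nabla^2l|_2\le M(c_0)(c_1^{2\nu+1}+c_1^\nu|l_t|_2)$ already established at the end of Lemma~\ref{l1} gives $|h\nabla^2l|_2\le M(c_0)c_1^{2\nu+1}$, and then $|l|_{D^2}\le|\varphi|_\infty|h\nabla^2l|_2\le M(c_0)c_1^{2\nu+1}$.

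For the remaining $L^2(0,t)$ bounds on $|h\nabla^3l|_2$, $|h\nabla^2l|_{D^1}$ and $|l|_{D^3}$, I would differentiate once in $x$ the Poisson-type identity $-a_4\Delta(h(l-\bar{l}))=w^{-\nu}\mathcal{A}-a_4F$ from the proof of Lemma~\ref{l1}, apply Lemma~\ref{ell}, and bound $|\nabla(w^{-\nu}\mathcal{A})|_2$ using $|\nabla l_t|_2\le|\varphi|_\infty^{1/2}|h^{1/2}\nabla l_t|_2$ together with the pointwise and low-order estimates from Lemmas~\ref{gh}--\ref{l1}; squaring and integrating in $t$ then feeds on the just-proved integral control of $|h^{1/2}\nabla l_s|_2^2$ to produce the claimed $c_1^{2\nu+2}$-factor. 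The principal obstacle throughout is keeping track of how much singular weight $l_t$ can carry: in contrast to the $\mathbb{R}^3$ theory of \cite{DXZ2}, the two-dimensional embeddings only allow the weight $w^{-\nu/2}$ on $l_t$ in the energy identity, so every $v_t$- or $w_t$-source must be split between a pointwise factor (absorbing mild powers of $h$ and $w$) and an $L^2$ factor carrying the full dissipation $h^{1/2}\nabla l_t$, with Young's inequality applied so that the residue scales only as a power of $c_4$ killed by the smallness of $T_5$.
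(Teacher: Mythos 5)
Your proposal reproduces the paper's argument: differentiate $\eqref{ln}_3$ in $t$, test against $w^{-\nu}l_t$ (your ``rescaled then differentiate then test against $l_t$'' yields the identical energy identity after the product rule), absorb the singular remainder terms with the interpolation bounds from the proof of Lemma~\ref{l1}, invoke Gronwall over the short interval $[0,T_5]$, feed the resulting $|l_t|_2$ bound into the elliptic estimate $\eqref{807}$, and finally differentiate the Poisson identity $-a_4\Delta\big(h(l-\bar{l})\big)=w^{-\nu}\mathcal{A}-a_4F$ and apply Lemma~\ref{ell} for the $D^3$ bounds. The one step you do not mention is that the Gronwall argument must be run on $(\tau,t)$ and $\tau\to 0$ taken afterwards, using $\limsup_{\tau\to 0}|w^{-\nu/2}l_t(\tau)|_2\le M(c_0)$ from Lemma~\ref{inin}; this is not automatic, since $l_t$ at $t=0$ is determined through the equation and the compatibility conditions rather than being given initial data, and without this control the first line of the lemma does not close. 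With that supplement your route matches the paper's.
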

			\begin{proof}
				Applying $\partial_t$ to $(3.1)_3$, one has
				\begin{equation}\label{703}
					\begin{aligned}
							l_{tt}-a_4w^\nu h\Delta l_t=&-(v\cdot\nabla l)_t+a_4(w^\nu)_t h\Delta l+a_4w^\nu h_t\Delta l\\&+
						a_5(w^\nu ng^2H(v))_t+a_6(w^{\nu+1}\text{div}\psi)_t+\Pi(l,h,w,g)_t.
					\end{aligned}
				\end{equation}
                
				Multiplying \ef{703} by $w^{-\nu}l_t$ and integrating over $\mathbb{R}^2$, one has
				\begin{equation}\label{808}
					\begin{aligned}
						& \frac{1}{2} \frac{\text{d}}{\text{d} t}|w^{-\frac{\nu}{2}}  l_t|_2^2+a_4|h^{\frac{1}{2}} \nabla l_t|_2^2 \\
						= & -\int( v \cdot \nabla l)_t w^{-\nu} l_t 
						+a_4 \int (w_t^\nu h \Delta l+w^\nu h_t \Delta l) w^{-\nu} l_t \\
						& +\int a_5\big(w^\nu n g^2 H(v)\big)_t w^{-\nu} l_t +\int a_6(w^{\nu+1}  \text{div} \psi)_t w^{-\nu} l_t  \\
						& +\int \Pi(l, h, w, g)_t w^{-\nu} l_t-a_4 \int\big( \nabla h\cdot \nabla l_t l_t+\frac{1}{2} (w^{-\nu} )_t|l_t|^2\big)=\sum_{i=1}^6 I_i,
					\end{aligned}
				\end{equation}
    where  
					\begin{align*}
						I_1=&-\int(v\cdot\nabla l)_tw^{-\nu}l_t\le C|w^{-\frac\nu2}|_\infty(|v_t|_4|\nabla l|_4+|\varphi|^\frac12_\infty|v|_\infty|h^\frac12\nabla l_t|_2)|w^{-\frac\nu2}l_t|_2,\\
						I_2=&a_4\int(w_t^\nu h\Delta l+w^\nu h_t\Delta l)w^{-\nu}l_t\\
						=&-a_4\int\nabla(w_t^\nu h w^{-\nu}l_t)\cdot\nabla l+a_4\int h_t\Delta ll_t\\
						\le&C(|w^{-2+\frac\nu2}|_\infty|w_t|_\infty|g^{-1}h|_\infty^\frac12|g^\frac12\nabla w|_4|h^\frac12\nabla l|_4
						+|w^{-1+\frac\nu2}|_\infty|w_t|_\infty|\psi|_\infty|\nabla l|_2\\&
						+|w^{-1+\frac\nu2}|_\infty|g^{-1}h|_\infty^\frac12|g^\frac12\nabla w_t|_4|h^\frac12\nabla l|_4+|w^\frac\nu2|_\infty|\varphi|^\frac12_\infty|h^{-\frac12}h_t|_\infty|h\nabla^2l|_2)|w^{-\frac\nu2}l_t|_2\\
						&+C|w^{-1}|_\infty|w_t|_\infty|h^{\frac12}\nabla l|_2|h^\frac12\nabla l_t|_2,\\
						I_3=&a_5\int\big(w^\nu ng^2H(v)\big)_tw^{-\nu}l_t\\
						\le&C\big((|w^{-1+\frac\nu2}|_\infty|n|_\infty|w_t|_\infty+|w^\frac\nu2|_\infty|n_t|_\infty)|g\nabla v|_4^2\\
						&+|w^\frac\nu2|_\infty|n|_\infty(|g^{-\frac12}g_t|_\infty|g^\frac34\nabla v|_4^2+|g\nabla v_t|_4|g\nabla v|_4)\big)|w^{-\frac\nu2}l_t|_2,\\
						I_4=&a_6\int(w^{\nu+1}\text{div}\psi)_tw^{-\nu}l_t
				\le C\big(|w^\frac\nu2|_\infty|\nabla\psi|_4|w_t|_4+|w^{1+\frac\nu2}|_\infty|\nabla\psi_t|_2\big)|w^{-\frac\nu2}l_t|_2,
						\\
						I_5=&\int\Pi(l,h,w,g)_tw^{-\nu}l_t\\\le&C\Big(|w^{1+\frac\nu2}|_\infty(|\varphi|^{\frac{1}{2}}_\infty|h^{-\frac12}h_t|_\infty|\nabla h^\frac12|_4^2+|\varphi|^\frac12_\infty|\nabla h^\frac12|_4|\psi_t|_4)\\&+|w^\frac\nu2|_\infty|w_t|_\infty|\nabla h^\frac12|_4^2+
		|w^{-2+\frac\nu2}|_\infty|g^\frac12\nabla w|_4^2|w_t|_\infty\\&+|w^{-1+\frac\nu2}|_\infty|\nabla l|_2|w_t|_\infty|\psi|_\infty+|w^\frac\nu2|_\infty(|\varphi|_\infty^\frac12|\psi|_\infty|h^\frac12\nabla l_t|_2+|\psi_t|_4|\nabla l|_4)\\&+|w^{-1+\frac\nu2}|_\infty(|g^{-\frac12}g_t|_\infty|g^\frac12\nabla w|_4|\nabla w|_4+|g^\frac12\nabla w|_4|g^\frac12\nabla w_t|_4)\Big)|w^{-\frac\nu2}l_t|_2,\\
						I_6=&-a_4\int\big(\nabla h\cdot\nabla l_tl_t+\frac12(w^{-\nu})_t|l_t|^2\big)
						\\\le&	C|w^\frac\nu2|_\infty|\varphi|^{\frac12}_\infty|\psi|_\infty|h^\frac12\nabla l_t|_2|w^{-\frac\nu2}l_t|_2+C|w^{-1}|_\infty|w_t|_\infty|w^{-\frac\nu2}l_t|_2^2.
					\end{align*}

				Integrating \ef{808} over $(\tau,t)\big(\tau\in(0,t)\big)$, by \ef{801}, Lemmas \ref{gh}-\ref{l1}, \ef{701}, \ef{807}  and 
$|w_t|_4\le|w_t|_2^2|\nabla w_t|_2^2\le M(c_0)c_2$,
	one has
				\begin{equation}\label{811}
					\begin{aligned}
						&|w^{-\frac\nu2}l_t(t)|_2^2+\int_\tau^t|h^\frac12\nabla l_s|_2^2\text{d}s\\\le& M(c_0)c_4^{6\nu+14}\int_\tau^t|w^{-\frac\nu2}l_s|_2^2\text{d}s+M(c_0)+|w^{-\frac\nu2}l_t(\tau)|_2^2,
					\end{aligned}
				\end{equation} 
				for $0\le t\le T_4$.
%
               %
				Letting $\tau\rightarrow0$ in \ef{811}, then Lemma \ref{inin} and Gronwall's inequality imply  
				\begin{equation}\label{702}
					|w^{-\frac{\nu}{2}}l_t(t)|_2^2+\int_0^t(|h^\frac12\nabla l_s|_2^2+|\nabla l_s|_2^2)\text{d}s\le M(c_0)\quad \text{for}\ \ 0\le t\le T_5.
				\end{equation}
				Then, \eqref{702}-\ef{807} give  
				\begin{equation}\label{812}
    \begin{aligned}
	|hl(t)|_{D^2}&+|h\nabla^2l(t)|_2+|l(t)|_{D^2}\le M(c_0)c_1^{2\nu+1},\\
	|l-\bar{l}|_2\le& |l_0-\bar{l}|_2+\left(\int_0^t|l_s|_2^2\text{d}s\right)^\frac12t^\frac12\le 
						M(c_0),\\
						|l-\bar{l}|_\infty \le& |l-\bar{l}|_2^\frac14|\nabla l|_2^\frac12|\nabla^2 l|_2^\frac14\le M(c_0)c_1^{\frac\nu2+\frac14}.
					\end{aligned}
				\end{equation}
                
%
		Furthermore, Lemmas \ref{gh}-\ref{l1}, \ef{2.14}, \ef{801} ,\ef{g32} and the results above imply
				\begin{equation}\label{813}
					\begin{aligned}
						|\mathcal{A}|_{D^1} \leq& C\big(|\nabla l_t|_2+|v|_{\infty}|\nabla^2 l|_2+|\nabla v|_4|\nabla l|_4+|w^{\nu-1}|_{\infty}|n|_{\infty}|\nabla w|_6|g\nabla v|_6^2 \\
						& +|w^\nu|_{\infty}(|\nabla n|_{\infty}|g\nabla v|_4^2 +|n|_{\infty}|\nabla g|_{\infty}|g\nabla v|_4 |\nabla v|_4 \\
						& +|n|_{\infty}|g^2\nabla v\cdot\nabla^2 v|_2+|\nabla w|_4|\nabla \psi|_4) +|w^{\nu+1}|_{\infty}|\nabla^2 \psi|_2   \\
						&+|w^\nu|_{\infty}|\varphi|_{\infty}|\psi|_{\infty}^2|\nabla w|_2+|w^{\nu+1}|_{\infty}(|\varphi|_{\infty}|\psi|_{\infty}|\nabla h^{\frac{1}{2}}|_4^2 \\
						& +|\varphi|_{\infty}|\psi|_{\infty}|\nabla\psi|_2)+|w^{\nu-1}|_{\infty}|\psi|_{\infty}|\nabla l|_4|\nabla w|_4 \\
						& +|w^\nu|_{\infty}(|\psi|_{\infty}|\nabla^2 l|_2 +|\nabla \psi|_4|\nabla l|_4)+|w^{\nu-2}|_{\infty}|g^\frac12 \nabla w|^2_6|\nabla w|_6 \\
						& +|w^{\nu-1}|_{\infty}(|\nabla g|_{\infty}|\nabla w|_4^2+|g\nabla w\cdot\nabla^2 w|_2)\big) \\
						\leq&  M\left(c_0\right)\left(\left|\nabla l_t\right|_2+c_1^{3 \nu+2}\right), \\
						|F|_{D^1} \leq& C\big(|\psi|_{\infty}|\nabla^2 l|_2+|\nabla\psi|_4|\nabla l|_4
						+|\nabla^3 h|_2 | l-\bar{l}|_{\infty}\big) \leq M(c_0) c_1^{2\nu+1},\\
					\end{aligned}
				\end{equation}
				where we also used the   estimates
					\begin{align*}
     |g\nabla v|_\infty\le&|g\nabla v|_2^\frac14|\nabla 
     (g\nabla v)|_2^\frac12|\nabla^2(g\nabla v)|_2^\frac14\le M(c_0)c_4,\\
     |g^\frac12\nabla w|_\infty\le&|g^\frac12\nabla w|_2^\frac14|\nabla 
     (g^\frac12\nabla w)|_2^\frac12|\nabla^2(g^\frac12\nabla w)|_2^\frac14\le M(c_0)c_2,\\
     \|w\|_{D^1\cap D^2}\le&\|l_0\|_{D^1\cap D^2}+t^\frac12\left(\int_0^t\|w_s\|^2_{D^1\cap D^2}\text{d}s\right)^\frac12\le M(c_0),\\
		|g\nabla v|_6\le& |g(0,x)\nabla u_0|_6+t^\frac12\left(\int_0^t|(g\nabla v)_s|_6^2\text{d}s\right)^\frac12\\
						\le& C|h_0\nabla u_0|_2^\frac13|\nabla(h_0 u_0\big)|_2^\frac23+Ct^\frac12\Big(\int_0^t(|g_s|_\infty^2|\nabla v|_6^2+|g\nabla v_s|_6^2)\text{d}s\Big)^
						\frac12\\
						\le& M(c_0)+t^\frac12\Big(\int_0^t|g\nabla v_s|_2^\frac23|\nabla(g\nabla v_s)|_2^\frac43\text{d}s\Big)^\frac12\le M(c_0),\\
						|g^\frac12\nabla w|_6\le& |g^\frac12(0,x)\nabla l_0|_6+t^\frac12\big(\int_0^t|(g^\frac12\nabla w)_s|_6^2\text{d}s\big)^\frac12\\
						\le& C|h_0^\frac12\nabla l_0 |_2^\frac13|\nabla(h_0^\frac12\nabla l_0 )|_2^\frac23\\&+Ct^\frac12\Big(\int_0^t\big(|g^{-\frac12}g_s|^2_\infty|\nabla w|_6^2+|g^\frac12\nabla w_s|_6^2\big)\text{d}s\Big)^\frac12\\
						\le& M(c_0)+Ct^\frac12\Big(\int_0^t|g^\frac12\nabla w_s|^\frac23_2|\nabla(g^\frac12\nabla w_s)|^\frac43_2\big)\text{d}s\Big)^\frac12\le M(c_0),\\
						|g^2 \nabla v\cdot\nabla^2 v|_2\le&|g^2(0,x)\nabla u_0\cdot\nabla^2 u_0|_2+t^\frac12\left(\int_0^t|(g^2\nabla v\cdot \nabla^2 v)_s|_2^2\text{d}s\right)^\frac12\\
						\le& C|h_0\nabla u_0|_4|h_0\nabla ^2u_0|_4+Ct^\frac12\Big(\int_0^t(|g_s|_\infty^2|g\nabla v|_4^2|\nabla^2 v|_4^2\\&+|g\nabla v_s|_4^2|g\nabla^2 v|_4^2+|g\nabla^2 v_s|_2^2|g\nabla v|_\infty^2)\text{d}s\Big)^
						\frac12\le M(c_0),\\
						|g \nabla w\cdot\nabla^2 w|_2\le&|g(0,x)\nabla l_0\cdot\nabla^2 l_0|_2+t^\frac12\left(\int_0^t|(g\nabla w\cdot \nabla^2 w)_s|_2^2\text{d}s\right)^\frac12\\
						\le& |h_0^\frac12\nabla l_0|_4|h_0^\frac12\nabla^2 l_0|_4+Ct^\frac12\Big(\int_0^t(|g_s|_\infty^2|\nabla w|_4^2|\nabla^2 w|_4^2\\&+|g^\frac12\nabla w_s|_4^2|g^\frac12\nabla^2 w|_4^2+|g^\frac12\nabla^2 w_s|_2^2|g^\frac12\nabla w|_\infty^2)\text{d}s\Big)^
						\frac12
						\le  M(c_0),
					\end{align*}
				for $0\le t\le T_5$. 
					 Then \ef{805}, \ef{812}-\ef{813} and Lemmas \ref{ell}, \ref{gh}-\ref{l1} give that
				\begin{equation}\label{814}
					\begin{aligned}
						|h(l-\bar{l})(t)|_{D^3}  \leq& C(|w^{-\nu} \mathcal{A}|_{D ^1}+|F(\nabla h, l-\bar{l})|_{D^1}) \\
						\leq&  C(|w^{-\nu}|_{\infty}|\mathcal{A}|_{D^1}+|\nabla w^{-\nu}|_4|\mathcal{A}|_4+|F|_{D^1}) \\
						\leq&  M(c_0)(c_1^{\nu+1}|\nabla l_t|_2+c_1^{4 \nu+3}), \\
						|h \nabla^3 l(t)|_2 \leq&  C\big(|h(l-\bar{l})|_{D^3}+|\nabla^3 h|_2|l-\bar{l}|_\infty+|\nabla \psi|_4|\nabla l|_4+|\psi|_\infty|\nabla^2l|_2\big) \\
						\leq&  M(c_0)(c_1^{\nu+1}|\nabla l_t|_2+c_1^{4 \nu+3}).
					\end{aligned}
				\end{equation}
				
				Finally, one gets from \ef{702}, \ef{814} and Lemma \ref{gh} that
				\begin{equation*}
					\begin{aligned}
						\int_0^t(|h\nabla^3 l|_2^2+|h\nabla^2 l|_{D^1}^2+|l|_{D^3}^2)\text{d}s\le M(c_0)c_1^{2\nu+2},
					\end{aligned}
				\end{equation*}
				for $0\le t\le T_5$. 
				The proof of Lemma \ref{l2} is completed.

			\end{proof}
			
			\begin{lemma}\label{l3}
				For $T_6=\min \left\{T_5,\left(1+C c_4\right)^{-40-10 \nu}\right\}$ and $t \in\left[0, T_6\right]$, one has
				\begin{equation}\label{704}
					\begin{aligned}
						|h^{\frac{1}{2}} \nabla l_t|_2^2+|\nabla l_t|_2^2+\int_0^t|w^{-\frac{\nu}{2}}  l_{s s}|_2^2 {\rm{d}}s & \leq M\left(c_0\right), \\
						|h \nabla^3 l|_2+|h \nabla^2 l|_{D^1}+|l|_{D^3} & \leq M\left(c_0\right) c_1^{4 \nu+3}, \\
						\int_0^t(|h^\frac12 \nabla^2 l_s|_2^2+|\nabla^2 l_s|_2^2) {\rm{d}}s & \leq M(c_0) c_1^{3 \nu} .
					\end{aligned}
				\end{equation}
				
			\end{lemma}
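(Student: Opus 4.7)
The plan is to differentiate the $l$-equation in $(3.1)_3$ in $t$ (producing \ef{703}) and test the resulting evolution against the natural multiplier $w^{-\nu} l_{tt}$. This will upgrade the $|w^{-\nu/2} l_t|_2$ bound of Lemma \ref{l2} to the desired estimate on $|h^{1/2}\nabla l_t|_2$ and $\int_0^t |w^{-\nu/2} l_{ss}|_2^2\,\dd s$. After integration by parts on $-a_4\int h\Delta l_t\cdot l_{tt}$, the principal identity reads
\begin{equation*}
\frac{a_4}{2}\frac{\dd}{\dd t}|h^{1/2}\nabla l_t|_2^2 + |w^{-\nu/2}l_{tt}|_2^2 = \frac{a_4}{2}\int h_t|\nabla l_t|^2 - a_4\int l_{tt}\,\nabla h\cdot\nabla l_t + \sum_j \mathcal{J}_j,
\end{equation*}
where the $\mathcal{J}_j$'s run over the five terms on the RHS of \ef{703}. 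The aim is to estimate every $\mathcal{J}_j$ by $M(c_0) c_4^{A}(1+|h^{1/2}\nabla l_t|_2^2)+\tfrac{1}{2}|w^{-\nu/2}l_{tt}|_2^2$ for a fixed polynomial power $A=A(\nu)$, using the standing a priori assumptions \ef{801} and Lemmas \ref{gh}--\ref{l2}.

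The main obstacle, as already flagged in the discussion of Section 3.2, is controlling the trouble term coming from $a_5(w^\nu n g^2 H(v))_t$, and especially its quadratic-in-$\nabla v$ piece
\begin{equation*}
a_5\int n g^2 (D(v):\nabla v_t)\,w^{-\nu}l_{tt}.
\end{equation*}
In $\mathbb{R}^2$ the failure of $H^1\hookrightarrow L^\infty$ prevents dumping the full weight $g^2$ onto $\nabla v\cdot\nabla v_t$ in $L^\infty$; instead I would split $g^2=g\cdot g$ and apply H\"older to get $|n|_\infty|w^{-\nu}|_\infty|g\nabla v|_4|g\nabla v_t|_4|w^{-\nu/2}l_{tt}|_2$. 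Here $|g\nabla v|_4\le M(c_0)$ is already in \ef{g32}, and $|g\nabla v_t|_4\le C|g\nabla v_t|_2^{1/2}(|g\nabla^2 v_t|_2 + |\nabla g|_\infty|\nabla v_t|_2)^{1/2}$ is derived by Gagliardo--Nirenberg (Lemma \ref{lem2as}) from the regularity of $v$ postulated in \ef{801}. Cauchy--Schwarz with $\varepsilon$ then absorbs $\tfrac{1}{2}|w^{-\nu/2}l_{tt}|_2^2$ into the LHS. Integrating from $\tau$ to $t$, letting $\tau\to 0^+$ and invoking Lemma \ref{inin} for $\limsup_{\tau\to 0}|h^{1/2}\nabla l_t(\tau)|_2\le M(c_0)$, Gronwall on a sufficiently small time $T_6$ (chosen to absorb $M(c_0)c_4^A$) yields the first line of \ef{704}. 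The bound $|\nabla l_t|_2\le M(c_0)$ then follows from $|h^{-1/2}|_\infty=|\varphi|_\infty^{1/2}\le\sqrt{2c_0}$ by Lemma \ref{gh}.

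For the improved $|l|_{D^3}$ bound, I would substitute the freshly obtained $|\nabla l_t|_2\le M(c_0)$ into the elliptic estimates \ef{813}--\ef{814} already established in the proof of Lemma \ref{l2}, whereby $|h\nabla^3 l|_2\le M(c_0)(c_1^{\nu+1}|\nabla l_t|_2+c_1^{4\nu+3})$ immediately reduces to the claimed $M(c_0)c_1^{4\nu+3}$. For the last bound, I would view \ef{703} as an elliptic equation for $l_t$ by rewriting it in the style of \ef{805}, namely $\Delta(h l_t) = -w^{-\nu}\mathcal{A}_t - F(\nabla h, l_t)$ with $\mathcal{A}_t$ containing $l_{tt}$ and lower-order data, and apply Lemma \ref{ell} for each fixed $t$. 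Squaring in $x$ and integrating in $t$, the key inputs are $\int_0^t|l_{ss}|_2^2\,\dd s\le|w^\nu|_\infty\int_0^t|w^{-\nu/2}l_{ss}|_2^2\,\dd s\le M(c_0)c_1^\nu$ from the first line of \ef{704}, together with the standing time-integrated bounds on $v,g,w$ in \ef{801}; combining these produces $\int_0^t|h\nabla^2 l_s|_2^2\,\dd s\le M(c_0)c_1^{3\nu}$, and the two stated integrated bounds follow on using $|h^{1/2}|_\infty\le C$ and $|h^{-1}|_\infty\le 2c_0$.
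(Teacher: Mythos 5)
Your proposal follows the same two-step architecture the paper uses: test \ef{703} against $w^{-\nu}l_{tt}$, integrate by parts to produce $\frac{a_4}{2}\frac{\dd}{\dd t}|h^{1/2}\nabla l_t|_2^2$ plus $|w^{-\nu/2}l_{tt}|_2^2$, absorb the troublesome quadratic term via $|g\nabla v|_4|g\nabla v_t|_4$ as the paper does inside $I_9$, pass to the limit $\tau\to0$ with Lemma \ref{inin}, and Gronwall. The second displayed bound is obtained, as in the paper, by feeding the freshly proved $|\nabla l_t|_2\le M(c_0)$ back into \ef{814}. For the third displayed bound the paper works with the quantity $\Delta(h^{1/2}l_t)$ rather than your $\Delta(h l_t)$, but that is a harmless variant: your choice forces you to downgrade $|h\nabla^2 l_t|_2$ to $|h^{1/2}\nabla^2 l_t|_2$ at the end, which costs one extra factor of $|h^{-1/2}|_\infty$, while the paper's normalization gives the target weight directly.

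There is one incorrect assertion in the closing sentence that you should repair, though the fix is immediate. You invoke ``$|h^{1/2}|_\infty\le C$'' to pass from $\int_0^t|h\nabla^2 l_s|_2^2\,\dd s$ to the two stated bounds. In this setting $h=(\phi^\epsilon)^{2\iota}$ with $\iota=\frac{\delta-1}{2(\gamma-1)}<0$, so $h$ is a \emph{negative} power of $\phi^\epsilon$; the far-field vacuum forces $\phi^\epsilon\to\epsilon$ and hence $|h|_\infty\sim\epsilon^{2\iota}\to\infty$ as $\epsilon\to0$. Only the lower bound $h>\tfrac{1}{2c_0}$ from Lemma \ref{gh} is available uniformly in $\epsilon$. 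What you actually need — and what does hold — is the bound in the opposite direction: $|h^{-1/2}|_\infty=|\varphi|_\infty^{1/2}\le(2c_0)^{1/2}$, combined with $|h^{-1}|_\infty=|\varphi|_\infty\le2c_0$, which together give $|h^{1/2}\nabla^2 l_s|_2\le|h^{-1/2}|_\infty|h\nabla^2 l_s|_2$ and $|\nabla^2 l_s|_2\le|h^{-1}|_\infty|h\nabla^2 l_s|_2$. With that correction the argument closes, and the exponent bookkeeping ($c_1^{2\nu}$ from squaring the $c_1^\nu|l_{tt}|_2$ term times $c_1^\nu$ from $\int|l_{ss}|_2^2\,\dd s$) reproduces the stated $M(c_0)c_1^{3\nu}$.
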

			
			\begin{proof}
				First, 
				multiplying \ef{703} by $w^{-\nu} l_{t t}$ and integrating over $\mathbb{R}^2$, one has
				\begin{equation}\label{815}
					\frac{a_4}{2} \frac{\text{d}}{\text{d} t}|h^{\frac{1}{2}} \nabla l_t|_2^2+|w^{-\frac{\nu}{2}}  l_{t t}|_2^2=\sum_{i=7}^{12} I_i, 
				\end{equation}
				where 
				\begin{equation}\label{816}
					\begin{aligned}
						I_7=&-\int( v \cdot \nabla l)_t w^{-\nu} l_{t t} \\
						\leq &C |w^{-\frac{\nu}{2}}|_{\infty}(\left|v_t\right|_4|\nabla l|_4+|\varphi|_{\infty}^{\frac{1}{2}}|v|_{\infty}|h^{\frac{1}{2}} \nabla l_t|_2)|w^{-\frac{\nu}{2}}  l_{t t}|_2, \\
						I_8=&a_4 \int(w_t^\nu h \Delta l+w^\nu h_t \Delta l) w^{-\nu} l_{t t} \\
						\leq &C(|w^{-1+\frac\nu2}|_\infty|w_t|_\infty|h\nabla^2 l|_2
						 +|w^{\frac{\nu}{2}}|_{\infty}|\varphi|_{\infty}^{\frac{1}{2}}|h^{-\frac12}h_t|_{\infty}|h \nabla^2 l|_2)|w^{-\frac{\nu}{2}}  l_{t t}|_2,\\
						I_9=&a_5 \int\big(w^\nu n g^{2} H(v)\big)_t w^{-\nu} l_{t t} \\
						\leq& C\big((|w^{-1+\frac{\nu}{2}}|_{\infty}|n|_{\infty}|w_t|_\infty+|w^{\frac{\nu}{2}}|_{\infty}|n_t|_\infty)|g \nabla v|_4^2 \\
						& +|w^{\frac{\nu}{2}}|_{\infty}|n|_{\infty}|g^{-\frac12}g_t|_\infty|g^\frac34\nabla v|_4^2+|w^{\frac{\nu}{2}}|_{\infty}|n|_\infty|g\nabla v|_4|g \nabla v_t|_4\big)|w^{-\frac{\nu}{2}}  l_{t t}|_2, \\
						I_{10}=&a_6 \int(w^{\nu+1}  \text{div} \psi)_t w^{-\nu} l_{t t} \\
						\leq& C(|w^{\frac{\nu}{2}}|_{\infty}|\nabla \psi|_4|w_t|_4+|w^{1+\frac{\nu}{2}}|_{\infty}|\nabla \psi_t|_2)|w^{-\frac{\nu}{2}}  l_{t t}|_2, \\
						I_{11}=&\int \Pi(l, h, w, g)_t w^{-\nu} l_{t t} \\
						\leq &C\big(|w^\frac\nu2|_\infty|w_t|_\infty|\nabla h^\frac12|_4^2\\&+|w^{1+\frac\nu2}|_\infty(|\varphi|^{\frac{1}{2}}_\infty|h^{-\frac12}h_t|_\infty|\nabla h^\frac12|_4^2+|\varphi|^\frac12_\infty|\nabla h^\frac12|_4|\psi_t|_4)\big)|w^{-\frac\nu2}l_t|_2\\&+C\big(|w^{-1+\frac\nu2}|_\infty|\nabla l|_2|w_t|_\infty|\psi|_\infty+|w^\frac\nu2|_\infty(|\varphi|_\infty^\frac12|\psi|_\infty|h^\frac12\nabla l_t|_2\\&+|\psi_t|_4|\nabla l|_4)+
						|w^{-2+\frac\nu2}|_\infty|g^\frac12\nabla w|_4^2|w_t|_\infty\\&+|w^{-1+\frac\nu2}|_\infty(|g^{-\frac12}g_t|_\infty|g^\frac12\nabla w|_4|\nabla w|_4+|g^\frac12\nabla w|_4|g^\frac12\nabla w_t|_4)\big)|w^{-\frac{\nu}{2}}  l_{t t}|_2, \\
						I_{12}=&  -a_4 \int \nabla h \cdot \nabla l_t l_{t t}+\frac{a_4}{2} \int h_t|\nabla l_t|^2 \\
						\leq&  C|w^{\frac{\nu}{2}}|_{\infty}|\varphi|_{\infty}^{\frac{1}{2}}|\psi|_{\infty}|h^{\frac{1}{2}} \nabla l_t|_2|w^{-\frac{\nu}{2}}  l_{t t}|_2+C|\varphi|^\frac12_{\infty}|h^{-\frac12}h_t|_{\infty}|h^{\frac{1}{2}} \nabla l_t|_2^2 .
					\end{aligned}
				\end{equation}
				Integrating \ef{815} over $(\tau, t)$, for $0 \leq t \leq T_5$, \eqref{815}-\ef{816} imply
				\begin{equation}\label{823}
					\begin{aligned}
						 |h^{\frac{1}{2}} \nabla l_t(t)|_2^2+\int_\tau^t|w^{-\frac{\nu}{2}}  l_{s s}|_2^2 \text{d} s 
						\leq & M(c_0) + C|h^{\frac{1}{2}} \nabla l_t(\tau)|_2^2 \\&+M(c_0) c_4^{10\nu+30} \int_0^t|h^{\frac{1}{2}} \nabla l_s|_2^2 \text{d} s .
					\end{aligned}
				\end{equation}
				Letting $\tau \rightarrow 0$ in \eqref{823}, then   Lemma \ref{inin} and Gronwall's inequality imply 
				\begin{equation}\label{818}
					\begin{aligned}
						& |h^{\frac{1}{2}} \nabla l_t(t)|_2^2+|\nabla l_t(t)|_2^2+\int_0^t|w^{-\frac{\nu}{2}}  l_{s s}|_2^2 \text{d} s \leq M(c_0),
					\end{aligned}
				\end{equation}
				 for $0 \leq t \leq T_6$.
				and \eqref{818}-\ef{814} give
				\begin{equation*}
					|h(l-\bar{l})|_{D^3}+|h \nabla^3 l|_2+|h \nabla^2 l|_{D^1}+|\nabla^3 l|_2 \leq M(c_0) c_1^{4 \nu+3} .
				\end{equation*}

				Furthermore,  \ef{703} gives
				\begin{equation}\label{819}
					\begin{aligned}
						-a_4 \Delta(h^\frac12 l_t) & =-a_4h^\frac12 \Delta l_t-a_4 F(\nabla h^\frac12, l_t)   =w^{-\nu} \mathcal{B}-a_4 F(\nabla h^\frac12, l_t),
					\end{aligned}
				\end{equation}
				with
				\begin{equation}
					\begin{aligned}
						\mathcal{B}= & - h^{-\frac12}l_{t t}-h^{-\frac12}( v \cdot \nabla l)_t+a_4 (w^\nu)_t h^\frac12\Delta l++a_4 w^\nu h^{-\frac12}h_t\Delta l \\
						& +a_5h^{-\frac12}\big(w^\nu n g^{2} H(v)\big)_t+a_6h^{-\frac12}(w^{\nu+1}  \text{div} \psi)_t+h^{-\frac12}\Pi(l, h, w, g)_t,\\
                        \hat{F}=&F(\nabla h^\frac12, l_t).
					\end{aligned}
				\end{equation}
				%
			%
				Then \ef{801} and Lemmas \ref{gh}-\ref{l2} imply 
				$$
				\begin{aligned}
					|\mathcal{B}|_2 \leq & C\Big(|\varphi|_{\infty}^{\frac{1}{2}}| l_{t t}|_2 +\|\nabla l \|_1|\varphi|_{\infty}^{\frac{1}{2}}|v_t|_4\\
					&+| \varphi|_{\infty} |v|_{\infty}|h^{\frac{1}{2}} \nabla l_t|_2  +|(w^\nu)_t|_4|h^{\frac{1}{2}} \Delta l|_4+|w^\nu|_{\infty}|(h^{\frac{1}{2}})_t \Delta l|_2 \\
					& +|\varphi|_\infty^\frac12\big(|w^\nu|_{\infty}|n_t|_{\infty}|g \nabla v|_4^2+|w^{\nu-1}|_{\infty}|n|_{\infty}|w_t|_4|g \nabla v|_{\infty}|g \nabla v|_4 \\
					& +|w^\nu|_{\infty}|n|_{\infty}(|g \nabla v|_2|g_t|_{\infty}|\nabla v|_{\infty}+|g \nabla v|_{\infty}|g \nabla v_t|_2)\big) \\
					& +|w^\nu|_{\infty}|\varphi|_{\infty}^{\frac{1}{2}}|w_t|_4|\nabla \psi|_4+|w^{\nu+1}|_{\infty}|\varphi|_{\infty}^{\frac{1}{2}}|\nabla \psi_t|_2 \\
					& +|w^\nu|_{\infty}|\varphi|_{\infty}|w_t|_4|\nabla h^{\frac{1}{2}}|_4|\nabla h|_\infty+|w^{1+\nu}|_{\infty}(|\varphi|_{\infty}^{\frac{3}{2}}|h_t|_{\infty}|\nabla h^{\frac{1}{2}}|_4^2 \\
					& +|\varphi|_{\infty}^{\frac{3}{2}}|\psi|_{\infty}|\psi_t|_2)+|w^{-1+\nu}|_{\infty}|\nabla l|_4|w_t|_4|\varphi|_{\infty}^{\frac{1}{2}}|\psi|_{\infty} \\
					& +|w^\nu|_{\infty}(|\varphi|_{\infty}^{\frac{1}{2}}|\psi|_{\infty}|\nabla l_t|_2+|\varphi|_{\infty}^{\frac{1}{2}}|\psi_t|_2|\nabla l|_{\infty}) \\
					& +|\varphi|_\infty^\frac12(|g^\frac12 \nabla w|_{4}^2|w^{-2+\nu}|_{\infty}|w_t|_\infty+|w^{-1+\nu}|_{\infty}|g_t|_{\infty}|\nabla w|_4^2) \\
					& 
     +|gh^{-1}|_\infty^\frac12|w^{-1+\nu}|_{\infty}|g^\frac12\nabla w_t|_2| \nabla w|_{\infty}\Big), \\
					|\hat{F}|_2 \leq & C\big(|\varphi|_{\infty}^{\frac{3}{2}}|\psi|_{\infty}^2| l_t|_2+|\varphi|_{\infty}^{\frac{1}{2}}(|l_t|_4|\nabla \psi|_4+|\psi|_{\infty}|\nabla l_t|_2)\big),
				\end{aligned}
				$$
                which, together with \ef{703}, \ef{818}-\ef{819} and Lemmas \ref{ell}, \ref{gh}-\ref{l2}, gives
				\begin{equation}\label{820}
					\begin{aligned}
						|h^\frac12 l_t|_{D^2} & \leq M(c_0)(c_1^\nu| l_{t t}|_2+c_4^{5 \nu+10}), \\
						|h^\frac12 \nabla^2 l_t|_2 \leq & M(c_0)(|h l_t|_{D^2}+|\varphi|_{\infty}^{\frac{3}{2}}|\psi|_{\infty}^2| l_t|_2 \\
						& +|\varphi|_{\infty}^{\frac{1}{2}}|l_t|_4|\nabla \psi|_4+|\varphi|_{\infty}^{\frac{1}{2}}|\psi|_{\infty}|\nabla l_{t}|_2) \\
						\leq & M(c_0)(c_1^\nu|l_{tt}|_2+c_4^{5 \nu+10}) .
					\end{aligned}
				\end{equation}   
				Moreover, \ef{818} and \ef{820} implies $\ef{704}_3$,
			and
				the proof of Lemma \ref{l3} is complete.
                
			\end{proof}


			\begin{lemma}\label{lt}
				For $T_7=\min \{T_6,\big(1+M(c_0) c_5\big)^{-40-10 \nu}\}$ and $t \in\left[0, T_7\right]$,  one has
				\begin{equation}
					\begin{aligned}
						t^{\frac{1}{2}}|l_t(t)|_{D^2}+t^{\frac{1}{2}}|h^\frac12 \nabla^2 l_t(t)|_2+t^{\frac{1}{2}}|h^{-\frac{1}{4}} l_{t t}(t)|_2 \leq M(c_0) c_1^{\frac{\nu}{2}}, \\
						\int_0^t s(|l_{s s}|_{D^1}^2+|h^{\frac{1}{4}} l_{s s}|_{D^1}^2) {\rm{d}} s \leq M(c_0), \quad
						\frac{1}{2} c_0^{-1} \leq l \leq \frac{3}{2} c_0. 
					\end{aligned}
				\end{equation}
				
			\end{lemma}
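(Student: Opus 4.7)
The strategy is to derive an evolution equation for $l_{tt}$ by differentiating \eqref{703} once more in $t$, perform a weighted energy estimate with the singular weight $h^{-1/2}$, and then read off the remaining bounds from the elliptic structure used in Lemma \ref{l3}. This weight $h^{-1/2}$ is the two-dimensional analogue of what is used in \cite{DXZ2}: the Gagliardo--Nirenberg inequality on $\mathbb{R}^2$ forces us to redistribute powers of $h$ so that the singular weight on $l_{tt}$ is reduced from $w^{-\nu/2}$ to $w^{-\nu/2} h^{-1/4}$, while the corresponding dissipation is $h^{1/4}|\nabla l_{tt}|$, matching the target norms in the statement.

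Concretely, differentiating \eqref{703} in $t$ yields
\begin{equation*}
l_{ttt}-a_4 w^\nu h\,\Delta l_{tt}=a_4(w^\nu h)_t\Delta l_t+\partial_t(\text{RHS of \eqref{703}}).
\end{equation*}
Multiplying by $t\,w^{-\nu}h^{-1/2}l_{tt}$, integrating over $\mathbb{R}^2$ and then integrating by parts in the $\Delta l_{tt}$ term, the identity
\begin{equation*}
-a_4 t\!\int h^{1/2}\Delta l_{tt}\,l_{tt}=a_4 t\,|h^{1/4}\nabla l_{tt}|_2^{2}+a_4 t\!\int l_{tt}\,\zeta\cdot\nabla l_{tt}
\end{equation*}
produces the desired dissipation $a_4 t|h^{1/4}\nabla l_{tt}|_2^{2}$, while the $l_{ttt}$ term yields $\tfrac{1}{2}\tfrac{d}{dt}\bigl(t|w^{-\nu/2}h^{-1/4}l_{tt}|_2^{2}\bigr)$ minus a harmless $\tfrac{1}{2}|w^{-\nu/2}h^{-1/4}l_{tt}|_2^{2}$. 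I would then bound the right-hand side term by term exactly as in the treatment of $I_7$--$I_{12}$ in Lemma \ref{l3}, but now carrying an extra time derivative: the problematic pieces come from $\bigl(w^\nu n g^{2}H(v)\bigr)_{tt}$, $(w^{\nu+1}\mathrm{div}\,\psi)_{tt}$ and $\Pi_{tt}$, which produce factors $v_{tt}$, $g_{tt}$, $w_{tt}$, $\psi_{tt}$. These are controlled in $L^2$ only after multiplying by $t$, thanks to the $c_5$-bounds in \eqref{801} and the already-established estimates from Lemmas \ref{phiphii}--\ref{l3}. After applying Gronwall on $[\tau,t]$ and using Lemma \ref{bjr} to pick a sequence $s_k\to 0$ with $s_k|w^{-\nu/2}h^{-1/4}l_{tt}(s_k)|_2^{2}\to 0$, we obtain
\begin{equation*}
t\,|w^{-\nu/2}h^{-1/4}l_{tt}(t)|_2^{2}+\int_0^{t} s\,|h^{1/4}\nabla l_{ss}|_2^{2}\,\mathrm{d}s\leq M(c_0)c_1^{\nu},
\end{equation*}
which, combined with $|w^{-\nu/2}|_\infty\geq c_1^{-\nu/2}$ and Lemma \ref{gh}, yields the $t^{1/2}|h^{-1/4}l_{tt}|_2$ bound and recovers the unweighted $\int_0^t s|l_{ss}|_{D^1}^{2}\mathrm{d}s$ estimate after multiplying by $h^{-1/4}$ and using the uniform upper bound of $h$.

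With $|h^{-1/2}l_{tt}|_2$ under control, the time-weighted bounds on $l_t$ in $D^2$ and on $h^{1/2}\nabla^2 l_t$ in $L^2$ follow from the elliptic reformulation \eqref{819}: inserting the new bound on $l_{tt}$ into $\mathcal{B}$ and $\hat{F}$, Lemma \ref{ell} together with the estimates of Lemmas \ref{phiphii}--\ref{l3} gives $t^{1/2}\bigl(|h^{1/2}l_t|_{D^2}+|h^{1/2}\nabla^2 l_t|_2+|l_t|_{D^2}\bigr)\leq M(c_0)c_1^{\nu/2}$, exactly mimicking the passage from \eqref{818} to \eqref{820} but with the stronger pointwise-in-time control now available.

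Finally, the pointwise bound $\tfrac{1}{2}c_0^{-1}\leq l\leq \tfrac{3}{2}c_0$ comes from integrating $l_t$ along time: writing $l(t,x)=l_0(x)+\int_0^{t}l_s(s,x)\mathrm{d}s$, the two-dimensional Sobolev embedding $H^2\hookrightarrow L^\infty$ combined with Lemma \ref{l3} gives $|l_s|_\infty\leq C\|l_s\|_{H^2}\leq M(c_0)c_1^{2\nu+1}$, so that $|l(t)-l_0|_\infty\leq M(c_0)c_1^{2\nu+1}t$ which is strictly less than $\tfrac{1}{2}c_0^{-1}$ for $t\leq T_7$ by the choice of $T_7$. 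Since $l_0\in[c_0^{-1},c_0]$ by \eqref{2.14}, the claimed two-sided bound on $l$ follows. The main obstacle throughout is the bookkeeping of the many RHS pieces after the second time differentiation: the worst offender is $a_5\bigl(w^\nu n g^{2}H(v)\bigr)_{tt}$, which contains $|g\nabla v_{tt}|$-type quantities and must be matched against $|h^{1/4}\nabla l_{tt}|_2$ after inserting the singular weight $h^{-1/2}$; the time weight $t$ together with the interpolation estimates used in \eqref{g32} is exactly what makes this term integrable in time.
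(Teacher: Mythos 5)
Your overall strategy—differentiating \eqref{703} once more in $t$, testing against $t\,w^{-\nu}h^{-1/2}l_{tt}$ to produce the dissipation $t|h^{1/4}\nabla l_{tt}|_2^2$, invoking Lemma \ref{bjr} to dispose of the initial layer, and then feeding the resulting time-weighted $|l_{tt}|_2$ bound back into the elliptic reformulation \eqref{819}—is exactly the paper's route, and your identity for the integration by parts in the diffusion term is correct.

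The genuine gap is in the pointwise bound on $l$. You claim that Lemma \ref{l3} together with $H^2\hookrightarrow L^\infty$ gives $|l_s|_\infty\le C\|l_s\|_{H^2}\le M(c_0)c_1^{2\nu+1}$ uniformly in time. This is false: Lemma \ref{l3} controls $|\nabla^2 l_t|_2$ only in $L^2([0,t])$ (the third line of \eqref{704}), not pointwise in $t$. Nothing in Lemmas \ref{l1}--\ref{l3} gives a time-uniform bound on $\|l_t\|_{H^2}$, so $|l(t)-l_0|_\infty\le M(c_0)c_1^{2\nu+1}t$ is not established. The correct argument, which the paper uses, relies on the time-weighted estimate $t^{1/2}|\nabla^2 l_t(t)|_2\le M(c_0)c_1^{\nu/2}$ that is proved \emph{within} this very lemma (your step two already yields it via \eqref{820}), combined with the two-dimensional Gagliardo--Nirenberg interpolation
$|l_t|_\infty\le C|l_t|_2^{1/4}|\nabla l_t|_2^{1/2}|\nabla^2 l_t|_2^{1/4}$. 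This produces $|l_t(s)|_\infty\lesssim M(c_0)c_1^{\nu/8}s^{-1/8}$, whence $\int_0^t|l_s|_\infty\,\mathrm{d}s\le M(c_0)c_1^{\nu/8}t^{7/8}$, which is small for $t\le T_7$ by the definition of $T_7$. The factor $t^{7/8}$, not $t$, is what makes the deviation small; the embedding $H^2\hookrightarrow L^\infty$ is unavailable here because you cannot afford a uniform-in-time $|\nabla^2 l_t|_2$ bound.
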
 
			\begin{proof}
				First, applying $\partial_t$ to \ef{703}, one has
				\begin{equation}\label{821}
					\begin{aligned}
						& l_{t t t}-a_4 w^\nu h \Delta l_{t t}+( v \cdot \nabla l)_{t t} \\
						= & 2 a_4(w^\nu h)_t \Delta l_t+2 a_4(w^\nu)_th_t \Delta l 
						+a_4(w^\nu)_{t t}h \Delta l+a_4 w^\nu h_{tt} \Delta l \\
						& +a_5\big(w^\nu n g^{2} H(v)\big)_{tt}+a_6(w^{\nu+1}  \text{div} \psi)_{tt}+\Pi(l, h, w, g)_{t t} .
					\end{aligned}
				\end{equation}   
				Multiplying \ef{821} by $w^{-\nu}h^{-\frac12} l_{t t}$ and integrating over $\mathbb{R}^2$, one has 
				\begin{equation}\allowdisplaybreaks[4]\label{822}
					\begin{aligned}
						& \frac{1}{2} \frac{\text{d}}{\text{d} t}|w^{-\frac{\nu}{2}} h^{-\frac{1}{4}} l_{t t}|_2^2+a_4|h^{\frac{1}{4}} \nabla l_{t t}|_2^2 \\
						= & -\int( v \cdot \nabla l)_{t t}w^{-\nu}h^{-\frac{1}{2}} l_{t t} +\int\big(2 a_4(w^\nu h)_t \Delta l_t+2 a_4(w^\nu)_th_t \Delta l\big) w^{-\nu} h^{-\frac{1}{2}}l_{t t} \\
						& +\int a_4(w^\nu)_{t t}h^{\frac{1}{2}} \Delta l w^{-\nu} l_{t t}+\Big(a_4 w^\nu h_{t t} \Delta l+a_5\big(w^\nu n g^{2} H(v)\big)_{tt}\Big) w^{-\nu}h^{-\frac{1}{2}} l_{t t} \\
						& +\int\big(a_6(w^{\nu+1}  \text{div} \psi)_{t t}+a_7(w^{\nu+1} h^{-1} \psi \cdot \psi)_{t t}+ a_8(w^\nu  \nabla l \cdot \psi)_{t t} \\&+a_9 (w^{\nu-1} g \nabla w \cdot \nabla w)_{t t}\big) w^{-\nu}h^{-\frac{1}{2}} l_{t t}\\
						&-a_4 \int \nabla h^{\frac{1}{2}} \cdot \nabla l_{t t} l_{t t}+\frac{1}{2} \int(w^{-\nu} h^{-\frac{1}{2}})_t|l_{t t}|^2=\sum_{i=13}^{20} I_i,
					\end{aligned}
				\end{equation}
				where  
			\begin{align*}
						 I_{13}=&-\int( v \cdot \nabla l)_{t t}w^{-\nu}h^{-\frac{1}{2}} l_{t t}\\
						 \leq& C|w^{-\frac\nu2}|_\infty|\varphi|^{\frac{1}{4}}_{\infty}(|\nabla l|_\infty|v_{t t}|_2+|\nabla l_t|_4|v_t|_4)|w^{-\frac{\nu}{2}} h^{-\frac{1}{4}} l_{t t}|_2  \\&+C|\varphi|_{\infty}^{\frac{1}{2}}|w^{-\frac{\nu}{2}}|_{\infty}|v|_{\infty}|h^{\frac{1}{4}} \nabla l_{t t}|_2|w^{-\frac{\nu}{2}} h^{-\frac{1}{4}} l_{t t}|_2, \\
						 I_{14}=&\int\Big(\big(2 a_4(w^\nu h)_t \Delta l_t+2 a_4(w^\nu)_th_t \Delta l\big) w^{-\nu} h^{-\frac{1}{2}}l_{t t} \\
						&+a_4(w^\nu)_{t t}h^{\frac{1}{2}} \Delta l w^{-\nu} l_{t t}+a_4  h_{t t} \Delta l h^{-\frac{1}{2}}l_{t t}\Big) \\
						 \leq &C\big(|\varphi|_{\infty}^{\frac{3}{4}}|h_t|_{\infty}|w^{\frac{\nu}{2}}|_{\infty}|h^\frac12 \nabla^2 l_t|_2+|\varphi|_\infty^\frac14|w^{\frac{\nu}{2}-1}|_{\infty}|w_t|_4|h_t|_{\infty}|\nabla^2 l|_4 \\
						& +|\varphi|_\infty^\frac14|w^{\frac{\nu}{2}}|_{\infty}|h_{t t}|_4|\nabla^2 l|_4 +|w^{\frac{\nu}{2}-2}|_{\infty}|w_t|_\infty^2|h^\frac34 \nabla^2 l|_2\big)|w^{-\frac{\nu}{2}} h^{-\frac{1}{4}} l_{t t}|_2 \\
						& +C|w^{-1}|_{\infty}(|g h^{-1}|_{\infty}^{\frac{1}{4}}|g^{-\frac{1}{4}} w_{t t}|_2|h \nabla^2 l|_4 +|w_t|_4|h^\frac12\nabla l_t|_2)| l_{t t}|_4,\\
						 I_{15}=&\int a_5\big(w^\nu n g^{2} H(v)\big)_{t t} w^{-\nu} h^{-\frac12}l_{t t} \\
						 \leq& C\big(|n|_{\infty}|\varphi|_{\infty}^{\frac{1}{4}}|g \nabla v|_{\infty}^2(|w^{\frac{\nu}{2}-2}|_{\infty}|w_t|_4^2+|w^{\frac{\nu}{2}-1}|_{\infty}| w_{t t}|_2) \\
						& +|w^{\frac{\nu}{2}}|_{\infty}|\varphi|_{\infty}^{\frac{1}{4}}(|n_{t t}|_2|g \nabla v|_{\infty}^2+|n|_{\infty}|g_t|_{\infty}^2|\nabla v|_4^2) \\
						& +|w^{\frac{\nu}{2}}|_{\infty}|\varphi|_\infty^\frac14|g \nabla v|_4|\nabla v|_\infty|n|_{\infty}|g_{t t}|_4 +|\varphi|_{\infty}^{\frac{1}{4}}|w^{\frac{\nu}{2}-1}|_{\infty}|w_t|_4|n_t|_4|g \nabla v|_{\infty}^2 \\
						& +|w^{\frac{\nu}{2}-1}|_{\infty}| w_t|_4|n|_{\infty}|\varphi|_{\infty}^{\frac{1}{4}}|g_t|_{\infty}| \nabla v|_{\infty}|g\nabla v|_{4} \\
						& +|w^{\frac{\nu}{2}}|_{\infty}|n_t|_2|\varphi|_{\infty}^{\frac{1}{4}}|g_t|_{\infty}|g \nabla v|_{\infty}|\nabla v|_{\infty}\big)|w^{-\frac{\nu}{2}} h^{-\frac{1}{4}} l_{t t}|_2
      +\int n g^{2} H^t(v) h^{-\frac12}l_{t t} \\
						& + C|\varphi|_\infty^\frac14|w^\frac\nu2|_\infty\big(| n | _ { \infty } |g \nabla v_t|_4^2+|g \nabla v_t|_4(| n | _ { \infty } |w^{-1}|_{\infty}| w_t|_4|g \nabla v|_{\infty} \\
						& +|n_t|_4|g \nabla v|_{\infty}+|n|_{\infty}|g_t|_{\infty}|\nabla v|_4)\big)|w^{-\frac\nu2}h^{-\frac14}l_{t t}|_2, \\
						 I_{16}=&\int a_6(w^{\nu+1}  \text{div} \psi)_{t t} w^{-\nu} h^{-\frac{1}{2}}l_{t t} \\
						 \leq&\int a_6 w  \text{div} \psi_{t t} h^{-\frac{1}{2}}l_{t t} +C|w^{\frac{\nu}{2}-1}|_{\infty}|\varphi|_{\infty}^{\frac{1}{4}}|w_t|_4|w_t|_\infty|\nabla \psi|_4 |w^{-\frac{\nu}{2}} h^{-\frac{1}{4}} l_{t t}|_2 \\
						& +C\big(|g h^{-1}|_{\infty}^{\frac{1}{4}}|\varphi|_{\infty}^{\frac{1}{4}}|g^{-\frac{1}{4}} w_{t t}|_2|\nabla \psi|_4+|\varphi|_{\infty}^{\frac{1}{2}}|w_t|_4|\nabla \psi_t|_2\big)|l_{t t}|_4, \\
						 I_{17}=&a_7 \int(w^{\nu+1} h^{-1} \psi \cdot \psi)_{t t} w^{-\nu}h^{-\frac12} l_{t t} \\
						 \leq& C\big(|\psi|_{\infty}^2(|g h^{-1}|_{\infty}^{\frac{1}{4}}|w^{\frac{\nu}{2}}|_{\infty}|g^{-\frac{1}{4}} w_{t t}|_2|\varphi|_{\infty}+|w^{\frac{\nu}{2}-1}|_{\infty}|\varphi|^\frac54_{\infty}|w_t|_4^2) \\
						& +|w^{\frac{\nu}{2}+1}|_{\infty}(|\varphi|_{\infty}^{\frac{7}{4}}|\psi|_\infty|\nabla h^\frac12|_4|h_{t t}|_4+|\varphi|^\frac94_{\infty}|h_t|_{\infty}^2|\nabla h^{\frac{1}{2}}|_4^2) \\
						& +|\varphi|_{\infty}^{\frac{5}{4}}|w^{\frac{\nu}{2}+1}|_{\infty}(|\psi_t|_4^2+|\psi|_{\infty}|\psi_{t t}|_2) \\
						& +|w^{\frac{\nu}{2}}|_{\infty}(|w_t|_4|\varphi|_{\infty}^{\frac{5}{4}}|\psi|_{\infty}|\psi_t|_4+| w_t|_2|\varphi|_{\infty}^\frac94|h_t|_{\infty}|\psi|_{\infty}^2) \\
						& +|w^{\frac{\nu}{2}+1}|_{\infty}|\varphi|_{\infty}^{\frac{9}{4}}|h_t|_{\infty}|\psi|_{\infty}|\psi_t|_2\big)|w^{-\frac{\nu}{2}} h^{-\frac{1}{4}} l_{t t}|_2, \\
						 I_{18}=&a_8 \int(w^\nu \nabla l \cdot \psi)_{t t} w^{-\nu}  h^{-\frac{1}{2}}l_{t t} \\
						 \leq& C\big(|\nabla l|_{\infty}|\psi|_{\infty}(|g h^{-1}|_{\infty}^{\frac{1}{4}}|w^{\frac{\nu}{2}-1}|_{\infty}|g^{-\frac{1}{4}} w_{t t}|_2+|\varphi|_\infty^\frac14|w^{\frac{\nu}{2}-2}|_{\infty}|w_t|_4^2) \\
						& +|w^{\frac{\nu}{2}}|_{\infty}(|\varphi|_{\infty}^{\frac{1}{2}}|\psi|_{\infty}|h^{\frac{1}{4}} \nabla l_{t t}|_2+|\varphi|_{\infty}^{\frac{1}{4}}|\nabla l|_{\infty}|\psi_{t t}|_2) \\
						& +|w^{\frac{\nu}{2}-1}|_{\infty}| w_t|_4|\varphi|_{\infty}^{\frac{1}{4}}|\nabla l|_{\infty}|\psi_t|_4 \\
						& +|\varphi|_{\infty}|h^{\frac{1}{2}} \nabla l_t|_2(|w^{-1}|_{\infty}| w_t|_4|\psi|_{\infty} +|\psi_t|_4)\big)| l_{t t}|_4, \\
						 I_{19}=&a_9 \int(w^{\nu-1} g \nabla w \cdot \nabla w)_{t t} w^{-\nu}h^{-\frac12} l_{t t} \\
						 \leq& C\big(|g^\frac12 \nabla w|_{\infty}^2(|h g^{-1}|_{\infty}^{\frac{1}{4}}|w^{\frac{\nu}{2}-2}|_{\infty}|g^{-\frac{1}{4}} w_{t t}|_2+|\varphi|_\infty^\frac14|w^{\frac{\nu}{2}-3}|_{\infty}| w_t|_4^2) \\
						& +|w^{\frac{\nu}{2}-1}|_{\infty}|\varphi|_{\infty}^{\frac{1}{4}}|g_{t t}|_4|\nabla w|_4|\nabla w|_\infty  +|w^{\frac{\nu}{2}-2}|_{\infty}|\varphi|_{\infty}^{\frac{1}{4}}(| w_t|_2|g_t|_{\infty}|\nabla w|_{\infty}^2\\
						&+| w_t|_\infty|g^\frac12\nabla w_t|_4|g^\frac12 \nabla w|_4)  +|w^{\frac{\nu}{2}-1}|_{\infty}|\varphi|_{\infty}^{\frac{1}{4}}|g_t|_{\infty}|\nabla w|_{\infty}| \nabla w_t|_2\\
						&+|w^{\frac\nu2-1}|_{\infty}|gh^{-1}|_\infty^\frac14|g^\frac12 \nabla w|_{\infty}|g^{\frac{1}{4}} \nabla w_{t t}|_2\big)|w^{-\frac{\nu}{2}} h^{-\frac{1}{4}} l_{t t}|_2 \\
						& +C|gh^{-1}|_\infty^\frac12|w^{-1}|_{\infty}|g^{\frac{1}{2}} \nabla w_t|_2|\nabla w_t|_4|l_{t t}|_4 ,\\
	I_{20}= & -a_4 \int \nabla h^{\frac{1}{2}} \cdot \nabla l_{t t} l_{t t}+\frac{1}{2} \int(w^{-\nu} h^{-\frac{1}{2}})_t|l_{t t}|^2 \\
						\leq & C(|\varphi|_{\infty}^{\frac{1}{2}}|w^{\frac{\nu}{2}}|_{\infty}|\psi|_{\infty}|h^{\frac{1}{4}} \nabla l_{t t}|_2+|\varphi|_{\infty}|h_t|_{\infty}|w^{-\frac{\nu}{2}} h^{-\frac{1}{4}} l_{t t}|_2 \\
						& +|\varphi|_{\infty}^{\frac{1}{4}}|w^{-\frac{\nu}{2}-1}|_{\infty}| w_t|_4|l_{t t}|_4)|w^{-\frac{\nu}{2}} h^{-\frac{1}{4}} l_{t t}|_2,
					\end{align*}
			in which,	$H^t(v)$ is given by $$
			\begin{aligned}
				H^t(v)= & 4 \alpha \sum_{i=1}^2 \partial_i v_i \partial_{i tt} v_i+2 \beta \text{div} v \text{div} v_{t t}+2 \alpha \sum_{i \neq j}^2 \partial_i v_j \partial_{i tt} v_j \\
				& +2 \alpha \sum_{i>j}\left(\partial_{i tt} v_j \partial_j v_i+\partial_i v_j \partial_{jt t} v_i\right) .
			\end{aligned}
			$$
				
				To estimate $I_{15}$ and $I_{16}$, one needs  Lemma \ref{lem2as} and 
\begin{align*}
|l_{tt}|_4\le& (|h^{-\frac{1}{4}}l_{tt}|_2|h^\frac14 l_{tt}\cdot l_{tt}\cdot l_{tt}|_2)^\frac14
\\\le& C|h^{-\frac{1}{4}}l_{tt}|_2^\frac14|\nabla(h^\frac14l_{tt}\cdot l_{tt}\cdot l_{tt})|_1^\frac14\\
\le&C|h^{-\frac{1}{4}}l_{tt}|_2^\frac14(|h^{-\frac12}\nabla h|_\infty^\frac14|h^{-\frac{1}{4}}l_{tt}|_2^\frac14|l_{tt}|_4^\frac12+|h^\frac14\nabla l_{tt}|_2^\frac14|l_{tt}|_4^\frac12),\\
|l_{tt}|_4\le& C(|w^{-\frac{\nu}{2}}h^{-\frac{1}{4}}l_{tt}|_2|w^{\frac{\nu}{2}}|_{\infty}|h^{-\frac12}\nabla h|_\infty^\frac12\\&+|w^{\frac{\nu}{2}}|_{\infty}^\frac12|w^{-\frac{\nu}{2}}h^{-\frac{1}{4}}l_{tt}|_2^\frac12|h^\frac14\nabla l_{tt}|_2^\frac12),\\
|g^\frac54 \nabla v|_\infty\le& |g^\frac54 \nabla v|_4^\frac23|\nabla^2(g^\frac54 \nabla v)|_2^\frac13\\\le& |g^\frac52\nabla v\cdot \nabla v|_2^\frac13|\nabla^2(g^\frac54 \nabla v)|_2^\frac13\\
\le& |\nabla (g^\frac52\nabla v\cdot \nabla v)|_1^\frac13|\nabla^2(g^\frac54 \nabla v)|_2^\frac13\\\le& (|\nabla g|_\infty|g\nabla v|_2|g^\frac12\nabla v|_2+|g\nabla v|_2|g^\frac32\nabla^2v|_2)^\frac13|\nabla^2(g^\frac54 \nabla v)|_2^\frac13\\\le& M(c_0)c_4^2,\\
\int n g^{2} H^t(v) h^{-\frac12}l_{t t} \leq & C|w^{\frac{\nu}{2}}|_{\infty}|g \nabla v|_{\infty}|v_{t t}|_2\big(|n|_{\infty}|\varphi|_{\infty}^{\frac{1}{4}}(|\nabla g|_{\infty} +|\psi|_\infty|gh^{-1}|_\infty)\\
&+|\psi|_{\infty}|\varphi|_{\infty}^{\frac{1}{4}-b}|g h^{-1}|_{\infty}\big)|w^{-\frac{\nu}{2}} h^{-\frac{1}{4}} l_{t t}|_2  
\\&+C|n|_{\infty}| v_{t t}|_2|g h^{-1}|_{\infty}^{\frac{1}{2}}|g^\frac32 \nabla^2 v|_4| l_{t t}|_4\\&+C|n|_{\infty}| v_{t t}|_2|g h^{-1}|_{\infty}^{\frac{3}{4}}|g^\frac54 \nabla v|_\infty|h^{\frac{1}{4}} \nabla l_{t t}|_2, \\
a_6\int  w  \text{div}\psi_{t t} h^{-\frac{1}{2}} l_{t t}= & -a_6\int(\nabla w h^{-\frac{1}{2}}+w \nabla h^{-\frac{1}{2}}) \cdot \psi_{t t} l_{t t}-a_6\int w h^{-\frac{1}{2}} \psi_{t t} \cdot \nabla l_{t t} \\
\leq & C(|w^{\frac{\nu}{2}}|_{\infty}|\varphi|_{\infty}^{\frac{1}{4}}|\nabla w|_{\infty}+|w^{\frac{\nu}{2}+1}|_{\infty}|\varphi|_{\infty}^{\frac{5}{4}}|\psi|_{\infty})|\psi_{t t}|_2|w^{-\frac{\nu}{2}} h^{-\frac{1}{4}} l_{t t}|_2 \\
& +C|w|_{\infty}|\varphi|_{\infty}^{\frac{3}{4}}|\psi_{t t}|_2|h^{\frac{1}{4}} \nabla l_{t t}|_2.
\end{align*}
				Multiplying \ef{822} by $t$ and integrating over $(\tau, t)$, then \ef{801}, Lemmas \ref{gh}-\ref{l3} and estimates on $I_i(i=13, \ldots, 20)$ imply that
				\begin{equation*}
					\begin{aligned}
						& t|w^{-\frac{\nu}{2}} h^{-\frac{1}{4}} l_{t t}|_2^2+\frac{a_4}{4} \int_\tau^t s|h^{\frac{1}{4}} \nabla l_{s s}|_2^2 \text{d} s \\
						\leq & \tau|w^{-\frac{\nu}{2}} h^{-\frac{1}{4}} l_{t t}(\tau)|_2^2+M(c_0)+M(c_0) c_4^{14+3 \nu} \int_\tau^t s|w^{-\frac{\nu}{2}} h^{-\frac{1}{4}} l_{s s}|_2^2 \text{d} s.
					\end{aligned}
				\end{equation*}
				
				It follows from \ef{823} and Lemma \ref{bjr} that 
				$$
				\exists s_k\ \ \text{s.t.} \ \ s_k \longrightarrow 0\quad \text { and } \quad s_k|w^{-\frac{\nu}{2}} h^{-\frac{1}{4}} l_{t t}(s_k, x)|_2^2 \longrightarrow 0 \quad \text { as } \quad k \longrightarrow \infty .
				$$
				Letting $\tau=s_k$ and $k \rightarrow \infty$ in (3.89), then Gronwall's inequality implies that
				\begin{equation}
					t|w^{-\frac{\nu}{2}} h^{-\frac{1}{4}} l_{t t}|_2^2+\frac{a_4}{4} \int_\tau^t s|h^{\frac{1}{4}} \nabla l_{s s}|_2^2 \text{d} s+\int_0^t s|\nabla l_{s s}|_2^2 \text{d} s \leq M(c_0),
				\end{equation}
				for $0 \leq t \leq T_7$, this, together with \ef{820}, gives
				\begin{equation}\label{824}
					t^{\frac{1}{2}}|h^{-\frac{1}{4}} l_{t t}(t)|_2+t^{\frac{1}{2}}|\nabla^2 l_t(t)|_2+t^{\frac{1}{2}}|h^\frac12 \nabla^2 l_t(t)|_2 \leq M(c_0) c_1^{\frac{\nu}{2}} .
				\end{equation}
				
				Furthermore, \ef{818} and \ef{824} imply
				\begin{equation*}
					\begin{gathered}
						|l|_{\infty}=|l_0+\int_0^t l_s \text{d} s|_{\infty} \leq|l_0|_{\infty}+t|l_t|_{\infty} \leq c_0+C t| l_t|_2^{\frac{1}{4}}|\nabla l_t|_2^{\frac{1}{2}}|\nabla^2 l_t|_2^{\frac{1}{4}} \leq \frac{3}{2} c_0, \\
						l=l_0+\int_0^t l_s \text{d} s \geq l_0-t|l_t|_{\infty} \geq
						c_0^{-1}-C t| l_t|_2^{\frac{1}{4}}|\nabla l_t|_2^{\frac{1}{2}}|\nabla^2 l_t|_2^{\frac{1}{4}}\geq \frac{1}{2} c_0^{-1},
					\end{gathered}
				\end{equation*}
				for $0 \leq t \leq T_7$. 
				The proof of Lemma \ref{lt} is complete.
                
			\end{proof}
			\subsubsection{Estimates on $u$.}

			\begin{lemma}\label{u1}
				For $t \in[0, T_7]$, one has
				\begin{equation}\label{u123}
					\begin{aligned}
						|h^\frac12u|_2^2+\int_0^t(|\nabla u|_2^2+|h\nabla u|_2^2) \mathrm{d} s \leq M(c_0), \\
						|h\nabla u|_2^2+\int_0^t(|h^\frac32\nabla^2 u|_2^2+|\nabla^2u|_2^2+|h^\frac12u_s|_2^2) \mathrm{d} s \leq M(c_0),\\
						|h^\frac32\nabla^2u|_2^2+|h^\frac12u_t|_2^2+\int_0^t(|h^\frac32\nabla^3u|_2^2+|h\nabla u_s|_{2}^2) \mathrm{d} s \leq M(c_0) .
					\end{aligned}
				\end{equation}
			\end{lemma}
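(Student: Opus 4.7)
The plan is to prove the three lines of \eqref{u123} by a cascading energy method: at each level, multiply the linearized momentum equation $\eqref{ln}_2$ by a test function tailored so that integration by parts on the $a_2 l^\nu h Lu$ term produces a dissipation with exactly the weights appearing on the corresponding LHS. The key inputs are the pointwise lower bound $h > (2c_0)^{-1}$ from Lemma \ref{gh}, the boundedness of $l$ from Lemma \ref{lt}, the singular-weighted controls on $\phi$ and $\psi$ from Lemmas \ref{phiphii}--\ref{psi}, the estimates on $l$, $l_t$ from Lemmas \ref{l1}--\ref{l3}, and the initial traces at $t=0$ supplied by Lemma \ref{inin}.

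For the first line, I multiply $\eqref{ln}_2$ by $hu$ and integrate. The time-derivative term yields $\tfrac{1}{2}\tfrac{d}{dt}|h^{1/2}u|_2^2 - \tfrac{1}{2}\int h_t |u|^2$, while the Lam\'e term, after IBP in space, produces the dissipation $\int a_2 l^\nu h^2 (\alpha|\nabla u|^2 + (\alpha+\beta)(\text{div}\, u)^2) +$ a commutator $\int \nabla(l^\nu h^2)\cdot(\ldots)$. Since $l^\nu h^2$ is bounded below, this controls $|h\nabla u|_2^2$, and hence (via $h\geq(2c_0)^{-1}$) also $|\nabla u|_2^2$. The convection $v\cdot\nabla v$, the pressure $a_1\phi\nabla l + l\nabla\phi$, and the viscous source terms on the right, together with the commutator, are bounded by Cauchy--Schwarz and Gagliardo--Nirenberg using Lemmas \ref{phiphii}--\ref{lt}; Gronwall then closes the estimate on $[0,T_7]$, with $|h_0^{1/2}u_0|_2 = |g_1|_2$ handling the initial value.

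For the second line, I multiply $\eqref{ln}_2$ by $hu_t$: the time-derivative term produces $|h^{1/2}u_t|_2^2$ as dissipation, and IBP on the Lam\'e term yields $\tfrac{1}{2}\tfrac{d}{dt}\int a_2 l^\nu h^2(\alpha|\nabla u|^2 + (\alpha+\beta)(\text{div}\,u)^2)$ plus commutators in $\partial_t(l^\nu h^2)$ and $\nabla(l^\nu h^2)$, controlled by the earlier bounds on $h_t$, $l_t$, $\psi$, $\nabla l$. To upgrade to $|h^{3/2}\nabla^2 u|_2^2 + |\nabla^2 u|_2^2$ in the time integral, I rewrite $\eqref{ln}_2$ as $a_2 l^\nu h Lu = \mathcal{F}$ with $\mathcal{F} = -u_t - v\cdot\nabla v - a_1\phi\nabla l - l\nabla\phi + a_2 g\nabla l^\nu \cdot Q(v) + a_3 l^\nu \psi \cdot Q(v)$, and study the Lam\'e problem for $\phi^{3\iota}u$: $L(\phi^{3\iota}u) = h^{3/2}Lu + $ commutators in $\psi$, $\nabla\psi$, where $h^{3/2}Lu = a_2^{-1}l^{-\nu}h^{1/2}\mathcal{F}$. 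Lemma \ref{ell} then delivers $|\phi^{3\iota}u|_{D^2}$, from which the desired norm is recovered via the 2D inequality $|\phi^{\iota/2}f|_4 \leq C\big(|\nabla\phi^\iota|_\infty^{1/2}|f|_2 + |\phi^\iota\nabla f|_2^{1/2}|f|_2^{1/2}\big)$ of Lemma \ref{lem2as}. The third line is obtained by differentiating $\eqref{ln}_2$ in $t$, multiplying by $hu_t$, and repeating the IBP/Gronwall argument, with the trace $\limsup_{\tau\to 0}|h\nabla u_t(\tau)|_2 \leq M(c_0)$ from Lemma \ref{inin}; the $|h^{3/2}\nabla^3 u|_2^2$ dissipation follows from the same elliptic-regularity step applied with the now-improved source bounds.

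The main obstacle is the control of the IBP commutators $\nabla(l^\nu h^2)$ and $\partial_t(l^\nu h^2)$ generated from the Lam\'e term: these produce factors $\psi$, $h_t$, $\nabla l$, $l_t$ that must be absorbed by the dissipation. Unlike in $\mathbb{R}^3$, where $|\phi^\iota f|_6 \lesssim \|\nabla(\phi^\iota f)\|_2$ lets $f$ carry the full weight $\phi^\iota$, the 2D analogue only permits $\phi^{\iota/2}$ on $f$, so the bookkeeping must rely on the singular-weighted estimates for $\phi$ itself established in Lemma \ref{phiphii} (which were not needed in \cite{DXZ2}). Matching these bounds with the weights $h$, $h^{1/2}$, $h^{3/2}$ appearing in the desired dissipation, and ensuring that time-integrated control obtained at one level exactly fits the pointwise-in-time control demanded at the next, is the technical heart of the argument.
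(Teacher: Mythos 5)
Your proposal follows essentially the same route as the paper: test $\eqref{ln}_2$ against $hu$, then against $hu_t$ (the paper first divides through by $l^\nu$, testing \eqref{825} by $hu_t$, which keeps $l^\nu$ out of the $\tfrac{d}{dt}$ term and avoids an extra $(l^\nu)_t$ commutator—your version carries this commutator but closes the same way since $l$, $l_t$, $h_t$ are controlled), then apply $\partial_t$ and test against $l^{-\nu}hu_t$, each stage supplemented by the Lam\'e regularity estimate \eqref{720} for $L(h^{3/2}u)$ and the initial-trace inputs of Lemma \ref{inin}. One small correction: the trace you need from Lemma \ref{inin} to launch Gronwall on the third-line energy identity is $\limsup_{\tau\to 0}|h^{1/2}u_t(\tau)|_2\leq M(c_0)$, not $|h\nabla u_t(\tau)|_2$ (the latter is invoked at the next level, Lemma \ref{u2}).
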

			\begin{proof}
				First, 
				multiplying $\ef{ln}_2$ by $hu$, integrating over $\mathbb{R}^2$, using \eqref{801} and Lemmas \ref{phiphii}-\ref{lt}, one has
				%
					\begin{align*}
						&\fr{1}{2}\frac{\text{d}}{\text{d}t}|h^{\frac12}u|_2^2+a_2\alpha|l^{\frac{\nu}{2}}h\nabla u|^2_2+a_2(\alpha+\beta)|l^{\frac{\nu}{2}}h\text{div} u|^2_2 \\
						=&-\int  \big(v \cdot \nabla v+a_1 \phi \nabla l+l \nabla \phi-a_2 \nabla l^\nu \cdot g Q(v)-a_3 l^\nu \psi \cdot Q(v)\big)\cdot h u\\
						& +\frac12\int h_t |u|^2 -a_2\int   \nabla  ( l^{\nu}  h^2 ) \cdot \hat{Q}(u)\cdot u\\
						\le & C \big( |g^{-1} h|_\infty|\varphi|_\infty^\frac12|g\nabla v|_2(|v|_\infty+|l^{\nu}|_\infty |\psi|_\infty) +|h^{\frac12}\nabla l|_2|\phi|_\infty+|h^{\frac12}\nabla \phi|_2|l|_\infty\\
						&+|h^{\frac12}\nabla l|_\infty |l^{\nu-1}|_\infty|g\nabla v|_2+|h^{-\frac12}h_t|_\infty|\varphi|_\infty^{\frac12} |h^{\frac12}u|_2\\
						&+|l^{\fr{\nu}{2}-1}|_\infty |h^{\fr12} \nabla l|_\infty | l^{\fr{\nu}{2}}h\nabla u|_2+ |\psi|_\infty |\varphi|_\infty^{\frac12} |l^{\frac{\nu}{2}}|_\infty |l^{\frac{\nu}{2}}h\nabla u|_2 \big)|h^{\frac12}u|_2\\
						\leq& M(c_0)c_4^{4}|h^{\fr12}u|^2_2+M(c_0)+\fr{1}{2}a_2\alpha|l^{\frac{\nu}{2}}h\nabla u|^2_2,
					\end{align*}
                    where $	\hat{Q}(u)=\alpha\nabla u+(\alpha+\beta)\text{div}u\mathbb{I}_2$, together with  \ef{2.14}, 
				\begin{equation*}
					\begin{aligned}
						|h^\frac12 \nabla l|_\infty\le& C|h^\frac12\nabla l|_2^\frac14|\nabla(h^\frac12\nabla l)|_2^\frac12|\nabla^2(h^\frac12\nabla l)|_2^\frac14
			\le M(c_0)c_1^{4\nu+3},\\
					\end{aligned}
				\end{equation*}
				and $\text{Gronwall's} $ inequalty   implies that 	for $0\leq t\leq T_7$,
				\begin{equation}\label{ul22}
					\begin{split}
						&| h^{\frac12}u|_2^2 +\int^t_0 \big(|\nabla u|^2_2+|h\nabla u|^2_2+|l^{\frac{\nu}{2}}h \nabla u|^2_2 \big)\text{d}s
						\le  M(c_0).
					\end{split}
				\end{equation}

				Second, 
				  $\ef{ln}_2$ also implies 
				\begin{equation}\label{825}
					\begin{aligned}
						 &l^{-\nu}\left(u_t+v \cdot \nabla v+a_1 \phi \nabla l+l \nabla \phi\right)+a_2 h L u 
						\\= & a_2 g l^{-\nu} \nabla l^\nu \cdot Q(v)+a_3 \psi \cdot Q(v),
					\end{aligned}
				\end{equation}
thus      
                multiplying \ef{825} by $hu_t$ and integrating over $\mathbb{R}^2$, one has 
				$$
				\begin{aligned}
					& \frac{1}{2} \frac{\text{d}}{\text{d}t}\big(a_2 \alpha|h \nabla u|_2^2+a_2(\alpha+\beta)|h \text{div} u|_2^2\big)+|l^{-\frac{\nu}{2}}h^\frac12 u_t|_2^2 \\
					= & -\int l^{-\nu}\big(v \cdot \nabla v+a_1 \phi \nabla l+l \nabla \phi-a_2 g \nabla l^\nu \cdot Q(v)-a_3 l^\nu \psi \cdot Q(v)\big) \cdot hu_t \\
					& + \int a_2 h h_t\big(\alpha|\nabla u|^2+(\alpha+\beta)|\text{div} u|^2\big) 
					-\int 2a_2 h\nabla h \cdot \hat{Q}(u) \cdot u_t \\
					\leq & C|l^{-\frac{\nu}{2}}|_{\infty}(|v|_{\infty}|g^{-1}h|_\infty|\varphi|^\frac12_\infty|g\nabla v|_2+|h^\frac12\nabla l|_2|\phi|_{\infty}+|l|_{\infty}|h^\frac12\nabla \phi|_2 \\
					& +|g \nabla v|_{4}|l^{\nu-1}|_{\infty}|h^\frac12\nabla l|_4+|g^{-1}h|_\infty|\varphi|^\frac12_\infty|\psi|_{\infty}|l^\nu|_{\infty}|g\nabla v|_2)|l^{-\frac{\nu}{2}} h^\frac12u_t|_2 \\
					& +C|h^{-\frac12}h_t|_{\infty}|\varphi|^\frac12_{\infty}|h \nabla u|_2^2+C|l^{\frac{\nu}{2}}|_{\infty}|\psi|_{\infty}|l^{-\frac{\nu}{2}} u_t|_2|h \nabla u|_2 \\
					\leq & M(c_0) c_4^6|h \nabla u|_2^2+M(c_0) +\frac{1}{2}|l^{-\frac{\nu}{2}} h^\frac12u_t|_2^2,
				\end{aligned}
				$$
				this, together with \eqref{2.14} and  Gronwall's inequality, give that for $0 \leq t \leq T_7$,
				\begin{equation}\label{828}
					\begin{aligned}
						& |h \nabla u|_2^2+|\nabla u|_2^2+\int_0^t(|l^{-\frac{\nu}{2}}h^\frac12 u_s|_2^2+|h^\frac12u_s|_2^2) \mathrm{d} s \\
						\leq & M(c_0)(1+t) \exp \big(M(c_0) c_4^6 t\big) \leq M(c_0) .
					\end{aligned}
				\end{equation}

				Since 
				\begin{equation}\label{720}
					a_2 L(h^\frac32 u)=l^{-\nu}h^\frac12 \mathcal{H}-a_2 G(\nabla h^\frac32, u),
				\end{equation}
                where 
                		\begin{equation}\label{kg}
				\begin{aligned}
					\mathcal{H}= & -u_t-v \cdot \nabla v-l \nabla \phi-a_1 \phi \nabla l+a_2 g \nabla l^\nu \cdot Q(v)+a_3 l^\nu \psi \cdot Q(v),\\
                    \widetilde{G}=& G(\nabla h^\frac32, u)=(\alpha+\beta)\big( \nabla h^\frac32{\rm{div}} u+\nabla h^\frac32\cdot\nabla u+u\cdot\nabla( \nabla h^\frac32)\big)\\&+\alpha\nabla h^{\frac32}\cdot\nabla u+\alpha {\rm{div}}(u\otimes \nabla h^{\frac32}).
				\end{aligned} 
			\end{equation}
				Thus \ef{Gdingyi}, \ef{801}, \ef{g32},  \ef{kg}, \ef{828} and Lemmas \ref{phiphii}-\ref{l3} imply
				\begin{equation}\label{HG1}
					\begin{aligned}
						|h^\frac12\mathcal{H}|_2 \leq & C(|h^\frac12u_t|_2+|g^{-1}h|_\infty^\frac12|v|_4|g^\frac12\nabla v|_4+|l|_{\infty}|h^\frac12\nabla \phi|_2+|\phi|_{\infty}|h^\frac12\nabla l|_2\\&+|h^\frac12\nabla l|_4| l^{\nu-1}|_{\infty}|g \nabla v|_4 
			+|g^{-1}h|_\infty^\frac12|l^\nu|_{\infty}|\psi|_{\infty}|g^\frac12\nabla v|_2)\\ \leq& M(c_0)(|h^\frac12u_t|_2+1), \\
						|\widetilde{G}|_2 \leq & C(|\psi|_{\infty}|h^\frac12\nabla u|_2+|\nabla \psi|_4|h^\frac12u|_4) \leq M(c_0),
					\end{aligned}
				\end{equation}
				where one also used 
				\begin{equation}\label{ld2}
					\begin{aligned}
						|h^\frac12\nabla l|_{2}\le|h_0^\frac12\nabla l_0|_2+t^\frac12\left(\int_0^t|(h^\frac12\nabla l)_t|_2^2\right)^\frac12\le M(c_0),\\ 
      |h^\frac12\nabla l|_{4}\le|h_0^\frac12\nabla l_0|_4+t^\frac12\left(\int_0^t|(h^\frac12\nabla l)_t|_4^2\right)^\frac12\le M(c_0),\\
      |g\nabla v|_2\le|h_0\nabla u_0|_2+t^\frac12\left(\int_0^t|(g\nabla v)_t|_2^2\right)^\frac12\le M(c_0),\\
						\|l\|_{D^1\cap D^2}\le\|l_0\|_{D^1\cap D^2}+t^\frac12\left(\int_0^t\|l_s\|^2_{D^1\cap D^2}\text{d}s\right)^\frac12\le M(c_0).\\
					\end{aligned}
				\end{equation}

				Thus \ef{ul22},\ef{828}-\ef{HG1} and Lemmas \ref{ell}, \ref{gh}-\ref{psi} imply that
				\begin{equation}\label{hu2}
					\begin{aligned}
						|h^\frac32 u|_{D^2} \leq & C(|l^{-\nu}h^\frac12 \mathcal{H}|_2+|G(\nabla h^\frac32, u)|_2) 
                        \\
                        \leq & M(c_0)(|h^\frac12u_t|_2+1), \\
						|h^\frac32 \nabla^2 u|_2 \leq & C(|h^\frac32 u|_{D^2}+|\nabla \psi|_4|h^\frac12u|_4+|\psi|_{\infty}|h^\frac12\nabla u|_2 ) \\ \leq & C|h^\frac32 u|_{D^2}+M(c_0),
					\end{aligned}
				\end{equation}
				which, along with \ef{ul22},\ef{828}, yield $\ef{u123}_2$.
				
				Next, 
				applying $\partial_t$ to $(3.1)_2$, one has
				\begin{equation}\label{829}
					\begin{aligned}
						& u_{t t}+a_2 l^\nu h L u_t+(v \cdot \nabla v)_t+(l \nabla \phi)_t+a_1(\phi \nabla l)_t \\
						= & -a_2(l^\nu h)_t L u+\big(a_2 g \nabla l^\nu \cdot Q(v)+a_3 l^\nu \psi \cdot Q(v)\big)_t .
					\end{aligned}
				\end{equation}
				Multiplying \ef{829} by $l^{-\nu} h u_t$ and  integrating over $\mathbb{R}^2$, one has 
					\begin{align*}
						& \frac{1}{2} \frac{\text{d}}{\text{d} t}|l^{-\frac{\nu}{2}}h^\frac12 u_t|_2^2+a_2 \alpha|h \nabla u_t|_2^2+a_2(\alpha+\beta)|h \text{div} u_t|_2^2 \\
						= & \int l^{-\nu}\Big(-(v \cdot \nabla v)_t-(l \nabla \phi)_t-a_1(\phi \nabla l)_t-a_2(l^\nu h)_t L u \\
						& +\big(a_2 g \nabla l^\nu \cdot Q(v)+a_3 l^\nu \psi \cdot Q(v)\big)_t\Big) \cdot hu_t \\
						& -\int 2a_2 h\nabla h \cdot \hat{Q}(u_t) \cdot u_t+\frac{1}{2} \int(l^{-\nu}h)_t|u_t|^2 \\
						\leq & C|l^{-\frac{\nu}{2}}|_{\infty}\big(|g^{-1}h|_\infty|\varphi|_\infty^\frac12(|v|_{\infty}|g\nabla v_t|_2+|v_t|_2|g\nabla v|_{\infty})
      +|l_t|_\infty|h^\frac12\nabla \phi|_2 \\
						&
     +|l|_{\infty}|h^\frac12\nabla \phi_t|_2 
      +|h^\frac12\nabla l_t|_2|\phi|_{\infty}+|\phi_t|_{\infty}|h^\frac12\nabla l|_2\big)|l^{-\frac{\nu}{2}}h^\frac12 u_t|_2\\
						&+C|l^{-1}|_{\infty}|h^\frac14l_t|_4|h \nabla^2 u|_2|h^\frac34u_t|_4  +C|l^{\frac{\nu}{2}}|_{\infty}|h^{-\frac12}h_t|_{\infty}|h\nabla^2 u|_2|l^{-\frac{\nu}{2}} h^\frac12u_t|_2\\
						&+C\big(|l^{\frac{\nu}{2}-2}|_{\infty}|g \nabla v|_{\infty}|l_t|_4|h^\frac12\nabla l|_4  +|l^{\frac{\nu}{2}-1}|_{\infty}(|g^{-\frac12}g_t|_{\infty}|g^\frac12\nabla v|_{\infty}|h^\frac12\nabla l|_2\\
						& +|g \nabla v|_{\infty}|h^\frac12\nabla l_t|_2+|g^{-1}h|_\infty^\frac12|\psi|_{\infty}|l_t|_4|g^\frac12\nabla v|_4) \\
						&+|l^{\frac{\nu}{2}}|_{\infty}|g^{-1}h|_\infty|\varphi|_\infty^\frac12(|\psi_t|_2|g\nabla v|_{\infty}+|\psi|_{\infty}|g\nabla v_t|_2)\big)|l^{-\frac{\nu}{2}}h^\frac12 u_t|_2 \\& +C|l^{-1}|_{\infty}|g \nabla v_t|_2|h^\frac12\nabla l|_\infty|l^{-\frac{\nu}{2}}h^\frac12 u_t|_4 \\&+C|l^{\frac{\nu}{2}}|_{\infty}|\varphi|_{\infty}^{\frac{1}{2}}|\psi|_{\infty}|h \nabla u_t|_2|l^{-\frac{\nu}{2}} h^\frac12u_t|_2\\
						& +C(|l^{-1}|_{\infty}|l_t|_\infty+|h^{-\frac12}h_t|_\infty|\varphi|_\infty^\frac12)|l^{-\frac{\nu}{2}} h^\frac12u_t|^2_2.
					\end{align*}
				Integrating it over $(\tau, t)\big(\tau \in(0, t)\big)$, then \ef{801} and Lemmas \ref{phiphii}-\ref{lt} imply that 
				\begin{equation}\label{831}
					\begin{aligned}
						& \frac{1}{2}|l^{-\frac{\nu}{2}}h^\frac12 u_t(t)|_2^2+\frac{a_2 \alpha}{2} \int_\tau^t|h \nabla u_s|_2^2 \mathrm{~d} s \\
						\leq & \frac{1}{2}|l^{-\frac{\nu}{2}} h^\frac12u_t(\tau)|_2^2+M(c_0) c_4^2 \int_0^t|l^{-\frac{\nu}{2}} h^\frac12u_s|^2 \mathrm{~d} s+M(c_0) c_4^{4+2 \nu} t+M(c_0) .
					\end{aligned}
				\end{equation}
				Letting $\tau \rightarrow 0$ in \ef{831}, by Lemmas \ref{inin}, \ref{lt} and Gronwall's inequality, for $0 \leq t \leq T_7$, one has
				\begin{equation}\label{ut1}
					\begin{aligned}
						& |h^\frac12u_t(t)|_2^2+\int_0^t(|h \nabla u_s|_2^2+|\nabla u_s|_2^2) \mathrm{d} s \\
						\leq & \big(M(c_0) c_4^{4+2 \nu} t+M(c_0)\big) \exp \big(M(c_0) c_4^2 t\big) \leq M(c_0),
					\end{aligned}
				\end{equation}
				this, together with \ef{hu2}, imply that 
				\begin{equation}\label{832}
					|h^\frac32 u(t)|_{D^2}+|h^\frac32 \nabla^2 u(t)|_2+|u(t)|_{D^2} \leq M(c_0) .
				\end{equation}

				Similarly,
				 \ef{Gdingyi}, \ef{801}, \ef{828},  \ef{kg}, \ef{ld2}, \ef{832} and Lemmas \ref{phiphii}-\ref{lt} give
				\begin{equation}\label{833}
					\begin{aligned}
						|h^\frac12\mathcal{H}|_{D^1} \leq & C|\nabla h^\frac12|_4|\mathcal{H}|_4+C(|h^\frac12\nabla u_t|_{2}+|v|_{\infty}|h^\frac12\nabla^2 v|_2\\
						&+|h^\frac14\nabla v|_4^2+|l|_{\infty}|h^\frac12\nabla^2 \phi|_2+|\nabla \phi|_4|h^\frac12\nabla l|_4  \\&+|\phi|_{\infty}|h^\frac12\nabla^2 l|_2+|\nabla g|_{\infty}|\nabla l^\nu|_{\infty}|h^\frac12\nabla v|_2\\
						&+|l^{\nu-1}|_\infty|h^\frac12\nabla^2 l|_4|g \nabla v|_4+|l^{\nu-2}|_\infty|\nabla l|_\infty|h^\frac12\nabla l|_4|g \nabla v|_4  \\
						& +|h^\frac12\nabla l^\nu|_{\infty}|g \nabla^2 v|_2+|\nabla l^\nu|_{\infty}|\psi|_{\infty}|h^\frac12\nabla v|_2\\&+|l^\nu|_{\infty}|\nabla \psi|_4|h^\frac12\nabla v|_4 +|l^\nu|_{\infty}|\psi|_{\infty}|h^\frac12\nabla^2 v|_2)  \\\leq&M(c_0)(|h^\frac12\nabla u_t|_{2}+c_3^{2 \nu+3}), \\
						|\widetilde{G}|_{D^1} \leq & C\big(|\psi|_{\infty}|\nabla(h^\frac12\nabla u)|_2+|\nabla \psi|_4(|h^\frac12\nabla u|_4+|\varphi|_\infty^\frac12|\psi|_\infty|u|_4) \\& +|\nabla^2 \psi|_2|h^\frac12u|_{\infty}\big) \leq M(c_0).
					\end{aligned}
				\end{equation}
				Thus, it follows from \ef{828}-\ef{HG1}, \ef{832}-\ef{833} and Lemmas \ref{ell}, \ref{gh}-\ref{psi}  that 
				\begin{equation}\label{834}
					\begin{aligned}
						|h^\frac32 u(t)|_{D^3} \leq & C|l^{-\nu} \mathcal{H}|_{D^1}+C|\widetilde{G}|_{D^1} 
						\leq M(c_0)(|h^\frac12\nabla u_t|_{2}+c_3^{2 \nu+3}), \\
						|h^\frac32 \nabla^3 u(t)|_2 \leq & C\big(|h^\frac32 u(t)|_{D^3}+|\psi|_{\infty}|\nabla(h^\frac12\nabla u)|_2\\&+|\nabla \psi|_4(|h^\frac12\nabla u|_4+|\varphi|_\infty^\frac12|\psi|_\infty|u|_4)  +|\nabla^2 \psi|_2|h^\frac12u|_{\infty}\big) \\
						\leq& M(c_0)(|h^\frac32 u(t)|_{D^3}+c_3^{2 \nu+3}),
					\end{aligned}
				\end{equation}
				this, together with \ef{ut1} and Lemma \ref{h3438n}, gives
				\begin{equation*}\label{835}
					\int_0^t(|h^\frac32 \nabla^3 u|_2^2+|h^\frac32 \nabla^2 u|_{D^1}^2+|u|_{D^3}^2) \mathrm{d} s \leq M(c_0) \quad \text { for } \quad 0 \leq t \leq T_7 .
				\end{equation*}
				
				The proof of Lemma \ref{u1} is complete.
				
			\end{proof}

			\begin{lemma}\label{u2}
				For $t \in[0, T_7]$, one has
				\begin{equation}\label{721}
					\begin{aligned}
						\big(|u|_{D^3}+|h^\frac32 \nabla^2 u|_{D^1}\big)(t) \leq M(c_0) c_3^{2 \nu+3}, \\
						|h \nabla u_t|_2+|u_t|_{D^1}+\int_0^t(|h^\frac12u_{s s}|_2^2+|u_s|_{D^2}^2) \mathrm{d} s \leq M(c_0), \\
						\int_0^t\big(|h \nabla^2 u_s|_2^2+|u|_{D^4}^2+|h \nabla^2 u|_{D^2}^2+|(h \nabla^2 u)_s|_2^2\big) \mathrm{d} s \leq M(c_0) .
					\end{aligned}
				\end{equation}
			\end{lemma}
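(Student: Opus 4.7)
The plan is to derive \eqref{721} by combining a new time-differentiated energy estimate with the Lam\'e elliptic regularity (Lemma \ref{ell}) applied to the renormalized equation \eqref{720} and its one-higher-order variant, leveraging the bounds already assembled in Lemmas \ref{phiphii}--\ref{u1}.

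First, I would multiply the time-differentiated momentum equation \eqref{829} by $l^{-\nu}h\,u_{tt}$ and integrate over $\mathbb{R}^2$. Integration by parts on the Lam\'e piece $a_2 l^\nu h L u_t\cdot l^{-\nu}h u_{tt}=a_2 h^2 L u_t\cdot u_{tt}$ produces the principal terms
\begin{equation*}
\frac{1}{2}\frac{\text{d}}{\text{d}t}\big(a_2\alpha|h\nabla u_t|_2^2+a_2(\alpha+\beta)|h\,\text{div}\,u_t|_2^2\big)+|l^{-\nu/2}h^{1/2}u_{tt}|_2^2,
\end{equation*}
plus commutators of the form $\int h h_t|\nabla u_t|^2$ and $\int h\nabla h\cdot\nabla u_t\,u_{tt}$, which are absorbed using the bounds on $|h_t|_\infty$ and $|\psi|_\infty$ from Lemmas \ref{psi}--\ref{h3438n}. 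The forcing comprises $(v\cdot\nabla v)_t$, $(l\nabla\phi)_t$, $(\phi\nabla l)_t$, $(l^\nu h)_t L u$, and $\partial_t$ of the viscous source terms; each is controlled by the $\mathbb{R}^2$ Gagliardo--Nirenberg inequalities (Lemma \ref{lem2as}), the pointwise bounds \eqref{801}, and the singular-weighted $l$-estimates of Lemmas \ref{l2}--\ref{lt}. Integrating on $(\tau,t)$, sending $\tau\to 0^+$ via Lemma \ref{inin} and the compatibility conditions \eqref{2.8*}, and applying Gronwall's inequality yields the pointwise bound on $|h\nabla u_t(t)|_2^2$ and the time-integrated bound on $\int_0^t|h^{1/2}u_{ss}|_2^2\,\text{d}s$.

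With pointwise control of $|h^{1/2}\nabla u_t|_2$ in hand, the first line of \eqref{721} follows directly from \eqref{834} applied to the Lam\'e equation \eqref{720}. To obtain $|u_t|_{D^1}$ and the time-integrated $|u_s|_{D^2}^2$, I would recast \eqref{829} in the renormalized form $a_2 L(h^{3/2} u_t)=l^{-\nu}h^{1/2}\mathcal{H}_t-a_2 G(\nabla h^{3/2},u_t)+\text{l.o.t.}$, with $\mathcal{H}_t$ collecting $u_{tt}$ together with the forcing of the previous step, and then invoke Lemma \ref{ell}. The integrated fourth-order bounds in the third line of \eqref{721} follow from one further spatial differentiation of \eqref{720}, another application of Lemma \ref{ell}, and the product-rule identity $(h\nabla^2 u)_s = h_s\nabla^2 u + h\nabla^2 u_s$, the two summands being controlled by $|h_s|_\infty$ from Lemma \ref{psi} together with the pointwise $|h^{3/2}\nabla^2 u|_2$ from Lemma \ref{u1}.

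The main obstacle is the energy estimate of the first step, specifically handling the forcing contribution $(l^\nu h)_t L u\cdot l^{-\nu}h u_{tt}\sim (l_t/l)\,h^{3/2}\nabla^2 u\cdot h^{1/2}u_{tt}$: in $\mathbb{R}^2$, the Gagliardo--Nirenberg bound $|f|_4\leq C|f|_2^{1/2}|\nabla f|_2^{1/2}$ is strictly weaker than the $H^1\hookrightarrow L^6$ embedding available in $\mathbb{R}^3$, so one cannot simply split this trilinear term into three $L^4$ (or $L^6$) factors as in the $\mathbb{R}^3$ argument of \cite{DXZ2}. The resolution will be to exploit the singular-weighted bound $|h^{-1/4}l_{tt}|_2$ from Lemma \ref{lt}, together with $|h^{1/4}l_t|_4\leq M(c_0)$, absorbing the highest-derivative factor on $u_{tt}$ into the dissipative term $|l^{-\nu/2}h^{1/2}u_{tt}|_2^2$, while the factor on $\nabla^2 u$ is dealt with by $|h^{3/2}\nabla^2 u|_2$ from Lemma \ref{u1} and the $L u$-factor is converted via the elliptic equation \eqref{720} back into $h^{1/2} u_t$ plus controllable lower-order terms.
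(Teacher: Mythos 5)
Your overall strategy matches the paper's: multiply the time-differentiated equation \eqref{829} by $l^{-\nu}h\,u_{tt}$, integrate, send $\tau\to 0$ via Lemma \ref{inin}, and Gronwall to get \eqref{840}; then upgrade via the Lam\'e regularity Lemma \ref{ell} applied to a renormalized $u_t$-equation together with \eqref{720} for the higher-order spatial bounds. Two remarks, however, on the details.

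First, a minor point: the paper renormalizes the time-differentiated equation as $a_2 L(h u_t)=l^{-\nu}\mathcal{G}-a_2 G(\psi,u_t)$ (weight $h$, not $h^{3/2}$), which matches the scaling of \eqref{829} after multiplication by $l^{-\nu}$ and directly produces the bound $\int|h\nabla^2 u_s|_2^2\,{\rm d}s$ in the lemma's statement. Your choice $L(h^{3/2}u_t)$ would give a strictly heavier weight; it still controls $|h\nabla^2 u_t|_2$ since $h\geq (2c_0)^{-1}$, but the source then carries $h^{1/2}u_{tt}$ rather than $u_{tt}$, and the paper deliberately uses the plain $|u_{tt}|_2$ estimate from \eqref{840} here.

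Second, and more substantively, your ``main obstacle'' paragraph misdiagnoses the issue. The trilinear term $(l^\nu h)_t Lu\cdot l^{-\nu}hu_{tt}$ involves only $l_t$ and $h_t$, never $l_{tt}$, so the bound $|h^{-1/4}l_{tt}|_2$ from Lemma \ref{lt} cannot help. Moreover, the estimate $|h^{1/4}l_t|_4\leq M(c_0)$ that you invoke is not available: $h=\phi^{2\iota}$ with $\iota<0$ is unbounded in the far field, so $h^{1/4}l_t$ is not even in $L^2$ a priori from the estimates of Lemmas \ref{l1}--\ref{l3}. Finally, your claim that the factorization into $L^4\times L^4\times L^2$ fails in $\mathbb{R}^2$ is incorrect: the paper splits this exact term as $|l^{-1}|_\infty|l_t|_4|h^{3/2}\nabla^2 u|_4|l^{-\nu/2}h^{1/2}u_{tt}|_2$. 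The factor $|l_t|_4$ is controlled by Gagliardo--Nirenberg from $|l_t|_2$ and $|\nabla l_t|_2\leq C|\varphi|_\infty^{1/2}|h^{1/2}\nabla l_t|_2$ (Lemma \ref{l3}); the factor $|h^{3/2}\nabla^2 u|_4$ is controlled by interpolation from the pointwise $|h^{3/2}\nabla^2 u|_2\leq M(c_0)$ and the time-integrable $|h^{3/2}\nabla^3 u|_2$ from Lemma \ref{u1}, which is precisely what the integrated energy estimate needs; and $|h^{1/2}u_{tt}|_2$ is absorbed into the dissipation. No detour through the elliptic equation to convert $Lu$ back to $u_t$, and no use of $l_{tt}$, is required.
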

			\begin{proof}
First, multiplying \ef{829} by $l^{-\nu} hu_{t t}$ and integrating over $\mathbb{R}^2$, one has
				\begin{equation}\label{836}
					\frac{a_2}{2} \frac{\text{d}}{\text{d} t}\big(\alpha|h \nabla u_t|_2^2+(\alpha+\beta)|\text{div} u_t|_2^2\big)+|l^{-\frac{\nu}{2}} h^\frac12u_{t t}|_2^2=\sum_{i=21}^{24} I_i,
				\end{equation}
		where 
					\begin{align*}
						I_{21}= & \int l^{-\nu}\big(-(v \cdot \nabla v)_t-(l \nabla \phi)_t-a_1(\phi \nabla l)_t \\
						& -a_2(l^\nu)_t h L u-a_2 l^\nu  h_t L u\big) \cdot hu_{t t} \\
						\leq & C|l^{-\frac{\nu}{2}}|_{\infty}\big(|g^{-1}h|_\infty|\varphi|_\infty^\frac12(|v|_{\infty}|g\nabla v_t|_2+|v_t|_2|g\nabla v|_{\infty})\\
						& +|l_t|_\infty|h^\frac12\nabla \phi|_2+|h^\frac12\nabla l_t|_2|\phi|_{\infty} +|\phi_t|_{\infty}|h^\frac12\nabla l|_2\\&+|l|_{\infty}|h^\frac12\nabla \phi_t|_2
						+|l^{\nu-1}|_{\infty}|l_t|_4|h^\frac32 \nabla^2 u|_4\\&+|l^\nu|_{\infty}|\varphi|_\infty^\frac12|h^{-\frac12}h_t|_{\infty}|h^\frac32\nabla^2 u|_2\big)|l^{-\frac{\nu}{2}} h^\frac12u_{t t}|_2, \\
						I_{22}= & \int l^{-\nu}\big(a_2 g \nabla l^\nu \cdot Q(v)+a_3 l^\nu \psi \cdot Q(v)\big)_t \cdot hu_{t t} \\
						\leq & C|l^{-\frac{\nu}{2}}|_{\infty}\big(|h^\frac12(\nabla l^\nu)_t|_2|g \nabla v|_{\infty}+|g^{-\frac12}g_t|_{\infty}|h^\frac12\nabla l^\nu|_4|g^\frac12\nabla v|_4. \\
						& +|h^\frac12 \nabla l^\nu|_{\infty}|g \nabla v_t|_2+|g^{-1}h|_\infty|\varphi|_\infty^\frac12(|l^\nu|_{\infty}|\psi|_{\infty}|g\nabla v_t|_2 \\
						& +|l^\nu|_{\infty}|\psi_t|_2|g\nabla v|_{\infty}+|(l^\nu)_t|_4|\psi|_{\infty}|g\nabla v|_4)\big)|l^{-\frac{\nu}{2}} h^\frac12u_{t t}|_2, \\
						I_{23}+I_{24}= & -\int 2a_2 h\nabla h \hat{Q}(u_t) \cdot u_{t t} +a_2 \int   hh_t\big(\alpha|\nabla u_t|^2+(\alpha+\beta)|\text{div} u_t|^2\big) \\
						\leq & C|\varphi|^\frac12_{\infty}(|l^{\frac{\nu}{2}}|_{\infty}|\psi|_{\infty}|h \nabla u_t|_2|l^{-\frac{\nu}{2}} h^\frac12u_{t t}|_2+|h^{-\frac12}h_t|_{\infty}|h \nabla u_t|_2^2) .
					\end{align*}

				Integrating \ef{836} over $(\tau, t)$,  by \ef{801} and Lemmas \ref{phiphii}-\ref{psi}, \ref{l1}, one has  
				\begin{equation}\label{838}
					\begin{aligned}
						&|h \nabla u_t(t)|_2^2+\int_\tau^t|l^{-\frac{\nu}{2}}h^\frac12 u_{s s}|_2^2 \mathrm{~d} s \\
						\leq& C|h \nabla u_t(\tau)|_2^2+M(c_0) c_4^2 \int_0^t|h \nabla u_s|_2^2 \mathrm{~d} s+M(c_0)(c_4^{7 \nu+6} t+1).
					\end{aligned}
				\end{equation}
		 %
				%
				Letting $\tau \rightarrow 0$ in \ef{838}, then Lemma \ref{inin} and Gronwall's inequality  give that 
				\begin{equation}\label{840}
					\begin{gathered}
						|h \nabla u_t(t)|_2^2+|\nabla u_t(t)|_2^2+\int_0^t|h^\frac12u_{s s}|_2^2 \mathrm{~d} s \\
						\leq M(c_0)(1+c_4^{7 \nu+6} t) \exp \big(M(c_0) c_4^2 t\big) \leq M(c_0),\quad \text{for}\ \ 0 \leq t \leq T_7.
					\end{gathered}
				\end{equation}
				This, together with \ef{834}, gives
				\begin{equation}\label{841}
					|h^\frac32u|_{D^3}+|h^\frac32 \nabla^3 u|_2+|h^\frac32 \nabla^2 u|_{D^1}+|\nabla^3 u|_2 \leq M(c_0) c_3^{2 \nu+3} .
				\end{equation}
				
				Now,  \ef{829} implies
				\begin{equation}\label{842}
					\begin{aligned}
						a_2 L(h u_t) & =a_2 h L u_t-a_2 G(\psi, u_t) 
						=l^{-\nu} \mathcal{G}-a_2 G(\psi, u_t),
					\end{aligned}
				\end{equation}
				with
				\begin{equation}\label{843}
					\begin{aligned}
						\mathcal{G}= & -u_{t t}-(v \cdot \nabla v)_t-(l \nabla \phi)_t-a_1(\phi \nabla l)_t-a_2(l^\nu)_t h L u \\
						& -a_2  h_t l^\nu L u+\big(a_2 g \nabla l^\nu \cdot Q(v)+a_3 l^\nu \psi \cdot Q(v)\big)_t,\\
                        \widehat{G}=&G(\psi, u_t).
					\end{aligned}
				\end{equation}
				%
				where \ef{Gdingyi}, \ef{801}, \ef{kg}, \ef{ld2}, \ef{840}-\ef{841}, \ef{843} and Lemmas \ref{phiphii}-\ref{u1} imply that
				\begin{equation}\label{844}
					\begin{aligned}
						|\mathcal{G}|_2 \leq & C(|u_{t t}|_2+\|v\|_2|\nabla v_t|_2+\|l\|_{L^{\infty} \cap D^1 \cap D^2}\|\phi_t\|_1\\&+\|\phi\|_2\|l_t\|_{L^4\cap D^1} +|(l^\nu)_t|_4|h L u|_4\\&+|l^\nu|_{\infty}|h^{-\frac12}h_t|_{\infty}|h^\frac12\nabla^2 u|_2+|g^{-\frac12}g_t|_{\infty}|\nabla l^\nu|_2|g^\frac12\nabla v|_{\infty} \\
						& +|g \nabla v|_{\infty}|\nabla(l^\nu)_t|_2+|l^{\nu-1}|_{\infty}|\sqrt{h} \nabla l|_{\infty}|g h^{-1}|_{\infty}^{\frac{1}{2}}|\sqrt{g} \nabla v_t|_2 \\
						& +|(l^\nu)_t|_4|\psi|_{\infty}|\nabla v|_4+|l^\nu|_{\infty}|\psi_t|_2|\nabla v|_{\infty}+|l^\nu|_{\infty}|\psi|_{\infty}|\nabla v_t|_2) \\
						\leq & M(c_0)(|u_{t t}|_2+c_4^{3 \nu+3}),\\
						|\mathcal{H}|_{D^2} \leq & C\big(|u_t|_{D^2}+\|v\|_2\|\nabla v\|_2+\|l\|_{L^{\infty} \cap D^1 \cap D^3}(|\phi|_\infty+\|\nabla \phi\|_2) \\
						& +|h^\frac12\nabla^3 l^\nu|_2|g^\frac12 \nabla v|_\infty|gh^{-1}|_\infty^\frac12
						+|h^\frac12\nabla^2 l^\nu|_4|g^\frac12 \nabla^2 v|_4|gh^{-1}|_\infty^\frac12\\
						&+|\nabla l^\nu|_\infty|g \nabla^3 v|_2+\|\nabla l^\nu\|_2\|\nabla g\|_{L^{\infty} \cap D^2}\|\nabla v\|_2 \\
						& +\|l^\nu\|_{L^{\infty} \cap D^1 \cap D^2}\|\psi\|_{L^q \cap D^{1} \cap D^2}\|\nabla v\|_2\big) \\
						\leq & M(c_0)(|u_t|_{D^2}+c_4^{6 \nu+5}), \\
						|\widehat{G}|_2 \leq & C(|\psi|_{\infty}|\nabla u_t|_2+|\nabla \psi|_4|u_t|_4) \leq M(c_0) .
					\end{aligned}
				\end{equation}
				
				Thus \ef{720}, \ef{HG1}, \ef{833}, \ef{840}-\ef{842}, \ef{844} and Lemmas \ref{ell}, \ref{phiphii}-\ref{u1} give that
				\begin{equation}\label{845}
					\begin{aligned}
						|h u_t|_{D^2} \leq & C|l^{-\nu} \mathcal{G}|_2+C|G(\psi, u_t)|_2 
						\le  M(c_0)(|u_{t t}|_2+c_4^{3 \nu+3}), \\
						|h \nabla^2 u_t|_2 \leq & C(|h u_t|_{D^2}+|\nabla u_t|_2|\psi|_{\infty} +|\nabla\psi|_{4}|u_t|_4) \\
						 \leq& M(c_0)(|u_{t t}|_2+c_4^{3 \nu+3}), \\
						|(h \nabla^2 u)_t|_2 \leq & C(|h \nabla^2 u_t|_2+|h^{-\frac12}h_t|_{\infty}|h^\frac12\nabla^2 u|_2) \leq M(c_0)(|u_{t t}|_2+c_4^{3 \nu+3}), \\
						|u|_{D^4} \leq & C|h^{-1} l^{-\nu} \mathcal{H}|_{D^2} \leq M(c_0)(|u_t|_{D^2}+c_4^{6 \nu+5}) \\
						\leq & M(c_0)(|u_{t t}|_2+c_4^{6 \nu+5}) .
					\end{aligned}
				\end{equation}

				It follows from $\ef{ln}_2$  that 
				\begin{equation*}\label{846}
					\begin{aligned}
						 a_2 L(h \nabla^{\varsigma} u)=a_2 h \nabla^{\varsigma} L u-a_2 G(\psi, \nabla^{\varsigma} u) 
						=  h \nabla^{\varsigma}(h^{-1} l^{-\nu} \mathcal{H})-a_2 G(\psi, \nabla^{\varsigma} u),
					\end{aligned}
				\end{equation*}
				where  $\varsigma \in \mathbb{Z}_{+}^2$ is the multi-index with $|\varsigma|=2$. This,  together with \ef{HG1}-\ef{ld2}, \ef{833}, \ef{840}-\ef{841}, \ef{844}-\ef{845} and Lemmas \ref{phiphii}-\ref{u1}, give
				\begin{equation}\label{847}
					\begin{aligned}
						|h \nabla^2 u(t)|_{D^2} \leq & C|h \nabla^{\varsigma}(h^{-1} l^{-\nu} \mathcal{H})|_2 +C(|\psi|_{\infty}|u|_{D^3}+|\nabla \psi|_4|\nabla^2 u|_4) \\
						\leq & M\left(c_0\right)(|u_{t t}|_2+c_4^{6 \nu+5}) .
					\end{aligned}
				\end{equation}
				Finally,  \ef{801}, \ef{840}, \ef{845}-\ef{847} and Lemma \ref{h3438n} imply that
				\begin{equation*}\label{848}
					\int_0^{T_7}(|h \nabla^2 u_t|_2^2+|u_t|_{D^2}^2+|u|_{D^4}^2+|h \nabla^2 u|_{D^2}^2+|(h \nabla^2 u)_t|_2^2) \mathrm{d} t \leq M(c_0) .
				\end{equation*}
				
				The proof of Lemma \ref{u2} is complete.
				
			\end{proof}
			
			\begin{lemma}\label{ut}
				For $t \in\left[0, T_7\right]$, one has
				\begin{equation}\label{849}
					\begin{aligned}
						t|u_t(t)|_{D^2}^2+t|h \nabla^2 u_t(t)|_2^2+t|u_{t t}(t)|_2^2+t|u(t)|_{D^4}^2(t) & \leq M(c_0) c_4^{6 \nu+4}, \\
						\int_0^t s(|u_{s s}|_{D^1}^2+|h\nabla^3u_s|_{2}^2+|h^\frac12 u_{s s}|_{D^1}^2) {\rm{d}} s & \leq M(c_0) c_4^{6 \nu+4} .
					\end{aligned}
				\end{equation}
			\end{lemma}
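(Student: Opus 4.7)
The plan is to adapt the time-weighted $L^\infty$-in-time argument used for $l_{tt}$ in Lemma \ref{lt} to $u_{tt}$, and then recover the higher spatial regularity of $u_t$ and $u$ via the Lam\'e elliptic theory (Lemma \ref{ell}). First I would apply $\partial_t$ to equation \eqref{829} to obtain an evolution equation for $u_{tt}$ of the schematic form
\begin{equation*}
u_{ttt}+a_2 l^\nu h\, L u_{tt}+(v\cdot\nabla v)_{tt}+(l\nabla\phi)_{tt}+a_1(\phi\nabla l)_{tt}
= -2a_2(l^\nu h)_t L u_t-a_2(l^\nu h)_{tt}Lu+\bigl(a_2 g\nabla l^\nu\cdot Q(v)+a_3 l^\nu\psi\cdot Q(v)\bigr)_{tt}.
\end{equation*}
Multiplying this equation by $l^{-\nu}u_{tt}$ and integrating over $\mathbb{R}^2$ produces, after integration by parts on the Lam\'e term, an identity of the form
\begin{equation*}
\frac12\frac{\mathrm d}{\mathrm dt}|l^{-\nu/2}u_{tt}|_2^2+a_2\alpha|h^{1/2}\nabla u_{tt}|_2^2+a_2(\alpha+\beta)|h^{1/2}\mathrm{div}\,u_{tt}|_2^2=\sum J_i,
\end{equation*}
where each $J_i$ collects the right-hand side terms together with the commutator $\int 2a_2 h\nabla h\cdot\hat Q(u_{tt})\cdot u_{tt}$ and the weight-derivative term $\frac12\int(l^{-\nu})_t|u_{tt}|^2$.

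Next I would bound each $J_i$ using \eqref{801} and all previous lemmas, exactly as in the proof of Lemma \ref{lt}. The terms that require most care are those containing second time derivatives of the coefficients, namely $v_{tt}$, $g_{tt}$, $w_{tt}$, $\phi_{tt}$, $l_{tt}$, $\psi_{tt}$, and $h_{tt}$: these are controlled using the time-weighted assumptions in \eqref{801} (so factors of $t^{1/2}$ or $s^{1/2}$ will appear inside the integrals) together with Lemmas \ref{phiphii}-\ref{ut}. After weighting the energy identity by $t$ and integrating on $(\tau,t)$, I obtain a bound
\begin{equation*}
t|l^{-\nu/2}u_{tt}(t)|_2^2+\int_\tau^t s\,|h^{1/2}\nabla u_{ss}|_2^2\,\mathrm ds
\leq \tau|l^{-\nu/2}u_{tt}(\tau)|_2^2+M(c_0)c_4^{6\nu+4}+M(c_0)c_4^{\sharp}\int_0^t s\,|l^{-\nu/2}u_{ss}|_2^2\,\mathrm ds.
\end{equation*}
Applying Lemma \ref{bjr} to the already-known bound $\int_0^t|h^{1/2}u_{ss}|_2^2\,\mathrm ds\le M(c_0)$ from Lemma \ref{u2} yields a sequence $s_k\to 0$ with $s_k|u_{tt}(s_k)|_2^2\to 0$; letting $\tau=s_k\to 0$ and invoking Gronwall gives $t|u_{tt}(t)|_2^2+\int_0^t s(|h^{1/2}\nabla u_{ss}|_2^2+|\nabla u_{ss}|_2^2)\,\mathrm ds\le M(c_0)c_4^{6\nu+4}$ for $0\le t\le T_7$.

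Finally, to recover the spatial regularity of $u_t$ and $u$, I would reuse the elliptic decompositions \eqref{842}-\eqref{843} and the one obtained in Lemma \ref{u2} for $Lu$ itself. From \eqref{845} one has $|u_t|_{D^2}+|h\nabla^2 u_t|_2\le M(c_0)(|u_{tt}|_2+c_4^{3\nu+3})$, which combined with the $L^\infty_t$ bound on $t^{1/2}u_{tt}$ yields $t^{1/2}(|u_t|_{D^2}+|h\nabla^2u_t|_2)\le M(c_0)c_4^{3\nu+3}$. Similarly the $D^4$ estimate for $u$ from \eqref{845} gives $|u|_{D^4}\le M(c_0)(|u_{tt}|_2+c_4^{6\nu+5})$, so $t|u(t)|_{D^4}^2\le M(c_0)c_4^{6\nu+4}$. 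The $\int_0^t s|h\nabla^3 u_s|_2^2\,\mathrm ds$ bound follows from applying Lemma \ref{ell} to the equation satisfied by $\nabla u_t$ (obtained from \eqref{842} after one more spatial derivative), with the right-hand side estimated by routine products of the previous singular-weighted bounds times $|u_{tt}|_{D^1}$, whose $L^2_t(s\cdot)$-norm is already controlled. The main obstacle is the careful bookkeeping in $\sum J_i$ of the second time-derivative terms $(l^\nu h)_{tt}Lu$ and $(l^\nu\psi\cdot Q(v))_{tt}$: these produce products like $l_{tt}\cdot h\cdot Lu$ and $l^\nu\psi\cdot Q(v_{tt})$ that need $t^{1/2}$-weighted interpolation to absorb the lack of $L^\infty_t$ control on $l_{tt},v_{tt}$, exactly as was done for $l_{tt t}$ in the proof of Lemma \ref{lt}.
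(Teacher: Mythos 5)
Your proposal matches the paper's argument essentially step for step: both apply $\partial_t$ to \eqref{829}, test against $l^{-\nu}u_{tt}$, weight the resulting energy identity by $t$, integrate over $(\tau,t)$, invoke Lemma~\ref{bjr} to find a sequence $s_k\to 0$ with $s_k|u_{tt}(s_k)|_2^2\to 0$ from the known $L^2_t L^2_x$ bound on $h^{1/2}u_{tt}$, and close with Gronwall to obtain $t|u_{tt}(t)|_2^2+\int_0^t s(|h^{1/2}\nabla u_{ss}|_2^2+|\nabla u_{ss}|_2^2)\,\mathrm{d}s\le M(c_0)c_4^{6\nu+4}$. The recovery of $t^{1/2}|u_t|_{D^2}$, $t^{1/2}|h\nabla^2 u_t|_2$, and $t^{1/2}|u|_{D^4}$ from \eqref{845} and the $\int_0^t s|h\nabla^3 u_s|_2^2\,\mathrm{d}s$ bound via the $D^1$-estimate of the Lam\'e decomposition \eqref{842} (i.e.\ the estimates $|\mathcal G|_{D^1}$, $|\widehat G|_{D^1}$ in \eqref{855}) are likewise the paper's route, so the proposal is correct and follows the same approach.
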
			
			\begin{proof}
				Applying $\partial_t$ to \ef{829},
				%
				 multiplying the resulting equation by $l^{-\nu} u_{t t}$ and integrating over $\mathbb{R}^2$, one has 
				\begin{equation}\label{851}
					\frac{1}{2} \frac{\text{d}}{\text{d} t}|l^{-\frac{\nu}{2}} u_{t t}|_2^2+a_2 \big(\alpha|h^\frac{1}{2} \nabla u_{t t}|_2^2+(\alpha+\beta)|h^{\frac{1}{2}} \text{div} u_{t t}|_2^2\big)=\sum_{i=25}^{28} I_i,
				\end{equation}
				where 
		\begin{align*}
						I_{25}=&\int l^{-\nu}\big(-(v \cdot \nabla v)_{t t}-a_1(\phi \nabla l)_{t t}-(l \nabla \phi)_{t t}\big) \cdot u_{t t} \\
						\leq& C|l^{-\frac{\nu}{2}}|_{\infty}(|\nabla v_t|_4|v_t|_4+|\nabla v|_{\infty}|v_{t t}|_2+|v|_{\infty}|\nabla v_{t t}|_2+|\phi|_{\infty}|\nabla l_{t t}|_2 \\
						& +|\phi_{t t}|_2|\nabla l|_{\infty}+|\phi_t|_{\infty}|\nabla l_t|_2+|l_t|_4|\nabla \phi_t|_4 \\
						& +|l_{t t}|_4|\nabla \phi|_4+|l|_{\infty}|\nabla \phi_{t t}|_2)|l^{-\frac{\nu}{2}} u_{t t}|_2, \\
						I_{26}=&a_3 \int l^{-\nu}\big(l^\nu \psi \cdot Q(v)\big)_{t t} \cdot u_{t t} \\
						\leq& C|l^{-\frac{\nu}{2}}|_{\infty}(|l^\nu|_{\infty}|\psi_{t t}|_2|\nabla v|_{\infty}+|l^{\nu-1}|_{\infty}|l_{t t}|_4|\psi|_{\infty}|\nabla v|_4 \\
						& +|l^{\nu-2}|_{\infty}|l_t|_4^2|\psi|_{\infty}|\nabla v|_\infty+|l^\nu|_{\infty}|\psi|_{\infty}|\nabla v_{t t}|_2+|\psi_t|_4|(l^\nu)_t|_4|\nabla v|_{\infty} \\
						& +|\psi|_{\infty}|(l^\nu)_t|_4|\nabla v_t|_4+|l^\nu|_{\infty}|\psi_t|_4|\nabla v_t|_4)|l^{-\frac{\nu}{2}} u_{t t}|_2, \\
						I_{27}=&a_2 \int l^{-\nu}\big(g \nabla l^\nu \cdot Q(v)\big)_{t t} \cdot u_{t t} \\
						\leq &C|l^{-\frac{\nu}{2}}|_{\infty}\big(|l^{\nu-3}|_{\infty}|g \nabla v|_{\infty}|l_t|_4^2|\nabla l|_\infty+|l^{\nu-2}|_{\infty}|g \nabla v|_{\infty}|l_{t t}|_4|\nabla l|_4 \\
						& +|l^{\nu-2}|_{\infty}|g \nabla v|_{\infty}|l_t|_4|\nabla l_t|_4+|l^{\nu-1}|_{\infty}|\nabla l_t|_2|g^{-\frac12}g_t|_{\infty}|g^{\frac12}\nabla v|_{\infty} \\
						& +|l^{\nu-2}|_{\infty}|l_t|_4(|\nabla l|_4|g^{-\frac12}g_t|_{\infty}|g^{\frac12}\nabla v|_{\infty}\\
						&+|gh^{-1}|_\infty^\frac14|h^\frac12\nabla l|_\infty|g^\frac12 \nabla v_t|_2)\big)|l^{-\frac{\nu}{2}} u_{t t}|_2 \\
						& +C|l^{\frac{\nu}{2}-1}|_{\infty}|gh^{-1}|_\infty^\frac14|h^\frac14\nabla l_t|_4|g^\frac34 \nabla v_t|_4|l^{-\frac{\nu}{2}}u_{t t}|_2\\
						&+C|l^{\frac{\nu}{2}-1}|_{\infty}|\nabla l|_4|g^{-\frac12}g_{t t}|_4|g^\frac12\nabla v|_{\infty}|l^{-\frac{\nu}{2}} u_{t t}|_2 \\
						& +C|l^{\frac{\nu}{2}-1}|_{\infty}|g h^{-1}|_{\infty}^{\frac{1}{2}}|\sqrt{h} \nabla l|_{\infty}|\sqrt{g} \nabla v_{t t}|_2|l^{-\frac{\nu}{2}} u_{t t}|_2 \\
						& +C|l^{\frac{\nu}{2}-1}|_{\infty}(|\nabla l|_{\infty}|g^{-\frac12}g_t|_{\infty}|g^\frac12\nabla v_t|_2+|g \nabla v|_{\infty}|\nabla l_{t t}|_2)|l^{-\frac{\nu}{2}} u_{t t}|_2, \\
						I_{28}=&-a_2 \int l^{-\nu}\big((h l^\nu)_{t t} L u+2(l^\nu)_t h L u_t+2 h_t l^\nu L u_t-l^\nu \nabla h \cdot \hat{Q}(u_{t t})\big) \cdot u_{t t} \\
						& +\frac{1}{2} \int(l^{-\nu})_t|u_{t t}|^2 \\
						\leq& C|l^{-\frac{\nu}{2}}|_{\infty}(|(l^\nu)_t|_4|h^{-\frac12}h_t|_{\infty}|h^\frac12\nabla^2 u|_4+|l^{\nu-2}|_{\infty}|l_t|_4^2|h \nabla^2 u|_\infty \\
						& +|l^\nu|_{\infty}|h^{-\frac12}h_{t t}|_4|h^\frac12\nabla^2 u|_4+|l^\nu|_{\infty}|h^{-\frac12}h_t|_{\infty}^2|\nabla^2 u|_2 \\
						& +|l^\nu|_{\infty}|h^{-\frac12}h_t|_{\infty}|\varphi|^\frac12_{\infty}|h \nabla^2 u_t|_2+|l^\nu|_{\infty}|\psi|_{\infty}|\sqrt{h} \nabla u_{t t}|_2|\varphi|_{\infty}^{\frac{1}{2}})|l^{-\frac{\nu}{2}} u_{t t}|_2 \\
						&+C|l^{-1}|_{\infty}|l_t|_4|h \nabla^2 u_t|_2|u_{t t}|_4 \\
						& +C|l^{-1}|_{\infty}|l_{t t}|_4|h \nabla^2 u|_2|u_{t t}|_4+C|l^{-\frac{\nu}{2}-1}|_{\infty}|l_t|_4|l^{-\frac{\nu}{2}} u_{t t}|_2|u_{t t}|_4 .
					\end{align*}
				Multiplying \ef{851} by $t$ and integrating over $(\tau, t)$, then \ef{801}, Lemmas \ref{phiphii}-\ref{u2} and the estimates of $I_i(i=25, \ldots, 28)$ give that
				\begin{equation}\label{853}
					\begin{aligned}
						& t|l^{-\frac{\nu}{2}} u_{t t}(t)|_2^2+\frac{a_2 \alpha}{4} \int_\tau^t s|h^\frac12 \nabla u_{s s}|_2^2 \mathrm{~d} s \\
						\leq & \tau|l^{-\frac{\nu}{2}} u_{t t}(\tau)|_2^2+M(c_0) c_4^{6 \nu+4}(1+t)+M(c_0) c_5^{2 \nu+8} \int_\tau^t s|l^{-\frac{\nu}{2}} u_{s s}|_2^2 \mathrm{~d} s .
					\end{aligned}
				\end{equation}
				
				It follows from \ef{840} and Lemma \ref{bjr} that 
				$$
				\exists s_k \quad \text{s.t.} \quad s_k \longrightarrow 0\quad \text { and } \quad s_k\left|u_{t t}\left(s_k, x\right)\right|_2^2 \longrightarrow 0\quad \text { as } \quad k \longrightarrow \infty .
				$$  
				Letting $\tau=s_k$ and  $k \rightarrow \infty$ in \ef{853}, then Gronwall's inequality  implies that
				\begin{equation}\label{854}
					t|u_{t t}(t)|_2^2+\int_0^t s|h^\frac12 \nabla u_{s s}|_2^2 \mathrm{~d} s+\int_0^t s|\nabla u_{s s}|_2^2 \mathrm{~d} s \leq M(c_0) c_4^{6 \nu+4}, 
				\end{equation}
				for $0 \leq t \leq T_7$.
				Moreover,  \ef{845} and \ef{854} imply
				\begin{equation}\label{722}
					t^{\frac{1}{2}}|\nabla^2 u_t(t)|_2+t^{\frac{1}{2}}|h \nabla^2 u_t(t)|_2+t^{\frac{1}{2}}|\nabla^4 u(t)|_2 \leq M(c_0) c_4^{3 \nu+2} .
				\end{equation}
				
				Now,
				  \ef{Gdingyi}, \ef{ld2}, \ef{843} and Lemmas \ref{phiphii}-\ref{u2} give that
				\begin{equation}\label{855}
					\begin{aligned}
						|\mathcal{G}|_{D^1} \leq & C\big(|u_{t t}|_{D^1}+\|\nabla v\|_2|\nabla v_t|_2+|v|_{\infty}|\nabla^2 v_t|_2 \\
						& +\|l\|_{L^{\infty} \cap D^1 \cap D^3}\|\phi_t\|_2+\|l_t\|_{D^1 \cap D^2}(|\phi|_\infty+\|\nabla \phi\|_2) \\
						& +\|l^{\nu-1}\|_{1, \infty}\|l_t\|_{L^{\infty} \cap D^2}(\|h L u\|_1+|\psi|_{\infty}|\nabla^2 u|_2) \\
						& +(1+|\psi|_{\infty})(1+|\varphi|_{\infty})(|h^{-\frac12}h_t|_{L^{\infty}}+|h_t|_{  D^2})\|l^\nu\|_{1, \infty}\|h^\frac12\nabla^2 u\|_1 \\
						& +(|g^{-\frac12}g_t|_{L^{\infty}}+|g_t|_{ D^1})\|\nabla l^\nu\|_2\|g^\frac12\nabla v\|_2\\&+\|\nabla l^\nu\|_2(|\nabla g|_{\infty}|\nabla v_t|_2+|g \nabla^2 v_t|_2) \\
						& +(|g \nabla v|_{\infty}+|\nabla g|_{\infty}\|\nabla v\|_2+\|g \nabla^2 v\|_1)\|l_t\|_{D^1 \cap D^2}\|l^{\nu-1}\|_{L^{\infty} \cap D^1 \cap D^3} \\
						& +\|l^{\nu-1}\|_{1, \infty}\|l_t\|_{D^1}\|\psi\|_{L^{\infty} \cap D^{1}}\|\nabla v\|_2 \\
						& +\|l^\nu\|_{1, \infty}\|\psi_t\|_1\|\nabla v\|_2+\|l^\nu\|_{1, \infty}\|\psi\|_{L^{\infty} \cap D^{1}}\|\nabla v_t\|_1\big) \\
						\leq & M(c_0)(|\nabla u_{t t}|_2+c_4^{4 \nu+3}|g \nabla^2 v_t|_2+c_4^{5 \nu+5}|l_t|_{D^2}+c_4^{5 \nu+7}), \\
						|\widehat{G}|_{D^1} \leq & C(|\psi|_{\infty}|\nabla^2 u_t|_2+|\nabla \psi|_4|\nabla u_t|_4 
						+|\nabla^2 \psi |_2|u_t|_{\infty}) \\\leq& M(c_0)(|u_t|_{D^2}+c_4^{2 \nu+3}) .
					\end{aligned}
				\end{equation}
				Thus \ef{842}, \ef{855} and Lemmas \ref{ell}, \ref{phiphii}-\ref{u2} imply that for $0 \leq t \leq T_7$,
				\begin{equation*}\label{856}
					\begin{aligned}
						|h u_t|_{D^3} & \leq C|l^{-\nu} \mathcal{G}|_{D^1}+C|\widehat{G}|_{D^1} \\
						& \leq M(c_0)\big(\| u_{t t}\|_1+|u_t|_{D^2}+c_4^{6 \nu+7}(|g \nabla^2 v_t|_2+|l_t|_{D^2}+1)\big), \\
						|h \nabla^3 u_t(t)|_2 &\leq  C(|h u_t|_{D^3}+|u_t|_{\infty}|\nabla^2 \psi|_2+|\nabla u_t|_4|\nabla \psi|_4 +|\nabla^2 u_t|_2|\psi|_{\infty}) \\
						& \leq C|h u_t|_{D^3}+M(c_0)(|u_t|_{D^2}+c_4^{2 \nu+3}),
					\end{aligned}
				\end{equation*}
				this, together with \ef{721}, \ef{854}-\ef{722} and Lemma \ref{h3438n}, yield $\ef{849}_2$.
				
				The proof of Lemma \ref{ut} is complete.
                
			\end{proof}
			
			Thus  for $0 \leq t \leq T_7=\min \{T^*,\big(1+M(c_0) c_5\big)^{-40-10 \nu}\}$, Lemmas \ref{gh}-\ref{ut} imply
			\begin{align*}
				\|(\phi-\epsilon)(t)\|_{L^p\cap D^1 \cap D^3}^2+|h^\frac12\nabla\phi(t)|_2^2+|h\nabla^2\phi(t)|_2^2\leq M(c_0),\\
    |h^\frac12\phi_t(t)|^2_2+|h\nabla\phi_t(t)|^2_2+\|\phi_t(t)\|_2^2+|\phi_{t t}(t)|_2^2+\int_0^t\|\phi_{s s}\|_1^2 \text{d} s \leq C c_4^6, \\
				\|\psi(t)\|_{L^q \cap D^{1} \cap D^2}^2 \leq M(c_0), \quad |\psi_t(t)|_2^2 \leq C c_3^4, \quad|h_t(t)|_{\infty}^2 \leq M(c_0) c_3^\frac{3}{2} c_4^\frac12, \\
|\psi_t(t)|_{D^1}^2+\int_0^t(|\psi_{s s}|_2^2+|h_{s s}|_4^2) \text{d} s \leq M(c_0) c_4^4, \\
				h(t, x)>\frac{1}{2 c_0}, \quad \frac{2}{3} \epsilon^{-2 \iota}<\varphi<2c_0,\quad \widetilde{C}^{-1} \leq g h^{-1}(t, x) \leq \widetilde{C},\\ |\zeta(t)|_4^2+\|n(t)\|^2_{L^{\infty} \cap D^{1, q} \cap D^{1,4} \cap D^2 \cap D^3} \leq M(c_0), \quad|n_t(t)|^2_2 \leq M(c_0) c_2^2, \\
				|n_t(t)|^2_{\infty}+|\nabla n_t(t)|^2_2+|\nabla n_t(t)|_4^2 \leq M(c_0) c_4^2, \quad|n_{t t}(t)|_2^2 \leq M(c_0) c_4^6, \\
				| (l-\bar{l})(t)|_2^2+|h^{\frac{1}{2}} \nabla l(t)|_2^2+\int_0^t(l_s|_2^2+|h \nabla^2 l|_2^2+|\nabla^2 l|_2^2) \text{d} s \leq M(c_0) c_1^{3 \nu}, \\
		|l_t(t)|_2^2+|h \nabla^2 l(t)|_2^2+\int_0^t(|h^{\frac{1}{2}} \nabla l_s|_2^2+|h \nabla^3 l|_2^2) \text{d} s \leq M(c_0) c_1^{4 \nu+2}, \\
				|h^{\frac{1}{2}} \nabla l_t(t)|_2^2+|h\nabla^3 l(t)|_2^2+\int_0^t(| l_{s s}|_2^2+|h^\frac12\nabla^2 l_s|_2^2) \text{d} s \leq M(c_0) c_1^{8 \nu+6}, \\
t|l_t(t)|_{D^2}^2+t|h^\frac12 \nabla^2 l_t(t)|_2^2+t|h^{-\frac{1}{4}} l_{t t}(t)|_2^2 \leq M(c_0) c_1^\nu, \\\int_0^t s(|l_{s s}|_{D^1}^2+|h^{\frac{1}{4}} l_{s s}|_{D^1}^2) \text{d} s \leq M(c_0), \quad \frac{1}{2} c_0^{-1} \leq l(x, t) \leq \frac{3}{2} c_0,\\
				|h^\frac12u|_{2}^2+|h \nabla u(t)|_2^2+\|u(t)\|_1^2+\int_0^t(\|\nabla u \|_1^2+|u_s|_2^2) \text{d} s \leq M(c_0),\\
    (|u|_{D^2}^2+|h^\frac32 \nabla^2 u|_2^2+|h^\frac12u_t|_2^2)(t)+\int_0^t(|u|_{D^3}^2+|h^\frac32 \nabla^2 u|_{D^1}^2+|u_s|_{D^1}^2) \text{d} s \leq M(c_0), \\
		(|u_t|_{D^1}^2+|h \nabla u_t|_2^2+|u|_{D^3}^2+|h^\frac32 \nabla^2 u|_{D^1}^2)(t)+\int_0^t|u_s|_{D^2}^2 \text{d} s \leq M(c_0) c_3^{4 \nu+6}, \\
				\int_0^t(|h^\frac12u_{s s}|_2^2+|u|_{D^4}^2+|h \nabla^2 u|_{D^2}^2+|(h \nabla^2 u)_s|_2^2) \text{d} s \leq M(c_0), \\
				t|u_t(t)|_{D^2}^2+t|h \nabla^2 u_t(t)|_2^2+t|u_{t t}(t)|_2^2+t|u(t)|_{D^4}^2 \leq M(c_0) c_4^{6 \nu+4},\\
				\int_0^t s(|u_{s s}|_{D^1}^2+|h \nabla^3 u_s|_2^2+|h^\frac12 u_{s s}|_{D^1}^2) \text{d} s \leq M(c_0) c_4^{6 \nu+4} .
			\end{align*}
Let
		\begin{align*}
				T^* & =\min \left\{T,\big(1+M(c_0)^{36 \nu^3+104 \nu^2+102 \nu+36}\big)^{-40-10 \nu}\right\}, \quad c_1^2=M(c_0)^2,  \\
				c_2^2&=c_3^2=M(c_0)^{8 \nu+7}, \quad
				c_4^2  =M(c_0)^{24 \nu^2+45 \nu+23}, \quad c_5^2=M(c_0)^{72 \nu^3+207 \nu^2+204 \nu+70},
			\end{align*}
	for $0 \leq t \leq T^*$, one has the following uniform energy estimates with respect to   $\epsilon$: 
				\begin{equation}\label{857}
			\begin{aligned}
					\|(\phi-\epsilon)(t)\|_{L^p\cap D^1 \cap D^3}^2+|h^\frac12\nabla\phi(t)|_2^2+|h\nabla^2\phi(t)|_2^2\leq c_5^2,&\\
    |h^\frac12\phi_t(t)|^2_2+|h\nabla\phi_t(t)|^2_2+\|\phi_t(t)\|_2^2+|\phi_{t t}(t)|_2^2+\int_0^t\|\phi_{s s}\|_1^2 \text{d} s \leq c_5^2, &\\
    \|\psi(t)\|_{L^q \cap D^{1} \cap D^2}^2 \leq c_1, \quad |\psi_t(t)|_2^2 \leq c_4^2, \quad|h_t(t)|_{\infty}^2 \leq c_4^2, &\\
|\psi_t(t)|_{D^1}^2+\int_0^t(|\psi_{s s}|_2^2+|h_{s s}|_4^2) \text{d} s \leq  c_5^2, &\\
				h(t, x)>\frac{1}{2 c_0}, \frac{2}{3} \epsilon^{-2 \iota}<\varphi<2c_0,\widetilde{C}^{-1} \leq g h^{-1}(t, x) \leq \widetilde{C},&\\ |\zeta(t)|_4^2+\|n(t)\|^2_{L^{\infty} \cap D^{1, q} \cap D^{1,4} \cap D^2 \cap D^3} \leq c_1, \quad|n_t(t)|^2_2 \leq  c_4^2, &\\
				|n_t(t)|^2_{\infty}+|\nabla n_t(t)|^2_2+|\nabla n_t(t)|_4^2 +|n_{t t}(t)|_2^2 \leq c_5^2, &\\
				| (l-\bar{l})(t)|_2^2+|h^{\frac{1}{2}} \nabla l(t)|_2^2+\int_0^t(l_s|_2^2+|h \nabla^2 l|_2^2+|\nabla^2 l|_2^2) \text{d} s \leq c_2^{2},&
                \end{aligned}	    
				\end{equation}
                \begin{equation}\label{310857}
			\begin{aligned}
		|l_t(t)|_2^2+|h \nabla^2 l(t)|_2^2+\int_0^t(|h^{\frac{1}{2}} \nabla l_s|_2^2+|h \nabla^3 l|_2^2) \text{d} s \leq  c_2^{2}, &\\
				|h^{\frac{1}{2}} \nabla l_t(t)|_2^2+|h\nabla^3 l(t)|_2^2+\int_0^t(| l_{s s}|_2^2+|h^\frac12\nabla^2 l_s|_2^2) \text{d} s \leq  c_2^{2}, &\\
    t|l_t(t)|_{D^2}^2+t|h^\frac12 \nabla^2 l_t(t)|_2^2+t|h^{-\frac{1}{4}} l_{t t}(t)|_2^2 \leq  c_2^2, &\\
    \int_0^t s(|l_{s s}|_{D^1}^2+|h^{\frac{1}{4}} l_{s s}|_{D^1}^2) \text{d} s \leq c_1^2, \quad  c_1^{-1} \leq l(x, t) \leq  c_1,&\\
				|h^\frac12u|_{2}^2+|h \nabla u(t)|_2^2+\|u(t)\|_1^2+\int_0^t(\|\nabla u \|_1^2+|u_s|_2^2) \text{d} s \leq c_1^2,&\\
    (|u|_{D^2}^2+|h^\frac32 \nabla^2 u|_2^2+|h^\frac12u_t|_2^2)(t)\qquad\quad\quad&\\+\int_0^t(|u|_{D^3}^2+|h^\frac32 \nabla^2 u|_{D^1}^2+|u_s|_{D^1}^2) \text{d} s \leq c_3^2, &\\
		(|u_t|_{D^1}^2+|h \nabla u_t|_2^2+|u|_{D^3}^2+|h^\frac32 \nabla^2 u|_{D^1}^2)(t)+\int_0^t|u_s|_{D^2}^2 \text{d} s \leq  c_4^{2}, &\\
				\int_0^t(|h^\frac12u_{s s}|_2^2+|u|_{D^4}^2+|h \nabla^2 u|_{D^2}^2+|(h \nabla^2 u)_s|_2^2) \text{d} s \leq c_4^2, &\\
				t|u_t(t)|_{D^2}^2+t|h \nabla^2 u_t(t)|_2^2+t|u_{t t}(t)|_2^2+t|u(t)|_{D^4}^2 \leq  c_5^{2},&\\
				\int_0^t s(|u_{s s}|_{D^1}^2+|h \nabla^3 u_s|_2^2+|h^\frac12 u_{s s}|_{D^1}^2) \text{d} s \leq  c_5^{2}.&
				\end{aligned}	    
				\end{equation}
            
            In fact, we can  reset
            $$
			\begin{aligned}
				T^* & =\min \left\{T,\Big(1+10\big(10^6M(c_0)\big)^{36 \nu^3+104 \nu^2+102 \nu+36}\Big)^{-40-10 \nu}\right\}, \\
				c_1^2 & =10^2M(c_0)^2, \quad c_2^2=c_3^2=10^2\big(10^2M(c_0)\big)^{8 \nu+7}, \\
				c_4^2 & =10^2\big(10^4M(c_0)\big)^{24 \nu^2+45 \nu+23}, \quad c_5^2=10^2\big(10^6M(c_0)\big)^{72 \nu^3+207 \nu^2+204 \nu+70},
			\end{aligned}
			$$
        which ensures that \ef{857}-\ef{310857} holds with the new settings  and leads to the following desired estimates:    
           \begin{align*}\label{251801}
					\sup_{0\leq t\leq T^*}(\|\nabla h\|^2_{L^\infty\cap L^q\cap D^{1}\cap D^2}+|\nabla h^\frac{1}{2}|_4^2)(t)\leq c_1^2,\qquad&\\
					\inf_{[0,T^*]\times \mathbb{R}^2} l(t,x)\geq c^{-1}_1,\quad   \inf_{[0, T^{*}]\times \mathbb{R}^2} h(t,x)\ge c_1^{-1},\qquad&\\
					\sup_{0\leq t\leq T^*}  (|l|^2_{\infty}+|u|_\infty^2)(t)+\int^{T^*}_0(|u|^2_{D^2}+|u_t|^2_2)\text{d}t\leq c_1^2,\qquad&\\
					\sup_{0\leq t\leq T^*}  (\|u\|^2_{1}+|h\nabla u|_2^2+|h^{\frac12}\nabla l|_2^2)(t)+\int_{0}^{T^*}(|l_t|_2^2+|h\nabla^2l|_2^2)\text{d}t\le c_2^2,\qquad&\\
					\sup_{0\leq t\leq T^*}(|l_t|_2^2+|h\nabla^2l|_2^2)(t)+\int_{0}^{T^*}(|h^\frac12\nabla l_t|_2^2+|h\nabla^3 l|_2^2)\text{d}t\le c_2^2,\qquad&\\
					\sup_{0\leq t\leq T^*}(|h^{\frac12}\nabla l_t|_2^2+|h\nabla^3l|_2^2)(t)+\int_{0}^{T^*}(| l_{tt}|_2^2+|h^\frac12\nabla^2 l_t|_2^2)\text{d}t\le c_2^2,\qquad&\\
					\text{ess}\sup_{0\leq t\leq T^*}t(|h^{-\frac14} l_{tt}|_2^2+|h^\frac12\nabla^2 l_t|_2^2)(t)+\int_{0}^{T^*}t|h^\frac14l_{tt}|_{D^1}^2\text{d}t\le c_2^2,\qquad&\\
					\sup_{0\leq t\leq T^*}(|u|^2_{D^2}+|u_t|^2_2+|h^\frac32\nabla^2u|^2_2)(t)+\int^{T^*}_0(|u|_{D^3}^2+|u_t|^2_{D^1})\text{d}t\leq c_3^2,\qquad&\\
					\sup_{0\leq t\leq T^*}(|u|^2_{D^3}+|u_t|_{D^1}^2+|h\nabla u_t|^2_2+|h_t|^2_{D^1})(t)+\int^{T^*}_0(|u|^2_{D^4}+|u_t|^2_{D^2}+|u_{tt}|^2_2)\text{d}t\leq c_4^2,\qquad&\\
					\sup_{0\leq t\leq T^*}(|h^\frac32\nabla^3u|^2_2+|h_t|^2_\infty)(t)+\int^{T^*}_0(|(h\nabla^2u)_t|^2_2+|h\nabla^4u|^2_2)\text{d}t\leq c_4^2,\qquad&\\
					\text{ess}\sup_{0\leq t\leq T^*}t(|u|^2_{D^4}+|h\nabla^2u_t|^2_2)(t)+\int^{T^*}_0 (|h_{tt}|_4^2+ |h_{tt}|^2_{D^1})\text{d}t\leq c^2_5,\qquad&\\
					\text{ess}\sup_{0\leq t\leq T^*}t|u_{tt}(t)|^2_2+\int^{T^*}_0t(|u_{tt}|^2_{D^1}+|h^\frac12u_{tt}|_{D^1}^2+|u_t|^2_{D^3})\text{d}t\leq c_5^2.\qquad&
				\end{align*}

   \section{Local existence in Theorems \ref{th21}}
			\subsection{The existence without  vacuum}
This subsection aim at proving the local-in-time existence of the classical solution to the Cauchy problem  under $\phi^{\epsilon}_0\ge \epsilon$:
			\begin{equation}\label{858}\left\{\begin{aligned}
					&\phi^{\epsilon}_t+u^{\epsilon}\cdot\nabla\phi^{\epsilon}+(\gamma-1)\phi^{\epsilon} \text{div}u^{\epsilon}=0,\\[2pt]
					&u^{\epsilon}_t+u^{\epsilon}\cdot \nabla u^{\epsilon}+a_1\phi^{\epsilon}\nabla l^{\epsilon}+l^{\epsilon}\nabla\phi^{\epsilon}+a_2(l^\epsilon)^\nu h^{\epsilon} Lu^{\epsilon}\\[2pt]
					=& a_2 h^{\epsilon} \nabla (l^\epsilon)^\nu  \cdot Q(u^{\epsilon})+a_3(l^\epsilon)^\nu  \psi^{\epsilon} \cdot Q(u^{\epsilon}),\\[3pt]
					&l^{\epsilon}_t+u^{\epsilon}\cdot\nabla l^{\epsilon}-a_4(\phi^\epsilon)^{2\iota}(l^\epsilon)^{\nu}\Delta l^\epsilon\\
					=&a_5(l^\epsilon)^\nu  n^{\epsilon}(\phi^\epsilon)^{4\iota} H(u^{\epsilon})+a_6(l^\epsilon)^{\nu+1}\text{div}\psi^\epsilon+\Theta(\phi^\epsilon,l^\epsilon,\psi^\epsilon),\\[3pt]
					&(\phi^{\epsilon},u^{\epsilon},l^{\epsilon})|_{t=0}=(\phi^\epsilon_0,u^\epsilon_0,l^\epsilon_0)
					=(\phi_0+\epsilon,u_0,l_0)\quad\quad x\in\mathbb{R}^2,\\[3pt]
					&(\phi^{\epsilon},u^{\epsilon},l^{\epsilon})\rightarrow (\epsilon,0,\bar{l}) \quad \text{as} \hspace{2mm}|x|\rightarrow \infty \quad {\rm for}\quad t\geq 0.
				\end{aligned}\right.\end{equation}
			\begin{theorem}\label{ddgs}
				Assume that \ef{can1}, $\epsilon>0$, \ef{a}-\ef{2.8*} and \ef{2.14} hold, where constant $c_0>0$ independent of $\epsilon$. Then there exist  a unique strong solution $(\phi^\epsilon, u^\epsilon, l^\epsilon)$ in $\left[0, T_*\right] \times \mathbb{R}^2$ to \ef{858} satisfying \ef{2.13}, for some  $T_*>0$. Moreover,  \ef{857}-\ef{310857} hold for $\left(\phi^\epsilon, u^\epsilon, l^\epsilon\right)$ with $T^*$ replaced by $T_*$.
			\end{theorem}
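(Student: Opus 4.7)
The plan is to construct the solution to the nonlinear problem \eqref{858} via a Picard iteration based on the linearized system \eqref{ln}, using the global solvability in Lemma \ref{ls} to produce the iterates and the uniform a priori bounds \eqref{857}-\eqref{310857} of \S \ref{apr} to close the scheme.

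First, I would choose a suitable initial iterate $(v^0,g^0,w^0)$ satisfying \eqref{4.1*} together with \eqref{801} on a short interval $[0,T^*]$. A natural choice is to solve auxiliary linear parabolic problems with coefficients frozen at the initial data, for instance taking $v^0$ to be the solution of the Lam\'e heat-type problem with right-hand side built from $(\phi_0,u_0,l_0)$ and the compatibility conditions \eqref{2.8*}, and setting $g^0=h_0$, $w^0=l_0$. Then I define $(\phi^{k+1},u^{k+1},l^{k+1})$ as the global classical solution of \eqref{ln} guaranteed by Lemma \ref{ls} with $(v,g,w)=(u^k,h^k,l^k)$. By Lemmas \ref{gh}--\ref{ut} the full chain of singular-weighted energy estimates applies uniformly, so the iterates $(\phi^k,u^k,l^k)$ automatically satisfy \eqref{801} (and \eqref{857}-\eqref{310857}) on $[0,T^*]$ with the same constants $c_0,\ldots,c_5$ independent of both $k$ and $\epsilon$. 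Hence the iteration is well-defined and stays in a bounded set of the strong-regularity class.

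Second, I would prove that $\{(\phi^k,u^k,l^k)\}$ is Cauchy in a low-order space, specifically $L^\infty([0,T^{**}];L^2)\times L^\infty([0,T^{**}];L^2)\times L^\infty([0,T^{**}];L^2)$ (with $L^2$-in-time control of $\nabla \bar u^k,\nabla \bar l^k$), for some $T^{**}\in(0,T^*]$ possibly smaller than $T^*$. Writing the equations satisfied by the differences
$\bar\phi^k=\phi^{k+1}-\phi^k$, $\bar u^k=u^{k+1}-u^k$, $\bar l^k=l^{k+1}-l^k$,
one obtains a linear coupled system whose coefficients are bounded uniformly thanks to the preceding step and whose lower-order terms can be controlled by the uniform estimates and Gronwall. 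The positivity $\phi^k\ge\epsilon>0$ is crucial here, since it makes $h^k$, $\varphi^k$, $n^k$ and $\psi^k$ pointwise bounded without needing the delicate singular-weighted compensation, so the contraction reduces to a standard argument. This yields $\sum_k\|(\bar\phi^k,\bar u^k,\bar l^k)\|$ geometrically small on a sufficiently short $[0,T^{**}]$.

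Third, combining the uniform high-order bounds with the strong convergence of differences and applying the Aubin--Lions compactness result Lemma \ref{aubin} together with lower semicontinuity, the limit $(\phi^\epsilon,u^\epsilon,l^\epsilon)$ inherits all regularities listed in \eqref{2.1zzx}, and passing to the limit $k\to\infty$ in \eqref{ln} shows that it solves \eqref{858} in the strong sense. Uniqueness follows from a contraction estimate analogous to the Cauchy step applied to any two strong solutions. Finally, the uniform bounds \eqref{857}-\eqref{310857} transfer to $(\phi^\epsilon,u^\epsilon,l^\epsilon)$ by weak/weak-$*$ lower semicontinuity, so $T_*$ may be taken as the $T^*$ supplied by \S\,\ref{apr}.

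The main obstacle is the contraction step, because even with $\phi\ge\epsilon>0$ the entropy equation carries the quadratic gradient source $a_5(l^\epsilon)^\nu n^\epsilon(\phi^\epsilon)^{4\iota}H(u^\epsilon)$, and hence the estimate on $\bar l^k$ depends on $\nabla\bar u^k$ in $L^2_{t,x}$; closing the loop requires differentiating the momentum equation to gain control on $\nabla\bar u^k$ in terms of $\bar\phi^k,\bar l^k$ and using the uniform bounds on $\phi^k,\psi^k,h^k$ to absorb the singular-coefficient contributions. Because $\epsilon>0$ prevents degeneracy, this absorption can be carried out by the $\epsilon$-dependent constants in a standard fashion, so the $\epsilon$-uniform bounds do not have to be re-derived at this stage; they are only needed for the existence of $T^*$ and for the later vanishing-$\epsilon$ passage.
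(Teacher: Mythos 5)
Your overall strategy is the right one and matches the paper's: construct an initial iterate, iterate on the linearized system of Lemma \ref{ls} with $(v,g,w)=(u^k,h^k,l^k)$, invoke the uniform a priori bounds of \S\ref{apr} to keep all iterates in a bounded set independent of $k$ and $\epsilon$, then prove the sequence is Cauchy in a lower-order norm and pass to the limit using lower semicontinuity. However, there is a genuine gap in your contraction step.

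Contracting only in $L^\infty([0,T^{**}];L^2)$ with $L^2$-in-time control of $\nabla\bar u^k,\nabla\bar l^k$ does not close. The obstruction is that $\bar\phi^{k+1}$ and $\bar\psi^{k+1}=\tfrac{a\delta}{\delta-1}\nabla\bar h^{k+1}$ are governed by pure transport equations with no dissipation, so their $L^2$ estimates propagate second-derivative source terms in $\bar u^k$: the $L^2$ energy for $\bar\phi^{k+1}$ already forces $\phi^k\,\mathrm{div}\,\bar u^k\in L^2$, hence $D^1$ control of $\bar u^k$, and the $L^2$ energy for $\bar\psi^{k+1}$ forces $h^k\nabla\mathrm{div}\,\bar u^k\in L^2$, hence $D^2$ control. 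Conversely, the $L^2$ energy for $\bar u^{k+1}$ involves $l^k\nabla\bar\phi^{k+1}$ and $\bar l^{k+1}\nabla\phi^{k+1}$, requiring $D^1$ control of $\bar\phi^{k+1},\bar l^{k+1}$, which in turn bootstraps further up. The paper resolves this by contracting in the much richer quantity $\Gamma^{k+1}$ of \eqref{1171} combining $\|\bar\phi^{k+1}\|_2$, $\|\bar\psi^{k+1}\|_1$, the weighted first-order norms $|(l^{k+1})^{-\nu/2}(h^{k+1})^{1/2}\bar u^{k+1}|_2$, $|h^{k+1}\nabla\bar u^{k+1}|_2$, $|(h^{k+1})^{1/2}(l^k)^{\nu/2}\nabla\bar l^{k+1}|_2$, and the time derivatives $|\bar l_t^{k+1}|_2$, $|(l^{k+1})^{-\nu/2}(h^{k+1})^{1/2}\bar u_t^{k+1}|_2$, obtained by applying $\partial_t$ to the difference equations as in \eqref{874} and \eqref{1570}. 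One then also needs the elliptic regularity estimates for $|h^{k+1}\nabla^2\bar u^{k+1}|_2$, $|h^{k+1}\nabla^3\bar u^{k+1}|_2$, $|(h^{k+1})^{1/2}\nabla^2\bar l^{k+1}|_2$ to reinsert the high-derivative sources, and the resulting recursion \eqref{1171} involves three previous iterates $\Gamma^k,\Gamma^{k-1},\Gamma^{k-2}$ with two small parameters $\sigma,\tilde\sigma$, not a one-step geometric contraction. Without these ingredients the scheme does not close.

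A second, smaller issue: you claim the absorption "can be carried out by the $\epsilon$-dependent constants in a standard fashion." In the paper all the constants in the contraction estimate come from the $\epsilon$-independent bounds \eqref{857}-\eqref{310857}, so the contraction time $T_*$ is $\epsilon$-independent, which is exactly what is needed for the subsequent vanishing-$\epsilon$ passage. If you allow $\epsilon$-dependent constants in this step, you would obtain only an $\epsilon$-dependent existence time and would need an additional continuation argument (appealing to the $\epsilon$-uniform a priori bounds to re-extend the solution to $[0,T^*]$), an extra step the paper does not have to perform.
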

            
			Now, we will use iteration scheme to prove Theorem \ref{th21}. 
We start with construct $\left(\phi^0, u^0, l^0\right)$, which is taken as the unique  solution to
			\begin{equation}\label{859}
				\left\{\begin{aligned}&
					U_t+u_0 \cdot \nabla U=0, \text { in }(0, \infty) \times \mathbb{R}^2, \\
					&Y_t-U^{2 \iota} \Delta Y=0, \text { in }(0, \infty) \times \mathbb{R}^2, \\
					& Z_t-U^{ 2\iota} \Delta Z=0, \text { in }(0, \infty) \times \mathbb{R}^2, \\
					&(U, Y, Z)|_{t=0}=(\phi_0^\epsilon, u_0^\epsilon, l_0^\epsilon)=(\phi_0+\epsilon, u_0, l_0) \text { in } \mathbb{R}^2, \\
					&(U, Y, Z) \rightarrow\left(\epsilon, 0, \bar{l}\right) \text { as }|x| \rightarrow \infty  \text { for }t \geq 0 .
				\end{aligned}\right.
			\end{equation}
Then choose $\bar{T} \in(0, T^*]$ sufficiently small such that  \ef{857}-\ef{310857} hold for $(\phi^0, u^0, l^0, h^0, \psi^0)$ with $T^*$ replaced by $\bar{T}$.
			
			\begin{proof}
				First, let $(v, w, g)=\big(u^0, l^0, (\phi^0)^{2\iota}\big)$, and one has a unique solution  $(\phi^1, u^1, l^1)$ to \ef{ln}. By induction, one can construct  $(\phi^k, u^k, l^k)$ for $k \geq 1$, and obtain $(\phi^{k+1}, u^{k+1}, l^{k+1})$ by solving
				\begin{equation}\label{860}
					\left\{\begin{aligned}
						& \phi_t^{k+1}+u^k \cdot \nabla \phi^{k+1}+(\gamma-1) \phi^{k+1} \text{div} u^k=0, \\
						& (l^{k+1})^{-\nu}(u_t^{k+1}+u^k \cdot \nabla u^k+a_1 \phi^{k+1} \nabla l^{k+1}+l^{k+1} \nabla \phi^{k+1}) \\
						& +a_2 h^{k+1} L u^{k+1}=a_2(l^{k+1})^{-\nu} h^k \nabla(l^{k+1})^\nu \cdot Q(u^k)+a_3 \psi^{k+1} \cdot Q(u^k), \\
						& l_t^{k+1}+u^k \cdot \nabla l^{k+1}-a_4h^{k+1}(l^k)^\nu \Delta l^{k+1} \\
						= & a_5(l^k)^\nu n^{k+1}(h^k)^{2} H(u^k)+a_6(l^k)^{\nu+1} \text{div} \psi^{k+1}+\Pi^{k+1}, \\
						& (\phi^{k+1}, u^{k+1}, l^{k+1})|_{t=0}=(\phi_0^\epsilon, u_0^\epsilon, l_0^\epsilon) 
						=  (\phi_0+\epsilon, u_0, l_0) \text { in } \mathbb{R}^2, \\
						& (\phi^{k+1}, u^{k+1}, l^{k+1}) \longrightarrow(\epsilon, 0, \bar{l}) \quad \text { as }\ |x| \rightarrow \infty \ \text { for }\  t \geq 0,
					\end{aligned}\right.
				\end{equation}
			where 
$$
						\Pi^{k+1}=  a_7(l^k)^{\nu+1}(h^{k+1})^{-1} \psi^{k+1} \cdot \psi^{k+1}+a_8(l^k)^\nu \nabla l^{k+1} \cdot \psi^{k+1} 
						 +a_9(l^k)^{\nu-1}h^k\nabla l^k \cdot \nabla l^k.$$
				Moreover,  $(\phi^{k+1}, u^{k+1}, l^{k+1})$ satisfies  \ef{857}-\ef{310857} uniformly in $k$.
				
				Now, we will show the compactness of $\left(\phi^k, u^k, l^k\right)$. Denote 
				$$
				\begin{aligned}
					\bar{f}^{k+1}=f^{k+1}-f^k,\quad \text{for}\quad f=\phi, u, l, \psi, h, n.
				\end{aligned}
				$$
				Then \ef{860} yield
				\begin{equation}\label{861}
					\left\{\begin{aligned}
						&\bar{\phi}_t^{k+1}+u^k \cdot \nabla \bar{\phi}^{k+1}+\bar{u}^k \cdot \nabla \phi^k+(\gamma-1)(\bar{\phi}^{k+1} \text{div} u^k+\phi^k \text{div} \bar{u}^k)=0, \\
						&(l^{k+1})^{-\nu} \bar{u}_t^{k+1}+a_2 h^{k+1} L \bar{u}^{k+1}+a_2 \bar{h}^{k+1} L u^k=\sum_{i=1}^4 \bar{\mathbb{U}}_i^{k+1}, \\
						& \bar{l}_t^{k+1}-a_4 h^{k+1}(l^k)^\nu \Delta \bar{l}^{k+1}=\sum_{i=1}^4 \bar{\mathbb{L}}_i^{k+1}+\bar{\Pi}^{k+1}, \\
						&(\bar{\phi}^{k+1}, \bar{u}^{k+1}, \bar{l}^{k+1})|_{t=0}=(0,0,0) \text { in } \mathbb{R}^2, \\
						&(\bar{\phi}^{k+1}, \bar{u}^{k+1}, \bar{l}^{k+1}) \longrightarrow(0,0,0) \quad \text { as } \quad|x| \rightarrow \infty \quad \text { for } \quad t \geq 0,
					\end{aligned}\right.
				\end{equation}
				where
				\begin{align*}
					\bar{\mathbb{U}}_1^{k+1}= & -(l^{k+1})^{-\nu}(u^k \cdot \nabla \bar{u}^k+\bar{u}^k \cdot \nabla u^{k-1}) \\
					& -\big((l^{k+1})^{-\nu}-(l^k)^{-\nu}\big)(u_t^k+u^{k-1} \cdot \nabla u^{k-1}), \\
					\bar{\mathbb{U}}_2^{k+1}= & -(l^{k+1})^{-\nu}(a_1 \bar{\phi}^{k+1} \nabla l^{k+1}+a_1 \phi^k \nabla \bar{l}^{k+1}+\bar{l}^{k+1} \nabla \phi^{k+1}+l^k \nabla \bar{\phi}^{k+1}) \\
					& -\big((l^{k+1})^{-\nu}-(l^k)^{-\nu}\big)(a_1 \phi^k \nabla l^k+l^k \nabla \phi^k), \\
					\bar{\mathbb{U}}_3^{k+1}= & a_2(l^{k+1})^{-\nu}\Big(h^k\big(\nabla(l^{k+1})^\nu-\nabla(l^k)^\nu\big) \cdot Q(u^k)+h^k \nabla(l^k)^\nu \cdot Q(\bar{u}^k) \\
					& +\bar{h}^k \nabla(l^k)^\nu \cdot Q(u^{k-1})\Big)+a_3 \bar{\psi}^{k+1} \cdot Q(u^k)+a_3 \psi^k \cdot Q(\bar{u}^k), \\
					\bar{\mathbb{U}}_4^{k+1}= & a_2\big((l^{k+1})^{-\nu}-(l^k)^{-\nu}\big) h^{k-1} \nabla(l^k)^\nu \cdot Q(u^{k-1}), \\
					\bar{\mathbb{L}}_1^{k+1}= & -(u^k \cdot \nabla \bar{l}^{k+1}+\bar{u}^k \cdot \nabla l^k),\\
					\bar{\mathbb{L}}_2^{k+1}= & a_4\Big(h^{k+1}\big((l^k)^\nu-(l^{k-1})^\nu\big)+\bar{h}^{k+1}(l^{k-1})^\nu\Big) \Delta l^k, \\
					\bar{\mathbb{L}}_3^{k+1}= & a_5(l^k)^\nu n^{k+1}\Big((h^k)^{2}\big(H(u^k)-H(u^{k-1})\big)+\big((h^k)^{2}-(h^{k-1})^{2}\big) H(u^{k-1})\Big) \\
					& +a_5(h^{k-1})^{2} H(u^{k-1})\Big((l^k)^\nu \bar{n}^{k+1}+\big((l^k)^\nu-(l^{k-1})^\nu\big) n^k\Big), \\
					\bar{\mathbb{L}}_4^{k+1}= & a_6(l^k)^{\nu+1} \text{div} \bar{\psi}^{k+1}+a_6\big((l^k)^{\nu+1}-(l^{k-1})^{\nu+1}\big)\text{div} \psi^k,\\
					\bar{\Pi}^{k+1}=& a_7(l^k)^{\nu+1}\Big((h^{k+1})^{-1} \bar{\psi}^{k+1} \cdot(\psi^{k+1}+\psi^k)+\big((h^{k+1})^{-1}-(h^k)^{-1}\big) \psi^k \cdot \psi^k\Big) \\
					& +a_7\big((l^k)^{\nu+1}-(l^{k-1})^{\nu+1}\big)(h^k)^{-1} \psi^k \cdot \psi^k \\
					& +a_8(l^k)^\nu(\nabla l^{k+1} \cdot \bar{\psi}^{k+1}+\nabla \bar{l}^{k+1} \cdot \psi^k) +a_8\big((l^k)^\nu-(l^{k-1})^\nu\big)\nabla l^k \cdot \psi^k \\
					& +a_9(l^k)^{\nu-1} h^k \nabla\bar{ l}^k \cdot(\nabla l^k+\nabla l^{k-1}) \\
					& +a_9\Big((l^k)^{\nu-1}\bar{h}^k+h^{k-1}\big((l^k)^{\nu-1}-(l^{k-1})^{\nu-1}\big)\Big)|\nabla l^{k-1}|^2 .
				\end{align*}
				
Moreover,  $\ef{860}_1$ implies
    \begin{equation}\label{nlpsiii}
    \begin{aligned}
       & \bar{h}_t^{k+1}+u^k \cdot \nabla \bar{h}^{k+1}+\bar{u}^k \cdot \nabla h^k+(\delta-1)(\bar{h}^{k+1} \text{div} u^k+h^k \text{div} \bar{u}^k)=0,\\
						&\bar{\psi}^{k+1}_t+\nabla(u^k\cdot\bar{\psi}^{k+1}+\bar{u}^k\cdot\psi^k)+(\delta-1)(\bar{\psi}^{k+1}\text{div}u^k+\psi^k\text{div}\bar{u}^k)\\
						&\qquad+a\delta(\bar{h}^{k+1}\nabla\text{div}u^k+h^k\nabla\text{div}\bar{u}^k)=0.
					\end{aligned}
				\end{equation}
                
				To show the compactness of the approximate solutions $\left(\phi^k, u^k, l^k\right)$, we start with the following estimates.
				\begin{lemma}
					$$
					(\bar{h}^{k+1}, \bar{\phi}^{k+1}) \in L^{\infty}([0, \bar{T}] ; H^3) \ \ \text {and} \ \ \bar{\psi}^{k+1} \in L^{\infty}([0, \bar{T}] ; H^2) \ \ \text { for } \quad k=1,2, \ldots
					$$
				\end{lemma}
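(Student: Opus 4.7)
The plan is to observe that the stated regularities for the differences follow almost immediately from applying Lemma \ref{ls} to each iterate separately and then taking differences, with one small energy-type argument to pick up the missing $L^2$ norms. By Lemma \ref{ls}, for every $k\ge 0$ we have $\phi^{k+1}-\epsilon,\phi^k-\epsilon\in C([0,\bar T];L^p\cap D^1\cap D^3)$, $h^{k+1},h^k$ with $\nabla h^{k+1},\nabla h^k\in C([0,\bar T];H^2)$, and $\psi^{k+1},\psi^k\in C([0,\bar T];H^2)$ (the latter two via $h=(\phi^\epsilon)^{2\iota}$ and the definition of $\psi$). Subtracting gives at once $\nabla^j\bar\phi^{k+1},\nabla^j\bar h^{k+1}\in L^\infty([0,\bar T];L^2)$ for $j=1,2,3$ and $\nabla^j\bar\psi^{k+1}\in L^\infty([0,\bar T];L^2)$ for $j=0,1,2$. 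So the only items left to verify are the $L^2$ bounds on $\bar\phi^{k+1}$ and $\bar h^{k+1}$ themselves.

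First I would handle $\bar\phi^{k+1}$. Its equation $\eqref{861}_1$ is a linear transport equation with vanishing initial data and smooth coefficient $u^k$; multiplying by $\bar\phi^{k+1}$ and integrating gives
\[
\tfrac{1}{2}\tfrac{\text{d}}{\text{d}t}|\bar\phi^{k+1}|_2^2\le C|\nabla u^k|_\infty|\bar\phi^{k+1}|_2^2+C\bigl(|\nabla\phi^k|_\infty|\bar u^k|_2+|\phi^k-\epsilon+\epsilon|_\infty|\nabla\bar u^k|_2\bigr)|\bar\phi^{k+1}|_2.
\]
Because $u^k,u^{k-1}\in C([0,\bar T];H^3)$ by Lemma \ref{ls}, $\bar u^k\in L^\infty([0,\bar T];H^3)\subset L^\infty([0,\bar T];L^2)$, and the coefficients $|\nabla u^k|_\infty,|\nabla\phi^k|_\infty,|\phi^k|_\infty$ are all finite on $[0,\bar T]$. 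Gronwall's inequality with $\bar\phi^{k+1}(0)=0$ then yields $\bar\phi^{k+1}\in L^\infty([0,\bar T];L^2)$, and combining with the higher-order bounds above gives $\bar\phi^{k+1}\in L^\infty([0,\bar T];H^3)$. The same argument applied to the transport equation $\eqref{nlpsiii}_1$ for $\bar h^{k+1}$, which has exactly the same structure, delivers $\bar h^{k+1}\in L^\infty([0,\bar T];H^3)$.

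Finally, for $\bar\psi^{k+1}$ the evolution equation $\eqref{nlpsiii}_2$ has an extra forcing term $a\delta(\bar h^{k+1}\nabla\text{div}u^k+h^k\nabla\text{div}\bar u^k)$. Multiplying by $\bar\psi^{k+1}$ and integrating in $x$ gives an $L^2$ estimate in which the forcing is controlled by $|\bar h^{k+1}|_2|\nabla\text{div}u^k|_\infty+|h^k|_\infty|\nabla\text{div}\bar u^k|_2$, both of which are bounded on $[0,\bar T]$ thanks to the $L^2$ bound on $\bar h^{k+1}$ just established together with the regularities from Lemma \ref{ls}. Gronwall again gives $\bar\psi^{k+1}\in L^\infty([0,\bar T];L^2)$, and together with the $H^2$-seminorm bounds inherited from $\psi^{k+1},\psi^k\in C([0,\bar T];H^2)$ this yields $\bar\psi^{k+1}\in L^\infty([0,\bar T];H^2)$.

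There is no real obstacle here; the lemma is essentially a bookkeeping statement that collects the regularity of the differences in preparation for the contraction/compactness argument that follows. The only thing to be slightly careful about is that the bounds on the iterates produced by Lemma \ref{ls} (rather than the uniform-in-$k$ bounds of \S\ref{apr}) are what justify applying Gronwall's inequality at each fixed $k$, so these constants may depend on $k$; this is acceptable since the lemma only asserts membership in $L^\infty([0,\bar T];H^3)$ (resp.\ $H^2$) for each fixed $k$.
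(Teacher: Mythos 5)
Your argument is correct. The paper itself gives no proof here and instead cites Lemma 3.11 of Duan--Xin--Zhu \cite{DXZ1}; your route -- using the per-iterate regularity from Lemma \ref{ls} to bound the homogeneous parts $\nabla^j\bar{\phi}^{k+1}$, $\nabla^j\bar{h}^{k+1}$ ($j=1,2,3$) and $\bar{\psi}^{k+1}\in H^2$ directly by subtraction, and then recovering the missing $L^2$ bounds on $\bar{\phi}^{k+1}$, $\bar{h}^{k+1}$ by the energy/Gronwall estimate on the transport equations $\eqref{861}_1$ and $\eqref{nlpsiii}_1$ with vanishing initial data -- is exactly the natural argument for this bookkeeping lemma, and it is what the cited proof would reduce to.

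Two small remarks. First, the Gronwall step on $\bar{\psi}^{k+1}$ in your last paragraph is redundant: since $\nabla h^{k+1},\nabla h^k\in C([0,\bar{T}];H^2)$ by Lemma \ref{ls} and $\psi^\epsilon=\frac{a\delta}{\delta-1}\nabla h^\epsilon$, one already has $\psi^{k+1},\psi^k\in C([0,\bar{T}];H^2)$, so $\bar{\psi}^{k+1}\in L^\infty([0,\bar{T}];H^2)$ follows by subtraction alone; you even flagged that only the $L^2$ bounds on $\bar{\phi}^{k+1},\bar{h}^{k+1}$ were left, so the extra step is inconsistent with your own accounting, though harmless. Second, you should note that passing from $\phi^k-\epsilon\in L^p\cap D^1\cap D^3$ to $|\phi^k|_\infty<\infty$ (which you need in the forcing $\phi^k\,\text{div}\,\bar{u}^k$) uses the Gagliardo--Nirenberg inequality in two dimensions with the second derivative (e.g. $|f|_\infty\le C|f|_p^{1-\theta}|\nabla^2 f|_2^\theta$), not simply $L^p\cap D^1$; since $\nabla^2\phi^k\in L^2$ by interpolation between $D^1$ and $D^3$, this is fine, but it is worth making the dependence on the $D^3$ piece explicit. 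With these minor clarifications the proof is complete and matches the paper's intended argument.
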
 
				\begin{proof}
				The proof is omitted for simplicity, see the proof of Lemma 3.11 in \cite{DXZ1} for details. 
				\end{proof}
				
				First, the classical energy estimates on $\eqref{861}_1$ gives
				\begin{equation}\label{862}
					\frac{\text{d}}{\text{d} t}|\bar{\phi}^{k+1}|_2^2 \leq C\big(|\nabla u^k|_{\infty}|\bar{\phi}^{k+1}|_2^2+(|\phi^k|_{\infty}|\nabla \bar{u}^k|_2+|\nabla\phi^k |_\infty |\bar{u}^k|_2)|\bar{\phi}^{k+1}|_2\big),
				\end{equation}
                %
				\begin{equation}\label{863}
    \begin{aligned}
        \frac{\text{d}}{\text{d} t}|\nabla \bar{\phi}^{k+1}|_2^2 \leq& C\|u^k\|_3|\nabla\bar{\phi}^{k+1}|_2^2+C(\|\phi^k\|_{L^{\infty} \cap D^1\cap D^{3}}\| \bar{u}^k\|_2\\&+|\bar{\phi}^{k+1}|_4|\nabla^2u^k|_4)|\nabla\bar{\phi}^{k+1}|_2,
    \end{aligned}	
				\end{equation}
                and
				\begin{equation}\label{864}
					\frac{\text{d}}{\text{d} t}|\nabla^2\bar{\phi}^{k+1}|_2^2 \leqslant C(\|u^k\|_3\| \bar{\phi}^{k+1}\|_2^2+\|\phi^k\|_{L^\infty\cap D^1 \cap D^3}\| \bar{u}^k\|_3|\bar{\phi}^{k+1}|_{D^2}).
				\end{equation}
                
Similarly, by $\eqref{nlpsiii}_1$, for $\bar{h}^{k+1}$, one has
\begin{equation}\label{1163}
					\frac{\text{d}}{\text{d} t}|\bar{h}^{k+1}|_2^2 \leq C\big(|\nabla u^k|_{\infty}|\bar{h}^{k+1}|_2^2+(|h^k\nabla \bar{u}^k|_2+|\nabla h^k |_\infty |\bar{u}^k|_2)|\bar{h}^{k+1}|_2\big),
				\end{equation}
by $\eqref{nlpsiii}_2$, for $\bar{\psi}^{k+1}$, one has 
				\begin{equation}\label{1164}
					\begin{aligned}
						 \frac{\text{d}}{\text{d} t}|\bar{\psi}^{k+1}|_2^2 \leq &C\|u^k\|_3|\bar{\psi}^{k+1}|_2^2+C( \|\psi^k\|_{L^{\infty} \cap D^1\cap D^{2}}\| \bar{u}^k\|_1\\&+|\bar{h}^{k+1}|_4|\nabla^2u^k|_4+|h^k \nabla^2 \bar{u}^k|_2)|\bar{\psi}^{k+1}|_2,&
					\end{aligned}
				\end{equation}

				and
				\begin{equation}\label{867}
					\begin{aligned}
						\frac{\text{d}}{\text{d} t}|\nabla \bar{\psi}^{k+1}|_2^2 \leq & C\|u^k\|_3\| \bar{\psi}^{k+1}\|_1^2+C(\|\psi^k\|_{L^{\infty} \cap D^{1}\cap D^2}\|\bar{u}^k\|_2
						\\&+|\bar{h}^{k+1}|_\infty|\nabla^3 u^k|_2+|h^k \nabla^3 \bar{u}^k|_2)|\nabla \bar{\psi}^{k+1}|_2.
					\end{aligned}
				\end{equation}

				For $\bar{l}^{k+1}$, by  $\ef{861}_3$, one has 
    \begin{equation}\label{1165}
				 \frac{\text{d}}{\text{d} t}|\bar{l}^{k+1}|_2^2+2a_4|(h^{k+1})^{\frac{1}{2}}(l^k)^\frac{\nu}{2} \nabla \bar{l}^{k+1}|_2^2=\sum_{i=1}^5 J_i, 
				\end{equation}
    where 
				
				\begin{align*}
					J_1= & \int2\Big(\bar{\mathbb{L}}_1^{k+1}-a_4\big((l^k)^\nu \nabla h^{k+1}+h^{k+1} \nabla(l^k)^\nu\big) \cdot \nabla \bar{l}^{k+1}\Big) \bar{l}^{k+1} \\
					\leq & C\big((|u^k|_\infty+|(l^k)^\nu|_\infty |\nabla h^{k+1}|_\infty) |\nabla \bar{l}^{k+1}|_2| \bar{l}^{k+1}|_2+|\nabla l^k|_\infty|\bar{u}^k|_2| \bar{l}^{k+1}|_2\\
					& +
|(h^{k+1})^\frac12\nabla(l^k)^\nu|_\infty|(l^k)^{-\frac{\nu}{2}}|_\infty|(h^{k+1})^{\frac{1}{2}}(l^k)^\frac{\nu}{2} \nabla \bar{l}^{k+1}|_2| \bar{l}^{k+1}|_2\big), \\
					J_2= & \int 2\bar{\mathbb{L}}_2^{k+1} \bar{l}^{k+1} 
                    \\
                    \leq & C(|h^{k+1}\nabla^2l^k|_2|\bar{l}^{k+1}|_4^2+|(l^{k-1})^{\nu}|_\infty|\bar{h}^{k+1}|_4|\nabla^2l^k|_4|\bar{l}^{k+1}|_2), \\
					J_3= & \int 2\bar{\mathbb{L}}_3^{k+1} \bar{l}^{k+1} \\ \leq &C\Big(|(l^k)^\nu|_\infty |n^{k+1}|_\infty\big(|h^k\nabla \bar{u}^{k}|_2(|h^k\nabla u^k|_\infty+|h^k\nabla u^{k-1}|_\infty)\\
					&+|\bar{h}^{k}|_2|\nabla u^{k-1}|_\infty(|h^k\nabla u^{k-1}|_\infty+|h^{k-1}\nabla u^{k-1}|_\infty) \big)  \\&+|h^{k-1}\nabla u^{k-1} |^2_\infty(|(l^k)^\nu|_\infty |\bar{h}^{k+1}|_2+|n^k|_\infty|\bar{l}^k|_2) \Big)|\bar{l}^{k+1}|_2, \\
					J_4= & \int 2\bar{\mathbb{L}}_4^{k+1} \bar{l}^{k+1} \\\leq& C(|(l^k)^{\nu}|_\infty|\nabla l^k|_\infty | \bar{\psi}^{k+1}|_2+|\bar{l}^k|_4|\nabla \psi^k|_4)|\bar{l}^{k+1}|_2\\&+C|(l^k)^{\nu+1}|_\infty | \bar{\psi}^{k+1}|_2|\nabla\bar{l}^{k+1}|_2,\\
					J_5= & \int2 \bar{\Pi}^{k+1} \cdot \bar{l}^{k+1} \\\leq& C\Big(|(l^k)^{\nu+1}|_\infty\big(|(h^{k+1})^{-1}|_\infty |\bar{\psi}^{k+1}|_2 (|\psi^{k+1}|_\infty+|\psi^k|_\infty)+|\bar{h}^{k+1}|_2 |\psi^k|_\infty^2\big) \\
					& +|\bar{l}^k|_2|(h^k)^{-1}|_\infty |\psi^k|_\infty^2  +|(l^k)^\nu|_\infty(|\nabla l^{k+1}|_\infty |\bar{\psi}^{k+1}|_2\\&+|\nabla \bar{l}^{k+1}|_2 |\psi^k|_\infty)  +|\bar{l}^k|_2|\nabla l^k|_\infty | \psi^k|_\infty \\&+|(l^k)^{\nu-1}|_\infty |(h^k)^\frac12 \nabla\bar{ l}^k|_2 (|(h^k)^\frac12\nabla l^k|_\infty+|(h^k)^\frac12\nabla l^{k-1}|_\infty) \\
					& +|(l^k)^{\nu-1}|_\infty|\bar{h}^k|_2|\nabla l^{k-1}|_\infty^2+|\bar{l}^{k}|_2|(h^{k-1})^\frac12\nabla l^{k-1}|_\infty^2\Big)|\bar{l}^{k+1}|_2,
				\end{align*}
			in which, we have used integration by parts and  
				\begin{equation}\label{871}
				  \begin{aligned}
					& |(h^{k+1})^{-1}-(h^k)^{-1}|_2=|(h^k-h^{k+1})(h^{k+1})^{-1}(h^k)^{-1}|_2\leq C|\bar{h}^{k+1}|_2, \\
					& |h^{k+1}(h^k)^{-1}|_{\infty}+|h^k(h^{k+1})^{-1}|_{\infty} \leq C, \\
					&  (h^{k})^{2}-(h^{k-1})^{2}=(h^{k}-h^{k-1})(h^{k}+h^{k-1}), \;|\bar{l}^{k+1}|_2 \leq C|\bar{l}^{k+1}|_2^{\frac{1}{2}}| \nabla\bar{l}^{k+1}|_2^{\frac{1}{2}},\\
					& a^{-b}|\bar{n}^{k+1}|_2=|(h^{k+1})^b-(h^k)^b|_2 \\&\qquad\qquad\;\;\leq C(|(h^{k+1})^{b-1}|_\infty+|(h^k)^{b-1}|_\infty)|\bar{h}^{k+1}|_2\leq C|\bar{h}^{k+1}|_2, \\
					& |(l^k)^{\nu+1}-(l^{k-1})^{\nu+1}|_2\le C(|(l^k)^{\nu}|_\infty+|(l^{k-1})^{\nu}|_\infty)|\bar{l}^k|_2\leq C|\bar{l}^k|_2 , \\
					& |(l^k)^{\nu}-(l^{k-1})^{\nu}|_2\le C(|(l^k)^{\nu-1}|_\infty+|(l^{k-1})^{\nu-1}|_\infty)|\bar{l}^k|_2\leq C|\bar{l}^k|_2 .
				\end{aligned}  
				\end{equation}


		For convenience, in the following estimation, the quantity that can be derived from \ef{857}-\ef{310857} is directly controlled by the constant $C$.	
      Now, the classical energy estimates on $\ef{861}_3$ gives
				\begin{equation}\label{869}
				a_4 \frac{\text{d}}{\text{d} t}|(h^{k+1})^{\frac{1}{2}}(l^k)^{\frac{\nu}{2}} \nabla \bar{l}^{k+1}|_2^2+| \bar{l}_t^{k+1}|_2^2=\sum_{i=6}^{10} J_i, 
				\end{equation}
                where 
			\begin{align*}
					J_6= & \int2\Big(\bar{\mathbb{L}}_1^{k+1}-a_4\big((l^k)^\nu \nabla h^{k+1}+h^{k+1}\nabla(l^k)^\nu\big) \cdot \nabla \bar{l}^{k+1}\Big) \bar{l}_t^{k+1} \\
					& +\int a_4\Big(h^{k+1}_t(l^k)^\nu+h^{k+1}\big((l^k)^\nu\big)_t\Big) \nabla \bar{l}^{k+1} \cdot \nabla \bar{l}^{k+1} \\
					\leq & C(|(h^{k+1})^{\frac{1}{2}}(l^k)^{\frac{\nu}{2}} \nabla \bar{l}^{k+1}|_2+| \bar{u}^k|_2)| \bar{l}_t^{k+1}|_2 \\&+C(1+|l_t^k|_{D^2}^{\frac{1}{4}})|(h^{k+1})^{\frac{1}{2}}(l^k)^{\frac{\nu}{2}} \nabla \bar{l}^{k+1}|_2^2, \\
					J_7= & \int 2\bar{\mathbb{L}}_2^{k+1} \bar{l}_t^{k+1} \leq C(|\bar{l}^k|_4+|\bar{h}^{k+1}|_4)| \bar{l}_t^{k+1}|_2, \\
					J_8= & \int2 \bar{\mathbb{L}}_3^{k+1} \bar{l}_t^{k+1} \leq C(|h^k \nabla \bar{u}^k|_2+|\bar{h}^k|_2 +|\bar{h}^{k+1}|_2+| \bar{l}^k|_2)|\bar{l}_t^{k+1}|_2, \\
					J_9= & \int 2\bar{\mathbb{L}}_4^{k+1} \bar{l}_t^{k+1} \leq C(\|\bar{\psi}^{k+1}\|_1+| \bar{l}^k|_4)| \bar{l}_t^{k+1}|_2, \\
					J_{10}= & \int 2\bar{\Pi}^{k+1} \cdot \bar{l}_t^{k+1} \\\leq& C| \bar{l}_t^{k+1}|_2(|(l^{k-1})^{\frac{\nu}{2}}(h^k)^{\frac{1}{2}} \nabla \bar{l}^k|_2+|\bar{\psi}^{k+1}|_2 \\
					& +|(l^k)^{\frac{\nu}{2}}(h^{k+1})^{\frac{1}{2}} \nabla \bar{l}^{k+1}|_2+|  \bar{l}^k|_2+|\bar{h}^{k+1}|_2+|\bar{h}^{k}|_2).
				\end{align*}              
Next, applying $\partial_t$ to $\ef{861}_3$,
 multiplying the resulting equation by $2\bar{l}_t^{k+1}$, and integrating over $\mathbb{R}^2$, one gets
\begin{equation}\label{874}
 \frac{\text{d}}{\text{d} t}| \bar{l}_t^{k+1}|_2^2+2a_4|(h^{k+1})^{\frac{1}{2}}(l^k)^{\frac{\nu}{2}} \nabla \bar{l}_t^{k+1}|_2^2=\sum_{i=11}^{15} J_i,   
\end{equation}
where, by using  \ef{871}, one has
\begin{align*}
J_{11}= & \int2(\bar{\mathbb{L}}_1^{k+1})_t \bar{l}_t^{k+1} \\
	\leq & C(\| \bar{u}^k\|_2 +|(h^{k+1})^{\frac{1}{2}}(l^k)^{\frac{\nu}{2}} \nabla \bar{l}_t^{k+1}|_2+| \bar{u}_t^k|_2)| \bar{l}_t^{k+1}|_2\\
&+C|(h^{k+1})^{\frac{1}{2}}(l^k)^{\frac{\nu}{2}} \nabla \bar{l}^{k+1}|_2| \bar{l}_t^{k+1}|_4, \\
J_{12}= & \int2\Big((\bar{\mathbb{L}}_2^{k+1})_t+a_4\big(h^{k+1}(l^k)^\nu\big)_t \Delta \bar{l}^{k+1} \\&-a_4 \big(\nabla h^{k+1}(l^k)^\nu \cdot \nabla \bar{l}_t^{k+1}+h^{k+1} \nabla(l^k)^\nu \nabla \bar{l}_t^{k+1}\big) \Big) \bar{l}_t^{k+1} \\
	\leq & C(| \bar{l}^k|_4 +|\bar{h}^{k+1}|_\infty+\|\bar{h}_t^{k+1}\|_1 +|h^{k+1} \nabla^2 \bar{l}^{k+1}|_2\\
	& +|(h^{k+1})^{\frac{1}{2}}(l^k)^{\frac{\nu}{2}} \nabla \bar{l}_t^{k+1}|_2)| \bar{l}_t^{k+1}|_2\\
	&+C|(h^k)^\frac12 \nabla^2 l_t^k|_2| (h^k)^\frac14\bar{l}^k|_4|(h^{k+1})^\frac14\bar{l}_t^{k+1}|_4\\
	& +C(| \nabla^2 l_t^k|_2| \bar{h}^{k+1}|_4+| \bar{l}_t^k|_4+|h^{k+1} \nabla^2 \bar{l}^{k+1}|_2) |\bar{l}_t^{k+1}|_4, \\
		J_{13}= & \int2(\bar{\mathbb{L}}_3^{k+1})_t \bar{l}_t^{k+1} \\\leq& C(|h^k \nabla \bar{u}^k|_2+|h^k\nabla \bar{u}_t^k|_2+|\bar{h}_t^k|_2+|\bar{h}_t^{k+1}|_2\\
				&+| \bar{l}^k_t|_2+| \bar{l}^k|_\infty+|\bar{h}^{k+1}|_\infty+|\bar{h}^k|_\infty)| \bar{l}_t^{k+1}|_2 +C|h^k \nabla \bar{u}^k|_4|\bar{l}_t^{k+1}|_4,\\
J_{14}= & \int2(\bar{\mathbb{L}}_4^{k+1})_t \bar{l}_t^{k+1} \\\leq& C(| \bar{l}^k|_\infty +|\bar{l}^k_t|_4)| \bar{l}_t^{k+1}|_2 +|\nabla \bar{\psi}^{k+1}|_2| \bar{l}_t^{k+1}|_4+J_*, \\
	J_{15}= & \int 2\bar{\Pi}_t^{k+1} l_t^{k+1} \\\leq& C(\|\bar{\psi}^{k+1}\|_1+|\bar{\psi}_t^{k+1}|_2+\|\bar{h}_t^k\|_1+\|\bar{h}^k\|_2 \\&+\|\bar{h}_t^{k+1}\|_1+|\bar{h}^{k+1}|_2 +|  \bar{l}^k|_\infty+|\bar{l}^k_t|_2\\&+|(h^k)^{\frac{1}{2}}(l^{k-1})^{\frac{\nu}{2}} \nabla \bar{l}^k|_2 +|(h^k)^{\frac{1}{2}} \nabla \bar{l}_t^k|_2+|(h^{k+1})^{\frac{1}{2}} \nabla \bar{l}_t^{k+1}|_2)|\bar{l}_t^{k+1}|_2\\
 &+C(|\bar{\psi}^{k+1}|_2+|(h^k)^{\frac{1}{2}} \nabla \bar{l}^k|_4+|(h^{k+1})^{\frac{1}{2}}(l^k)^{\frac{\nu}{2}} \nabla \bar{l}^{k+1}|_2)| \bar{l}_t^{k+1}|_4.
\end{align*}		    
 In which, via using $\eqref{nlpsiii}_2$ and integration by parts, the term $J_*$ in $J_{14}$ can be controlled  as		

    \begin{align*}
J_*= & 2a_6 \int(l^k)^{\nu+1} \text{div} \bar{\psi}_t^{k+1} \bar{l}_t^{k+1} \\
	= & -2a_6 \int(l^k)^{\nu+1} \bar{l}_t^{k+1} \text{div}\big(\nabla(u^k \cdot \bar{\psi}^{k+1})+\nabla(\bar{u}^k \cdot \psi^k) \\
	& +(\delta-1)(\bar{\psi}^{k+1}\text{div} u^k+\psi^{k} \text{div} \bar{u}^k)+a \delta(\bar{h}^{k+1} \nabla \text{div} u^k+h^k \nabla \text{div} \bar{u}^{k})\big) \\
		\leq & C(\|\bar{\psi}^{k+1}\|_1+|\bar{h}^{k+1}|_\infty+\|\bar{u}^k\|_1+|h^k\nabla \bar{u}^k|_2+|h^k \nabla^2 \bar{u}^k|_2 \\
	& +|h^k \nabla^3 \bar{u}^k|_2)| \bar{l}_t^{k+1}|_2+C\|\bar{\psi}^{k+1}\|_1| \nabla \bar{l}_t^{k+1}|_2 .
\end{align*}

Furthermore,  $\eqref{nlpsiii}$ gives that
\begin{equation}\label{878}
\begin{aligned}
 |\bar{h}_t^{k+1}|_2 &\leq C(|\bar{\psi}^{k+1}|_2+|\bar{h}^{k+1}|_2+\|\bar{u}^k\|_1), \\
    |\bar{\psi}_t^{k+1}|_2 &\leq C(\|\bar{\psi}^{k+1}\|_1+|\bar{h}^{k+1}|_\infty+\|\bar{u}^k\|_1+|h^k \nabla^2 \bar{u}^k|_2). 
\end{aligned}
\end{equation}

    Now, 
			by	$\ef{861}_2$, one has
		\begin{equation}\label{1167}
		    \begin{aligned}
					& \frac{\text{d}}{\text{d} t}|(l^{k+1})^{-\frac{\nu}{2}}(h^{k+1})^\frac12 \bar{u}^{k+1}|_2^2+2a_2 \alpha|h^{k+1} \nabla \bar{u}^{k+1}|_2^2\\&+2a_2(\alpha+\beta)|h^{k+1} \text{div} \bar{u}^{k+1}|_2^2 \\
					\leq&  C \big(|(l^{k+1})^{-\frac{\nu}{2}}(h^{k+1})^\frac12 \bar{u}^{k+1}|_2^2+(|(l^{k+1})^{-\frac{\nu}{2}}(h^{k+1})^\frac12 \bar{u}^{k+1}|_4+|h^{k+1} \nabla \bar{u}^{k+1}|_2\\&+|\bar{h}^{k+1}|_2+|\bar{l}^{k+1}|_2+\|\bar{\phi}^{k+1}\|_1+|(h^{k+1})^\frac12\nabla\bar{l}^{k+1}|_2+|h^k\nabla\bar{u}^{k}|_2\\&+|\bar{h}^{k}|_2+|\bar{\psi}^{k+1}|_2)|(l^{k+1})^{-\frac{\nu}{2}}(h^{k+1})^\frac12 \bar{u}^{k+1}|_2+|h^{k+1} \nabla \bar{u}^{k+1}|_2\|\bar{\phi}^{k+1}\|_1\big),
				\end{aligned}
		\end{equation}
			%
				and
				\begin{equation}\label{1168}
				   \begin{aligned}
					& \frac{\text{d}}{\text{d} t}\big(a_2 \alpha|h^{k+1} \nabla \bar{u}^{k+1}|_2^2 +a_2(\alpha+\beta)|h^{k+1} \text{div} \bar{u}^{k+1}|_2^2\big)\\&+2|(l^{k+1})^{-\frac{\nu}{2}}(h^{k+1})^\frac12 \bar{u}_t^{k+1}|_2^2=\sum_{i=16}^{21} J_i,
				\end{aligned} 
				\end{equation}
				where 
				\begin{align*}
					J_{16}= & - 2a_2 \int \bar{h}^{k+1} L u^k \cdot h^{k+1}\bar{u}_t^{k+1} \leq C|\bar{h}^{k+1}|_4|(l^{k+1})^{-\frac{\nu}{2}} (h^{k+1})^\frac12\bar{u}_t^{k+1}|_2, \\
					J_{17}= &  \int2\big(\overline{\mathbb{U}}_1^{k+1}-2a_2\frac{\delta-1}{a \delta}  \psi^{k+1} \cdot Q(\bar{u}^{k+1})\big) \cdot h^{k+1}\bar{u}_t^{k+1} \\
					\leq & C(| \nabla \bar{u}^k|_2+| \bar{u}^k|_2+| \bar{l}^{k+1}|_2
					 +|h^{k+1} \nabla \bar{u}^{k+1}|_2)|(l^{k+1})^{-\frac{\nu}{2}}(h^{k+1})^\frac12 \bar{u}_t^{k+1}|_2, \\
					J_{18}= &  \int2 \overline{\mathbb{U}}_2^{k+1} \cdot h^{k+1}\bar{u}_t^{k+1} \\
					\leq & C(\|\bar{\phi}^{k+1}\|_1+|(h^{k+1})^{\frac{1}{2}} \nabla \bar{l}^{k+1}|_2+| \bar{l}^{k+1}|_4)|(l^{k+1})^{-\frac{\nu}{2}}(h^{k+1})^\frac12 \bar{u}_t^{k+1}|_2\\&+C\|\bar{\phi}^{k+1}\|_1|h^{k+1} \nabla \bar{u}_t^{k+1}|_2, \\
					J_{19}= &  \int2 \overline{\mathbb{U}}_3^{k+1} \cdot h^{k+1}\bar{u}_t^{k+1} \\\leq& C(|(h^{k+1})^{\frac{1}{2}} \nabla \bar{l}^{k+1}|_2+|\bar{h}^k|_2 
					 +|\bar{\psi}^{k+1}|_2\\
                     &+|h^k \nabla \bar{u}^k|_2)|(l^{k+1})^{-\frac{\nu}{2}} (h^{k+1})^\frac12\bar{u}_t^{k+1}|_2, \\
					J_{20}= & \int2 \overline{\mathbb{U}}_4^{k+1} \cdot h^{k+1}\bar{u}_t^{k+1} \leq C|\bar{l}^{k+1}|_2|(l^{k+1})^{-\frac{\nu}{2}}(h^{k+1})^\frac12 \bar{u}_t^{k+1}|_2, \\
					J_{21}= & 2a_2 \int h^{k+1}h_t^{k+1}\big(\alpha|\nabla \bar{u}^{k+1}|^2+(\alpha+\beta)|\text{div} \bar{u}^{k+1}|^2\big) \leq C|h^{k+1} \nabla \bar{u}^{k+1}|_2^2 .
				\end{align*}	
				Next, applying $\partial_t$ to $\ef{861}_2$, 
		 multiplying the resulting  system by $2 h^{k+1}\bar{u}_t^{k+1}$, and integrating over $\mathbb{R}^2$, one gets
			\begin{equation}\label{1570}
			    \begin{aligned}
					& \frac{\text{d}}{\text{d} t}|(l^{k+1})^{-\frac{\nu}{2}} (h^{k+1})^{\frac{1}{2}}\bar{u}_t^{k+1}|_2^2+2 a_2 \alpha|h^{k+1} \nabla \bar{u}_t^{k+1}|_2^2 \\
					& +2 a_2(\alpha+\beta)|h^{k+1} \text{div} \bar{u}_t^{k+1}|_2^2=\sum_{i=22}^{26} J_i,
				\end{aligned}
			\end{equation}
				where 
				    \begin{align*}
					 J_{22}=&\int\Big(-2\big((l^{k+1})^{-\nu}\big)_th^{k+1}(\bar{u}_t^{k+1})^2+2( \overline{\mathbb{U}}_1^{k+1})_t \cdot h^{k+1}\bar{u}_t^{k+1}\\&-(l^{k+1})^{-\nu}(h^{k+1})_t(\bar{u}_t^{k+1})^2\Big) \\
					 \leq& C(1+|l_t^{k+1}|_{D^2}^{\frac{1}{4}})|(l^{k+1})^{-\frac{\nu}{2}}(h^{k+1})^{\frac{1}{2}} \bar{u}_t^{k+1}|_2^2+C(|h^k \nabla \bar{u}_t^k|_2\\
					&+| \bar{l}_t^{k+1}|_2+| \bar{u}_t^k|_2  +\| \bar{u}^k\|_2)|(l^{k+1})^{-\frac{\nu}{2}} (h^{k+1})^{\frac{1}{2}}\bar{u}_t^{k+1}|_2 \\
					& +C(1+|u_{t t}^k|_2)(|\bar{l}^{k+1}|_2+|(h^{k+1})^{\frac{1}{2}}\nabla\bar{l}^{k+1}|_2)(|(l^{k+1})^{-\frac{\nu}{2}} (h^{k+1})^{\frac{1}{2}}\bar{u}_t^{k+1}|_2 \\
					&+|h^{k+1} \nabla \bar{u}_t^{k+1}|_2)  +C| \bar{l}_t^{k+1}|_2|h^{k+1} \nabla \bar{u}_t^{k+1}|_2, \\
					 J_{23}=&\int-2 a_2\big(2\nabla h^{k+1} \cdot \hat{Q}(\bar{u}_t^{k+1})+h_t^{k+1} L \bar{u}^{k+1}+(\bar{h}^{k+1} L u^k)_t\big) \cdot h^{k+1}\bar{u}_t^{k+1} \\
					 \leq &C(|h^{k+1} \nabla \bar{u}_t^{k+1}|_2+|h^{k+1}\nabla^2 \bar{u}^{k+1}|_2+|\bar{h}_t^{k+1}|_2+|\bar{\psi}_t^{k+1}|_2 \\
					& +|h^k\nabla^2 u_t^k|_2\|\bar{h}^{k+1}\|_2)|(l^{k+1})^{-\frac{\nu}{2}}(h^{k+1})^{\frac{1}{2}} \bar{u}_t^{k+1}|_2, \\
					 J_{24}=&\int 2(\overline{\mathbb{U}}_2^{k+1})_t \cdot h^{k+1} \bar{u}_t^{k+1} \\\leq& C\big(\|\bar{\phi}^{k+1}\|_2+|\bar{\phi}_t^{k+1}|_2+|(h^{k+1})^\frac12\nabla\bar{\phi}_t^{k+1}|_2\\
					& +|\bar{l}_t^{k+1}|_2 +(1+|l_t^k|_{D^2}^{\frac{1}{2}}+|l_t^{k+1}|_{D^2}^{\frac{1}{2}})(| \bar{l}^{k+1}|_2\\
					&+|(h^{k+1})^{\frac{1}{2}} \nabla \bar{l}^{k+1}|_2)\big)|(l^{k+1})^{-\frac{\nu}{2}}(h^{k+1})^{\frac{1}{2}} \bar{u}_t^{k+1}|_2 \\
					& +C(| \bar{l}_t^{k+1}|_2+\| \bar{l}^{k+1}\|_1)|h^{k+1} \nabla \bar{u}_t^{k+1}|_2,\\ 
					 J_{25}=&\int 2(\overline{\mathbb{U}}_3^{k+1})_t \cdot h^{k+1}\bar{u}_t^{k+1} \\\leq& C\big((1+|h^k \nabla^2 u_t^k|_2)|(h^{k+1})^{\frac{1}{2}}(l^k)^{\frac{\nu}{2}} \nabla \bar{l}^{k+1}|_2\\
					& +|\bar{ l}_t^{k+1}|_2 +\|\bar{h}^k\|_2+|h^k\nabla \bar{u}_t^k|_2 +|\bar{h}_t^k|_2+|\bar{\psi}_t^{k+1}|_2\\
					& +\|\bar{\psi}^{k+1}\|_1\big)|(l^{k+1})^{-\frac{\nu}{2}} (h^{k+1})^{\frac{1}{2}}\bar{u}_t^{k+1}|_2+C|h^k\nabla \bar{u}^k|_4|(h^{k+1})^{\frac{1}{2}}\bar{u}_t^{k+1}|_4 \\
					& +C\big((1+|h^k \nabla^2 u_t^k|_2)|(h^{k+1})^{\frac{1}{2}}(l^k)^{\frac{\nu}{2}} \nabla \bar{l}^{k+1}|_2 \\
					& +| \bar{l}_t^{k+1}|_2+\|\bar{\psi}^{k+1}\|_1\big)|h^{k+1} \nabla \bar{u}_t^{k+1}|_2, \\
					 J_{26}=&\int 2(\overline{\mathbb{U}}_4^{k+1})_t \cdot h^{k+1}\bar{u}_t^{k+1} \\\leq& C\|  \bar{l}^{k+1}\|_1|h^{k+1} \nabla \bar{u}_t^{k+1}|_2 \\
                     &+C(|\bar{l}_t^{k+1}|_2+\| \bar{l}^{k+1}\|_1)|(l^{k+1})^{-\frac{\nu}{2}} (h^{k+1})^{\frac{1}{2}}\bar{u}_t^{k+1}|_2,
				\end{align*}
				in which, we have used integration by parts in $J_{24}$ and $J_{25}$ to deal with the terms related to $\nabla \bar{l}_t^{k+1}$, and
    \begin{equation}\label{2678}
\begin{aligned}
 |\bar{\phi}_t^{k+1}|_2 &\leq C(|\nabla\bar{\phi}^{k+1}|_2+|\bar{\phi}^{k+1}|_2+\|\bar{u}^k\|_1), \\
    |(h^{k+1})^\frac12\nabla\bar{\phi}_t^{k+1}|_2 &\leq C(\|\nabla\bar{\phi}^{k+1}\|_1+|\bar{\phi}^{k+1}|_\infty+\|\bar{u}^k\|_1+|h^k \nabla^2 \bar{u}^k|_2). 
\end{aligned}
\end{equation}
				
				
				Moreover, according to  
				$\ef{861}_2$-$\ef{861}_3$ and Lemma \ref{ell}, one has 
			    \begin{align*}
      |(h^{k+1})^\frac12 \nabla^2 \bar{l}^{k+1}|_2 \leq & C(|\bar{l}_t^{k+1}|_2+| \bar{u}^k|_2+|\bar{l}^{k+1}|_2+|\bar{l}^{k}|_2+|\bar{h}^{k}|_2\\&+|\bar{h}^{k+1}|_2+|h^k \nabla \bar{u}^k|_2+|(l^k)^{\frac{\nu}{2}}(h^{k+1})^{\frac{1}{2}} \nabla \bar{l}^{k+1}|_2 \\
& +|(l^{k-1})^{\frac{\nu}{2}}(h^k)^{\frac{1}{2}} \nabla \bar{l}^k|_2+|\bar{\psi}^k|_2+\|\bar{\psi}^{k+1}\|_1), \\
					|h^{k+1} \nabla^2 \bar{u}^{k+1}|_2 \leq & C(|(l^{k+1})^{-\frac{\nu}{2}} \bar{u}_t^{k+1}|_2+|h^{k+1} \nabla \bar{u}^{k+1}|_2+|h^k \nabla \bar{u}^k|_2\\&+|\bar{u}^{k+1}|_2+|\bar{u}^k|_2 +|| \bar{\phi}^{k+1} \|_1+|\bar{h}^k|_2+|\bar{h}^{k+1}|_2\\
					&+|\bar{\psi}^{k+1}|_2 +|(l^k)^{\frac{\nu}{2}}(h^{k+1})^{\frac{1}{2}} \nabla \bar{l}^{k+1}|_2+|\bar{l}^{k+1}|_2),\\
|h^{k+1} \nabla^3 \bar{u}^{k+1}|_2 \leq & C(|(l^{k+1})^{-\frac{\nu}{2}} \bar{u}_t^{k+1}|_2+|h^{k+1} \nabla \bar{u}_t^{k+1}|_2+|h^{k+1} \nabla \bar{u}^{k+1}|_2 \\
& +|h^k \nabla \bar{u}^k|_2+|\bar{u}^{k+1}|_2+|\bar{u}^k|_2+|\bar{l}^{k+1}|_2+|\bar{l}^{k}|_2\\&+| \bar{l}_t^{k+1}|_2+|(l^{k-1})^{\frac{\nu}{2}}(h^k)^{\frac{1}{2}} \nabla \bar{l}^k|_2  +\|\bar{\phi}^{k+1}\|_2+\|\bar{h}^k\|_2\\
&+\|\bar{h}^{k+1}\|_2+|(l^k)^{\frac{\nu}{2}}(h^{k+1})^{\frac{1}{2}} \nabla \bar{l}^{k+1}|_2  +|(l^k)^{-\frac{\nu}{2}} \bar{u}_t^k|_2\\
&+|h^{k-1} \nabla \bar{u}^{k-1}|_2+|\bar{u}^{k-1}|_2+\|\bar{\phi}^k\|_1+|\bar{h}^{k-1}|_2),
				\end{align*}
		which, along with \ef{862}-\ef{1165}, \ef{869}-\ef{874} and \ef{1167}-\ef{1570}, yields that
				\begin{equation}\label{1171}
				  \begin{aligned}
					& \frac{\text{d}}{\text{d} t}\big(\|\bar{\phi}^{k+1}\|_2^2+|\bar{h}^{k+1}|_2^2+\|\bar{\psi}^{k+1}\|_1^2+| \bar{l}^{k+1}|_2^2+|(h^{k+1})^{\frac{1}{2}}(l^k)^{\frac{\nu}{2}} \nabla \bar{l}^{k+1}|_2^2\\&+| \bar{l}_t^{k+1}|_2^2+|(l^{k+1})^{-\frac{\nu}{2}}(h^{k+1})^\frac12 \bar{u}^{k+1}|_2^2+ \alpha|h^{k+1} \nabla \bar{u}^{k+1}|_2^2\\
					&  +(\alpha+\beta)|h^{k+1} \text{div} \bar{u}^{k+1}|_2^2+|(l^{k+1})^{-\frac{\nu}{2}} (h^{k+1})^{\frac{1}{2}}\bar{u}_t^{k+1}|_2^2\big)  \\&+a_4|(h^{k+1})^{\frac{1}{2}}(l^k)^{\frac{\nu}{2}} \nabla \bar{l}_t^{k+1}|_2^2+ a_2 \alpha|h^{k+1} \nabla \bar{u}_t^{k+1}|_2^2 \\
					\leq & \mathcal{C}^k(t)\big(\|\bar{\phi}^{k+1}\|_2^2+|\bar{h}^{k+1}|_2^2+\|\bar{\psi}^{k+1}\|_1^2+| \bar{l}^{k+1}|_2^2+|(h^{k+1})^{\frac{1}{2}}(l^k)^{\frac{\nu}{2}} \nabla \bar{l}^{k+1}|_2^2\\&+| \bar{l}_t^{k+1}|_2^2+|(l^{k+1})^{-\frac{\nu}{2}}(h^{k+1})^\frac12 \bar{u}^{k+1}|_2^2+ |h^{k+1} \nabla \bar{u}^{k+1}|_2^2\\
					&  +|(l^{k+1})^{-\frac{\nu}{2}} (h^{k+1})^{\frac{1}{2}}\bar{u}_t^{k+1}|_2^2\big)+\widetilde{\sigma}^{-2}\big(\|\bar{\phi}^{k+1}\|_2^2+\|\bar{\psi}^{k+1}\|_1^2\\&+| \bar{l}_t^{k+1}|_2^2+|(l^{k+1})^{-\frac{\nu}{2}} (h^{k+1})^{\frac{1}{2}}\bar{u}_t^{k+1}|_2^2\big) +\sigma\big(|h^k \nabla \bar{u}^k|_2^2+\|\bar{\phi}^k\|_1^2\\
     &+\|\bar{\psi}^k\|_1^2+|\bar{u}^k|_2^2+|h^{k-1} \nabla \bar{u}^{k-1}|_2^2  +(1+|(h^k)^\frac12 \nabla^2 l_t^k|_2)(|\bar{l}^{k}|_2^2\\
&+|(l^{k-1})^{\frac{\nu}{2}}(h^k)^{\frac{1}{2}} \nabla \bar{l}^k|_2^2)+|\bar{u}_t^k|_2^2+|\bar{l}_t^k|_2^2+|\bar{l}^{k-1}|_2^2+|\bar{h}^{k}|_2^2\\&+|\bar{h}^{k-1}|_2^2  +\|\bar{\psi}^{k-1}\|_1^2+\|\bar{u}^{k-1}\|_1^2+|(l^{k-2})^{\frac{\nu}{2}}(h^{k-1})^{\frac{1}{2}} \nabla \bar{l}^{k-1}|_2^2\\
&+|(l^{k-1})^{-\frac{\nu}{2}} \bar{u}_t^{k-1}|_2^2  +|h^{k-2} \nabla \bar{u}^{k-2}|_2^2+|\bar{u}^{k-2}|_2^2+\|\bar{\phi}^{k-1}\|_1^2\\
&+|\bar{h}^{k-2}|_2^2\big)+\widetilde{\sigma}\big(|(h^k)^{\frac{1}{2}} (l^{k-1})^{\frac{\nu}{2}}\nabla \bar{l}_t^k|_2^2+|h^k \nabla \bar{u}_t^k|_2^2\big),\\
				\end{aligned}  
				\end{equation}
				where constants $\sigma$ and $\widetilde{\sigma}$  will be determined later, and
				$$
				\begin{aligned}			\mathcal{C}^k(t)= & C \sigma^{-3}(1+|(h^k)^\frac12 \nabla^2 l_t^k|_2^2+|(h^{k+1})^\frac12 \nabla^2 l_t^{k+1}|_2^2+|h^k \nabla^2 u_t^k|_2^2+|u_{t t}^k|_2^2).
				\end{aligned}
				$$

				Now, define
				$$
				\begin{aligned}
					\Gamma^{k+1}(t)= & \sup _{0 \leq s \leq t}\|\bar{\phi}^{k+1}\|_2^2+\sup _{0 \leq s \leq t}|\bar{h}^{k+1}|_2^2+\sup _{0 \leq s \leq t}\|\bar{\psi}^{k+1}\|_1^2+\sup _{0 \leq s \leq t}|\bar{l}^{k+1}|_2^2\\&+\sup _{0 \leq s \leq t}|(h^{k+1})^{\frac{1}{2}}(l^k)^{\frac{\nu}{2}} \nabla \bar{l}^{k+1}|_2^2  +\sup _{0 \leq s \leq t}|(l^{k+1})^{-\frac{\nu}{2}} (h^{k+1})^{\frac{1}{2}}\bar{u}^{k+1}|_2^2 \\
					& +\sup _{0 \leq s \leq t}|h^{k+1} \nabla \bar{u}^{k+1}|_2^2+\sup _{0 \leq s \leq t}| \bar{l}_t^{k+1}|_2^2+\sup _{0 \leq s \leq t}|(l^{k+1})^{-\frac{\nu}{2}} (h^{k+1})^{\frac{1}{2}}\bar{u}_t^{k+1}|_2^2 .
				\end{aligned}
				$$
				Then it follows from \ef{1171} and Gronwall's inequality that
				$$
				\begin{aligned}
					& \quad \Gamma^{k+1}(t)+\int_0^t(|(h^{k+1})^{\frac{1}{2}}(l^k)^{\frac{\nu}{2}} \nabla \bar{l}_s^{k+1}|_2^2+|h^{k+1} \nabla \bar{u}_s^{k+1}|_2^2) \text{d} s \\
					& \leq C\Big(\int_0^t \widetilde{\sigma}(|(h^k)^{\frac{1}{2}} (l^{k-1})^{\frac{\nu}{2}}\nabla \bar{l}_s^k|_2^2+|h^k \nabla \bar{u}_s^k|_2^2) \text{d} s+(t+t^\frac12) \sigma \Gamma^k(t) \\
					& \quad+t \sigma \Gamma^{k-1}(t)+t \sigma \Gamma^{k-2}(t)\Big) \exp (C \sigma^{-3} t+C \sigma^{-3}+\widetilde{\sigma}^{-2} t) .
				\end{aligned}
				$$
			Thus choose $\sigma \in(0, \min \{1, a_4, a_2 \alpha\}), T_* \in(0, \min \{1, \bar{T}\}]$ and $\widetilde{\sigma} \in(0, 1)$ satisfy
			\begin{equation*}
			    \begin{aligned}
		C \widetilde{\sigma} \exp (C \sigma^{-3} T_*+C\sigma^{-3}+\widetilde{\sigma}^{-2} T_*) \leq \frac{1}{32},\\
     C \sigma T_*^{\frac12} \exp (C \sigma^{-3} T_*+C \sigma^{-3}+\widetilde{\sigma}^{-2} T_*) \leq \frac{1}{32}.
				\end{aligned}
			\end{equation*}
One can obtain that
				$$
				\begin{aligned}
					& \sum_{k=1}^{\infty}\Big(\Gamma^{k+1}(T_*)+\int_0^{T_*}(|(h^{k+1})^{\frac{1}{2}}(l^k)^{\frac{\nu}{2}} \nabla \bar{l}_t^{k+1}|_2^2+|h^{k+1}\nabla \bar{u}_t^{k+1}|_2^2) \text{~d} t\Big)<\infty,
				\end{aligned}
				$$
				which, together  with  \ef{857}-\ef{310857}, implies 
				$$
				\begin{aligned}
					\lim _{k \rightarrow \infty}(\|\bar{\phi}^{k+1}\|_{s^{\prime}}+\|\bar{h}^{k+1}\|_{s^{\prime}}+\|\bar{u}^{k+1}\|_{s^{\prime}}+\|\bar{l}^{k+1}\|_{s^{\prime}}+|\bar{u}_t^{k+1}|_2+|\bar{l}_t^{k+1}|_2) & =0, 
				\end{aligned}
				$$
				for any $s^{\prime} \in[1,3)$. And there has a subsequence (still denoted by $\left(\phi^k, h^k, l^k, u^k\right)$ ) satisfies
		\begin{equation}\label{880}
			    \begin{aligned}
					& (\phi^k-\epsilon,h^k, l^k-\bar{l},u^k) \overset{L^{\infty}([0, T_*] ; H^{s^{\prime}})}{\longrightarrow}(\phi^\epsilon-\epsilon,h^\epsilon,l^\epsilon-\bar{l} , u^\epsilon), \\
		& (u_t^k, l_t^k) \overset{ L^{\infty}([0, T_*] ; L^2)}{\longrightarrow}(u_t^\epsilon, l_t^\epsilon) ,
				\end{aligned}
			\end{equation}
			for limit $\left(\phi^\epsilon, h^\epsilon, l^\epsilon, u^\epsilon\right)$.
			
Moreover, it follows from  \ef{857}-\ef{310857} that there has a subsequence (still denoted by $(\phi^k, h^k, l^k, u^k)$) converges to $\left(\phi^\epsilon, h^\epsilon, l^\epsilon, u^\epsilon\right)$, which also satisfies
the estimates in 
\ef{857}-\ef{310857} 
except those weighted estimates on $\phi^\epsilon$, $u^\epsilon$ and $l^\epsilon$. 

We claim that
				\begin{align*}
				(h^k)^{\frac{1}{2}} \nabla l^k, (h^k)^\frac32\nabla^3 u^k, t^\frac12(h^k)^{-\frac{1}{4}}l_{tt}^k & \overset{L^{\infty}([0, T_*] ; L^2)}{\rightharpoonup }(h^\epsilon)^{\frac{1}{2}} \nabla l^\epsilon, (h^\epsilon)^\frac32 \nabla^3 u^\epsilon, t^\frac12(h^\epsilon)^{-\frac{1}{4}}l_{tt}^\epsilon;\\
				t^\frac12(h^k)^{\frac{1}{4}} \nabla l_{tt}^k &\overset{L^{2}([0, T_*] ; L^2)}{\rightharpoonup} t^\frac12(h^\epsilon)^{\frac{1}{4}} \nabla l_{tt}^\epsilon.
				\end{align*}
    Indeed, since
				$$
				\begin{aligned}
					(h^k)^\frac12-(h^\epsilon)^\frac12 & =\frac{h^k-h^\epsilon}{(h^k)^\frac12+(h^\epsilon)^\frac12}, \quad
				(h^k)^{\frac{1}{4}}-(h^\epsilon)^{\frac{1}{4}}  =\frac{(h^k)^\frac12-(h^\epsilon)^\frac12}{(h^k)^{\frac{1}{4}}+(h^\epsilon)^{\frac{1}{4}}}, \\
					(h^k)^{-\frac{1}{4}}-(h^\epsilon)^{-\frac{1}{4}} & =\frac{-((h^k)^{\frac{1}{4}}-(h^\epsilon)^{\frac{1}{4}})}{(h^k)^{\frac{1}{4}}(h^\epsilon)^{\frac{1}{4}}},
				\end{aligned}
				$$
				and $h^k$ and $h^\epsilon$ have positive lower bounds independent of $k$, one has
	\begin{equation}\label{879}
				\|\big((h^k)^\frac12-(h^\epsilon)^\frac12,(h^k)^{\frac{1}{4}}-(h^\epsilon)^{\frac{1}{4}},(h^k)^{-\frac{1}{4}}-(h^\epsilon)^{-\frac{1}{4}} \big)\|_{L^{\infty}([0, T_*] ; L^{\infty})} \rightarrow 0,
\end{equation}
				as $k \rightarrow \infty$. Thus it follows from \ef{880}-\ef{879} and \ef{857}-\ef{310857} for $\left(\phi^k, h^k, l^k, u^k\right)$, the estimates for $\left(\phi^\epsilon, h^\epsilon, l^\epsilon, u^\epsilon\right)$ obtained above that
				\begin{align*}
					& \int_0^{T_*} \int_{\mathbb{R}^2}\big((h^k)^\frac12 \nabla l^k-(h^\epsilon)^\frac12 \nabla l^\epsilon\big) \mathcal{X} \text{d} x \text{d} t \\
					\leq & C(\|(h^k)^\frac12-(h^\epsilon)^\frac12\|_{L^{\infty}([0, T_*] ; L^{\infty})}+\|\nabla l^k-\nabla l^\epsilon\|_{L^{\infty}([0, T_*] ; L^2)}) T_* \rightarrow 0 \text { as } k \rightarrow \infty, \\\\
     &  \int_0^{T_*} \int_{\mathbb{R}^2}\big((h^k)^\frac32\nabla^3 u^k-(h^\epsilon )^\frac32\nabla^3 u^\epsilon\big) \mathcal{X} \text{d} x \text{d} t \\
					 \leq& C\|(h^k)^\frac12-(h^\epsilon)^\frac12\|_{L^{\infty}([0, T_*] ; L^{\infty})}T_*\\&
      + C\int_0^{T_*} \int_{\mathbb{R}^2}(h^k\nabla^3 u^k-h^\epsilon \nabla^3 u^\epsilon) \mathcal{X} \text{d} x \text{d} t \\\leq& C(\|(h^k)^\frac12-(h^\epsilon)^\frac12\|_{L^{\infty}([0, T_*] ; L^{\infty})}+\|h^k-h^\epsilon\|_{L^{\infty}([0, T_*] ; L^{\infty})})T_*\\&+C \int_0^{T_*} \int_{\mathbb{R}^2}(\nabla^3 u^k- \nabla^3 u^\epsilon) \mathcal{X} \text{d} x \text{d} t \rightarrow 0 \text { as } k \rightarrow \infty,\\\\
					& \int_0^{T_*} \int_{\mathbb{R}^2}\big(t^\frac12(h^k)^{-\frac{1}{4}}l_{tt}^k-t^\frac12(h^\epsilon)^{-\frac{1}{4}}l_{tt}^\epsilon\big) \mathcal{X} \text{d} x \text{d} t \\
					\leq & C\|(h^k)^{-\frac{1}{4}}-(h^\epsilon)^{-\frac{1}{4}} )\|_{L^{\infty}([0, T_*] ; L^{\infty})} T_* \\&+C\int_0^{T_*} \int_{\mathbb{R}^2}(t^\frac12l_{tt}^k-t^\frac12l_{tt}^\epsilon) \mathcal{X} \text{d} x \text{d} t \rightarrow 0 \text { as } k \rightarrow \infty, \\\\
					& \int_0^{T_*} \int_{\mathbb{R}^2}\big(t^\frac12(h^k)^{\frac{1}{4}} \nabla l_{tt}^k-t^\frac12(h^\epsilon)^{\frac{1}{4}} \nabla l_{tt}^\epsilon\big) \mathcal{X} \text{d} x \text{d} t \\
						\leq & C\|(h^k)^{\frac{1}{4}}-(h^\epsilon)^{\frac{1}{4}} )\|_{L^{\infty}([0, T_*] ; L^{\infty})} T_* \\&+C\int_0^{T_*} \int_{\mathbb{R}^2}(t^\frac12\nabla l_{tt}^k-t^\frac12\nabla l_{tt}^\epsilon) \mathcal{X} \text{d} x \text{d} t \rightarrow 0 \text { as } k \rightarrow \infty,
				\end{align*}
				for any test function $\mathcal{X}(t, x) \in C_c^{\infty}([0, T_*] \times \mathbb{R}^2)$, which, along with the lower semicontinuity of norms, yield the uniform boundedness of $(h^\epsilon)^{\frac{1}{2}} \nabla l^\epsilon, (h^\epsilon)^\frac32 \nabla^3 u^\epsilon, t^\frac12(h^\epsilon)^{-\frac{1}{4}}l_{tt}^\epsilon$   in $L^{\infty}([0, T_*] ; L^2)$ and $t^\frac12(h^\epsilon)^{\frac{1}{4}} l_{tt}^\epsilon$  in $L^{2}([0, T_*] ; L^2)$ with respect to $\epsilon$. Similarly, one can also obtain the other desired weighed estimates. Hence the corresponding weighted estimates for $\left(\phi^\epsilon, u^\epsilon, l^\epsilon\right)$ in \ef{857}-\ef{310857} hold also for the limit.
    
    Next we show that $(\phi^\epsilon, u^\epsilon, l^\epsilon)$ is a weak solution in the sense of distributions to \ef{858}. First, multiplying  $\ef{860}_3$ by any given $\mathcal{X}(t, x) \in C_c^{\infty}\left(\left[0, T_*\right] \times \mathbb{R}^2\right)$ on both sides, and integrating over $(0, t) \times \mathbb{R}^2$ for $t \in\left(0, T_*\right]$, one has
\begin{equation}\label{881}
\begin{aligned}
& \int_0^t \int_{\mathbb{R}^2}(l^{ k+1} \mathcal{X}_s-u^k \cdot \nabla l^{k+1} \mathcal{X}) \text{d} x \text{d} s =  \int l^{k+1} \mathcal{X}(t, x)-\int l_0 \mathcal{X}(0, x) \\& -\int_0^t \int_{\mathbb{R}^2}\big(a_4 (l^{k})^\nu h^{k+1} \Delta l^{ k+1}+a_5 (l^k)^\nu n^{k+1} (h^{k})^{2} H(u^{k})\big) \mathcal{X} \text{d} x \text{d} s \\
& -\int_0^t \int_{\mathbb{R}^2}\big(a_6 (l^k)^{\nu+1}\text{div} \psi^{k+1}+\Pi^{k+1}\big) \mathcal{X} \text{d} x \text{d} s.
\end{aligned}
\end{equation}

It follows from the uniform estimates obtained above that one can take the limit $k \rightarrow \infty$ in \ef{881} to get
\begin{equation}\label{882}
\begin{aligned}
& \int_0^t \int_{\mathbb{R}^2}(l^{ \epsilon} \mathcal{X}_s-u^\epsilon \cdot \nabla l^{\epsilon} \mathcal{X}) \text{d} x \text{d} s 
=  \int l^{\epsilon} \mathcal{X}(t, x)-\int l_0 \mathcal{X}(0, x)  \\&-\int_0^t \int_{\mathbb{R}^2}\big(a_4 (l^\epsilon)^\nu h^{\epsilon} \Delta l^{ \epsilon}+a_5 (l^\epsilon)^\nu n^{\epsilon} (h^{\epsilon})^{2} H(u^{\epsilon})\big) \mathcal{X} \text{d} x \text{d} s \\
& -\int_0^t \int_{\mathbb{R}^2}\big(a_6 (l^\epsilon)^{\nu+1}\text{div} \psi^{\epsilon}+\Theta(\phi^\epsilon,l^\epsilon,\psi^\epsilon)\big) \mathcal{X} \text{d} x \text{d} s.
\end{aligned}
\end{equation}

Similarly, one can show that ($\phi^\epsilon, u^\epsilon, l^\epsilon$) also satisfies the equations $\ef{858}_1$, $\ef{858}_2$ and the initial data in the sense of distributions. Thus, $(\phi^\epsilon, u^\epsilon, l^\epsilon)$ is a weak solution in the sense of distributions to the Cauchy problem \ef{858}.
				
				For the uniqueness. Let $(\phi_1, u_1, l_1)$ and $(\phi_2, u_2, l_2)$ be two  solutions to \ef{858} that satisfy  \ef{857}-\ef{310857}. Denote
				$$
				\begin{aligned}
					\bar{f}  =f_1-f_2, \quad \text{for}\quad f=h, \phi, u, l,\psi. 
				\end{aligned}
				$$
				Then \ef{858} gives
			\begin{equation}\label{wyx}
				\left\{\begin{aligned}
					& \bar{\phi}_t+u_1 \cdot \nabla \bar{\phi}+\bar{u} \cdot \nabla \phi_2+(\gamma-1)(\bar{\phi} \text{div} u_1+\phi_2 \text{div} \bar{u})=0, \\
					& \bar{u}_t+u_1 \cdot \nabla \bar{u}+l_1 \nabla \bar{\phi}+a_1 \phi_1 \nabla \bar{l}+a_2 l_1^\nu h_1 L \bar{u} \\
					=&-\bar{u} \cdot \nabla u_2-a_1 \bar{\phi} \nabla l_2-\bar{l} \nabla \phi_2-a_2(l_1^\nu h_1-l_2^\nu h_2) L u_2 \\
					&+a_2\big(h_1 \nabla l_1^\nu \cdot Q(u_1)-h_2 \nabla l_2^\nu \cdot Q(u_2)\big) \\
					&+a_3\big(l_1^\nu \psi_1 \cdot Q\left(u_1\right)-l_2^\nu \psi_2 \cdot Q\left(u_2\right)\big), \\
					& \bar{l}_t+u_1 \cdot \nabla \bar{l}+\bar{u} \cdot \nabla l_2-a_4 \phi_1^{2\iota} l_1^\nu \Delta \bar{l} \\
					=&a_4(\phi_1^{2 \iota}l_1^\nu \Delta l_2-\phi_2^{2 \iota} l_2^\nu \Delta l_2) +a_5\big(l_1^\nu n_1 \phi_1^{4 \iota} H(u_1)-l_2^\nu n_2 \phi_2^{4 \iota} H(u_2)\big) \\
					&+a_6(l_1^{\nu+1}  \text{div} \psi_1-l_2^{\nu+1}  \text{div} \psi_2)+\Theta(\phi_1, l_1, \psi_1)-\Theta(\phi_2, l_2, \psi_2), \\
					& \bar{h}_t+u_1 \cdot \nabla \bar{h}+\bar{u} \cdot \nabla h_2+(\delta-1)(\bar{h} \text{div} u_1+h_2 \text{div} \bar{u})=0,\\
					& \bar{\psi}_t+\sum_{k=1}^2 A_k(u_1) \partial_k \bar{\psi}+B(u_1) \bar{\psi}+a \delta(\bar{h} \nabla \text{div} u_1+h_2 \nabla \text{div} \bar{u}) \\
					=&-\sum_{k=1}^2 A_k(\bar{u}) \partial_k \psi_2-B(\bar{u}) \psi_2, \\
					&(\bar{\phi}, \bar{u}, \bar{l}, \bar{h}, \bar{\psi})|_{t=0}=(0,0,0,0,0) \quad \text { in } \quad \mathbb{R}^2, \\
					&(\bar{\phi}, \bar{u}, \bar{l}, \bar{h}, \bar{\psi}) \longrightarrow(0,0,0,0,0) \quad \text { as }|x| \rightarrow \infty \quad \text { for } \quad t \geq 0 .
				\end{aligned}\right.
			\end{equation}
				
				Set
				$$
				\begin{aligned}
					\Phi(t)= & \|\bar{\phi}\|_2^2+|\bar{h}|_2+\|\bar{\psi}\|_1^2+|\bar{l}|_2+|h_1^{\frac{1}{2}} l_1^{\frac{\nu}{2}} \nabla \bar{l}|_2^2+| \bar{l}_t|_2^2+|l_1^{-\frac{\nu}{2}}h_1^{\frac12} \bar{u}|_2^2 \\
					& + \alpha|h_1 \nabla \bar{u}|_2^2+(\alpha+\beta)|h_1 \text{div} \bar{u}|_2^2+|l_1^{-\frac{\nu}{2}}h_1^{\frac12} \bar{u}_t|_2^2,
				\end{aligned}
				$$
    %
				 then one has
				$$
				\frac{\text{d}}{\text{d} t} \Phi(t)+C(|h_1^{\frac{1}{2}} l_1^{\frac{\nu}{2}} \nabla \bar{l}_t|_2^2+|h_1 \nabla \bar{u}_t|_2^2) \leq H(t) \Phi(t),
				$$
				where $H(t)$ satisfying
				$$
				\int_0^t H(s) \mathrm{d} s \leq C\quad \text { for }\quad 0 \leq t \leq T_*,
				$$
				and  Gronwall's inequality implies
				$$
		\bar{\phi}=\bar{u}=\bar{l}=0, \quad a.e. 
				$$
				thus the solution is unique.

 The time-continuity can be obtained as the process in proving  Lemma \ref{ls},  and this complete the proof of Theorem \ref{ddgs}.
			\end{proof}
   \subsection{Limit to the case with vacuum.}  With \ef{857}-\ef{310857} at hand, it is time to prove Theorem \ref{3.1}.
\begin{proof}
First,  assume that 
			$
			\phi_0^\epsilon=\phi_0+\epsilon
			$ for all $\epsilon\in (0,1)$, 
   then
	\begin{equation}\label{883}
			\begin{aligned}
				& u_0=(\phi_0+\epsilon)^{-\iota} g_1^\epsilon, \quad  \nabla u_0=(\phi_0+\epsilon)^{-2\iota} g_2^\epsilon,\quad Lu_0=(\phi_0+\epsilon)^{-3 \iota} g_3^\epsilon, \\
				& \nabla\big((\phi_0+\epsilon)^{3 \iota} L u_0\big)=(\phi_0+\epsilon)^{-\iota} g_4^\epsilon, \quad \nabla l_0=(\phi_0+\epsilon)^{-\iota} g_5^\epsilon, \\
				& \Delta l_0=(\phi_0+\epsilon)^{-2 \iota} g_6^\epsilon, \quad \nabla\big((\phi_0+\epsilon)^{2\iota} \Delta l_0\big)=(\phi_0+\epsilon)^{-\iota} g_7^\epsilon,
			\end{aligned} 
	\end{equation}
			where 
			$$
			 \begin{array}{l}
				g_1^\epsilon=\frac{\phi_0^{-\iota}}{(\phi_0+\epsilon)^{-\iota}} g_1, \quad g_2^\epsilon=\frac{\phi_0^{-2 \iota}}{(\phi_0+\epsilon)^{-2 \iota}} g_2, \quad g_3^\epsilon=\frac{\phi_0^{-3 \iota}}{(\phi_0+\epsilon)^{-3 \iota}} g_3, \\
				g_4^\epsilon=\frac{\phi_0^{-4 \iota}}{(\phi_0+\epsilon)^{-4 \iota}}(g_4-\frac{\epsilon \nabla \phi_0^{3 \iota}}{\phi_0+\epsilon} \phi_0^\iota L u_0),\quad 	g_5^\epsilon=\frac{\phi_0^{-\iota}}{(\phi_0+\epsilon)^{-\iota}} g_5,  \\
			 g_6^\epsilon=\frac{\phi_0^{-2 \iota}}{\left(\phi_0+\epsilon\right)^{-2 \iota}} g_6, \quad
				g_7^\epsilon=\frac{\phi_0^{-3 \iota}}{(\phi_0+\epsilon)^{-3 \iota}}(g_7-\frac{\epsilon \nabla \phi_0^{2\iota}}{\phi_0+\epsilon} \phi_0^{ \iota} \Delta l_0) .
			\end{array} 
			$$
			According to \ef{a}-\ef{2.8*}, one has 
			$$
				\begin{aligned}
					&2+\epsilon+\bar{l}+\|\phi^\epsilon_0-\epsilon\|_{L^p\cap D^1\cap D^2}+|\nabla (\phi_0^\epsilon
					)^{\iota+1}|_2+|(\phi_0^\epsilon
					)^{2\iota}\nabla^2 \phi_0^\epsilon
					|_2+\|u^\epsilon_0\|_{3}\\&+\|\nabla h^\epsilon_0\|_{L^q\cap D^{1}}+| (h^\epsilon_0)^{\frac{1}{2}}\nabla^3h^\epsilon_0|_2+|\nabla (h^\epsilon_0)^{\frac{1}{2}}|_4
					+|(h^\epsilon_0)^{-1}|_\infty \\&+\sum_{i=1}^7 |g^\epsilon_i|_2+\|l^\epsilon_0-\bar{l}\|_{3}+|(l^\epsilon_0)^{-1}|_\infty \le\bar{c}_0,
	\end{aligned}$$
	for some constants $\epsilon_1>0$ and $0<\epsilon<\epsilon_1$,
			where constant $\bar{c}_0>0$ is  independent of $\epsilon$. Thus for the initial data $(\phi_0^\epsilon, u_0^\epsilon, l_0^\epsilon)$, Theorem \ref{ddgs} implies that \ef{858} has a unique solution $(\phi^\epsilon, u^\epsilon, l^\epsilon)$ in $[0, T_*] \times \mathbb{R}^2$ satisfying  \ef{857}-\ef{310857} with $c_0$ replaced by $\bar{c}_0$, where $T_*$ does not depend on $\epsilon$.
			Moreover, one has
\begin{lemma}\label{Lyjx}
    For all $R_0>0$, $\epsilon \in(0,1]$, and ball $B_{R}$ which  centered at origin with radius $R$, there exists a constant $a_{R_0}$ independent of $\epsilon$ satisfying
$$
\phi^\epsilon(t, x) \geq a_{R_0}>0, \quad \forall(t, x) \in\left[0, T_*\right] \times B_{R_0}.
$$
\end{lemma}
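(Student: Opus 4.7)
The plan is to prove this lower bound via the method of characteristics applied to the transport equation for $\phi^\epsilon$. From $\eqref{858}_1$, one has
\begin{equation*}
\phi^\epsilon_t + u^\epsilon \cdot \nabla \phi^\epsilon + (\gamma-1) \phi^\epsilon \text{div} u^\epsilon = 0,
\end{equation*}
so along the particle trajectory $\mathbb{X}(t;y)$ defined by $\frac{\text{d}}{\text{d}s}\mathbb{X}(s;y) = u^\epsilon\big(s,\mathbb{X}(s;y)\big)$ with $\mathbb{X}(0;y) = y$, integration yields the explicit formula
\begin{equation*}
\phi^\epsilon\big(t,\mathbb{X}(t;y)\big) = \phi_0^\epsilon(y)\exp\bigg(-(\gamma-1)\int_0^t \text{div} u^\epsilon\big(s,\mathbb{X}(s;y)\big)\,\text{d}s\bigg).
\end{equation*}
Since $\phi_0^\epsilon(y) = \phi_0(y) + \epsilon \ge \phi_0(y)$, it suffices to bound the exponential factor uniformly from below and to control which initial points $y$ can be mapped into $B_{R_0}$ at time $t \leq T_*$.

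The next step is to extract two uniform (in $\epsilon$) bounds from the a priori estimates \eqref{857}-\eqref{310857} established for $(\phi^\epsilon, u^\epsilon, l^\epsilon)$. First, since $u^\epsilon \in L^\infty([0,T_*]; H^3)$ with norm bounded by $c_3$, the Sobolev embedding in $\mathbb{R}^2$ yields $|u^\epsilon|_\infty + |\nabla u^\epsilon|_\infty \leq C$ uniformly in $\epsilon$; in particular $|\text{div} u^\epsilon|_\infty \leq C$. Hence
\begin{equation*}
\phi^\epsilon\big(t,\mathbb{X}(t;y)\big) \geq \phi_0(y)\, e^{-C T_*}\quad \text{for all } y \in \mathbb{R}^2, \ t \in [0,T_*].
\end{equation*}
Second, the uniform bound on $|u^\epsilon|_\infty$ implies that the flow map $\mathbb{X}(t;\cdot)$ is bi-Lipschitz with constants independent of $\epsilon$; in particular, if $x = \mathbb{X}(t;y) \in B_{R_0}$ for some $t \in [0,T_*]$, then $|y-x| \leq C T_*$, so $y \in B_{R_0 + C T_*} \eqqcolon B_{R_1}$.

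For the final step, observe that by \eqref{a}, $\phi_0 > 0$ everywhere, and $\phi_0 \in L^p \cap D^1 \cap D^3$ combined with $\nabla\phi_0^{2\iota} \in L^q \cap D^1$ ensures (via Sobolev embedding $H^2(\mathbb{R}^2)\hookrightarrow C_b^0$) that $\phi_0$ is continuous on $\mathbb{R}^2$. On the compact ball $\overline{B_{R_1}}$, the continuous and strictly positive function $\phi_0$ attains a positive minimum:
\begin{equation*}
\tilde{a}_{R_0} \coloneqq \inf_{y \in \overline{B_{R_1}}} \phi_0(y) > 0,
\end{equation*}
and $\tilde{a}_{R_0}$ depends only on $R_0$, $T_*$ and the (fixed, $\epsilon$-independent) initial data, not on $\epsilon$. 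Combining the three steps, for every $(t,x) \in [0,T_*] \times B_{R_0}$, choosing $y$ with $\mathbb{X}(t;y)=x$, we obtain
\begin{equation*}
\phi^\epsilon(t,x) \geq \phi_0(y)\, e^{-C T_*} \geq \tilde{a}_{R_0}\, e^{-C T_*} \eqqcolon a_{R_0} > 0,
\end{equation*}
which proves the claim. The only mildly technical point is verifying the continuity of $\phi_0$ on bounded sets from the hypotheses \eqref{a}; beyond this, the proof is essentially a direct consequence of the uniform estimates already established in Section 5 and the transport structure of $\eqref{858}_1$.
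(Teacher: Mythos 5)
Your proof is correct and is essentially the same argument the paper intends, since the paper defers to Lemma 3.9 of Xin--Zhu \cite{XinZ1}, which is exactly this characteristics-based argument; the technique also mirrors the particle-path computations the present paper uses in the proof of Lemma \ref{gh}. All the ingredients you invoke are available: \eqref{857}--\eqref{310857} give $\|u^\epsilon\|_{L^\infty([0,T_*];H^3)}\le C(c_0)$ with a bound independent of $\epsilon$, and $H^3(\mathbb{R}^2)\hookrightarrow W^{1,\infty}$ yields the uniform control of $|u^\epsilon|_\infty$ and $|\operatorname{div} u^\epsilon|_\infty$ needed for both the exponential factor and the finite-propagation bound on characteristics; $\phi_0>0$ together with $\nabla\phi_0\in H^2(\mathbb{R}^2)\hookrightarrow C^0_b$ gives (Lipschitz) continuity of $\phi_0$ and hence a positive infimum on the enlarged compact ball.
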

The proof of  Lemma \ref{Lyjx} is similar to Lemma 3.9 in \cites{XinZ1}, the detail is omitted for simplicity.

Second, following the process in proving Theorem \ref{ddgs},  we can show that Theorem \ref{3.1} holds, i.e. $(\phi, u, l)$ is the unique strong solution  to  \ef{2.3} in $[0, T_*] \times \mathbb{R}^2$.
\end{proof}

\begin{proof}
The proof of Theorem \ref{th21} follows similar process as \cite{DXZ1}, and the details are omitted for simplicity.

\end{proof}

			\bigskip
			
			{\bf Acknowledgement:} The research was supported in part by National Key R\& D Program of China (No. 2022YFA1007300). 
            The research of Yue Cao is also supported in part by  National Natural Science Foundation of China under Grants 12401275. 

			\bigskip
			
			{\bf Conflict of Interest:} The authors declare  that they have no conflict of
			interest. 
			The authors also  declare that this manuscript has not been previously  published, and will not be submitted elsewhere before your decision. 
			
			\bigskip

            {\bf Data availability:}
Data sharing is  not applicable to this article as no data sets were generated or analyzed during the current study.
					\bigskip
			

		\end{document}